\numberwithin{equation}{section}
\font\dsrom=dsrom10 scaled 1200
\def \N{\mathbb{N}}
\def \R{\mathbb{R}}
\def \E{\mathbb{E}}
\def \P{\mathbb{P}}
\def \m{\mathfrak{m}}
\newcommand\ind[1]{\textrm{\dsrom{1}}_{#1}}
\def \O{\mathcal{O}}
\def \L{\mathcal{L }}
\def \into{\int_{\mathcal{O}}}
\newcommand{\T}{\mathcal{T}}
\theoremstyle{plain} 
\newtheorem{theorem}{Theorem}[section]
\newtheorem{definition}[theorem]{Definition}
\newtheorem{lemma}[theorem]{Lemma}
\newtheorem{proposition}[theorem]{Proposition}
\newtheorem{remark}[theorem]{Remark}
\date{}
\begin{document}

%\parindent 0pt

%\title{\bf A hybrid approach for the implementation of the
%Bates model with stochastic interest rate}

\title{\bf Variational formulation of American option prices in the Heston Model}

\author{{\sc Damien Lamberton}\thanks{%
 Université Paris-Est, Laboratoire d'Analyse et de Mathématiques Appliquées (UMR 8050), UPEM, UPEC, CNRS, Projet Mathrisk INRIA, F-77454, Marne-la-Vallée, France - {\tt damien.lamberton@u-pem.fr}}\\
{\sc Giulia Terenzi}\thanks{%
Université Paris-Est, Laboratoire d'Analyse et de Mathématiques Appliquées (UMR 8050), UPEM, UPEC, CNRS, Projet Mathrisk INRIA, F-77454, Marne-la-Vallée, France, and Universit\`a di Roma Tor Vergata, Dipartimento di Matematica, Italy - {\tt terenzi@mat.uniroma2.it}}\\}
%
%\date{}
\maketitle
\begin{abstract}\noindent{\parindent0pt}
	We give an analytical characterization of the price function of an American option in Heston-type models. Our approach is based on variational inequalities and extends recent results of Daskalopoulos and Feehan (2011, 2016) and Feehan and Pop (2015). We study the existence and uniqueness of a weak solution of the associated degenerate parabolic obstacle problem. Then, we use suitable estimates on the joint distribution of the log-price process and the volatility process in order to characterize the analytical weak solution as the solution to the optimal stopping problem. We also rely on semi-group techniques and on the affine property of the model.
\end{abstract}

\noindent \textit{Keywords:}  American options; degenerate parabolic obstacle problem; optimal stopping problem.

\smallskip

%\noindent \textit{2000 MSC:} 91G10, 60H30, 65C20.

\section{Introduction}
The model introduced by S. Heston in 1993 (\cite{H}) is 
%has achieved great success in the financial world, becoming
one of the most widely used stochastic volatility models in the financial world and it was the starting point for several more complex models 
%is nowadays the base of more complex models 
which extend it. The great success of the
Heston model is due to the fact that the dynamics of the underlying asset can take into account the non-lognormal distribution of the asset returns and the observed
mean-reverting property of the volatility. Moreover, it remains analytically tractable and provides a closed-form valuation formula for European options using Fourier transform. 

These features have called for  an extensive literature on numerical methods to price derivatives in Heston-type models. In this framework, besides purely probabilistic methods such as standard Monte Carlo and tree approximations,  there is a large class of algorithms which  exploit numerical analysis techniques in order to solve the standard PDE (resp. the obstacle problem) formally associated with the European (resp. American) option price  function. 
However, these algorithms  have, in general, little mathematical support and in particular, as far as we know, a rigorous and complete study of the  analytic characterization of the American price function is not present in the literature.

The main difficulties in this sense come from the degenerate nature of the model. In fact, the infinitesimal generator associated with the two dimensional diffusion given by the log-price process and the volatility process is not uniformly elliptic: it degenerates on the boundary of the domain, that is when the volatility  variable vanishes. Moreover, it has unbounded coefficients with linear growth. Therefore, the existence and the uniqueness of the solution to the pricing PDE and obstacle problem do not follow from the classical theory, at least in the case in which the boundary of the state space is reached with positive probability, as happens in many cases of practical importance (see \cite{An}). Moreover, the probabilistic representation of the solution, that is the identification with the price function, is far from trivial in the case of non regular payoffs. 

It should be emphasized that a clear analytic characterization of the price function allows not only  to formally justify  the   theoretical convergence of some classical pricing algorithms but also to investigate the regularity properties of the price function (see  \cite{JLL} for the case of the Black and Scholes models).

Concerning the existing literature, E. Ekstrom and J. Tysk in  \cite{ET} give a rigorous and complete analysis of these issues in the case of European options, proving that, under some regularity assumptions on the payoff functions, the price function is the unique classical solution of the associated PDE with a certain boundary behaviour for vanishing values of the volatility.  However, the payoff functions they consider do not include the case of standard put and call options. 

Recently, P. Daskalopoulos and P. Feehan studied the existence, the uniqueness, and some regularity properties of the solution of this kind of degenerate PDE and obstacle problems in the elliptic case, introducing suitable weighted Sobolev spaces which clarify the behaviour of the solution near the degenerate boundary (see \cite{DF,DF2}). In another paper (\cite{FP}) P. Feehan and C. Pop addressed the issue of the probabilistic representation of the solution,  but we do not know if their assumptions on the solution of the parabolic obstacle problem are satisfied in the case of standard American options. 
Note that Feehan and Pop did prove regularity results in the elliptic case, see \cite{FP2}. They also announce results for the parabolic case in \cite{FP}.

The aim of this paper is to give a precise analytical characterization of the American option price function for a large class of payoffs which includes the standard put and call options. In particular, we give a variational formulation of the American pricing problem using the weighted Sobolev spaces and the bilinear form introduced in \cite{DF}. %Then, the simpler case of European options can be easily  carried  out in the same way. 
The paper is organized as follows. In Section 2, we introduce our notations and we state our main results. Then, in Section 3, we study the existence and uniqueness of the solution of the associated variational inequality, extending the results obtained in \cite{DF} in the elliptic case. The proof  relies, as in \cite{DF}, on the classical penalization technique introduced by Bensoussan and Lions \cite{BL} with some technical devices due to the degenerate nature of the problem. We also establish a Comparison Theorem. Finally, in section 4, we prove that the solution of the variational inequality  with obstacle function $\psi$ is actually the American option price function with payoff $\psi$, with  conditions on $\psi$ which are satisfied, for example, by the standard call and put options. In order to do this, we use  the affine property  of the underlying diffusion given by the log price process $X$ and the volatility process $Y$. Thanks to this property, we first identify the analytic semigroup associated with the bilinear form with a correction term  and the transition semigroup of the pair $(X,Y)$ with a killing term. Then, we prove regularity results on the solution of the variational inequality and suitable estimates on the joint law  of the process $(X,Y)$ and we deduce from them the analytical characterization of the solution of the optimal stopping problem, that is the  American option price.
\tableofcontents
\section{Notations and main results}
\subsection{The Heston model}
	We recall that in the Heston model the dynamics under the pricing measure of the asset price $S$ and the volatility process $Y$ are governed by the stochastic differential equation system
	\begin{equation*}
	\begin{cases}
	\frac{dS_t}{S_t}=(r-\delta)dt + \sqrt{Y_t}dB_t,\qquad&S_0=s>0,\\ dY_t=\kappa(\theta-Y_t)dt+\sigma\sqrt{Y_t}dW_t, &Y_0=y\geq 0,
	\end{cases}
	\end{equation*}
where $B$ and $W$ denote two correlated Brownian motions with 
	$$
	d\langle B,W \rangle_t=\rho dt, \qquad \rho \in (-1,1).
	$$
	We exclude  the  degenerate case $\rho=\pm 1$, that is the case in which the same Brownian motion drives the dynamics of $X$ and $Y$. Actually, it can be easily seen that, in this case, $S_t$ reduces to  a function of the pair $(Y_t,\int_0^tY_s ds)$ and the resulting degenerate model  cannot be treated with the techniques we develop in this paper. Moreover, this particular situation is not very interesting from a financial point of view.
	
	Here $r\geq0$ and $\delta\geq0$ are respectively  the risk free rate of interest and the continuous dividend rate. The dynamics of $Y$ follows a CIR process with mean reversion rate $\kappa>0$ and long run state $\theta> 0$. The parameter $\sigma>0$ is called the volatility of the volatility. Note that we do not require the Feller condition $2\kappa\theta \geq \sigma^2$: the volatility process $Y$  can hit $0$ (see, for example, \cite[Section 1.2.4]{Abook}).
	
	We are interested in studying the price of an American option with payoff function $\psi$. For technical reasons which will be clarified later on, hereafter we consider the process
	\begin{equation}\label{traslation}
	X_t=\log S_t-\bar{c}t,\qquad 	\mbox{with }	\bar{c}=r-\delta- \frac{\rho\kappa\theta}{\sigma},
	\end{equation}
which satisfies
	\begin{equation}\\
	\begin{cases}\label{hest}
dX_t=\big(\frac{\rho\kappa\theta}{\sigma}-\frac {Y_t} 2 \big)dt + \sqrt{Y_t}dB_t,\\dY_t=\kappa(\theta-Y_t)dt+\sigma\sqrt{Y_t}dW_t .
	\end{cases}
	\end{equation}

	 Note that, in this framework,  we have to consider payoff functions  $\psi$ which depend on both the time and the space variables. For example, in the case of a standard put option (resp. a call option) with strike price $K$ we have $ \psi(t,x)=(K-e^{x+\bar c t})_+ $ (resp. $ \psi(t,x)=(e^{x+\bar c t}-K)_+ $). So, the natural price at time $t$ of an American option with a nice enough payoff $(\psi(t,X_t,Y_t))_{0\leq t\leq T}$ is given by $P(t,X_t,Y_t)$, with
	\begin{equation*}
	P(t,x,y)=\sup_{\theta \in\mathcal{T}_{t,T}}\E[e^{-r(\theta-t )}\psi(\theta,{X}^{t,x,y}_\theta, Y^{t,y}_\theta)],
	\end{equation*}  
	where $\mathcal{T}_{t,T}$ is the set of all stopping times with values in $[t,T]$ and $(X^{t,x,y}_s, Y^{t,y}_s)_{t\leq s\leq T}$ denotes the solution to \eqref{hest} with the starting condition $(X_t,Y_t) = (x,y)$.
	
Our aim is to give an analytical characterization of the price function $P$.
%	Without loss of generality hereafter we assume $r=0$. 
	We recall that the infinitesimal generator of the two dimensional diffusion $(X,Y)$ is given by
	\begin{equation*}
	\mathcal{L}= \frac y 2 \left( \frac{\partial^2}{\partial x^2 } +2 \rho \sigma  \frac{\partial^2}{\partial y \partial x }+  \sigma^2  \frac{\partial^2}{\partial y^2 }  \right) + \left(\frac{\rho\kappa\theta}{\sigma}-\frac y 2\right)\frac{\partial}{\partial x}+\kappa(\theta - y)\frac{\partial}{\partial y} ,
	\end{equation*}
which is	defined on the open set $\mathcal{O}:= \R \times (0,\infty) $. Note that  $\mathcal{L}$ has unbounded coefficients and  is not uniformly elliptic: it degenerates on the boundary $\partial\mathcal{O}=\R\times\{0\}$.
	\subsection{American options and variational inequalities}
	\subsubsection{Heuristics}
		From the optimal stopping theory, we know that the discounted price process $\tilde{P}(t,X_t,Y_t)=e^{-rt}P(t,X_t,Y_t)$ is a supermartingale  and that its 
		finite variation part only decreases on the set $P=\psi$ with respect to the time variable $t$. We want to have an analytical interpretation of these features on the function $P(t,x,y)$.
%		We introduce the partial differential operator 
%	\begin{equation}\label{Hestonoperator}
%	\mathcal{L}u:=(\mathcal{L}-r)u.
%	\end{equation}
	So, assume that $P \in\mathcal{C}^{1,2} ((0,T)\times \mathcal{O})$. Then, by applying It\^{o}'s formula, the finite variation part of $\tilde{P}(t,X_t,Y_t)$ is 
		$$
	 \left( \frac{\partial \tilde P }{\partial t } + \mathcal{L}\tilde P\right)(t,X_t,Y_t) .
		$$
		Since $\tilde{P}$ is a supermartingale, we can deduce the inequality 
		$$
		\frac{\partial \tilde P }{\partial t } + \mathcal{L}\tilde P\leq 0
		$$
		and, since its  finite variation part decreases only on the set $P(t,X_t,Y_t)=\psi(t,X_t,Y_t)$, we can write
		$$
		\left(  \frac{\partial \tilde P }{\partial t } + \mathcal{L}\tilde P\  \right) (\psi- P)=0.
		$$
		This relation has to be satisfied $dt-a.e.$ along the trajectories of $(t,X_t,Y_t)$. Moreover, we have the two trivial conditions $P(T,x,y)=	\psi(T,x,y)$ and $P\geq \psi$.
		
		The previous discussion is only heuristic, since the price function $P$ is not regular enough to apply  It\^{o}'s formula. However, it suggests the following strategy:
		\begin{enumerate}
			\item Study the obstacle problem
			\begin{equation} \label{system}
			\begin{cases}
			\frac{\partial u }{\partial t } + \mathcal{L}u\leq 0, \qquad u\geq \psi, \qquad &in \  [0,T]\times \mathcal{O},\\
			\left(  \frac{\partial u }{\partial t } + \mathcal{L}u  \right) (\psi-u)=0, \qquad &in \  [0,T]\times \mathcal{O},\\
			u(T,x,y)=\psi(T,x,y).
			\end{cases}
			\end{equation}
			\item Show that the discounted price function $\tilde P$ is equal to the solution of \eqref{system} where $\psi$ is replaced by $\tilde \psi(t,x,y)=e^{-rt}\psi(t,x,y)$.
		\end{enumerate}
We will follow this program providing a variational formulation of system \eqref{system}.
\subsubsection{Weighted Sobolev spaces and  bilinear form associated with the Heston operator} 
We consider the measure first introduced in \cite{DF}:
\begin{equation*}\label{m}
\mathfrak{m}_{\gamma, \mu}(dx,dy)=y^{\beta - 1}e^{-\gamma |x|- \mu y } dx dy, 
\end{equation*}
with $\gamma>0, \ \mu >0 \mbox{ and } \beta:= \frac{2 \kappa \theta}{\sigma^2}$.

It is worth noting  that in \cite{DF} the authors fix  $\mu=\frac{2\kappa}{\sigma^2}$ in the definition of the measure $\m_{\gamma,\mu}$. This specification will not be necessary in this paper, but it is useful to mention it in order to better understand how this measure arises. In fact, recall that the density of the speed measure of the CIR process is given by $y^{\beta-1}e^{-\frac{2\kappa}{\sigma^2}y}$. Then,  the term $y^{\beta-1}e^{ -\frac{2\kappa}{\sigma^2}y}$ in the definition of $\m_{\gamma,	\mu}$ has a clear probabilistic interpretation, while the exponential term $e^{-\gamma |x|}$ is classically introduced  just to deal with the unbounded domain in the $x-$component.

%It will be clear later on that this measure  in some sense describes the qualitative behaviour of the process $(X,Y)$ (see Section \ref{sect-estimjointlaw}). 

For $u\in \R^n$ we denote by $|u|$ the standard Euclidean norm of $u$ in $\R^n$. Then, we recall the weighted  Sobolev spaces introduced in \cite{DF}. The choice of these particular Sobolev spaces will  allow us to formulate the obstacle problem \eqref{system} in a variational framework with respect to the measure $\m_{\gamma,\mu}$. 
\begin{definition}\label{def-sob}
	For every $p\geq 1$, let $L^p(\mathcal{O},\mathfrak{m}_{\gamma, \mu})$ be the space of all Borel measurable functions $u: \mathcal{O} \rightarrow \R $ for which 
	$$
	\Vert u \Vert^p_{L^p(\mathcal{O},\mathfrak{m}_{\gamma, \mu})} := \int_{\mathcal{O}} |u|^p d\mathfrak{m}_{\gamma, \mu}  < \infty, 
	$$
%	$$
%	\Vert u \Vert^\infty_{L^\infty(\mathcal{O},\mathfrak{m}_{\gamma, \mu})} := \int_{\mathcal{O}} \sup_{(x,y)\in\O} |u| d\mathfrak{m}_{\gamma, \mu}  < \infty, \qquad \mbox{ if }p= \infty,
%	$$
	and denote $H^0(\mathcal{O},\mathfrak{m}_{\gamma, \mu}):=L^2(\mathcal{O},\mathfrak{m}_{\gamma, \mu}).$
	
	\begin{enumerate}
		\item If $\nabla u:=(u_x,u_y)$ and $u_x$, $u_y$ are defined in the sense of distributions, we set 
		$$
		H^1(\mathcal{O},\mathfrak{m}_{\gamma, \mu}):=\{  u \in  L^2(\mathcal{O},\mathfrak{m}_{\gamma, \mu}): \sqrt{1+y}  u \mbox{ and }  \sqrt{y}|\nabla  u| \in L^2(\mathcal{O},\mathfrak{m}_{\gamma, \mu}) \},
		$$
	 and 
		$$
		\Vert u \Vert^2_{H^1(\mathcal{O},\mathfrak{m}_{\gamma, \mu})} := \int_{\mathcal{O}} \left(  y|\nabla  u|^2 + (1+y)u^2 \right) d\mathfrak{m}_{\gamma, \mu}.
		$$
		\item If $D^2u:=(u_{xx},u_{xy},u_{yx}, u_{yy})$ and all derivatives of $u$ are defined in the sense of distributions, we set 
		$$
		H^2(\mathcal{O},\mathfrak{m}_{\gamma, \mu}):=\{  u \in  L^2(\mathcal{O},\mathfrak{m}_{\gamma, \mu}): \sqrt{1+y}  u, \ (1+y)|\nabla  u|, \   y|D^2u| \in L^2(\mathcal{O},\mathfrak{m}_{\gamma, \mu}) \}
		$$
		and 
		$$
		\Vert u \Vert^2_{H^2(\mathcal{O},\mathfrak{m}_{\gamma, \mu})} := \int_{\mathcal{O}} \left(  y^2|D^2u|^2 + (1+y)^2|\nabla  u|^2+(1+y)u^2 \right) d\mathfrak{m}_{\gamma, \mu} .
		$$
	\end{enumerate}
\end{definition}
%\begin{remark}
%Note that the choice of the measure $\m_{\gamma, \mu}$ ensures that the constant functions and the polynomials are integrable in $\O$.
%		 
%
%\end{remark}

	For brevity and when the context is clear, we shall often denote
	$$
	H:= H^0(\mathcal{O}, \mathfrak{m}_{\gamma, \mu}), \qquad V:= H^1(\mathcal{O}, \mathfrak{m}_{\gamma, \mu}) 
	$$
	and
	$$\Vert
	u\Vert_H :=\Vert u \Vert_{L^2(\mathcal{O}, \mathfrak{m}_{\gamma, \mu}) }, \qquad \Vert u \Vert_V :=\Vert u \Vert_{H^1(\mathcal{O}, \mathfrak{m}_{\gamma, \mu}) }.
	$$
	Note that we have the inclusion 
	$$
	H^2(\mathcal{O}, \mathfrak{m}_{\gamma, \mu}) \subset H^1(\mathcal{O}, \mathfrak{m}_{\gamma, \mu})
	$$
	and that the spaces $H^k(\mathcal{O}, \mathfrak{m}_{\gamma, \mu})$, for $k=0,1,2$ are Hilbert spaces with the inner products
	$$
	(u,v)_H=	(u,v)_{L^2(\mathcal{O}, \mathfrak{m}_{\gamma, \mu}) }=\int_{\mathcal{O}}^{}uv d \m_{\gamma, \mu} ,
	$$ 
	$$
	(u,v)_V=(u,v)_{H^1(\mathcal{O}, \mathfrak{m}_{\gamma, \mu})} =\int_{\mathcal{O}}^{}\left(y \left( \nabla  u,\nabla  v\right) + (1+y)uv \right)d\m_{\gamma, \mu} 
	$$
	and
	$$
	(u,v)_{H^2(\mathcal{O}, \mathfrak{m}_{\gamma, \mu})} :=\int_{\mathcal{O}}^{}\left(y^2\left( D^2u,D^2v\right) + (1+y)^2 \left( \nabla  u,\nabla  v\right)+ (1+y)uv \right) d \m_{\gamma, \mu},
	$$
	where $(\cdot,\cdot)$ denotes the standard scalar product in $\R^n$.
	
Moreover, for every  $T>0, \,p\in [1,+\infty)$ and $i=0,1,2$, we set
\begin{align*}
	L^p([0,T];H^i(\O,\m_{\gamma,\mu}))=\bigg\{& u:[0,T]\times \O\rightarrow \R \mbox{ Borel measurable}: u(t,\cdot,\cdot) \in H^i(\O,\m_{\gamma,\mu})\mbox{ for a.e. } t\in [0,T] \\&\mbox{ and }	\int_0^T \Vert u(t,\cdot.\cdot)\Vert^p_{H^i(\O,\m_{\gamma,\mu})}dt<\infty\bigg\}
\end{align*}
%\begin{align*}
%L^\infty([0,T];H^i(\O,\m_{\gamma,\mu}))=\bigg\{& u:[0,T]\times \O\rightarrow \R : u(t,\cdot,\cdot) \in H^i(\O,\m_{\gamma,\mu})\mbox{ for a.e. } t\in [0,T] \\&\mbox{ and } \sup_{t\in[0,T]}\Vert u(t,\cdot.\cdot)\Vert_{H^i(\O,\m_{\gamma,\mu})}<\infty\bigg\},\qquad\mbox{ if }p= \infty,
%\end{align*}
	and
	$$
	\Vert u\Vert^p_{L^p([0,T];H^i(\O,\m_{\gamma,\mu}))}= 	\int_0^T \Vert u(t,\cdot.\cdot)\Vert^p_{H^i(\O,\m_{\gamma,\mu})}dt.
	$$
We also define $L^\infty([0,T];H^i)$ with the usual essential sup norm.

	We can now introduce the following bilinear form.
	\begin{definition}
		For any $u,v \in H^1(\mathcal{O}, \mathfrak{m}_{\gamma, \mu})$ we define the bilinear form
		\begin{align*}
		a_{\gamma, \mu}(u,v) 
		=&  \frac 1 2 \int_{\mathcal{O}} y\left(  u_xv_x(x,y)  +\rho\sigma u_xv_y(x,y) + \rho\sigma u_yv_x(x,y) +\sigma^2 u_y v_y(x,y ) \right)d\m_{\gamma, \mu} \\
		&+\into y \left(  j_{\gamma,\mu}(x)u_x(x,y) + k_{\gamma,\mu}  (x)u_y(x,y) \right) v(x,y) d\m_{\gamma, \mu},
		\end{align*}
		where
		 \begin{equation}\label{JandGamma}
		j_{\gamma,\mu}(x)=\frac 1 2 \left( 1  -\gamma \mbox{sgn}(x)-\mu\rho\sigma \right), \qquad k_{\gamma,\mu}(x)=\kappa-\frac{\gamma\rho\sigma}{2} \mbox{sgn}(x) -\frac{\mu\sigma^2}{2}  . 
		\end{equation}
		\end{definition}
		We will prove that $a_{\gamma,\mu}$ is the bilinear form associated with the operator $\L$, in the sense that for every $ u\in H^2(\mathcal{O},\m_{\gamma,\mu})$ and for every $ v\in H^1(\mathcal{O},\m_{\gamma,\mu})$, we have
		\begin{equation*}
		(\mathcal{L}u,v)_H=-a_{\gamma, \mu}(u,v).
		\end{equation*}
		In order to simplify the notation,  for the rest of this paper we will write   $	\m$ and $a(\cdot,\cdot)$ instead of $\m_{\gamma, \mu}$ and $a_{\gamma, \mu}(\cdot,\cdot)$ every time the dependence on $\gamma$ and $\mu$ does not play a role in the analysis and computations.
	\subsection{Variational formulation of the American price}
	Fix $T>0$. We consider an assumption on the payoff function $\psi$ which will be crucial in the discussion of the penalized problem.

\medskip
\noindent	
\textbf{Assumption $\mathcal{H}^1$.}
We say that a function $\psi$ satisfies Assumption $\mathcal{H}^1$ if	$\psi\in \mathcal{C}([0,T];H) $, $\sqrt{1+y}\psi \in L^2([0,T];V)$, $\psi(T)\in V$ and there exists  $\Psi\in L^2([0,T];V)$ such that $\left| \frac{\partial \psi}{\partial t} \right|\leq \Psi$. 

\medskip

We will also need a domination condition on $\psi$ by a function $\Phi$ which satisfies the following assumption. 

\medskip
\noindent	
\textbf{Assumption $\mathcal{H}^2$.}
We say that a function $\Phi\in L^2([0,T];H^2(\mathcal{O},\m))$ satisfies Assumption $\mathcal{H}^2$   if  
 $(1+y)^{\frac 32}\Phi\in L^2([0,T];H)$, $\frac{\partial \Phi}{\partial t}+ \mathcal{L}\Phi \leq 0$ and $\sqrt{1+y}\Phi \in L^\infty([0,T]; L^2(\mathcal{O}, 	\m_{\gamma,\mu'}))$ for some $0<\mu'<\mu$.

\medskip
The domination condition is needed to deal with the lack of coercivity of the bilinear form associated with our problem. Similar conditions are also used in \cite{DF}.

The first step in the variational formulation of the problem is to introduce the associated variational inequality and 
to prove the following existence and uniqueness result.
\begin{theorem}\label{variationalinequality}
Assume that $	\psi$ satisfies Assumption $\mathcal{H}^1$ together with $0\leq \psi\leq \Phi$, where $\Phi$ satisfies Assumption $\mathcal{H}^2$.
%Let $\psi:[0T]\rightarrow H $ be a continuous function such that $\sqrt{1+y}\psi \in L^2([0,T];V)$ and  $\left| \frac{\partial \psi}{\partial t} \right|\leq \Psi$ with $\Psi\in L^2([0,T];V)$. 
%	Moreover, assume that $0\leq \psi \leq \Phi$ where $\Phi:[0,T]\rightarrow H^2(\mathcal{O},\m)$ is such that $(1+y)^{\frac 32}\Phi\in L^2([0,T];H)$, $\frac{\partial \Phi}{\partial t}+ \mathcal{L}\Phi \leq 0$ and $\sqrt{1+y}\Phi \in L^\infty([0,T]; L^2(\mathcal{O}, 	\m_{\gamma,\mu'}))$ for some $0<\mu'<\mu$ .
	%	Assume that there exists a function $\Phi:[0,T]\rightarrow H^2(\mathcal{O},\m)$ such that $(1+y)^{\frac 32}\Phi\in L^2([0,T];H)$, $\frac{\partial \Phi}{\partial t}+ \mathcal{L}\Phi \leq 0$ and $0\leq \psi \leq \Phi$. Moreover, let us assume that there is $0<\mu'<\mu$ such that $\sqrt{1+y}\Phi \in L^2(\mathcal{O}, 	\m_{\gamma,\mu'})$. 
	Then, there exists a unique function $u$ such that $	u\in\mathcal{C}([0,T];H)\cap L^2([0,T];V),\,\frac{\partial u}{\partial t } \in L^2([0,T];  H)$ and 
	\begin{equation}\label{VI}
	\begin{cases}
-\left( \frac{\partial u}{\partial t },v -u  \right)_H + a(u,v-u)\geq 0, \quad \mbox{a.e. in } [0,T] \quad v\in L^2([0,T];V), \ v\geq \psi,\\
u\geq \psi \mbox{ a.e. in } [0,T]\times \R \times (0,\infty),\\
	u(T)=\psi(T),\\
	0\leq u \leq \Phi.
	\end{cases}
	\end{equation}
	\end{theorem}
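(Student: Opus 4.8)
The plan is to follow the penalization method of Bensoussan and Lions \cite{BL}, adapted to the degenerate weighted setting as in \cite{DF}. The backbone of the argument is a pair of structural facts about the form $a$ that I would establish first: its continuity on $V\times V$, and a G\aa rding-type inequality. Because the diffusion matrix with entries $1,\rho\sigma,\rho\sigma,\sigma^2$ is positive definite for $|\rho|<1$, the second-order part of $a(v,v)$ controls $\int_{\mathcal O}y|\nabla v|^2\,d\m$; integrating the first-order part by parts against $\m$ produces, besides absorbable $\Vert v\Vert_H^2$-contributions, a term proportional to $\int_{\mathcal O}y\,v^2\,d\m$ whose coefficient has no definite sign. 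Thus one obtains
\begin{equation*}
a(v,v)+\lambda\Vert v\Vert_H^2\ \ge\ \nu\Big(\int_{\mathcal O}y|\nabla v|^2\,d\m+\Vert v\Vert_H^2\Big)-C\int_{\mathcal O}y\,v^2\,d\m ,
\end{equation*}
and it is precisely the impossibility of absorbing the last, genuinely degenerate term into $\Vert v\Vert_V^2$ that constitutes the announced lack of coercivity. The domination $0\le\psi\le\Phi$ is what repairs this.

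For fixed $\epsilon>0$ I would then solve the penalized problem
\begin{equation*}
-\Big(\frac{\partial u_\epsilon}{\partial t},v\Big)_H+a(u_\epsilon,v)=\frac1\epsilon\big((u_\epsilon-\psi)_-,v\big)_H,\qquad v\in V,\quad u_\epsilon(T)=\psi(T),
\end{equation*}
where $r_-:=\max(-r,0)$, so that the penalty pushes $u_\epsilon$ upward wherever $u_\epsilon<\psi$. Since $r\mapsto-\tfrac1\epsilon(r-\psi)_-$ is Lipschitz and nondecreasing, existence and uniqueness of $u_\epsilon\in\mathcal C([0,T];H)\cap L^2([0,T];V)$ with $\partial_t u_\epsilon\in L^2([0,T];H)$ follow from a Galerkin approximation combined with a monotonicity argument for the penalty. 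The lack of coercivity is dealt with already here through the a priori bound $0\le u_\epsilon\le\Phi$, which I would obtain by a comparison argument (the weak maximum principle for the penalized equation): the constant $0$ is a subsolution since $\psi\ge0$ annihilates its penalty, while $\Phi$ is a supersolution since $\partial_t\Phi+\mathcal L\Phi\le0$, $\Phi\ge\psi$ annihilates its penalty, and $\Phi(T)\ge\psi(T)$.

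The heart of the proof is a set of estimates on $u_\epsilon$ uniform in $\epsilon$, and this is where the two Assumptions enter. From $u_\epsilon\le\Phi$ and Assumption $\mathcal H^2$ I get the crucial control of the degenerate term: writing $y\,d\m_{\gamma,\mu}=y\,e^{-(\mu-\mu')y}d\m_{\gamma,\mu'}$ and using that $y\mapsto ye^{-(\mu-\mu')y}$ is bounded, the bound $\sqrt{1+y}\,\Phi\in L^\infty([0,T];L^2(\mathcal O,\m_{\gamma,\mu'}))$ yields $\sup_{t}\int_{\mathcal O}y\,u_\epsilon^2\,d\m\le\sup_t\int_{\mathcal O}y\,\Phi^2\,d\m<\infty$, so the non-coercive term in the G\aa rding inequality becomes harmless. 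Testing the penalized equation against $u_\epsilon$ then gives a uniform bound in $L^\infty([0,T];H)\cap L^2([0,T];V)$; testing against $\partial_t(u_\epsilon-\psi)$, using the identity $\frac1\epsilon\big((u_\epsilon-\psi)_-,\partial_t(u_\epsilon-\psi)\big)_H=-\frac{d}{dt}\frac1{2\epsilon}\Vert(u_\epsilon-\psi)_-\Vert_H^2$ together with Assumption $\mathcal H^1$ (namely $|\partial_t\psi|\le\Psi\in L^2([0,T];V)$ and $\psi(T)\in V$) to absorb the obstacle terms, gives a uniform bound on $\partial_t u_\epsilon$ in $L^2([0,T];H)$ and, crucially, $\tfrac1\epsilon\Vert(u_\epsilon-\psi)_-\Vert_H^2\le C$, which shows the constraint is recovered in the limit.

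Finally I would pass to the limit $\epsilon\to0$: along a subsequence, $u_\epsilon\rightharpoonup u$ weakly in $L^2([0,T];V)$, weakly-$*$ in $L^\infty([0,T];H)$, and $\partial_t u_\epsilon\rightharpoonup\partial_t u$ in $L^2([0,T];H)$ (so $u\in\mathcal C([0,T];H)$ by the standard embedding); the penalty bound forces $(u-\psi)_-=0$, i.e. $u\ge\psi$, and $u(T)=\psi(T)$, $0\le u\le\Phi$ survive the limit. To recover the variational inequality, for any admissible $v\ge\psi$ I would use $v-u_\epsilon=(v-\psi)+(\psi-u_\epsilon)$ and the sign of the penalty to write
\begin{equation*}
-\Big(\frac{\partial u_\epsilon}{\partial t},v-u_\epsilon\Big)_H+a(u_\epsilon,v-u_\epsilon)=\frac1\epsilon\big((u_\epsilon-\psi)_-,v-u_\epsilon\big)_H\ \ge\ \frac1\epsilon\Vert(u_\epsilon-\psi)_-\Vert_H^2\ \ge\ 0 ,
\end{equation*}
and then pass to the liminf using weak lower semicontinuity of the coercive part of $a$. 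Uniqueness I would obtain by subtracting the inequalities satisfied by two solutions $u_1,u_2$ with the admissible choices $v=u_2$ and $v=u_1$, which gives $\tfrac12\frac{d}{dt}\Vert u_1-u_2\Vert_H^2\ge a(u_1-u_2,u_1-u_2)$; here the degenerate term $\int_{\mathcal O}y(u_1-u_2)^2\,d\m$ reappears and must again be dominated using $0\le u_i\le\Phi$ (alternatively, uniqueness can be read off from a comparison argument applied with equal data), after which Gronwall's lemma together with $u_1(T)=u_2(T)$ forces $u_1=u_2$. The main obstacle throughout is the lack of coercivity: every a priori estimate threatens to lose control of $\int_{\mathcal O}y\,u_\epsilon^2\,d\m$, and it is only the super-solution $\Phi$, together with the slack $\mu'<\mu$ in Assumption $\mathcal H^2$, that closes the estimates.
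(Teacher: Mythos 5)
Your overall strategy (penalization, the G\aa rding inequality with defect term $\Vert (1+y)^{1/2}u\Vert_H^2$, domination by $\Phi$ together with the slack $\mu'<\mu$) is the right one and matches the paper's; the assumptions enter where you say they do. But there is a genuine gap in your existence step for the penalized problem, and it is precisely the point the paper spends most of Section 3 on. You propose to solve the penalized equation directly with the non-coercive form $a$ by Galerkin approximation, and to handle the lack of coercivity ``already here'' through the a priori bound $0\le u_\epsilon\le\Phi$ obtained by comparison. This is circular: the comparison principle applies to the solution $u_\epsilon$ once it exists, but it is not available at the level of the Galerkin approximants $u_j$ (projection onto $V_j$ does not preserve order), and without it the uniform Galerkin estimates do not close. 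Testing against $u_j$ and using the G\aa rding inequality leaves the term $C_3\int_t^T\Vert (1+y)^{1/2}u_j(s)\Vert_H^2\,ds$, which can neither be absorbed into $C_2\int_t^T\Vert u_j(s)\Vert_V^2\,ds$ (the paper's constants satisfy $C_3>C_2$) nor handled by Gronwall (it is not controlled by $\Vert u_j\Vert_H^2$). The paper's way out is to first coercify, $a_\lambda(u,v)=a(u,v)+\lambda\int_{\mathcal O}(1+y)uv\,d\m$ with $\lambda\ge \frac{\delta_1}{2}+\frac{K_1^2}{2\delta_1}$, solve the coercive penalized problem and the coercive variational inequality, and only then remove the zeroth-order correction by a monotone iteration: $u_0=\Phi$ and $u_n$ solves the coercive inequality with source $g_n=\lambda(1+y)u_{n-1}$; the comparison principle, legitimately available for the coercive problem, gives $\Phi\ge u_1\ge u_2\ge\cdots\ge 0$ and the decreasing limit solves the non-coercive inequality. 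A second, related omission: the first-order coefficients grow linearly in $y$, so the estimate obtained by testing against $\partial_t u_j$ (needed for $\partial_t u\in L^2([0,T];H)$) involves $\int_{\mathcal O} y|\nabla u_j|\,|\partial_t u_j|\,d\m$, which cannot be split as $\Vert u_j\Vert_V\Vert\partial_t u_j\Vert_H$ because of the extra factor $\sqrt{y}$; the paper truncates the drift ($y\wedge M$), derives estimates uniform in $M$ using test functions of the form $(u-\psi)(1+y\wedge n)$ (this is where $\sqrt{1+y}\,\psi\in L^2([0,T];V)$ is consumed), and then lets $M\to\infty$.

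Your uniqueness sketch points at the correct mechanism but is looser than what is actually needed: a plain Gronwall argument does not close, for the same reason as above. The paper's proof splits $\int_{\mathcal O}(1+y)w^2\,d\m$ over $\{y\le\lambda\}$ and $\{y>\lambda\}$, bounds $w^2\le\Phi^2$ and extracts the factor $e^{-(\mu-\mu')\lambda}$ on the far region using $\sqrt{1+y}\,\Phi\in L^\infty([0,T];L^2(\mathcal O,\m_{\gamma,\mu'}))$, applies Gronwall with a $\lambda$-dependent constant, sends $\lambda\to\infty$, and then iterates backwards over time intervals of length less than $(\mu-\mu')/C$. You should make this two-parameter limiting argument explicit rather than appealing to Gronwall alone.
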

The proof is presented in Section 3 and essentially relies on the penalization technique introduced by Bensoussan and Lions  (see also \cite{F})  with some technical devices due to the degenerate nature of the problem. We extend in the parabolic framework the  results obtained in \cite{DF} for the elliptic case.  

The second step is to identify the unique solution of the variational inequality \eqref{VI} as the solution of the optimal stopping problem, that is the (discounted) American option price. In order to do this, we consider the following  assumption on the payoff function.

\medskip
\noindent	
\textbf{Assumption $\mathcal{H}^*$.}
We say that a function $\psi:[0,T]\times \R\times [0,\infty)\rightarrow \R$ satisfies Assumption $\mathcal{H}^*$   if  $\psi$ is continuous and there exist constants $C>0$ and $L\in \left[0,\frac{2\kappa}{\sigma^2}\right)$ such that, for all $(t,x,y)\in[0,T]\times \R\times [0,\infty)$,
\begin{equation}\label{boundonpsi}
0\leq \psi(t,x,y)\leq C(e^{x}+e^{Ly}),
\end{equation}
 and
\begin{equation}\label{boundonderivatives}
\left|\frac{\partial \psi}{\partial t}(t,x,y)\right|+\left|\frac{\partial \psi}{\partial x}(t,x,y)\right|+ \left|\frac{\partial \psi}{\partial y}(t,x,y)\right|\leq C(e^{a|x|+by}),
\end{equation}
for some $a,b\in \R$.
\medskip

Note that  the payoff functions of a standard  call  and put option with strike price $K$ (that is, respectively, $\psi=\psi(t,x)=(K-e^{x+\bar c t})_+ $ and $\psi=\psi(t,x)=(e^{x+\bar c t}-K)_+$) satisfy Assumption $\mathcal H^*$.
Moreover, it is easy to see that, if $\psi$ satisfies Assumption $\mathcal{H}^*$, then it is possible to choose $\gamma$ and $\mu$ in the definition of the measure $\m_{\gamma,\mu}$  (see \eqref{m})  such that $\psi$ satisfies  the assumptions of Theorem \ref{variationalinequality}. Then, for such $\gamma$ and $\mu$, we get the following identification result.
	\begin{theorem}	\label{theorem2}
	Assume that $\psi$ satisfies Assumption $\mathcal{H}^*$
Then, the solution $u$ of the variational inequality \eqref{VI} associated with $\psi$ is continuous and coincides with  the function $u^*$ defined by 
\begin{equation*}\label{americanprice}
		u^*(t,x,y)= \sup_{\tau \in \mathcal{T}_{t,T}}\E \left[   \psi(\tau,X_\tau^{t,x,y}, Y_\tau^{t,y})     \right].
\end{equation*}

	\end{theorem}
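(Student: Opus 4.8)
The plan is to realize $u^*$ as the Snell envelope of the payoff process $\big(\psi(s,X_s^{t,x,y},Y_s^{t,y})\big)_{t\le s\le T}$ and to match it with the variational solution $u$ through the penalized problem together with a semigroup identification, rather than by a direct application of It\^o's formula: the latter is unavailable because $u$ is only a weak solution and the operator $\mathcal{L}$ degenerates at $y=0$. Conceptually the two required inequalities are the supermartingale property of $u(s,X_s,Y_s)$ (which, with $u\ge\psi$, gives $u\ge u^*$) and the martingale property of $u$ up to the first entry in $\{u=\psi\}$ (which gives $u\le u^*$); both will be obtained from a probabilistic representation produced by the semigroup identification.

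\textbf{Reductions and well-posedness of $u^*$.} By the remark preceding the statement, Assumption $\mathcal{H}^*$ --- in particular the bound \eqref{boundonpsi} with $L<2\kappa/\sigma^2$, which is exactly the threshold ensuring finiteness of the exponential moments $\E[e^{LY_s}]$ of the CIR process --- permits a choice of $\gamma,\mu$ (with $\mu$ taken above $L$) for which $\psi$ satisfies the hypotheses of Theorem \ref{variationalinequality}, dominated by a supersolution $\Phi$ of exponential type satisfying Assumption $\mathcal{H}^2$. Theorem \ref{variationalinequality} then furnishes the unique $u\in\mathcal{C}([0,T];H)\cap L^2([0,T];V)$ with $\partial_t u\in L^2([0,T];H)$ and $0\le u\le\Phi$. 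The same exponential bounds, combined with moment estimates for $e^{X_s}$ and $e^{LY_s}$ read off from the explicit Laplace transform of the affine pair $(X,Y)$, guarantee that $u^*(t,x,y)$ is finite and that the payoff process is of class (D), so that the Snell envelope is well defined and the optimal stopping problem is well posed.

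\textbf{Semigroup identification and passage to the limit.} Let $P_sf(x,y)=\E[f(X_s^{x,y},Y_s^{y})]$ be the transition semigroup of \eqref{hest} and let $(S_s)_{s\ge0}$ be the analytic semigroup on $H$ generated by the operator associated with the coercive form $a(\cdot,\cdot)+\lambda(\cdot,\cdot)_H$. Using $(\mathcal{L}u,v)_H=-a(u,v)$ for $u\in H^2(\mathcal{O},\m)$, the generator of $(S_s)$ is $\mathcal{L}$ on a suitable domain; exploiting the affine property to compute $\E[e^{pX_s+qY_s}]$ explicitly, I would identify $(S_s)$ with $(P_s)$ up to the deterministic weight defining $\m_{\gamma,\mu}$ --- a correction term on the analytic side matching a killing term on the probabilistic side. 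Granting this, the penalized solutions $u_\epsilon$ built in the proof of Theorem \ref{variationalinequality}, which solve $\partial_t u_\epsilon+\mathcal{L}u_\epsilon=-\tfrac1\epsilon(\psi-u_\epsilon)^+$ with $u_\epsilon(T)=\psi(T)$ and converge to $u$, admit the mild/Feynman--Kac representation
\begin{equation*}
u_\epsilon(t,x,y)=\E\Big[\psi(T,X_T^{t,x,y},Y_T^{t,y})+\int_t^T\tfrac1\epsilon(\psi-u_\epsilon)^+(s,X_s^{t,x,y},Y_s^{t,y})\,ds\Big].
\end{equation*}
Letting $\epsilon\to0$, the left side converges to $u$, while standard penalization theory for optimal stopping identifies the limit of the right side with $u^*$; the joint-law estimates provide the uniform integrability needed for this passage, and the continuity of $u$ --- established by interior parabolic regularity away from $\{y=0\}$ together with control of the weighted norms near the degenerate boundary --- upgrades the resulting a.e. equality $u=u^*$ to equality everywhere on $[0,T]\times\mathcal{O}$.

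\textbf{Main obstacle.} I expect the delicate point to be precisely the semigroup identification near the degenerate boundary $y=0$. Since the Feller condition $2\kappa\theta\ge\sigma^2$ is not assumed, $Y$ reaches $0$ with positive probability, so neither a straightforward It\^o formula nor standard parabolic regularity is available up to $\partial\mathcal{O}$; matching the domain of the analytic generator with a probabilistic core, and controlling the weight $y^{\beta-1}e^{-\gamma|x|-\mu y}$ against the true law of $(X,Y)$ as $y\to0$, is where the real work lies. The affine structure, yielding explicit transforms and hence sharp moment and joint-law estimates, is exactly the tool that makes this control feasible, while the domination by $\Phi$ and the constraint $L<2\kappa/\sigma^2$ are what keep every integral appearing in the argument finite.
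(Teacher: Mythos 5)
Your overall architecture is the paper's: avoid It\^o, identify the analytic semigroup of the coercive form $a_\lambda$ with the killed transition semigroup $P^\lambda_t$ via the affine property, use joint-law estimates to control $P^\lambda_t$ on $L^p(\O,\m)$, and then extract the supermartingale property of $u(s,X_s,Y_s)$ (giving $u\ge u^*$) and the martingale property up to the exit of the continuation region (giving $u\le u^*$). However, there is a genuine gap at the step where you invoke the Feynman--Kac representation of the penalized solutions and ``standard penalization theory'' to pass to the limit. The a priori bound available on the penalization term is \eqref{sp3}, which only gives $\Vert \frac1\varepsilon(\psi-u_{\varepsilon,\lambda})_+\Vert_{L^\infty([0,T];H)}=O(1/\sqrt{\varepsilon})$; this blows up, so the right-hand side of your mild representation is not controlled uniformly in $\varepsilon$ and the limit cannot be taken as stated. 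The paper's key technical input here is Proposition \ref{reg2}: testing the penalized equation against $w^{p-1}$ with $w=(\psi-u_{\varepsilon,\lambda})_+$ and using positive definiteness of certain quadratic forms for $\lambda$ large, one gets a bound on $\frac1\varepsilon(\psi-u_{\varepsilon,\lambda})_+$ in $L^p([0,T];L^p(\O,\m))$ \emph{uniform in} $\varepsilon$, for $p>\beta+\frac52$ so that the singularity $t^{-(\beta/p+3/(2p))}$ in Theorem \ref{theotransition} is integrable. This is what lets one conclude that the limit $u$ solves an \emph{equation} $-(\partial_t u,v)_H+a_\lambda(u,v)=(F,v)_H$ with $F\in L^p$, $F-\lambda(1+y)u\ge0$ and $(F-\lambda(1+y)u,\psi-u)_H=0$, to which the martingale representation of Proposition \ref{martingale} is then applied directly (rather than to $u_\varepsilon$). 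Your proposal contains no substitute for this uniform $L^p$ estimate, and the ``uniform integrability from joint-law estimates'' you invoke does not supply it.

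Two further points. First, the uniform $L^p$ bound requires hypotheses beyond $\mathcal{H}^*$ (namely $\psi\in L^2([0,T];H^2(\O,\m))$ and $\partial_t\psi+\L\psi\in L^p$), so the argument only yields $u=u^*$ for a regularized class of payoffs; the general case needs the final approximation step of the paper, where $\psi$ is approximated uniformly by regular $\psi_n$ and the sup-norm comparison principles (Proposition \ref{CompPrinc3} on the analytic side, the elementary inequality $|u_n^*-u^*|\le\Vert\psi_n-\psi\Vert_\infty$ on the probabilistic side) transfer the identity to the limit. Your proposal omits this reduction. Second, a smaller inaccuracy: the non-coercive penalized problem is never solved directly in the paper (coercivity fails for $a$ itself); the penalized solutions live at the level of the coercive form $a_\lambda$ with a source term, and their $\varepsilon\to0$ limit is the coercive variational inequality, with a further monotone iteration in the source needed to reach $u$. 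Any Feynman--Kac representation must therefore carry the killing factor $e^{-\lambda\int(1+Y_s)ds}$ and the source, and the complementarity condition must be recovered for the limit $F$, exactly as in Proposition \ref{identification}.
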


%\begin{remark}\label{remarkFP}
%	In \cite[Theorem 1.27]{FP} the authors prove a stochastic representation result for the solution of the Heston parabolic obstacle problem. However, note that the function $u^*$ defined in \cite[(1.37)]{FP} does not correspond to the American value function  \eqref{americanprice} in the general case because of the presence of the condition  $\tau\wedge \tau_\O$, where $\tau_\O$ is the first time in which the volatility process touches 0. From an analytical point of view, this corresponds to impose the boundary condition \cite[(1,12)]{FP}.
%\end{remark}
\section{Existence and uniqueness of solutions to the variational inequality}

\subsection{Integration by parts and energy estimates}
The following result justifies the definition of the bilinear form $a$.
	\begin{proposition}\label{Prop_integrationbyparts}
		If $u\in H^2(\mathcal{O},\m)$ and $v\in H^1(\mathcal{O},\m)$, we have
		\begin{equation}\label{A=a}
		(\mathcal{L}u,v)_H=-a(u,v).
		\end{equation}
	\end{proposition}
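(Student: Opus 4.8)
The plan is to verify the identity $(\mathcal{L}u,v)_H=-a(u,v)$ by a direct integration by parts, carefully tracking the weight $\m=\m_{\gamma,\mu}=y^{\beta-1}e^{-\gamma|x|-\mu y}\,dx\,dy$. The left-hand side is $\into (\mathcal{L}u)\,v\,d\m$, where $\mathcal{L}u$ collects the second-order terms $\frac{y}{2}(u_{xx}+2\rho\sigma u_{xy}+\sigma^2 u_{yy})$ and the first-order drift terms $(\frac{\rho\kappa\theta}{\sigma}-\frac{y}{2})u_x+\kappa(\theta-y)u_y$. The strategy is to integrate by parts in the second-order terms only, moving one derivative off $u$ and onto the product $v\,d\m$; the first-order terms combine with the boundary-type contributions that arise from differentiating the weight to produce exactly the first-order term in $a(u,v)$, with coefficients $j_{\gamma,\mu}$ and $k_{\gamma,\mu}$ as in \eqref{JandGamma}.

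Concretely, I would first write $\m = w(x,y)\,dx\,dy$ with $w=y^{\beta-1}e^{-\gamma|x|-\mu y}$, and record the logarithmic derivatives $\partial_x \log w = -\gamma\,\mathrm{sgn}(x)$ and $\partial_y \log w = \frac{\beta-1}{y}-\mu$. Then for each second-order term I integrate by parts in the appropriate variable against the measure $dx\,dy$ (not $d\m$), so that, e.g., $\into \frac{y}{2}u_{xx}\,v\,w\,dx\,dy = -\into \frac{1}{2}\partial_x(y v w)\,u_x\,dx\,dy$, and similarly for the $u_{yy}$ and the two cross terms $u_{xy}$, $u_{yx}$. The key point is that the factor of $y$ multiplying the second derivatives exactly matches the degeneracy of the weight at $y=0$ (recall $\beta=\frac{2\kappa\theta}{\sigma^2}$, so $y\cdot y^{\beta-1}=y^\beta$), which makes the boundary terms at $y=0$ and $y\to\infty$ vanish; the factor $e^{-\gamma|x|}$ kills the boundary terms as $x\to\pm\infty$. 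After integrating by parts, the principal part reproduces $\frac12\into y(u_xv_x+\rho\sigma u_xv_y+\rho\sigma u_yv_x+\sigma^2 u_yv_y)\,d\m$, and the terms where the derivative falls on $w$ or on the explicit factor $y$ generate first-order contributions $\into y(\cdots)u_x v\,d\m$ and $\into y(\cdots)u_y v\,d\m$.

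The main obstacle, and the step requiring the most care, is collecting these first-order remainder terms and checking that, once combined with the genuine first-order drift terms of $\mathcal{L}$, they assemble into precisely $j_{\gamma,\mu}$ and $k_{\gamma,\mu}$. For the $u_x$ coefficient, differentiating $y\,w$ in $x$ contributes $-\gamma\,\mathrm{sgn}(x)$, differentiating in the cross term contributes the $-\mu\rho\sigma$ and the $\frac12$ pieces, and the drift term $(\frac{\rho\kappa\theta}{\sigma}-\frac{y}{2})$ must be rescaled by dividing through by $y$; one checks that the $x$-independent piece $\frac{\rho\kappa\theta}{\sigma}$ cancels against contributions from differentiating the weight in $y$ via the $\frac{\beta-1}{y}$ term in the cross derivative (this is exactly where the translation \eqref{traslation} by $\bar c$ and the choice $\beta=\frac{2\kappa\theta}{\sigma^2}$ pay off). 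For the $u_y$ coefficient, the analogous bookkeeping of the $\frac{\beta-1}{y}-\mu$ factor against $\kappa(\theta-y)$ and the $\sigma^2$ and cross terms yields $k_{\gamma,\mu}=\kappa-\frac{\gamma\rho\sigma}{2}\mathrm{sgn}(x)-\frac{\mu\sigma^2}{2}$. I would therefore organize the proof so that I first dispatch the vanishing of all boundary terms (justified by $u\in H^2(\mathcal{O},\m)$, $v\in H^1(\mathcal{O},\m)$ and the definitions of these spaces, which give just enough integrability and decay), and then present the coefficient matching as a single consolidated computation, taking care that the $\mathrm{sgn}(x)$ terms from $\partial_x\log w$ are the source of the $\mathrm{sgn}(x)$ dependence in $j_{\gamma,\mu}$ and $k_{\gamma,\mu}$.
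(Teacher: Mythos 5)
Your proposal is correct and follows essentially the same route as the paper: the paper first records the two weighted integration-by-parts identities (its Lemma \ref{IPP1}, proved by the standard approximation argument you allude to for handling boundary terms), applies them to each second-order term of $\mathcal{L}$, and then performs exactly the coefficient matching you describe, with the cancellation of $\frac{\rho\kappa\theta}{\sigma}$ against $\frac{\rho\sigma\beta}{2}$ (and of $\kappa\theta$ against $\frac{\sigma^2\beta}{2}$) being the point of the translation by $\bar c$. The only cosmetic difference is that you integrate against $dx\,dy$ tracking the weight explicitly rather than quoting the two formulas as a separate lemma.
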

This result is proved with the same arguments of  \cite[Lemma 2.23]{DF} or \cite[Lemma A.3]{DF2} but  we prefer to repeat here  the proof  since it clarifies why we have considered the process $X_t=\log S_t- \bar c t$ instead of the standard log-price process $\log S_t$. 

We first need the  following result which justifies the integration by parts formulas with respect to the measure $\m$. 	The proof follows standard approximation techniques, so we omit it (see  the proof of  \cite[Lemma 2.23]{DF} or \cite{T} for the details).
	\begin{lemma}\label{IPP1}
		Let us consider $u,v: \mathcal{O} \rightarrow \R$ locally square-integrable on $\mathcal{O}$. Then, if the derivatives $u_x$ and $v_x$ are locally square-integrable on $\mathcal{O}$  and
		$$
		\into \big(| u_x(x,y)v(x,y)|+ | u(x,y)v_x(x,y)|+ |u(x,y)v(x,y)| \big) d\m <\infty,
		$$ we have
		\begin{equation*}\label{IPPx}
		\into u_x(x,y) v(x,y)d\m=-\int_\mathcal{O} u(x,y) \left(v_x(x,y)-\gamma sgn(x)v\right)d\m.
		\end{equation*}
Similarly, if the derivatives $u_y$ and $v_y$ are locally square-integrable on $\mathcal{O}$ and 
		$$
		\into y\big(| u_y(x,y)v(x,y)|+ | u(x,y)v_xy(x,y)|\big)+ |u(x,y)v(x,y)|  d\m <\infty,
		$$  we have
		\begin{equation*}\label{IPPy}
		\int_\mathcal{O} y u_y(x,y) v(x,y)d\m=-\int_\mathcal{O}y u(x,y) v_y(x,y)d\m- \int_{\mathcal{O}} (\beta-\mu y)u(x,y) v(x,y) d\m.
		\end{equation*}
	\end{lemma}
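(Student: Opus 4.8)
The plan is to reduce both identities to a one-dimensional integration by parts in a single variable (the other being frozen and handled afterwards by Fubini), and to dispose of the low regularity of $u,v$ by a standard mollification-and-truncation argument. The two formulas are independent and are treated separately, each governed by the logarithmic derivative of the corresponding factor of the density $y^{\beta-1}e^{-\gamma|x|-\mu y}$.

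For the first formula I would fix $y$ and integrate in $x$ against the weight $e^{-\gamma|x|}$, the factor $y^{\beta-1}e^{-\mu y}$ being constant in $x$. The key observation is that, although $e^{-\gamma|x|}$ fails to be differentiable at the origin, it is Lipschitz and continuous there, so its distributional derivative is exactly $-\gamma\,\mathrm{sgn}(x)\,e^{-\gamma|x|}$, with no Dirac mass at $0$. Hence, for smooth compactly supported $u,v$, the Leibniz rule gives $\partial_x(v\,e^{-\gamma|x|})=(v_x-\gamma\,\mathrm{sgn}(x)\,v)\,e^{-\gamma|x|}$ and ordinary integration by parts on $\R$ yields the claimed identity, the boundary terms at $\pm\infty$ vanishing by compact support. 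To pass to general $u,v$ I would mollify in $x$ and multiply by a cutoff $\chi_R$ supported in $[-R,R]$, apply the smooth identity, and then let the mollification parameter tend to $0$ and $R\to\infty$. The integrability hypothesis, namely that $|u_xv|+|uv_x|+|uv|$ is $\m$-integrable, is precisely what controls the error terms and forces the boundary contribution created by $\chi_R'$ to vanish in the limit.

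For the second formula I would instead fix $x$ and integrate in $y$ over $(0,\infty)$, absorbing the prefactor $y$ into the weight so that the relevant one-dimensional density becomes $w(y)=y^{\beta}e^{-\mu y}$ (using $y\cdot y^{\beta-1}=y^{\beta}$). A direct computation gives $\partial_y\!\big(v\,y^{\beta}e^{-\mu y}\big)=\big(v_y\,y^{\beta}+v(\beta-\mu y)\,y^{\beta-1}\big)e^{-\mu y}$, so that integration by parts produces exactly the term $-\int(\beta-\mu y)uv\,d\m$ together with $-\int y\,u v_y\,d\m$. The boundary terms $[uv\,y^{\beta}e^{-\mu y}]_0^\infty$ vanish: at $+\infty$ because of the exponential decay, and at $0^+$ because $\beta=\frac{2\kappa\theta}{\sigma^2}>0$ forces $y^{\beta}\to 0$, the stated integrability assumption guaranteeing that $uv$ cannot grow fast enough to compensate. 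Once more the general case follows by mollifying in $y$ and cutting off near $0$ and near $\infty$.

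The main obstacle is the rigorous justification of these limiting procedures under the weak regularity hypotheses — only local square-integrability of $u,v$, distributional first derivatives, and the global $\m$-integrability of the three displayed products. The two delicate points are: (i) showing that no boundary contribution survives despite the degeneracy of $\m$ at $y=0$ (controlled by $\beta>0$) and the non-smoothness of the $x$-weight at the origin (controlled by the continuity of $e^{-\gamma|x|}$); and (ii) verifying that the truncation and mollification errors vanish in the limit, for which the integrability conditions in the statement are exactly tailored. As these are the standard approximation arguments already used in \cite[Lemma 2.23]{DF}, I would only sketch them here and refer to \cite{T} for the complete details.
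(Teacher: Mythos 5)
Your proposal is correct and follows essentially the same route as the paper, which omits the proof precisely because it is the standard mollification-and-truncation argument of \cite[Lemma 2.23]{DF}: your one-dimensional computations (the distributional derivative $-\gamma\,\mathrm{sgn}(x)e^{-\gamma|x|}$ with no Dirac mass, and $\partial_y\big(v\,y^{\beta}e^{-\mu y}\big)$ producing the $(\beta-\mu y)$ term) are exactly the identities underlying the lemma, and you correctly identify the integrability hypotheses as what kills the cutoff errors and boundary contributions (at $y=0^+$, note one extracts a sequence $y_n\to 0$ along which $uv\,y^{\beta}\to 0$, since $\int_0^1|uv|\,y^{\beta-1}dy<\infty$ does not give pointwise decay). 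Nothing further is needed beyond the details you defer, as does the paper, to \cite{DF} and \cite{T}.
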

	We can now prove Proposition  \ref{Prop_integrationbyparts}.	
	\begin{proof}[Proof of Proposition \ref{Prop_integrationbyparts}]
	By using Lemma \ref{IPP1}  we have
	\[
	\into y \frac{\partial^2 u}{\partial x^2}vd\m=-\into  y\frac{\partial u}{\partial x}\left(\frac{\partial v}{\partial x}
	-\gamma sgn(x) v\right)d\m,
	\]
	\[
	\into  y \frac{\partial^2 u}{\partial y^2}vd\m=-\into y \frac{\partial u}{\partial y}\frac{\partial v}{\partial y}d\m
	+\into  (\mu y -\beta)\frac{\partial u}{\partial y} vd\m,
	\]
	\[
	\into y \frac{\partial^2 u}{\partial x\partial y}vd\m=-\into y\frac{\partial u}{\partial y}\left(\frac{\partial v}{\partial x}
	-\gamma sgn(x) v\right)d\m
	\]
and
	\[
	\into  y \frac{\partial^2 u}{\partial x\partial y}vd\m=-\into y \frac{\partial u}{\partial x}\frac{\partial v}{\partial y}d\m
	+\into  (\mu y -\beta)\frac{\partial u}{\partial x} vd\m.
	\]
	Recalling  that 
	\[
\mathcal{L} =\frac{y}{2}\left(\frac{\partial^2 }{\partial x^2}+2\rho\sigma\frac{\partial^2 }{\partial x\partial y}+\sigma^2\frac{\partial^2 }{\partial y^2}\right)
	+\left(\frac{\rho\kappa\theta}{\sigma}-\frac{y}{2}\right)\frac{\partial}{\partial x}
	+\kappa(\theta-y) \frac{\partial}{\partial y}
	\]
and using the equality $\beta=2\kappa\theta/\sigma^2$, we get
	\begin{align*}
		(\mathcal{L} u,v)_H&=-\into \frac{y}{2}\left(\frac{\partial u}{\partial x}\frac{\partial v}{\partial x}+
		\sigma^2 \frac{\partial u}{\partial y}\frac{\partial v}{\partial y}+
		\rho\sigma \frac{\partial u}{\partial x}\frac{\partial v}{\partial y}+
		\rho\sigma \frac{\partial u}{\partial y}\frac{\partial v}{\partial x}\right) d\m+\into \frac{1}{2}\frac{\partial u}{\partial x}\left(y\gamma sgn(x) +{\rho\sigma}(\mu y -\beta)\right)vd\m\\
		&\qquad
		+\into\frac{1}{2}\frac{\partial u}{\partial y}\left(\mu \sigma^2y -\beta\sigma^2+ {\rho\sigma}y\gamma sgn(x)\right) vd\m+\into \left[\left(\frac{\rho\kappa\theta}{\sigma}-\frac{y}{2}\right)\frac{\partial u}{\partial x}
		+\kappa(\theta-y) \frac{\partial u}{\partial y}\right]
		vd\m\\&=-a(u,v).
	\end{align*}
	\end{proof}

	\begin{remark}\label{rem_trasl}
	By a closer look at the proof of Proposition \ref{Prop_integrationbyparts}	it is clear that the choice of $\bar c$  in \eqref{traslation} allows  to avoid terms of the type $\int  (u_x+u_y) v d\m$ in the associated bilinear form $a$. This trick will be crucial in order to obtain suitable energy estimates.
	\end{remark}
		 Recall the well-known inequality 
		 \begin{equation}\label{inequality}
		 bc=(\sqrt{\zeta}b) \left(\frac c {\sqrt{\zeta}}\right)  \leq \frac \zeta 2  b^2+\frac 1 {2\zeta} c^2, \qquad \ b,c \in \R, \ \zeta >0.
		 \end{equation}
		 Hereafter we will often apply \eqref{inequality} in the proofs even if it is not explicitly recalled each time.
		 
We have the following energy estimates.
		 \begin{proposition}\label{energyestimates}
		 For every $u,v\in V$, the bilinear form $a(\cdot,\cdot)$ satisfies 
		 	\begin{equation}	\label{lb}
		 	|a(u,v)|\leq C_1 \Vert u \Vert_V\Vert v \Vert_V,
		 	\end{equation}
		 	\begin{equation}\label{ub}
		 	a(u,u) \geq  C_2 \Vert u \Vert^2_V -C_3\Vert (1 + y)^{\frac 1 2 }u\Vert^2_H,
		 	\end{equation}
		 	where $$C_1=\delta_0+K_1,\quad C_2=\frac{\delta_1} 2, \quad C_3= \frac{\delta_1}{2}+\frac {K_1^2} {2\delta_1}, $$
		with
		 	 	\begin{equation}\label{delta0}
		 	 	\delta_0=\sup_{s_1^2+t_1^2>0, \;s_2^2+t_2^2>0}\frac{|s_1s_2+\rho\sigma s_1t_2+\rho\sigma s_2 t_1+\sigma^2 t_1t_2|}{2\sqrt{(s_1^2+t_1^2)(s_2^2+t_2^2)}},
		 	 	\end{equation}
		 	 	\begin{equation}\label{delta1}
		 	 	\delta_1=\inf_{s^2+t^2>0}\frac{s^2+2\rho\sigma st+\sigma^2 t^2}{2(s^2+t^2)},
		 	 	\end{equation}
		 	 	and
		 	 	\begin{equation}\label{K1}
		 	 	K_1=\sup_{x\in \R}\sqrt{j^2_{\gamma,\mu}(x)+k^2_{\gamma,\mu}(x)}.
		 	 	\end{equation}
		 	 \end{proposition}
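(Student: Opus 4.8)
The plan is to split $a(\cdot,\cdot)$ into its principal (second-order) part and its first-order part, and to read the constants $\delta_0$, $\delta_1$, $K_1$ off as pointwise estimates on the integrands before integrating against $y\,d\m$. For the continuity bound \eqref{lb}, I would first note that, by the very definition \eqref{delta0} of $\delta_0$ applied with $(s_1,t_1)=(u_x,u_y)$ and $(s_2,t_2)=(v_x,v_y)$ and using homogeneity, the principal integrand obeys the pointwise estimate $\frac12|u_xv_x+\rho\sigma u_xv_y+\rho\sigma u_yv_x+\sigma^2u_yv_y|\le\delta_0|\nabla u|\,|\nabla v|$. Integrating against $y\,d\m$ and applying Cauchy--Schwarz bounds this part by $\delta_0(\into y|\nabla u|^2d\m)^{1/2}(\into y|\nabla v|^2d\m)^{1/2}\le\delta_0\|u\|_V\|v\|_V$, since $\into y|\nabla u|^2d\m\le\|u\|_V^2$. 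For the first-order part, Cauchy--Schwarz in the pair $(j_{\gamma,\mu},k_{\gamma,\mu})$ gives $|j_{\gamma,\mu}u_x+k_{\gamma,\mu}u_y|\le\sqrt{j_{\gamma,\mu}^2+k_{\gamma,\mu}^2}\,|\nabla u|\le K_1|\nabla u|$ with $K_1$ as in \eqref{K1}; splitting the weight as $y=\sqrt y\cdot\sqrt y$ and using Cauchy--Schwarz once more bounds this term by $K_1(\into y|\nabla u|^2d\m)^{1/2}(\into y\,v^2d\m)^{1/2}$, and since $\into y\,v^2d\m\le\into(1+y)v^2d\m\le\|v\|_V^2$ the whole term is at most $K_1\|u\|_V\|v\|_V$. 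Adding the two contributions yields $C_1=\delta_0+K_1$.

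For the Gårding inequality \eqref{ub} the principal part is treated identically: by \eqref{delta1} with $(s,t)=(u_x,u_y)$ one has $\frac12(u_x^2+2\rho\sigma u_xu_y+\sigma^2u_y^2)\ge\delta_1|\nabla u|^2$ pointwise, so this part is at least $\delta_1\into y|\nabla u|^2d\m$. The first-order term $\into y(j_{\gamma,\mu}u_x+k_{\gamma,\mu}u_y)u\,d\m$ carries no sign, so I would bound it from below by $-K_1\into(\sqrt y|\nabla u|)(\sqrt y|u|)\,d\m$ and apply \eqref{inequality} with $b=\sqrt y|\nabla u|$, $c=\sqrt y|u|$. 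This is the one genuinely delicate step: the balancing choice $\zeta=\delta_1/K_1$ is dictated by the need to absorb exactly half of the available $\delta_1\into y|\nabla u|^2d\m$, leaving $\frac{\delta_1}{2}\into y|\nabla u|^2d\m$ and producing the remainder $-\frac{K_1^2}{2\delta_1}\into y\,u^2d\m$. Writing $\frac{\delta_1}{2}\into y|\nabla u|^2d\m=\frac{\delta_1}{2}\|u\|_V^2-\frac{\delta_1}{2}\into(1+y)u^2d\m$ and then using $\into y\,u^2d\m\le\into(1+y)u^2d\m=\|(1+y)^{1/2}u\|_H^2$ gives $a(u,u)\ge\frac{\delta_1}{2}\|u\|_V^2-(\frac{\delta_1}{2}+\frac{K_1^2}{2\delta_1})\|(1+y)^{1/2}u\|_H^2$, that is $C_2=\delta_1/2$ and $C_3=\delta_1/2+K_1^2/(2\delta_1)$.

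The only structural facts left to verify are $\delta_1>0$ and $K_1<\infty$. The former holds because the symmetric matrix with rows $(1,\rho\sigma)$ and $(\rho\sigma,\sigma^2)$ has determinant $\sigma^2(1-\rho^2)>0$ under the standing assumption $\rho\in(-1,1)$, hence is positive definite, and $\delta_1$ is precisely half its smallest eigenvalue; this is exactly where excluding $\rho=\pm1$ is used. The latter holds because $j_{\gamma,\mu}$ and $k_{\gamma,\mu}$ in \eqref{JandGamma} depend on $x$ only through $\mathrm{sgn}(x)$ and are therefore bounded. I would emphasize that \eqref{ub} is a Gårding-type estimate rather than genuine coercivity: since $a(u,u)$ contains no zeroth-order term while its first-order part carries the same weight $y$ as the principal part, the $(1+y)u^2$ contribution to $\|u\|_V^2$ must be paid for by the defect term $\|(1+y)^{1/2}u\|_H^2$, which cannot be absorbed. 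This is the lack of coercivity that the domination condition of Assumption $\mathcal H^2$ is later designed to circumvent.
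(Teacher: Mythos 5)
Your proof is correct and follows essentially the same route as the paper's: the pointwise bounds $\delta_0|\nabla u||\nabla v|$ and $K_1|\nabla u||v|$ for the two parts of the integrand, Young's inequality with the balancing choice $\zeta=\delta_1/K_1$, and the add-and-subtract of $\int_{\mathcal{O}}(1+y)u^2\,d\m$ to produce the $\Vert\cdot\Vert_V$ norm. Your closing remarks on $\delta_1>0$ (via $\rho\in(-1,1)$) and on the G\r{a}rding, rather than coercive, nature of \eqref{ub} are accurate and consistent with the paper's subsequent introduction of $a_\lambda$.
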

		 	 It is easy to see  that the constants $\delta_0,\delta_1$ and $K_1$ defined in \eqref{delta0} and \eqref{K1} are positive and finite (recall that
	 the functions 	 $j_{\gamma,\mu}=j_{\gamma,\mu}(x)$  and $k_{\gamma,\mu}=\kappa_{\gamma,\mu}(x)$ defined in \eqref{JandGamma} are bounded). 
	 
	 These energy estimates  were already proved in  \cite[Lemma 2.40]{DF} with a very similar statement. Here we repeat the proof for the sake of completeness, since we will refer to it later on.
		 \begin{proof}[Proof of Proposition \ref{energyestimates}]
%		 	Recall that
%		 	\begin{align*}
%		 	a(u,v) =& \frac 1 2 \int_{\mathcal{O}} y\left(  u_xv_x(x,y) +\rho\sigma u_xv_y (x,y)+\rho\sigma u_yv_x (x,y)+\sigma^2 u_yv_y (x,y) \right)d\m  \\
%		 	&+\into y \left(  j_{\gamma,\mu}(x)u_x(x,y) + k_{\gamma,\mu}  (x)u_y(x,y) \right) v (x,y) d\m.
%		 	\end{align*}
%		 	We can easily see that
%		 	\begin{align*}
%		 	&	\left|	 \frac 1 2 \int_{\mathcal{O}} y\left(  u_xv_x+\rho\sigma u_xv_y + \rho\sigma u_yv_x +\sigma^2 u_y v_y \right)d\m \right| \leq  \delta_0 \into y|\nabla u||\nabla v| d\m \leq\delta_0 \|u\|_V\|v\|_V
%		 	\end{align*}
%		 	and
%		 	\begin{align*}
%		 	&\left|  \into y \left(  j_{\gamma,\mu}(x)u_x(x,y) + k_{\gamma,\mu}  (x)u_y(x,y) \right) v(x,y) d \m   \right| \leq K_1  \into y|\nabla u||v| d\m\leq  K_1\|u\|_V\|v\|_V.
%		 	\end{align*}
%	Then \eqref{lb} immediately follows.
		 	In order to prove \eqref{ub}, we note that
		 	\begin{align*}
		 	&	 \frac 1 2 \int_{\mathcal{O}} y\left(  u_xv_x+\rho\sigma u_xv_y + \rho\sigma u_yv_x +\sigma^2 u_y v_y \right)d\m  \geq \delta_1 \into y|\nabla u|^2 d\m.
		 	\end{align*}
	Therefore
		 	\begin{align*}
		 	a(u,u)&\geq\delta_1 \into y|\nabla u|^2 d\m
		 	-		  K_1  \into y|\nabla u| |u| d\m  \\&\geq\delta_1 \into y|\nabla u|^2 d\m
		 	- \frac{K_1\zeta} 2 \int_{\mathcal{O}}y|\nabla u|^2   
	d\m  -  \frac {K_1} {2\zeta} \int_{\mathcal{O}} (1+y) u^2d\m  \\
		 	&= \left(\delta_1-\frac{K_1\zeta} 2 		\right)\int_{\mathcal{O}} \left(y |\nabla   u |^2 + (1+y)u^2\right) d\m -\left( \delta_1-\frac{K_1\zeta} 2+\frac {K_1} {2\zeta}  \right)\into (1+y)u^2 d\m.
		 	\end{align*}	
		 	The assertion then follows by choosing $\zeta=\delta_1/K_1$.  \eqref{lb} can be proved in a similar way.
%		 	\begin{align*}
%		 	a(u,u) \geq \frac{\delta_1}{2}\|u\|_V^2-\left(    \frac{\delta_1}{2}+\frac {K_1^2} {2\delta_1}  \right)\|\sqrt{1+y}u\|^2_H
%		 	\end{align*}
	%	 	and the assertion is proved.
		 \end{proof}
	 \subsection{Proof of Theorem \ref{variationalinequality} }
		 	Among the standard assumptions required in \cite{BL}  for the penalization procedure, there are the  coercivity and  the boundedness of the coefficients.  In the Heston-type models these assumptions are no longer satisfied and this leads to some technical difficulties. 
		 	In order to overcome them, we introduce some auxiliary operators.

		 	From now on, we set 
		 	$$
		 	a(u,v)=\bar{a}(u,v)+	\tilde{a}(u,v),
		 	$$
		 	where
		 	\begin{eqnarray*}
		 		\bar{a}(u,v)&=&\into\frac{y}{2}\left(\frac{\partial u}{\partial x}\frac{\partial v}{\partial x}+
		 		\rho\sigma \frac{\partial u}{\partial x}\frac{\partial v}{\partial y}+
		 		\rho\sigma \frac{\partial u}{\partial y}\frac{\partial v}{\partial x}+
		 		\sigma^2 \frac{\partial u}{\partial y}\frac{\partial v}{\partial y}\right) d\m,\\
		 		\tilde{a}(u,v)&=&\into y\frac{\partial u}{\partial x}j_{\gamma,\mu}vd\m
		 		+  \into y\frac{\partial u}{\partial y}k_{\gamma,\mu} vd\m.
		 	\end{eqnarray*}
		 	Note that $\bar a$ is symmetric.
As in the proof of Proposition \eqref{energyestimates} we have, for every $u, v\in V$,
		 	\begin{eqnarray*}
		 		|\bar{a}(u,v)|&\leq& \delta_0 \into y|\nabla u||\nabla v| d\m ,
		 	\end{eqnarray*}
%	with
%		 	\[
%		 	\delta_0=\sup_{s_1^2+t_1^2>0, \;s_2^2+t_2^2>0}\frac{|s_1s_2+\rho\sigma s_1t_2+\rho\sigma s_2 t_1+\sigma^2 t_1t_2|}{2\sqrt{(s_1^2+t_1^2)(s_2^2+t_2^2)}}.
%		 	\]
%		 and
		 	\begin{eqnarray*}
		 		\bar{a}(u,u)&\geq& \delta_1 \into y|\nabla u|^2 d\m,
		 	\end{eqnarray*}
%		 	with
%		 	\[
%		 	\delta_1=\inf_{s^2+t^2>0}\frac{s^2+2\rho\sigma st+\sigma^2 t^2}{2(s^2+t^2)}.
%		 	\]
	%	 On the other hand, for every $u,v\in V$,
and
		 	\[
		 	|\tilde{a}(u,v)|\leq K_1  \into y|\nabla u||v| d\m,
		 	\]
%		 	with\
%		 	\[
%		 	K_1=\sup_{(x,y)\in \R\times ]0,+\infty[}\sqrt{j^2_{\gamma,\mu}(x,y)+k^2_{\gamma,\mu}(x,y)}.
%		 	\]
with $\delta_0,\,\delta_1$ and $K_1$ defined in Proposition \ref{energyestimates}.
		 	Moreover,  for $\lambda\geq 0$ and  $M>0$ we consider the bilinear forms
		 	\begin{eqnarray*}
		 		a_\lambda(u,v)&=&{a}(u,v)+\lambda \into (1+y)uv d\m,\\
		 		\bar{a}_\lambda(u,v)&=&\bar{a}(u,v)+\lambda \into (1+y)uv d\m,\\
		 		\tilde{a}^{(M)}(u,v)&=&\into (y\wedge M)\left(\frac{\partial u}{\partial x}j_{\gamma,\mu}
		 		+ \frac{\partial u}{\partial y}k_{\gamma,\mu}\right) vd\m
		 	\end{eqnarray*}
		 and
		 	\begin{eqnarray*}
		 		a^{(M)}_\lambda(u,v)&=& \bar{a}_\lambda(u,v)+\tilde{a}^{(M)}(u,v).
		 	\end{eqnarray*}
		 	The operator $a_\lambda$  was introduced in \cite{DF} to deal with the lack of coercivity of the bilinear form $a$, while the introduction of the truncated operator $a^{(M)}_\lambda$ with $M>0$ will be useful in order to overcome the technical difficulty related to the unboundedness of the coefficients.
		 	\begin{lemma}\label{coercivity}
		 		Let $\delta_0,\, \delta_1, \, K_1$ be defined as in \eqref{delta0}, \eqref{delta1} and \eqref{K1} respectively.
For any fixed $\lambda\geq \frac{\delta_1}{2}+\frac{K_1^2}{2\delta_1}$ the bilinear forms $a_{\lambda}$ and $a_{\lambda}^{(M)}$ are continuous and coercive. More precisely, we have
\begin{equation}
|a_{\lambda}(u,v)|\leq C\Vert u \Vert_V \Vert v \Vert_V, \qquad  u,v \in V,
\end{equation}
%	There exist $\lambda_0>0$ such that $\forall \lambda\geq \lambda_0$ the bilinear forms $a_\lambda$ and $a_\lambda^{(M)}$ are coercive. In particular, we have
%				 		\begin{equation}
%				 		|a_{\lambda}(u,v)|\leq C\Vert u \Vert_V \Vert v \Vert_V, \qquad \forall u,v \in V,
%				 		\end{equation}
				 		\begin{equation}
				 		a_{\lambda}(u,u) \geq \frac{\delta_1} 2\Vert u \Vert_V^2, \qquad  u\in V,
				 		\end{equation}
				 		and
				 		\begin{equation}
				 		|a_{\lambda}^{(M)}(u,v)|\leq C\Vert u \Vert_V \Vert v \Vert_V, \qquad u,v \in V,
				 		\end{equation}
				 		\begin{equation}
				 		a_{\lambda}^{(M)}(u,u) \geq \frac{\delta_1} 2 \Vert u \Vert_V^2, \qquad  u\in V.
				 		\end{equation}
				 		where $C=\delta_0+K_1+\lambda$.
				 	\end{lemma}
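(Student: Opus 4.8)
The plan is to reduce both assertions to the three estimates on $\bar a$ and $\tilde a$ recorded just above, together with the elementary inequality \eqref{inequality} and the two trivial bounds $y\leq 1+y$ and $y\wedge M\leq y$. Since $a_\lambda$ and $a_\lambda^{(M)}$ differ only in that the first-order part $\tilde a$ is replaced by its truncation $\tilde a^{(M)}$, I would handle the two cases in parallel, the crucial observation being that $\tilde a^{(M)}$ obeys exactly the same estimate as $\tilde a$. Indeed, by the Cauchy--Schwarz inequality in $\R^2$ and the definition \eqref{K1} of $K_1$ one has pointwise $|u_x j_{\gamma,\mu}+u_y k_{\gamma,\mu}|\leq |\nabla u|\sqrt{j_{\gamma,\mu}^2+k_{\gamma,\mu}^2}\leq K_1|\nabla u|$, so that, since $y\wedge M\leq y$,
\[
|\tilde a^{(M)}(u,v)|\leq K_1\into (y\wedge M)|\nabla u||v|\,d\m\leq K_1\into y|\nabla u||v|\,d\m,
\]
which is precisely the bound already available for $\tilde a$.

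For the continuity bounds I would split $a_\lambda=\bar a+\tilde a+\lambda\into(1+y)uv\,d\m$ (resp. its truncated analogue) and estimate the three pieces by Cauchy--Schwarz with respect to $\m$. The symmetric part gives $|\bar a(u,v)|\leq\delta_0\into y|\nabla u||\nabla v|\,d\m\leq\delta_0\Vert u\Vert_V\Vert v\Vert_V$; the first-order part gives $K_1\into y|\nabla u||v|\,d\m\leq K_1\Vert u\Vert_V\Vert v\Vert_V$ after writing $y|\nabla u||v|=(\sqrt y|\nabla u|)(\sqrt y|v|)$ and bounding $\into y\,v^2\,d\m\leq\Vert v\Vert_V^2$ via $y\leq 1+y$; and the zero-order part gives $\lambda\into(1+y)|u||v|\,d\m\leq\lambda\Vert u\Vert_V\Vert v\Vert_V$. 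Summing yields the stated constant $C=\delta_0+K_1+\lambda$ for both forms.

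The coercivity estimate is the heart of the matter. Starting from $\bar a(u,u)\geq\delta_1\into y|\nabla u|^2\,d\m$ and controlling the first-order term by \eqref{inequality} with a free parameter $\zeta>0$, namely $K_1\into y|\nabla u||u|\,d\m\leq\frac{K_1\zeta}{2}\into y|\nabla u|^2\,d\m+\frac{K_1}{2\zeta}\into (1+y)u^2\,d\m$, and adding $\lambda\into(1+y)u^2\,d\m$, I obtain
\[
a_\lambda(u,u)\geq\Big(\delta_1-\tfrac{K_1\zeta}{2}\Big)\into y|\nabla u|^2\,d\m+\Big(\lambda-\tfrac{K_1}{2\zeta}\Big)\into(1+y)u^2\,d\m.
\]
The decisive step is to choose $\zeta=\delta_1/K_1$: this makes the gradient coefficient equal to $\delta_1/2$ and turns the lower bound required on the zero-order coefficient into exactly $\lambda\geq\frac{\delta_1}{2}+\frac{K_1^2}{2\delta_1}$, which is the hypothesis of the lemma. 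Under this hypothesis both coefficients are at least $\delta_1/2$, whence $a_\lambda(u,u)\geq\frac{\delta_1}{2}\Vert u\Vert_V^2$. For the truncated form the argument is verbatim the same, since in the application of \eqref{inequality} one has $(y\wedge M)|\nabla u|^2\leq y|\nabla u|^2$ and $(y\wedge M)u^2\leq(1+y)u^2$, while the coercive quadratic-gradient contribution still comes from the untruncated $\bar a$.

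I do not anticipate a genuine obstacle: the only points requiring care are the optimization in $\zeta$, which must be taken equal to $\delta_1/K_1$ for the threshold on $\lambda$ to match exactly the one stated, and the observation that replacing $y$ by $y\wedge M$ in the first-order part degrades none of the estimates because $y\wedge M\leq y$ and the coercive gradient term is left untouched.
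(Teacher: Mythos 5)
Your proposal is correct and follows essentially the same route as the paper: the same decomposition into $\bar a$, the first-order part, and the $\lambda(1+y)$ term, the same uniform bound $|\tilde a^{(M)}(u,v)|\leq K_1\into y|\nabla u||v|\,d\m$ coming from $y\wedge M\leq y$, and the same choice $\zeta=\delta_1/K_1$ in the Young-inequality step, which is exactly how the paper matches the threshold $\lambda\geq\frac{\delta_1}{2}+\frac{K_1^2}{2\delta_1}$ and obtains constants independent of $M$.
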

			\begin{proof}
	The proof for the bilinear form  $a_\lambda$ follows as in \cite[Lemma 3.2]{DF}. We give the details for  $a_\lambda^{(M)}$ to check that the constants do not depend on $M$. Note that, for every $u,v\in V$,
		 	\begin{equation*}
		 		|\tilde{a}^{(M)}(u,v)|\leq K_1  \into y |\nabla u||v| d\m,
		 	\end{equation*}
		 	so that by straightforward computations we get
		 	\begin{align*}
		 	|a^{(M)}_\lambda(u,v)|
		 	%&\leq   |\bar{a}(u,v)| +\lambda\into(1+y) |u||v| d\m+ 	K_1  \into y |\nabla u||v| d\m\\
		\leq (\delta_0+\lambda+K_1)\|u\|_V\|v\|_V.
		 	\end{align*}
		 	On the other hand, for every $\zeta>0$,
		 	\begin{eqnarray*}
		 		a^{(M)}_\lambda(u,u)&\geq& \delta_1 \into y|\nabla u|^2 d\m+\lambda
		 		\into (1+y)u^2 d\m-K_1 \into y|\nabla u||u| d\m\\
		 		&\geq&\left(\delta_1-\frac{K_1\zeta}{2}\right)\into y|\nabla u|^2 d\m
		 		+\left(\lambda -\frac{K_1}{2\zeta}\right)
		 		\into (1+y)u^2 d\m.
		 	\end{eqnarray*}
		 	By choosing $\zeta=\delta_1/K_1$, we get
		 	\begin{eqnarray*}
		 		a^{(M)}_\lambda(u,u)&\geq& \frac{\delta_1}{2}\into y|\nabla u|^2 dm
		 		+\left(\lambda -\frac{K_1^2}{2\delta_1}\right)
		 		\into (1+y)u^2 d\m\geq  \frac{\delta_1}{2}\|u\|^2_V,
		 	\end{eqnarray*}
		 	for every $\lambda\geq \frac{\delta_1}{2}+\frac{K_1^2}{2\delta_1}$.
	 \end{proof}
	 	From now on in the rest of this paper we assume  $\lambda\geq \frac{\delta_1}{2}+\frac{K_1^2}{2\delta_1}$  as in Lemma \ref{coercivity}. Moreover, we will denote by $\|b\|=\sup_{u,v\in V,u,v\neq0}\frac{|b(u,v)|}{\|u\|_V\|v\|_V}$ the norm of a bilinear form $b:V\times V\rightarrow \R$.
	 	\begin{remark}
	 	%	The introduction of the truncated operator $a^{(M)}_\lambda$ with $M>0$ will be useful in order to overcome the technical difficulty given by the unboundedness of the coefficients of the bilinear form $a$. So,  the idea is to deal in a first step with the bilinear form  $a^{(M)}_\lambda$ which has bounded coefficients. 	
	 	%We denote by $\|a\|=\sup_{u,v\in V,u,v\neq0}\frac{|a(u,v)|}{\|u\|_V\|v\|_V}$ the norm of a bilinear form $a:V\times V\rightarrow \R$.
	 	We stress that  Lemma \ref{coercivity} gives us
	 		\begin{equation}\label{estimate_aM}
		\sup_{M>0}\|a_\lambda^{(M)}\|\leq C ,
\end{equation}
	 		where $C=\delta_0+K_1+\lambda$. This will be crucial in the penalization technique we are going to describe in  Section \ref{sect-penalization}.  Roughly speaking, 
	 		in order to prove the existence of a solution of the penalized coercive problem we will introduce in Theorem \ref{penalizedcoerciveproblem}, we proceed  as follows. First, we replace the bilinear form  $a_\lambda$ with the operator $a^{(M)}_\lambda$, which has bounded coefficients, and we solve the associated penalized truncated coercive problem (see Proposition \ref{penalizedcoercivetruncatedproblem}). 
Then,  thanks to \eqref{estimate_aM},  we can  deduce estimates on the solution which are uniform in $M$ (see Lemma \ref{lemma_estim})  and which will allow us to pass to the limit as $M$ goes to infinity and to find a solution of the original penalized coercive problem. %Without this technical device, we  should require further regularity assumptions on the payoff function $\psi$.
	 		  %Therefore, in a second step, we can then pass to the limit as $M$ goes to infinity and come back to the operator $a_\lambda$. This is the scheme we will follow in the proof of Theorem \ref{penalizedcoerciveproblem} in next section.
	 	\end{remark}

%		 	
%		 	First of all we write
%		 	$$
%		 	a(f,g)=a_0(f,g)+a_1(f,g),
%		 	$$
%		 	where
%		 	$$
%		 	a_1(f,g)=\into y \left(  j_{\gamma,\mu}(x) \frac{\partial f}{\partial x} + k_{\gamma,\mu}  (x)\frac{\partial f}{\partial y}  \right) g(x,y) d\m
%		 	$$
%		 	is the non symmetric part of $a$, and 
%		 	$$
%		 	a_0(f,g)=a(f,g)-a_1(f,g)
%		 	$$
%		 	is its symmetric part. We also need to introduce the truncated operator
%		 	$$
%		 	\tilde{a}^{(M)}(f,g)= \into (y \wedge M) \left(  j_{\gamma,\mu}(x) \frac{\partial f}{\partial x} + k_{\gamma,\mu}  (x)\frac{\partial f}{\partial y}  \right) g(x,y)d \m.
%		 	$$
%		 	Note that, $\forall \epsilon >0$,
%		 	\begin{align*}
%		 	|\tilde{a}^{(M)}(f,g)|&\leq C \into( y\wedge M) \vert \nabla f(x,y)\vert \vert g(x,y)\vert d\m \\
%		 	& \leq \frac {C\epsilon} 2 \into y | \nabla f(x,y)|^2 d\m + \frac  C {2\epsilon}  M \into  g^2(x,y)d\m,
%		 	\end{align*}
%		 	with $C= \Vert j\Vert_\infty + \Vert k \Vert_\infty$, but also
%		 	\begin{equation}\label{estimaM}
%		 	|\tilde{a}^{(M)}(f,g)| \leq \frac {C\epsilon} 2  \into y | \nabla f(x,y)|^2 \m(x,y)dxdy + \frac  C {2\epsilon}  \into  y g^2(x,y)d\m.
%		 	\end{equation}
%		 	Observe that the last inequality is independent of $M$.
%		 	
%		 	Moreover, as usual in the case of non coercive forms, for $\lambda >0$ we consider the bilinear form
%		 	$$
%		 	a_{\lambda}(f,g)= a(f,g)+ \lambda \into (1+y) fg d\m
%		 	$$
%		 	and its truncated version
%		 	\begin{align*}
%		 	a_{\lambda}^{(M)}(f,g)&= a_0(f,g)+\tilde{a}^{(M)}(f,g)+ \lambda \into (1+y) fg d\m\\
%		 	&=a_0^{\lambda}(f,g) +\tilde{a}^{(M)} (f,g).
%		 	\end{align*}
		 	Finally, we define
		 	$$
		 	\mathcal{L}^\lambda  := 	\mathcal{L}-\lambda(1+y) 
		 	$$
		 	the differential operator associated with the bilinear form $a_\lambda$, that is 
		 	$$
		(	\mathcal{L}^\lambda u,v)_H=- 	a_\lambda(u,v), \qquad u\in H^2(\O,\m),\, v\in V.
		 	$$
		\subsubsection{Penalized problem }\label{sect-penalization}
		 		\begin{comment}
		 		for any $v \in V$ we have 
		 		\begin{align*}
		 		a_{\lambda}^{(M)} (v,v) &\geq \delta \into y |\nabla v |^2 d\m + a_1(v,v) + \lambda \into (1+y) v^2 d\m\\
		 		& \geq  (\delta-\varepsilon) \into y |\nabla v |^2 d\m  + (\lambda-C_\varepsilon) \into (1+y) v^2 d\m.
		 		\end{align*}
		 		Again, there exists a $\lambda_0>0$ such that for any $\lambda \geq \lambda_0$ 
		 		$$
		 		a_{\lambda}^{(M)}(v,v)\geq \frac{\delta}{2}\Vert v\Vert_V^2.
		 		$$
		 		where $\delta$ is a positive constant independent of $M$.
		 		In the usual way we also have 
		 		$$
		 		a_{\lambda}^{(M)}(f,g)\leq C\Vert f\Vert_V \Vert g \Vert_V
		 		$$
		 		\end{comment}
		 	For any fixed $\varepsilon >0$  we define the penalizing operator 
		 	\begin{equation}\label{penalizedoperator}
		 	\zeta_\varepsilon(t,u)= -\frac 1 \varepsilon (\psi(t)-u)_+= \frac 1 \varepsilon \zeta(t,u),\qquad t\in[0,T],  u\in V.
		 	\end{equation}
		 	 Since for every fixed $t\in[0,T]$ the function $x\mapsto -(\psi(t)-x)_+$ is nondecreasing, we have the following well known monotonicity result (see \cite{BL}).
		 	\begin{lemma}	\label{monotonicity}
		 		For any fixed $t\in [0,T]$ the penalizing operator \eqref{penalizedoperator} is monotone, in the sense that
		 		$$
		 		(\zeta_\varepsilon(t,u)-\zeta_\varepsilon(t,v),u-v)_H \geq 0, \qquad   u,v \in V.
		 		$$
		 	\end{lemma}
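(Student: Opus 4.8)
The plan is to reduce the claimed inequality to an elementary pointwise fact about the truncation $s\mapsto(a-s)_+$. First I would check that everything is well defined: since $\psi(t)\in H$ by Assumption $\mathcal H^1$ and $u,v\in V\subset H$, the pointwise bound $|(\psi(t)-u)_+|\leq|\psi(t)-u|$ shows that $(\psi(t)-u)_+\in H$, so that $\zeta_\varepsilon(t,u)\in H$ and both entries of the inner product belong to $H$. Unfolding the $H$-inner product as an integral against $\m$ then gives
\begin{equation*}
(\zeta_\varepsilon(t,u)-\zeta_\varepsilon(t,v),u-v)_H
=\frac 1 \varepsilon\into \big[h(u)-h(v)\big]\,(u-v)\,d\m,
\end{equation*}
where, for fixed $(x,y)$, the real map $h(s):=-\big(\psi(t,x,y)-s\big)_+$ is applied pointwise to $u$ and $v$.

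The key observation is that for every fixed $a\in\R$ the function $s\mapsto -(a-s)_+=\min(s-a,0)$ is nondecreasing; consequently, for any reals $p,q$ one has $\big(h(p)-h(q)\big)(p-q)\geq 0$. Applying this pointwise with $a=\psi(t,x,y)$, $p=u(x,y)$ and $q=v(x,y)$, I conclude that the integrand above is nonnegative $\m$-a.e. Since $\m$ is a positive measure and $\varepsilon>0$, the integral is nonnegative, which is exactly the asserted monotonicity.

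The only point requiring a word of care is the integrability of the integrand, needed so that the displayed identity is legitimate. This follows because $s\mapsto(a-s)_+$ is $1$-Lipschitz uniformly in $a$, whence $|h(u)-h(v)|\leq|u-v|$ pointwise and the integrand is dominated by $(u-v)^2\in L^1(\O,\m)$. Beyond this routine check there is no genuine obstacle: the result is the standard monotonicity of the penalization operator, exactly as in Bensoussan and Lions \cite{BL}, and its proof is purely algebraic once the functional setting is fixed.
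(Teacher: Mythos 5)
Your proof is correct and follows exactly the route the paper indicates: the paper omits the details, merely noting that $x\mapsto -(\psi(t)-x)_+$ is nondecreasing and citing Bensoussan--Lions, and your argument fills in precisely that pointwise monotonicity computation together with the routine integrability check. No issues.
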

		 
		 We now introduce the intermediate penalized coercive problem with  a source  term $g$. We consider the following assumption:
		 
		 \medskip
		 \noindent	
		 \textbf{Assumption $\mathcal{H}^0$.}
		 We say that a function $g$ satisfies Assumption $\mathcal{H}^0$ if	$ \sqrt{1+y}g\in L^2([0,T];H)$.
		 
		 \medskip

		 	\begin{theorem}\label{penalizedcoerciveproblem} 
		 Assume that  $\psi$ satisfies Assumption $\mathcal{H}^1$ and  $g$ satisfies Assumption $\mathcal{H}^0$. Then, for every fixed $\varepsilon>0$,  there exists a unique function $u_{\varepsilon,\lambda}$ such that 
		 		 $	u_{\varepsilon,\lambda}\in L^2([0,T];V)$,  $\frac{\partial u_{\varepsilon,\lambda}}{\partial t } \in L^2([0,T]; H)$ and, for all $v\in L^2([0,T];V)$,
		 		\begin{equation}\label{PCP}
		 		\begin{cases}
		 		-\left( \frac{\partial u_{\varepsilon,\lambda}}{\partial t }(t),v (t)  \right)_H + a_{\lambda}(u_{\varepsilon,\lambda}(t),v(t))+ (\zeta_\varepsilon(t,u_{\varepsilon,\lambda}(t)),v(t))_H= (g(t),v(t))_H,\qquad \mbox{ a.e. in } [0,T],\\
		 		u_{\varepsilon,\lambda}(T)=\psi(T).
		 		\end{cases}
		 		\end{equation}
		 		Moreover, the following estimates hold:
		 		\begin{equation}\label{sp1}
		 		\Vert u_{\varepsilon,\lambda} \Vert_{L^\infty([0,T],V)}\leq K,
		 		\end{equation}
		 		\begin{equation}\label{sp2}
		 		\left\Vert \frac{\partial u_{\varepsilon,\lambda}}{\partial t }\right \Vert_{L^2([0,T];H)}\leq K,
		 		\end{equation}
		 		\begin{equation}\label{sp3}
		 		\frac {1} {\sqrt{\varepsilon }}\left\Vert (\psi-u_{\varepsilon,\lambda})^+ \right \Vert_{L^\infty([0,T],H)}\leq K,
		 		\end{equation}
		 		where $K=C \left(    \Vert \Psi \Vert_{L^2([0,T];V)} + \Vert \sqrt{1+y}g\Vert_{L^2([0,T];H)}  + \Vert\sqrt{1+y}\psi \Vert_{L^2([0,T];V)}+\Vert\psi(T)\Vert_V^2   \right)$,  with $C>0$ independent of $\varepsilon$, and $\Psi$ is given in Assumption $\mathcal{H}^1$.
		 	\end{theorem}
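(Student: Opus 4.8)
The plan is to adapt the penalization scheme of Bensoussan and Lions to the present degenerate setting, implementing the truncation device anticipated in the remark preceding the statement. Since \eqref{PCP} is posed with the terminal condition $u_{\varepsilon,\lambda}(T)=\psi(T)$, I would first reverse time by setting $s=T-t$, which turns the problem into a forward Cauchy problem with initial datum $\psi(T)\in V$ (this is where Assumption $\mathcal{H}^1$ enters). For each fixed $M>0$ the bilinear form $a^{(M)}_\lambda$ has first-order coefficients bounded by $MK_1$ and is coercive on $V$ with the constant $\delta_1/2$ \emph{uniformly} in $M$ (Lemma \ref{coercivity}), while the penalizing map $u\mapsto\zeta_\varepsilon(t,u)$ of \eqref{penalizedoperator} is Lipschitz with constant $1/\varepsilon$ and monotone (Lemma \ref{monotonicity}). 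Hence the truncated penalized problem, obtained by replacing $a_\lambda$ with $a^{(M)}_\lambda$ in \eqref{PCP}, is a coercive parabolic equation perturbed by a globally Lipschitz monotone term; I would solve it by a Galerkin approximation in $V$, which reduces to an ODE system with Lipschitz right-hand side, and pass to the limit in the Galerkin parameter using the coercivity and monotonicity. This produces a unique $u^{(M)}_{\varepsilon,\lambda}\in L^2([0,T];V)$ with $\partial_t u^{(M)}_{\varepsilon,\lambda}\in L^2([0,T];H)$, and is the content of Proposition \ref{penalizedcoercivetruncatedproblem}.

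The core of the argument is a family of a priori bounds on $u^{(M)}_{\varepsilon,\lambda}$ that are uniform in $M$ and in $\varepsilon$, namely the analogues of \eqref{sp1}--\eqref{sp3} collected in Lemma \ref{lemma_estim}. I would derive them by testing the truncated equation with $v=\partial_t\big(u^{(M)}_{\varepsilon,\lambda}-\psi\big)$. The decisive feature of this choice is that the penalization term becomes a perfect time derivative: by the chain rule for $r\mapsto\frac12(r_+)^2$ one has $\big(\zeta_\varepsilon(t,u),\partial_t(u-\psi)\big)_H=\frac{1}{2\varepsilon}\frac{d}{dt}\Vert(\psi-u)_+\Vert_H^2$, and integrating over $[t,T]$ together with $(\psi-u)(T)=0$ yields the nonnegative term $\frac{1}{2\varepsilon}\Vert(\psi-u^{(M)}_{\varepsilon,\lambda})_+(t)\Vert_H^2$, which gives the penalization estimate \eqref{sp3}. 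The symmetric part $\bar a_\lambda$ contributes $\frac12\frac{d}{dt}\bar a_\lambda(u,u)$, hence, being coercive uniformly in $M$, the $L^\infty([0,T];V)$ bound \eqref{sp1} via the boundary value $\bar a_\lambda(\psi(T),\psi(T))\le C\Vert\psi(T)\Vert_V^2$; and the quadratic term $-(\partial_t u,\partial_t u)_H$ produces the $L^2([0,T];H)$ control \eqref{sp2}. The remaining data terms $(g,\cdot)_H$, $a_\lambda(u,\partial_t\psi)$ and $(\partial_t u,\partial_t\psi)_H$ are absorbed by means of \eqref{inequality}, using Assumption $\mathcal{H}^0$ on $g$ and Assumption $\mathcal{H}^1$ on $\psi$ (through $\Psi$, $\sqrt{1+y}\psi$ and $\psi(T)\in V$), which explains the precise form of the constant $K$.

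The step I expect to be the main obstacle is the control, in this same estimate, of the first-order contribution $\tilde a^{(M)}\big(u^{(M)}_{\varepsilon,\lambda},\partial_t u^{(M)}_{\varepsilon,\lambda}\big)$. Because the principal part of the operator is degenerate, the norm $\Vert\cdot\Vert_V$ controls only the weighted gradient $\into y|\nabla u|^2 d\m$, whereas a crude estimate of this cross term would call for either $\into y|\partial_t u|^2 d\m$ or $\into y^2|\nabla u|^2 d\m$, neither of which is available. This is exactly where the truncation is indispensable: the bounded first-order coefficients of $a^{(M)}_\lambda$ and the uniform bound $\sup_{M>0}\Vert a^{(M)}_\lambda\Vert\le C$ of \eqref{estimate_aM} allow one to dominate this term by the coercive quantities already standing on the left-hand side, at the cost of a Grönwall argument in time, and---this is the delicate point---with constants that do not deteriorate as $M\to\infty$. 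Carefully tracking the dependence on the constants of Lemma \ref{coercivity} then shows that $K$ is indeed independent of both $\varepsilon$ and $M$.

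Finally I would let $M\to\infty$. The uniform estimates make $(u^{(M)}_{\varepsilon,\lambda})_M$ bounded in $L^\infty([0,T];V)$ and $(\partial_t u^{(M)}_{\varepsilon,\lambda})_M$ bounded in $L^2([0,T];H)$; extracting a subsequence converging weakly-$*$, respectively weakly, and noting that $y\wedge M\uparrow y$ so that $\tilde a^{(M)}(u,v)\to\tilde a(u,v)$ with the passage controlled by \eqref{estimate_aM}, I would identify the limit $u_{\varepsilon,\lambda}$ as a solution of \eqref{PCP}, and weak lower semicontinuity of the norms transfers \eqref{sp1}--\eqref{sp3} to it. Uniqueness is then immediate from monotonicity: if $u_1,u_2$ both solve \eqref{PCP}, the difference $w=u_1-u_2$ satisfies $w(T)=0$, and testing the difference of the equations with $w$ makes the penalization contribution nonnegative by Lemma \ref{monotonicity} while $a_\lambda(w,w)\ge0$ by Lemma \ref{coercivity}; hence $\frac{d}{dt}\Vert w\Vert_H^2\ge0$, so $\Vert w\Vert_H^2$ is nondecreasing and vanishes at $T$, forcing $w\equiv0$.
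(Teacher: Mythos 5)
Your overall architecture (truncate, Galerkin, uniform a priori bounds, let $M\to\infty$, monotonicity for uniqueness) matches the paper's, and your uniqueness argument is exactly the standard one the paper invokes. But there is a genuine gap at the point you yourself single out as the main obstacle: the uniform-in-$M$ control of $\tilde a^{(M)}\big(u,\partial_t u\big)$. The uniform operator-norm bound \eqref{estimate_aM} is an estimate on $V\times V$, and $\partial_t u$ is not controlled in $V$ uniformly in $M$ and $\varepsilon$ (only in $H$), so \eqref{estimate_aM} cannot be applied to this term. The only crude bound available is $|\tilde a^{(M)}(u,\partial_t u)|\le K_1\big(\into (y\wedge M)^2|\nabla u|^2 d\m\big)^{1/2}\Vert\partial_t u\Vert_H\le K_1\sqrt{M}\,\Vert u\Vert_V\Vert\partial_t u\Vert_H$, and a Gr\"onwall argument then produces constants growing with $M$ (this is precisely the $M$-dependent estimate \eqref{estim2} of Proposition \ref{penalizedcoercivetruncatedproblem}). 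What is actually needed, and what your plan does not supply, is an a priori bound on $\big\Vert\, y|\nabla u|\,\big\Vert_{L^2([0,T];H)}$ that is uniform in $M$ and $\varepsilon$. The paper obtains it in Lemma \ref{lemma_estim} by testing the equation with the weighted functions $v=(u-\psi)\varphi_n$, $\varphi_n=1+y\wedge n$, and letting $n\to\infty$ by monotone convergence (estimate \eqref{uchapeau}); it is exactly here that the hypotheses $\sqrt{1+y}\,\psi\in L^2([0,T];V)$ and $\sqrt{1+y}\,g\in L^2([0,T];H)$ are consumed. Without this extra estimate the passage $M\to\infty$ does not go through.

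A second, independent problem is your choice of test function $v=\partial_t(u-\psi)$. It does turn the penalization term into a perfect derivative, but it introduces $a^{(M)}_\lambda(u,\partial_t\psi)$, whose symmetric part involves $\nabla\partial_t\psi$. Assumption $\mathcal{H}^1$ only gives the pointwise domination $|\partial_t\psi|\le\Psi$ with $\Psi\in L^2([0,T];V)$; this controls $\Vert\sqrt{1+y}\,\partial_t\psi\Vert_H$ but says nothing about $\Vert\partial_t\psi\Vert_V$, so this term cannot be absorbed with the stated constant $K$. The paper instead tests with $v=\partial_t u$ alone, accepts the leftover penalization term $\frac1\varepsilon\big((\psi-u)_+,\partial_t\psi\big)_H\le\frac1\varepsilon\big((\psi-u)_+,\Psi\big)_H$, and controls the latter by testing the equation a second time with $v=\Psi(t)$ (estimate \eqref{estavecv1}). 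You would need to incorporate both of these devices — the weighted test functions for \eqref{uchapeau} and the $v=\Psi$ trick — to make the estimates \eqref{sp1}--\eqref{sp3} come out independent of $M$ and $\varepsilon$ with the claimed constant. (You also omit the final regularization step that lowers the hypotheses from those of Proposition \ref{penalizedcoercivetruncatedproblem} down to Assumptions $\mathcal{H}^0$--$\mathcal{H}^1$, but that is a more routine matter.)
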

		 	The proof of uniqueness of the solution of the penalized coercive problem follows a standard monotonicity argument as in \cite{BL}, so we omit the proof.
%		 	We first prove uniqueness of the penalized coercive problem.
%		 	
%		 	\begin{proof}[Proof of Theorem \ref{penalizedcoerciveproblem}: uniqueness]
%		 		Assume that there exist two functions $u_1$ and $u_2$ satisfying \eqref{PCP} and set $w=u_1-u_2$.
%		 		If we choose $v=u_1-u_2$ in the equation satisfied by $u_1$ and $v=u_2-u_1$ in the one satisfied by $u_2$ and then we add the resulting equations, we get
%%		 		$$
%%		 		-\left( \frac{\partial u_1}{\partial t }(t),u_1(t)-u_ 2(t) \right)_H + a_{\lambda}(u_1(t),u_1(t)-u_2(t))+ (\zeta_\varepsilon(u_1(t)),u_1(t)-u_ 2(t))_H= (g(t),u_1(t)-u_ 2(t))_H
%%		 		$$
%%		 		and
%%		 		$$
%%		 		-\left( \frac{\partial u_2}{\partial t }(t),u_2(t)-u_ 1(t) \right)_H + a_{\lambda}(u_2(t),u_2(t)-u_1(t))+ (\zeta_\varepsilon(u_2(t)),u_2(t)-u_ 1(t))_H= (g(t),u_2(t)-u_ 1(t))_H.
%%		 		$$
%%		 		Adding the second equation from the first one and defining $w=u_1-u_2$, we get
%		 		$$
%		 		-\left( \frac{\partial w}{\partial t }(t),w(t) \right)_H +a_\lambda(w(t),w(t))+ (\zeta_\varepsilon(u_1(t))-\zeta_\varepsilon(u_2(t)),w(t))_H= 0.
%		 		$$
%		 		By the coercivity of $a_\lambda$ and the monotonicity of the penalized operator we deduce that
%		 		$$
%		 		-\left( \frac{\partial w}{\partial t }(t),w (t)\right)_H\leq 0 \Rightarrow  \left( \frac{\partial w}{\partial t }(t),w(t) \right)_H=\frac 1 2 \frac{\partial  }{ \partial t}\Vert w(t)\Vert^2_H \geq 0 .
%		 		$$
%		 		But $w(T)= \psi(T)-\psi(T) =0$ , so $w(t)=0 $ a.e. in $[0,T]$, which means $u_1=u_2$.
%		 	\end{proof}

		 The  proof of existence in Theorem \ref{penalizedcoerciveproblem} is quite long and technical, so we split it into two propositions. We first consider the truncated penalized problem, which requires less stringent conditions on $\psi$ and $g$.
		 	\begin{proposition}\label{penalizedcoercivetruncatedproblem}
		 		Let $\psi\in \mathcal{C}([0,T];H)\cap     L^2([0,T];V)$ and $ g\in L^2([0,T];H)$. Moreover, assume that $ \psi(T) \in H^2(\O,\m)$, $ (1+y)\psi(T) \in H$, $\frac{\partial \psi}{\partial t}\in L^2([0,T];V) $  and $\frac{\partial g}{\partial t}\in L^2([0,T];H)$. Then, there exists a unique function $u_{\varepsilon,\lambda,M }$ such that $	u_{\varepsilon,\lambda,M }\in  L^2([0,T]; V),  \,
		 		\frac{\partial u_{\varepsilon,\lambda,M}}{\partial t } \in L^2([0,T]; V)$ and for all $v\in L^2([0,T]; V)$
		 	%	$\in L^2([0,T];V)$ such that %$\frac{\partial u_{\varepsilon,\lambda}}{\partial t } \in L^2([0,T], H)$ and
		 		\begin{equation}\label{PTCP}
		 		\begin{cases}
		 		-\left( \frac{\partial u_{\varepsilon,\lambda,M}}{\partial t }(t),v(t)   \right)_H + a_{\lambda}^{(M)}(u_{\varepsilon,\lambda,M}(t),v(t))+ (\zeta_\varepsilon(t,u_{\varepsilon,\lambda,M}(t)),v(t))_H=  (g(t),v(t))_H,  \qquad \mbox{a.e. in     } [0,T) ,\\		u_{\varepsilon,\lambda,M}(T)=\psi(T). 
		 		\end{cases}
		 		\end{equation}
		 	\end{proposition}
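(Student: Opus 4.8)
The plan is to build $u_{\varepsilon,\lambda,M}$ by a Galerkin (Faedo--Galerkin) scheme, to establish two successive energy estimates, and to pass to the limit using only the monotonicity of the penalization, so as not to rely on any compactness of the embedding $V\hookrightarrow H$ (which fails on the unbounded domain $\mathcal{O}$). All constants may depend on the fixed parameters $\varepsilon,\lambda,M$. It is convenient to reverse time, $s=T-t$, turning \eqref{PTCP} into a forward problem with initial datum $\psi(T)$; recall that $a_\lambda^{(M)}$ is continuous and coercive on $V$ (Lemma \ref{coercivity}) and that $\zeta_\varepsilon(t,\cdot)$ is monotone (Lemma \ref{monotonicity}) and $\tfrac1\varepsilon$-Lipschitz on $H$. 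The role of the truncation is that the first-order part $\tilde a^{(M)}$ is now continuous from $V$ into $H$: indeed $(y\wedge M)\le\sqrt M\,\sqrt y$, so $|\tilde a^{(M)}(u,v)|\le K_1\sqrt M\,\|u\|_V\|v\|_H$, which is what lets the regularity estimate close.

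First I would fix a Galerkin basis $(w_j)_{j\ge1}$ of $V$, orthonormal in $H$, and seek $u_n=\sum_{j=1}^n c_j^n w_j$ solving the projection of \eqref{PTCP} onto $V_n=\mathrm{span}(w_1,\dots,w_n)$, with $u_n(T)$ the $H$-projection of $\psi(T)$. Since $\zeta_\varepsilon(t,\cdot)$ is Lipschitz in $H$ and measurable in $t$, the system for $(c_j^n)$ satisfies a Carath\'eodory--Cauchy--Lipschitz hypothesis and has a unique absolutely continuous solution, global once the bounds below hold. Testing with $u_n$ and using the coercivity of $a_\lambda^{(M)}$ together with the monotonicity of $\zeta_\varepsilon$ (comparing with the value at $0$, controlled by $\tfrac1\varepsilon\|\psi\|_H$) gives a first bound for $u_n$ in $L^\infty([0,T];H)\cap L^2([0,T];V)$.

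The crux is the second estimate, which yields the strong regularity $\partial_t u\in L^2([0,T];V)$. I would work with time increments: setting $u_n^h(t)=\tfrac1h\big(u_n(t+h)-u_n(t)\big)$, subtracting the equations at $t+h$ and $t$ and testing with $u_n^h$, the coercivity $a_\lambda^{(M)}(u_n^h,u_n^h)\ge\tfrac{\delta_1}2\|u_n^h\|_V^2$ controls the full $V$-norm of the increment. The increment of the penalization, by Lemma \ref{monotonicity}, splits into the increment in the $u$-argument, which has the favourable sign, plus the increment in the obstacle $\psi$, which, since $x\mapsto x_+$ is $1$-Lipschitz, is dominated by $\tfrac1\varepsilon\big\|\tfrac1h(\psi(t+h)-\psi(t))\big\|_H\,\|u_n^h\|_H$ and absorbed via Young's inequality and $\partial_t\psi\in L^2([0,T];V)$. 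Integrating in time produces a terminal boundary term whose limit as $h\to0$ is $\|\partial_t u_n(T)\|_H$; this terminal slope is bounded in $H$ uniformly in $n$, because at $t=T$ the penalization drops out and the equation expresses $\partial_t u_n(T)$ through the functional $v\mapsto a_\lambda^{(M)}(\psi(T),v)$ and through $g(T)$, the former being represented by an element of $H$ exactly under $\psi(T)\in H^2(\mathcal{O},\m)$ and $(1+y)\psi(T)\in H$ (via the integration by parts of Proposition \ref{Prop_integrationbyparts}), while $g(T)\in H$ holds since $g,\partial_t g\in L^2([0,T];H)$ force $g\in\mathcal{C}([0,T];H)$. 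Letting $h\to0$ gives a uniform bound for $\partial_t u_n$ in $L^2([0,T];V)$.

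With both estimates, I would extract $u_n\rightharpoonup u$ and $\partial_t u_n\rightharpoonup\partial_t u$ weakly in $L^2([0,T];V)$, so that $u$ has the required regularity and $u(T)=\psi(T)$. The only nonlinear term is the penalization, and since no compact embedding is available I would pass to the limit through Minty's monotonicity device, using the monotonicity and continuity of $\zeta_\varepsilon$ to identify the weak limit of $\zeta_\varepsilon(\cdot,u_n)$ with $\zeta_\varepsilon(\cdot,u)$; this yields a solution of \eqref{PTCP}. Uniqueness is standard: the difference of two solutions, tested against itself, is controlled by the coercivity of $a_\lambda^{(M)}$ and the monotonicity of $\zeta_\varepsilon$, hence vanishes. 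I expect the main obstacle to be exactly the second energy estimate: handling the time increment of the nonlinear penalization with the correct sign, controlling the terminal slope in $H$ (this is where $\psi(T)\in H^2(\mathcal{O},\m)$ and $(1+y)\psi(T)\in H$ enter), and exploiting the truncation so that the drift $\tilde a^{(M)}$ acts as an $H$-valued perturbation.
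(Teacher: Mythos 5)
Your overall architecture coincides with the paper's: a Galerkin scheme, a first energy estimate in $L^\infty(H)\cap L^2(V)$, a second estimate giving $\partial_t u\in L^2([0,T];V)$ obtained from the time-differentiated (or time-incremented) equation together with a bound on the terminal slope $\partial_t u_n(T)$ in $H$ — which is exactly where $\psi(T)\in H^2(\mathcal{O},\m)$, $(1+y)\psi(T)\in H$ and $g(T)\in H$ enter, via Proposition \ref{Prop_integrationbyparts} and the vanishing of the penalization at $t=T$ — and finally weak limits. Your discrete-increment version of the second estimate, using the monotonicity of $\zeta_\varepsilon(t,\cdot)$ on the increment in the $u$-argument and the $1$-Lipschitz property of $x\mapsto x_+$ on the increment in the obstacle, is a legitimate and arguably cleaner variant of the paper's formal differentiation of \eqref{ATPCP} (which produces the indicator $\mathbbm{1}_{\{\psi_j\geq u_j\}}$). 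The bound $|\tilde a^{(M)}(u,v)|\le K_1\sqrt{M}\,\Vert u\Vert_V\Vert v\Vert_H$ is correct and captures why the truncation is needed at this stage.

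The genuine gap is in your limit passage. Your stated premise — that no compactness is available because $V\hookrightarrow H$ is not compact on the unbounded domain — is not the relevant obstruction: the paper only uses Rellich on relatively compact open subsets $\mathcal{U}\subset\mathcal{O}$ (which does hold for these weighted spaces), extracts an a.e.\ convergent subsequence of $u_j(t)$, and identifies the weak $H$-limit of $(\psi_j(t)-u_j(t))_+$ with $(\psi(t)-u(t))_+$ directly; every other term in the equation is linear and passes to the limit weakly. Your replacement by Minty's device, as sketched, does not close. To run Minty you must show $\limsup_n\int_0^T(\zeta_\varepsilon(t,u_n),u_n)_H\,dt\le\int_0^T(\chi,u)_H\,dt$, and evaluating $\int(\zeta_\varepsilon(t,u_n),u_n)_H$ through the equation forces you to prove $\liminf_n\int_0^T a_\lambda^{(M)}(u_n,u_n)\,dt\ge\int_0^T a_\lambda^{(M)}(u,u)\,dt$. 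The symmetric coercive part $\bar a_\lambda$ is weakly lower semicontinuous, but the non-symmetric first-order part satisfies only $|\tilde a^{(M)}(u_n-u,u_n-u)|\le K_1\sqrt{M}\,\Vert u_n-u\Vert_{L^2([0,T];V)}\Vert u_n-u\Vert_{L^2([0,T];H)}$, which does not vanish under weak convergence alone; controlling it requires strong convergence in $L^2([0,T];H)$ (unavailable globally without a tightness argument such as the domination $0\le u\le\Phi$ used later in Proposition \ref{coercive variational inequality}) or precisely the local compactness you set out to avoid. So either reinstate the local Rellich plus a.e.-convergence identification of $(\psi-u)_+$ as in the paper, or supply the missing lower semicontinuity argument for $a_\lambda^{(M)}$; as written the identification of the nonlinear limit is not justified. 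A minor additional point: take $u_n(T)=\psi(T)$ by including $\psi(T)$ in every $V_n$ (as the paper does), rather than its $H$-projection; otherwise the penalization does not literally vanish at $t=T$ and the terminal condition $u(T)=\psi(T)$ requires an extra limiting argument.
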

		 	\begin{proof}
		 		%We start by considering the case with the further assumptions that $ \mathcal{L}\psi(T) \in V$, $\frac{\partial \psi}{\partial t}\in L^2([0,T];V) $  and $\frac{\partial g}{\partial t}\in L^2([0,T];H)$. We will pass to the general case at the end of the proof.
		 		\begin{enumerate}
		 			\item \textbf{ Finite dimensional problem}
		 			We use the classical Galerkin method of approximation, which consists in introducing a nondecreasing sequence $(V_j)_j$ of subspaces of $V$ such that $dimV_j<\infty$ and, for every $ v\in V ,$ there exists  a sequence $  \ (v_j)_{j\in \N}$  such that $v_j \in V_j $ for any $ j\in\N $ and $\Vert v-v_j\Vert_V\rightarrow0$ as $j\rightarrow \infty$. Moreover, we assume that $\psi(T)\in V_j,$ for all $ j\in\N$. Let $P_j$ be the projection of $V$ onto $V_j$ and $\psi_j(t)=P_j\psi(t)$. We have  $\psi_j (t) \rightarrow \psi(t)$ strongly in $V$ and $\psi_j(T)=\psi(T)$ for any $ j\in \N$. 
		 			The finite dimensional problem is, therefore, to find $u_j:[0,T]\rightarrow V_j$ such that 
		 			\begin{equation}\label{ATPCP}
		 			\begin{cases}
		 		
		 			-\left( \frac{\partial u_j}{\partial t }(t),v   \right)_H + a_{\lambda}^{(M)}(u_j(t),v)-\frac 1 \varepsilon ((\psi_j(t)-u_j(t))_+,v)_H= (g(t),v)_H, \qquad 	 v\in V_j,\\
		 				u_j(T)=\psi(T).
		 			\end{cases}
		 			\end{equation}
		
		 			This problem can be interpreted as an ordinary differential equation in $V_j$ (dim $V_j<\infty$),
		 			 that is 
		 			$$
		 			\begin{cases}
		 			-\frac{\partial u_j}{\partial t }(t) + A_{\lambda,j}^{(M)}u_j(t)-\frac 1 \varepsilon Q_j((\psi_j(t)-u_j(t))_+)= Q_jg(t),\\
		 			u_j(T)=\psi(T),
		 			\end{cases}
		 			$$
		 			where $  A_{\lambda,j}^{(M)}:V_j\rightarrow V_j$ is a finite dimensional linear operator and $Q_j$ is the projection of $H$ onto $V_j$. 
		 			It is not difficult to prove that the function $(t,u)\rightarrow Q_j((\psi_j(t)-u(t))_+)$ is continuous with values in $V_j$ and Lipschitz continuous with respect to the $u$ variable and that the term $Q_jg$ belongs to $L^2([0,T];V_j)$ (we refer to \cite{T} for the details). 
%		 			 In fact, for every $u,v\in V_j$, we have  $|\zeta_\varepsilon(u)-\zeta_\varepsilon(v)|\leq |u-v|$ so that $\|\zeta_\varepsilon(u)-\zeta_\varepsilon(v)\|_H\leq C$. Then, since in $V_j$ all the norms are equivalent, we can deduce that 
%		 			$$
%		 			\|P_j(\zeta_\varepsilon(u_j(t)))-P_j(\zeta_\varepsilon(u_j(t)))\|_{V}\leq \|\zeta_\varepsilon(u_j(t))-\zeta_\varepsilon(u_j(t))\|_{V}\leq C.$$
%	
		 		 Therefore, by the Cauchy-Lipschitz Theorem, we can deduce the existence and the uniqueness of a solution $u_j$ of \eqref{ATPCP}, continuous from $[0,T]$ into $V_j$, a.e. differentiable and with integrable derivative.
		 			
		 			\item \textbf{ Estimates on the finite dimensional problem}
		 			First, we take $v=u_j(t)-\psi_j(t)$ in  \eqref{ATPCP}. We get
		 			$$
		 			-\left( \frac{\partial u_j}{\partial t }(t),u_j(t)-\psi_j (t)  \right)_H + a_{\lambda}^{(M)}(u_j(t),u_j(t)-\psi_j(t))-\frac 1 \varepsilon ((\psi_j(t)-u_j(t))_+,u_j(t)-\psi_j(t))_H$$
		 			$$= (g(t),u_j(t)-\psi_j(t))_H,
		 			$$
		 			which can be rewritten as
		 			\begin{align*}
		 			-\frac 1 2 &\frac{d }{d t }\Vert u_j(t)-\psi_j(t) \Vert_H^2 - \left( \frac{\partial \psi_j}{\partial t }(t),u_j(t)-\psi_j  (t) \right)_H + a_{\lambda}^{(M)}(u_j(t)-\psi_j(t),u_j(t)-\psi_j(t))_H \\&+ \frac 1 \varepsilon ((\psi_j(t)-u_j(t))_+,\psi_j(t)-u_j(t))_H+ a_{\lambda}^{(M)}(\psi_j(t),u_j(t)-\psi_j(t))  = (g(t),u_j(t)-\psi_j(t))_H .
		 			\end{align*}
%		 			Note that
%		 			$$
%		 			\frac 1 \varepsilon ((\psi_j(t)-u_j(t))_+,\psi_j(t)-u_j(t))_H\geq 0.
%		 			$$ 
		 			
		 			We integrate between $t$ and $T$ and we use coercivity and $u_j(T)=\psi_j(T)$ to obtain  
		 		\begin{align*}
 	&	\frac 1 2 \Vert u_j(t)-\psi_j(t) \Vert_H^2  + \frac{\delta_1}2\int_t^T \Vert u_j(s)-\psi_j(s)\Vert_V^2 ds +\frac 1 \varepsilon\int_t^T \Vert(\psi_j(s)-u_j(s))_+\Vert^2_Hds\\
%		 		&	\leq \int_t^T \left \Vert \frac{\partial \psi_j(s)}{\partial t } \right \Vert_H \!\Vert u_j(s)-\psi_j(s) \Vert_H ds + \int_t^T  \Vert g(s) \Vert_H \Vert u_j(s)-\psi_j(s) \Vert_H ds\\&\qquad
%		 		 +\|a^{(M)}_\lambda\| \int_t^T  \Vert \psi_j(s) \Vert_V \Vert u_j(s)-\psi_j(s) \Vert_V ds\\
		 		&
		 			\leq    \frac 1 {2\zeta}      \int_t^T \left \Vert \frac{\partial \psi_j(s)}{\partial t } \right \Vert_H^2 ds+\frac \zeta 2 \int_t^T  \Vert u_j(s)-\psi_j(s) \Vert_H^2 ds +    \frac 1 {2\zeta}   \int_t^T  \Vert g(s) \Vert^2_H ds+ \frac \zeta 2  \int_t^T \Vert u_j(s)-\psi_j(s) \Vert_H^2 ds
		 		\\& \qquad+ \frac {\|a^{(M)}_\lambda\|\zeta} 2  \int_t^T   \Vert u_j(s)-\psi_j(s) \Vert^2_V ds +  \frac {\|a^{(M)}_\lambda\|}{2\zeta}  \int_t^T  \Vert \psi_j(s) \Vert_V^2ds,
		 		\end{align*}
		 			for any $\zeta>0$.
		 			%Recall that our function $\psi(t,x,y)=\phi(x-\bar{c}t,y)$, so that $\frac{\partial \psi}{\partial t }=-k \frac{\partial \phi}{\partial x }$ and $\frac{\partial }{\partial t } P_j \psi=P_j \frac{\partial }{\partial t } \psi $. From this we deduce that $  \Vert \frac{\partial \psi_j(s)}{\partial t } \Vert_H \leq \Vert \phi_j\Vert_V$.
		 			Recall  that $\psi_j=P_j\psi$,  and so $ \Vert\psi_j (t)\Vert^2_V\leq  \Vert\psi  (t)\Vert^2_V$. In the same way $\Vert \frac{\partial \psi_j(t)}{\partial t } \Vert^2_H\leq \Vert \frac{\partial \psi_j(t)}{\partial t } \Vert^2_V \leq \Vert \frac{\partial \psi(t)}{\partial t } \Vert^2_V    $ .
		 			Choosing $\zeta=\frac{\delta_1}{4+2\|a^{(M)}_\lambda\|}$ 
		 			after simple calculations we  deduce that there exists $C>0$ independent of $M$, $\varepsilon$ and $j$ such that 
		 			\begin{equation}\label{estim1}
		 			\begin{array}{c}
		 		\frac 1 4 	\Vert u_j(t)\Vert_H^2  +\frac{\delta_1}{8}\int_t^T \Vert u_j(s)
		 			 \Vert_V^2 ds +\frac 1 \varepsilon\int_t^T \Vert(\psi_j(s)-u_j(s))_+\Vert^2_Hds \\	\leq C \left(  \left  \Vert \frac{\partial \psi}{\partial t } \right\Vert^2_{L^2([t,T];V)} + \Vert g\Vert^2_{L^2([t,T];H)}  + \Vert\psi\Vert^2_{L^2([t,T];V)}  +\Vert\psi(T)\Vert^2_H \right).
		 			\end{array}
		 			\end{equation}
		 			
		 			\medskip
		 			
		 			We now go back to \eqref{ATPCP} and we take $v=\frac{\partial u_j}{\partial t}(t)$ so we get
%		 			$$
%		 			-\left( \frac{\partial u_j}{\partial t }(t),\frac{\partial u_j}{\partial t} (t) \right)_H + a_{\lambda}^{(M)}\left(u_j(t),\frac{\partial u_j}{\partial t}(t)\right)-\frac 1 \varepsilon \left(\left(\psi_j(t)-u_j(t)\right)_+,\frac{\partial u_j}{\partial t}(t)\right)_H= \left(g(t),\frac{\partial u_j}{\partial t}(t)\right)_H,
%		 			$$
%		 			so that
		 			$$
		 			-\left \Vert \frac{\partial u_j}{\partial t} (t)\right \Vert^2_H+ \bar{a}_\lambda\left(u_j(t),\frac{\partial u_j}{\partial t}(t)\right) + \tilde{a}^{(M)}\left(u_j(t),\frac{\partial u_j}{\partial t}(t)\right)-\frac 1 \varepsilon \left(\left(\psi_j(t)-u_j(t)\right)_+,\frac{\partial u_j}{\partial t}(t)\right)_H$$$$= \left(g(t),\frac{\partial u_j}{\partial t}(t)\right)_H.
		 			$$
		 			Note that
		 			\begin{align*}
		 			-\frac 1 \varepsilon \left((\psi_j(t)-u_j(t))_+,\frac{\partial u_j}{\partial t}(t)\right)_H&= \frac 1 \varepsilon \left((\psi_j-u_j)_+,\frac{\partial (\psi_j-u_j)}{\partial t}(t)\right)_H - \frac 1 \varepsilon \left((\psi_j(t)-u_j(t))_+,\frac{\partial \psi_j}{\partial t}(t)\right)_H\\
		 			&= \frac 1 {2\varepsilon} \frac d  {d t}\Vert (\psi_j-u_j)_+(t)\Vert^2_H - \frac 1 \varepsilon \left((\psi_j(t)-u_j(t))_+,\frac{\partial \psi_j}{\partial t}(t)\right)_H.
		 			\end{align*}
		 			Therefore, using the symmetry of $\bar a_\lambda$, we have
		 			$$-\left \Vert \frac{\partial u_j}{\partial t} (t)\right \Vert^2_H+ \frac 1 2 \frac{d }{d t}     \bar{a}_\lambda(u_j(t), u_j(t)) + \tilde{a}^{(M)} \left(u_j(t),\frac{\partial u_j}{\partial t}(t)\right)+\frac 1 {2\varepsilon} \frac \partial  {\partial t}\Vert (\psi_j(t)-u_j(t))_+\Vert^2_H $$ 
		 			$$- \frac 1 \varepsilon \left((\psi_j(t)-u_j(t))_+,\frac{\partial \psi_j}{\partial t}(t)\right)_H= \left(g(t),\frac{\partial u_j}{\partial t}(t)\right)_H.
		 			$$
		 			Integrating between $t$ and $T$, we obtain 
		 			\begin{align*}
		 			&\int_t^T\left \Vert \frac{\partial u_j}{\partial t}(s) \right \Vert^2_Hds+ \frac 1 2 \bar{a}_\lambda(u_j(t), u_j(t)) +\frac 1 {2\varepsilon} \Vert (\psi_j(t)-u_j(t))_+\Vert^2_H\\
		 			&= \int_t^T \tilde{a}^{(M)}\left(u_j(s),\frac{\partial u_j}{\partial s}(s)\right)ds +\frac 1 2 \bar{a}_\lambda(\psi_j(T),\psi_j(T)) -\int_t^T \frac 1 \varepsilon \left((\psi_j(s)-u_j(s)_+,\frac{\partial \psi_j}{\partial s}(s)\right)_Hds\\&-\int_t^T \left(g(s),\frac{\partial u_j}{\partial s}(s)\right)_Hds.
		 			\end{align*}
		 			Recall that $
		 			\bar{a}_\lambda(u_j(t), u_j(t)) \geq \frac{\delta_1} 2 \Vert u_j(t)\Vert_V^2$, $\bar a _\lambda(\psi_j(T),\psi_j(T))=\bar a _\lambda(\psi(T),\psi(T))\leq \|\bar a_\lambda\|\|\psi(T)\|_V^2$ and $
		 			|\tilde{a}^{(M)}(u,v)|\leq K_1  \into y\wedge M |\nabla u||v| d\m
		 	$,
		 so that, for every  $\zeta>0$,
		 			\begin{align*}
		 		&	\int_t^T \left \Vert \frac{\partial u_j}{\partial s}(s) \right \Vert^2_Hds+ \frac {\delta_1} 4  \Vert u_j(t)\Vert_V^2 +\frac 1 {2\varepsilon} \Vert (\psi_j(t)-u_j(t))_+\Vert^2_H\\
		 		&	\leq K_1 \int_t^Tds \into y\wedge M |\nabla u_j(s,.)|\left|\frac{\partial u_j}{\partial t}(s, .)\right|d\m+\frac{\|\bar a_\lambda\|} 2 \Vert\psi
		 		(T)\Vert_V^2   +\frac 1 \varepsilon \int_t^T \Vert (\psi_j(s)-u_j(s))_+\Vert_H  \left \Vert \frac{\partial \psi_j}{\partial s}(s)\right \Vert_Hds\\&\quad
		 		+\int_t^T \Vert g(s)\Vert_H  \left \Vert \frac{\partial u_j}{\partial s}(s)\right \Vert_Hds\\
		&	\leq \frac{K_1}{2\zeta} \int_t^T \Vert u_j(s)\Vert^2_Vds + \frac{K_1M}{2}\zeta \int_t^T \left \Vert \frac{\partial u_j}{\partial s}(s) \right \Vert_H^2ds +\frac{\|\bar a_\lambda\|} 2 \Vert\psi(T)\Vert_V^2 \\& \quad+  \frac {\zeta} { 2\varepsilon}
		 \int_t^T \Vert (\psi_j(s)-u_j(s))_+\Vert^2_H ds+  \frac 1 {2\zeta\varepsilon}\int_t^T \left \Vert \frac{\partial \psi_j}{\partial t}(s)\right \Vert_H^2ds+\frac 1 {2 \zeta} \int_t^T \Vert g(s)\Vert_H^2 ds +\frac \zeta {2} \int_t^T  \left \Vert \frac{\partial u_j}{\partial s}(s)\right \Vert^2_Hds.
		 			\end{align*}
		 			
		 		From \eqref{estim1}, we already know that 
		 			$$	\int_t^T \!\!\Vert u_j(s)
		 			\Vert_V^2 ds+\frac 1 \varepsilon\int_t^T\!\!                    \Vert(\psi_j(s)-u_j(s))_+\Vert^2_Hds \leq C \left(  \left  \Vert \frac{\partial \psi}{\partial t } \right \Vert^2_{L^2([t,T];V)} \!\!\!\!\!\!+ \Vert g\Vert^2_{L^2([t,T];H)}  + \Vert\psi\Vert^2_{L^2([t,T];V)} +\Vert\psi(T)\Vert^2_H  \right),$$
%		 			and 
%		 			\begin{equation*}
%		 			\int_t^T \Vert u_j(s)
%		 			) \Vert_V^2 ds 	\leq C \left(    \Vert \frac{\partial \psi}{\partial t } \Vert^2_{L^2([0,T];V)} + \Vert g\Vert^2_{L^2([0,T];H)}  + \Vert\psi\Vert^2_{L^2([0,T];V)}   \right).	\end{equation*}
		 			then we can finally deduce
		 			\begin{equation}\label{estim2}
		 			\begin{split}
		 			\int_t^T& \left \Vert \frac{\partial u_j}{\partial t} (s)\right \Vert^2_Hds+   \Vert u_j(t)\Vert_V^2 +\frac 1 {2\varepsilon} \Vert (\psi_j(t)-u_j(t))_+\Vert^2_H\\
		 		&	\leq C_{\varepsilon,M} \left(  \left  \Vert \frac{\partial \psi}{\partial t } \right\Vert^2_{L^2([t,T];V)} + \Vert g\Vert^2_{L^2([t,T];H)}  + \Vert\psi \Vert^2_{L^2([t,T];V)}  +\Vert\psi(T)\Vert^2_V \right),
		 			\end{split}
		 			\end{equation}   	
		 			where $C_{\varepsilon,M} $  is a constant which depends on $\varepsilon$ and $M$ but not on $j$.
		 			
		 			\medskip
		 			We will also need a further estimation.	If we denote $\bar{u}_j=\frac{\partial u_j}{\partial t}$ and we differentiate the equation \eqref{ATPCP} with respect to $t$ for a fixed $v$ independent of $t$, we obtain that $\bar{u}_j$ satisfies
		 			\begin{equation}\label{eqforubar}
		 			-\left(  \frac{\partial \bar{u}_j}{\partial t }(t), v\right)_H + a_{\lambda}^{(M)}(\bar{u}_j(t),v)- \frac 1 \varepsilon \left(  \bigg(\frac{\partial \psi_j}{\partial t }(t)-\bar{u}_j(t)\bigg)\mathbbm{1}_{\{ \psi_j(t) \geq u_j(t)\}},v   \right)_H=\left(\frac{\partial g}{\partial t}(t),v\right)_H, \quad v\in V_j.
		 			\end{equation}
		 		As regards the initial condition, from \eqref{ATPCP} computed in $t=T$,   for every $ v\in V_j$ we have
		 			\begin{align*}
		 			\left(  \frac{\partial u_j(T)}{\partial t},v     \right)_H &= a_{\lambda}^{(M)}(\psi(T),v)-(g(T),v)_H.
		 			\\	&=-\left( \mathcal{L}\psi(T),v\right)_H+\lambda\left( (1+y)\psi(T),v\right)_H+\left( (y\wedge M-y) \left( j_{\gamma,\mu} \frac{\psi(T)}{\partial x} +k_{\gamma,\mu}\frac{\psi(T)}{\partial y}\right) ,v\right)_H+\left(g(T),v\right)_H.
		 			\end{align*}
		 			Choosing $v= \frac{\partial u_j(T)}{\partial t}$, we deduce that 
		 			\begin{align*}
		 			\left \Vert  \frac{\partial u_j(T)}{\partial t}\right  \Vert_H& \leq C \left(     \Vert \mathcal L \psi(T) \Vert_H + \Vert(1+y)\psi(T)\Vert_H +  \Vert(y-M)_+\nabla   \psi(T)\Vert_H +\Vert g(T)\Vert_H \right)
		 		%	\\& \leq C\left(     \Vert y D\psi(T) \Vert_{H} +  \Vert (1+y) \nabla \psi(T) \Vert_{H}  + \Vert(1+y)\psi(T)\Vert_H +\Vert g(T)\Vert_H \right),
		 		\\ &\leq C  \left(  \Vert \psi(T) \Vert_{H^2(\O,\m)}+\Vert(1+y)\psi(T)\Vert_H +\Vert g(T)\Vert_H \right),
		 			\end{align*}
		 		where  we have used that$\Vert \mathcal L \psi(T) \Vert_H+ \Vert(y-M)_+\nabla   \psi(T)\Vert_H \leq C  \Vert \psi(T) \Vert_{H^2(\O,\m)}$ for a certain constant $C>0$.
%			that is, 
%			$\left \Vert  \frac{\partial u_j(T)}{\partial t}\right  \Vert_H\leq C  \left(  \Vert \psi(T) \Vert_{H^2(\O,\m)}+\Vert(1+y)\psi(T)\Vert_H +\Vert g(T)\Vert_H \right)$.

		 			We can take $v=\bar{u}_j(t)$ in \eqref{eqforubar} and we obtain
		 			$$
		 			-\left(  \frac{\partial \bar{u}_j}{\partial t }(t), \bar{u}_j(t)\right)_H + a_{\lambda}^{(M)}(\bar{u}_j(t),\bar{u}_j(t))- \frac 1 \varepsilon \left(  \bigg(\frac{\partial \psi_j}{\partial t }(t)-\bar{u}_j(t)\bigg)\mathbbm{1}_{\{ \psi_j (t)\geq u_j(t)\}},\bar{u}_j  (t) \right)_H
		 			=\left(\frac{\partial g}{\partial t}(t),\bar{u}_j(t)\right)_H,
		 			$$
		 		so that
		 		\begin{align*}
		 			-\frac 1 2\frac{d}{d t} \left\Vert   \bar{u}_j(t)\right\Vert_H^2 + \frac{\delta_1}{2}\Vert \bar{u}_j(t)\Vert_V^2&\leq \frac 1 \varepsilon \left(  \bigg(\frac{\partial \psi_j}{\partial t }(t)-\bar{u}_j(t)\bigg)\mathbbm{1}_{\{ \psi_j(t) \geq u_j\}},\bar{u}_j (t)  \right)_H+\left(\frac{\partial g}{\partial t}(t),\bar{u}_j(t)\right)_H\\
		 			&\leq \frac 1 \varepsilon \left(  \frac{\partial \psi_j}{\partial t }(t)\mathbbm{1}_{\{ \psi_j(t) \geq u_j\}},\bar{u}_j (t)  \right)_H+\left(\frac{\partial g}{\partial t}(t),\bar{u}_j(t)\right)_H.
		 		\end{align*}
		 			Integrating between $t$ and $T$, with the usual calculations, we obtain, in particular, that
		 			\begin{equation}\label{estim3}
		 			\begin{split}
		 &	\Vert \bar{u}_j(t)\Vert_H^2+\frac{\delta_1}{2}	 \int_t^T\Vert \bar{u}_j(s)\Vert_V^2 ds \\&\qquad  \leq C_\varepsilon \bigg( \Vert \psi(T) \Vert^2_{H^2(\O,\m)} +\Vert (1+y) \psi(T) \Vert^2_{H} +\Vert g(T)\Vert^2_H   + \left \Vert \frac{\partial \psi}{\partial t }\right \Vert^2_{L^2([t,T];H)}+ \left \Vert \frac{\partial g}{\partial t }\right \Vert^2_{L^2([t,T];H)} \bigg),
		 	\end{split}
		 			\end{equation}
		 			where $C_\varepsilon$ is a constant which depends on $\varepsilon$, but not on $j$.

		 			\item \textbf{Passage to the limit}
		 			
		 		Let $\varepsilon$ and $M$ be fixed. By passing to a subsequence, from \eqref{estim2} we can assume that $\frac{\partial u_j}{\partial t}$ weakly converges to a function $u_{\varepsilon,\lambda,M}'$ in $L^2([0,T];H)$. We deduce that, for any fixed $t\in [0,T]$, $u_j(t)$  weakly  converges in $H$ to 
		 			$$
		 			u_{\varepsilon,\lambda,M}(t)= \psi(T)-\int_t^T u_{\varepsilon,\lambda,M}'(s) ds. 
		 			$$ 
		 			Indeed, $u_{j}(t)$ is bounded in $V$, so the convergence is weakly in $V$. Passing to the limit in \eqref{estim3} we deduce that $\frac{\partial u_{\varepsilon,\lambda,M}}{\partial t } \in L^2([0,T];V)$.
		 			Moreover, from $\eqref{estim2}$, we have that $(\psi_j-u_j(t))^+ $ weakly converges in $H$ to a certain function $\chi(t) \in H$. 
		 			Now, for any $v\in V$ we know that there exists a sequence $(v_j)_{j\in \N}$  such that $v_j \in V_j $ for all $j \in\N $ and $\Vert v-v_j\Vert_V\rightarrow0$. 
		 			We have
		 			$$
		 			-\left( \frac{\partial u_j}{\partial t }(t),v_j   \right)_H + a_{\lambda}^{(M)}(u_j(t),v_j)_H -\frac 1 \varepsilon ((\psi_j(t)-u_j(t))_+,v_j)_H= (g(t),v_j)_H
		 			$$
		 			so, passing to the limit as $j\rightarrow \infty$, 
		 			$$
		 			-\left( \frac{\partial u_{\varepsilon,\lambda,M}}{\partial t }(t),v   \right)_H + a_\lambda(u_{\varepsilon,\lambda,M}(t),v)_H-\frac 1 \varepsilon (\chi(t),v)_H= (g(t),v)_H.
		 			$$
		 			We only have to note that $\chi(t)= (\psi(t)-u_{\varepsilon,\lambda,M}(t))_+$. In fact, $\psi_j(t)\rightarrow \psi(t)$ in $V$ and, up to a subsequence, $\mathbbm{1}_\mathcal{U}u_j(t)\rightarrow \mathbbm{1}_\mathcal{U}u_{\varepsilon,\lambda,M}(t)$ in $L^2(\mathcal{U},\m)$ for every open $\mathcal{U} $ relatively compact in $\mathcal{O}$. Therefore,  there exists a subsequence which converges a.e.  and this allows to conclude the proof. 
		 		\end{enumerate}
		 	\end{proof}
		 	We  now want to get rid of the truncated operator, that is to pass to the limit for $M\rightarrow  \infty$. In order to do this we need some  estimates on the function $u_{\varepsilon,\lambda,M}$ which are uniform in $M$.
		 	\begin{lemma}\label{lemma_estim}
		 		Assume that, in addition to the assumptions of Proposition \ref{penalizedcoercivetruncatedproblem},  
		 		$  \sqrt{1+y}\psi \in L^2([0,T];V),$    $\left| \frac{\partial \psi}{\partial t} \right|\leq \Psi$ with $\Psi\in L^2([0,T];V)$ and $g$ satisfies Assumption $\mathcal{H}^0$.
		 		Let $u_{\varepsilon,\lambda,M}$ be the solution of \eqref{PTCP}.
		 		Then,
		 		\begin{equation}\label{estim4}
		 		\begin{array}{c}
		 		\int_t^T\left \Vert \frac{\partial u_{\varepsilon,\lambda,M}}{\partial s}(s) \right \Vert^2_Hds+   \Vert u_{\varepsilon,\lambda,M}(t)\Vert_V^2 +\frac 1 {\varepsilon} \Vert (\psi(t)-u_{\varepsilon,\lambda,M}(t))_+\Vert^2_H\\
		 		\leq C  \left( \Vert 	\Psi \Vert_{L^2([0,T];V)}+ \|\sqrt{1+y}g\|_{L^2([0,T];H)} +\|\sqrt{1+y}\psi\|^2_{L^2([0,T];V)}   +\Vert\psi(T)\Vert_V^2\right),
		 		\end{array}
		 		\end{equation}   	
		 		where $C$ is a positive constant independent of $M$ and $\varepsilon$.
		 	\end{lemma}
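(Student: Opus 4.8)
The plan is to establish \eqref{estim4} through two a priori energy estimates, obtained by testing the penalized truncated equation \eqref{PTCP} against two carefully chosen functions, and then to combine them. Throughout I write $u:=u_{\varepsilon,\lambda,M}$; recall from Proposition \ref{penalizedcoercivetruncatedproblem} that $u\in L^2([0,T];V)$ with $\partial_t u\in L^2([0,T];V)$, so both test functions below are admissible. The whole point is that the two bounds must be made independent of the truncation parameter $M$ and of the penalization parameter $\varepsilon$; the role of the extra hypotheses $\sqrt{1+y}\,\psi\in L^2([0,T];V)$ and $\sqrt{1+y}\,g\in L^2([0,T];H)$ (Assumptions $\mathcal H^1$ and $\mathcal H^0$) is precisely to close the estimates in the presence of the unbounded coefficient $y$.

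First I would derive a weighted refinement of \eqref{estim1} by testing \eqref{PTCP} with $v=(1+y)(u-\psi)$ (rigorously, with $\phi_N(y)(u-\psi)$ for a bounded truncation $\phi_N\uparrow 1+y$, letting $N\to\infty$). The penalization contributes $-\tfrac1\varepsilon\big((\psi-u)_+,(1+y)(u-\psi)\big)_H=\tfrac1\varepsilon\int_{\mathcal O}(1+y)\,(\psi-u)_+^2\,d\m\ge 0$, so it can be discarded; this is what keeps the bound uniform in $\varepsilon$. The time derivative produces $\tfrac12\tfrac{d}{dt}\|\sqrt{1+y}\,(u-\psi)\|_H^2$, and the symmetric principal part of $\bar a_\lambda$ yields a term bounded below by $\int_{\mathcal O}(1+y)\,y|\nabla u|^2\,d\m\ge\int_{\mathcal O} y^2|\nabla u|^2\,d\m$. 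The genuinely new terms are those coming from the derivative of the weight in $\nabla\big((1+y)(u-\psi)\big)$ and from the drift $\tilde a^{(M)}$; each carries one extra power of $y$, and each is absorbed by Young's inequality using the $\sqrt{1+y}$-weighted norms of $\psi$, $g$ and $\Psi$. Integrating over $[t,T]$ and using $u(T)=\psi(T)$ then gives a bound, uniform in $M$ and $\varepsilon$, on $\int_t^T\!\int_{\mathcal O}(1+y)\big(y|\nabla u|^2+(1+y)u^2\big)\,d\m\,ds$, and in particular on $\int_t^T\!\int_{\mathcal O} y^2|\nabla u|^2\,d\m\,ds$.

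Next I would test \eqref{PTCP} with $v=\partial_t(u-\psi)=-\partial_t(\psi-u)$ and integrate over $[t,T]$. The key algebraic gain of this choice is that the penalization term collapses exactly: $-\tfrac1\varepsilon\big((\psi-u)_+,\partial_t(u-\psi)\big)_H=\tfrac1{2\varepsilon}\tfrac{d}{dt}\|(\psi-u)_+\|_H^2$, which integrates (using $(\psi-u)_+(T)=0$) to the clean boundary term $+\tfrac1{2\varepsilon}\|(\psi-u)_+(t)\|_H^2$ on the left, with no residual $\tfrac1\varepsilon$-contribution left to control; this is what preserves $\varepsilon$-uniformity at the second stage. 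The symmetric part $\bar a_\lambda(u,\partial_t u)=\tfrac12\tfrac{d}{dt}\bar a_\lambda(u,u)$ integrates to $\tfrac12\bar a_\lambda(u(t),u(t))\ge\tfrac{\delta_1}{4}\|u(t)\|_V^2$ by coercivity (Lemma \ref{coercivity}), and $\|\partial_t u\|_H^2$ stays on the left. The boundary datum gives $\tfrac12\bar a_\lambda(\psi(T),\psi(T))\le C\|\psi(T)\|_V^2$, while the terms $(\partial_s u,\partial_s\psi)_H$, $\bar a_\lambda(u,\partial_s\psi)$, $\tilde a^{(M)}(u,\partial_s\psi)$ and $(g,\partial_s u)_H$ are estimated by Young's inequality: the factors quadratic in $\partial_s u$ are absorbed into $\int_t^T\|\partial_s u\|_H^2$, and the remaining factors are controlled by $\|\Psi\|_{L^2([0,T];V)}$, $\|\sqrt{1+y}\,g\|_{L^2([0,T];H)}$ and the (uniform) bound on $\int_t^T\|u\|_V^2$ from Step A, using $|\partial_t\psi|\le\Psi$ and $y\wedge M\le y$ throughout.

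The main obstacle is the one remaining term, the drift contribution $\int_t^T\tilde a^{(M)}(u,\partial_s u)\,ds$, which is exactly the term responsible for the spurious factor $M$ in \eqref{estim2}. Here I would write, by Young's inequality, $|\tilde a^{(M)}(u,\partial_s u)|\le \tfrac14\|\partial_s u\|_H^2+C\int_{\mathcal O}(y\wedge M)^2|\nabla u|^2\,d\m$; the first term is absorbed on the left, and since $y\wedge M\le y$ the second is dominated by $C\int_{\mathcal O} y^2|\nabla u|^2\,d\m$, whose time integral is bounded uniformly in $M$ and $\varepsilon$ by the weighted estimate of Step A. This is precisely the point where the degeneracy and unboundedness of the coefficients force us out of the plain energy space: the first-order term cannot be absorbed into $\|u\|_V^2+\|\partial_s u\|_H^2$ alone, and one must pay for it with the higher-weight control $\int y^2|\nabla u|^2$, which is available only thanks to the $\sqrt{1+y}$-weighted hypotheses; the truncation $y\wedge M$ is introduced solely to keep every quantity finite while the derivation is carried out, the final constant being $M$-free. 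Choosing all Young parameters small enough to absorb the $\|\partial_s u\|_H^2$- and $\|u(t)\|_V^2$-terms and collecting the bounds yields \eqref{estim4}.
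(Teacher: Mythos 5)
Your Step A is essentially the paper's first stage: the paper tests \eqref{PTCP} with $v=(u-\psi)\varphi_n$, $\varphi_n=1+y\wedge n$, and lets $n\to\infty$ by monotone convergence to obtain exactly the uniform bound on $\int_t^T\Vert y|\nabla u(s)|\Vert_H^2\,ds$ that you describe; your treatment of the drift term $\tilde a^{(M)}(u,\partial_s u)$ in Step B (Young's inequality, $y\wedge M\le y$, and the weighted bound from Step A) is also the paper's argument. The gap is in your choice of test function $v=\partial_t(u-\psi)$ in Step B. It does make the penalization term collapse to $\tfrac{1}{2\varepsilon}\tfrac{d}{dt}\Vert(\psi-u)_+\Vert_H^2$ with no residual, but it introduces the extra term $\int_t^T a^{(M)}_\lambda\bigl(u(s),\partial_s\psi(s)\bigr)ds$, whose principal part
\[
\bar a(u,\partial_s\psi)=\frac12\into y\Bigl(u_x(\partial_s\psi)_x+\rho\sigma u_x(\partial_s\psi)_y+\rho\sigma u_y(\partial_s\psi)_x+\sigma^2 u_y(\partial_s\psi)_y\Bigr)d\m
\]
involves $\nabla\partial_s\psi$. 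The pointwise domination $|\partial_t\psi|\le\Psi$ gives no control on $\nabla\partial_t\psi$, so this term cannot be ``controlled by $\Vert\Psi\Vert_{L^2([0,T];V)}$'' as you assert; the best available bound is $\Vert u\Vert_V\Vert\partial_s\psi\Vert_V$, which puts $\Vert\partial_t\psi\Vert_{L^2([0,T];V)}$ into the constant. That quantity is finite under the hypotheses of Proposition \ref{penalizedcoercivetruncatedproblem}, but it is not among the four quantities on the right-hand side of \eqref{estim4}, and it is precisely the quantity that is \emph{not} uniformly controlled in the regularization step of the proof of Theorem \ref{penalizedcoerciveproblem}, where only $\Psi_n\to\Psi$ in $L^2([0,T];V)$ is available while $\Vert\partial_t\psi_n\Vert_{L^2([0,T];V)}$ may blow up. So your estimate, as written, neither proves the lemma as stated nor survives the passage to the limit where it is used; integrating by parts to move the gradient off $\partial_s\psi$ is not an option either, since $u$ is only in $V$.

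The paper avoids this by testing with $v=\partial_t u$ instead. The price is the residual penalization term $\tfrac1\varepsilon\bigl((\psi-u)_+,\partial_t\psi\bigr)_H\le\tfrac1\varepsilon\bigl((\psi-u)_+,\Psi\bigr)_H$, which is then controlled by the additional device of taking $v=\Psi(t)$ in \eqref{PTCP} itself (inequality \eqref{estavecv1}), giving a bound by $\Vert\partial_t u\Vert_H\Vert\Psi\Vert_H+\Vert a^{(M)}_\lambda\Vert\,\Vert u\Vert_V\Vert\Psi\Vert_V+\Vert g\Vert_H\Vert\Psi\Vert_H$, all of which is absorbable. You would need to adopt this device, or find another way to handle the coupling between the penalization and $\partial_t\psi$ that never differentiates $\partial_t\psi$ in space.
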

		 	\begin{proof}
			To simplify the notation we denote $u_{\varepsilon,\lambda,M}$ by $u$ and $u_{\varepsilon,\lambda,M}-\psi=u-\psi$ by $w$.
		 	For $n\geq 0$, 
			 define  $\varphi_n(x,y)=1+y\wedge n$. Since  $\varphi_n$ and its derivatives are bounded, if $v\in V$,  we have $v\varphi_n\in V$. 
		Choosing  $v= (u-\psi)\varphi_n=w\varphi_n$ in \eqref{PTCP}, with simple passages we get
%		 $$
%		 		-\left( \frac{\partial u}{\partial t },(u-\psi)\varphi_n   \right)_H + a^{(M)}_{\lambda}(u,(u-\psi)\varphi_n)
%				+ (\zeta_\varepsilon(u),(u-\psi)\varphi_n)_H= (g,(u-\psi)\varphi_n)_H,
%$$
%		 	so that
\begin{align*}
		 			 	&	-\left( \frac{\partial w}{\partial t }(t),w(t)\varphi_n   \right)_H  + a^{(M)}_{\lambda}(w(t),w(t)\varphi_n)
							    + (\zeta_\varepsilon(t,u(t)),w(t)\varphi_n)_H \\&\qquad    = \left(\frac{\partial \psi}{\partial t }(t)+g(t),w(t)\varphi_n\right)_H- a^{(M)}_{\lambda}(\psi(t),w(t)\varphi_n).
\end{align*}
		 	With the notation  $\varphi'_n=\frac{\partial \varphi_n}{\partial y}= \ind{\{y\leq n\}}$, we have
		 	\begin{align*}
  & a^{(M)}_{\lambda}(w(t),w(t)\varphi_n)= 
       \into\frac{y}{2}\left[\left(\frac{\partial w}{\partial x}(t)\right)^2+
		 		2\rho\sigma \frac{\partial w}{\partial x}(t)\frac{\partial w}{\partial y}(t)+\sigma^2 \left(\frac{\partial w}{\partial y}(t)\right)^2
		 		\right ]\varphi_n d\m	+      \lambda \into (1+y)w^2(t)\varphi_nd\m\\
				&\qquad+\into \frac{y}{2}\left(\rho\sigma \frac{\partial w}{\partial x}(t)+
		 		\sigma^2\frac{\partial w}{\partial y}(t)\right)w(t)\varphi'_nd\m
		 		+\into y\wedge M\left(\frac{\partial w}{\partial x}(t)j_{\gamma,\mu}+
		 		\frac{\partial w}{\partial y}(t)k_{\gamma,\mu}\right)w(t)\varphi_nd\m\\
		 		&\qquad \geq\delta_1 \into y\left|\nabla w(t)
		 			\right|^2 \varphi_n d\m+\lambda \into (1+y)w^2(t)\varphi_nd\m
					-K_1\into y\left|\nabla w(t)\right|| w(t)|\varphi_n d\m\\
					&\qquad
					-K_2\into y\left|\nabla w(t)\right|| w(t)|\ind{\{y\leq n\}}d\m,
					\end{align*}
where $K_2=\frac{\sqrt{\rho^2\sigma^2+\sigma^4}}{2}$. Note that, if $n=0$, the last term vanishes, and that, for all $n>0$,
\[
\into y\left|\nabla w(t)\right| |w(t)|\ind{\{y\leq n\}}d\m\leq \Vert w(t)\Vert _V^2.
\]
Therefore, for all $\zeta>0$,
\begin{align*}
 &  a^{(M)}_{\lambda}(w(t),w(t)\varphi_n)
			\geq\delta_1 \into y\left|\nabla w(t)
		 			\right|^2 \varphi_n d\m+\lambda \into (1+y)w^2(t)\varphi_nd\m\\
					&\qquad
					-K_1\into y\left(\frac{\zeta}{2}\left|\nabla w(t)\right|^2
					    +\frac{1}{2\zeta}|w(t)|^2\right)\varphi_n d\m				
					-K_2\Vert w(t)\Vert _V^2\\
					&\qquad \geq\left( \delta_1 -\frac{K_1\zeta}{2}\right)\into y\left|\nabla w(t)
		 			\right|^2 \varphi_n d\m+\left(\lambda -\frac{K_1}{2\zeta}\right)\into (1+y)w^2(t)\varphi_nd\m			
					-K_2\Vert w(t)\Vert _V^2\\
					&\qquad\geq 
					\frac{\delta_1}{2}\into\left( y\left|\nabla w(t)
		 			\right|^2+(1+y)w^2(t)\right)\varphi_nd\m-K_2\Vert w(t)\Vert _V^2,
					\end{align*}
where, for the last inequality, we have chosen $\zeta=\delta_1/K_1$ and used the inequality 
$\lambda\geq \frac{\delta_1}{2}+\frac{K_1^2}{2\delta_1}$. Again, in the case $n=0$ the last term 
on the righthand side can be omitted.

Hence, we have, with the notation $\Vert v\Vert ^2_{V,n}=\into\left( y\left|\nabla v
		 			\right|^2+(1+y)v^2\right)\varphi_nd\m$,		 		
		 	\begin{align*}
		 	&\-\frac{1}{2}\frac{d}{dt}\into w^2(t)\varphi_n d\m+
		 			\frac{\delta_1}{2}\Vert w(t)\Vert ^2_{V,n}
							 			+\frac{1}\varepsilon \into (-w(t))_+^2\varphi_nd\m\leq  	\\&\qquad	
				\left(g(t)+ \frac{\partial \psi}{\partial t }(t),w(t)\varphi_n   \right)_H
					        -a^{(M)}_{\lambda}(\psi(t),w(t)\varphi_n)
		 			+K_2\Vert w(t)\Vert _V^2.
							 	\end{align*}
In the case $n=0$, the inequality reduces to
$$
		 		-\frac{1}{2}\frac{d}{dt}\into w^2(t)d\m+
		 			\frac{\delta_1}{2}\Vert w(t)\Vert ^2_{V}
							 			+\frac{1}\varepsilon \into (\psi-u)_+^2d\m
		 			\leq  		
				\left(g(t)+ \frac{\partial \psi}{\partial t }(t),w(t)   \right)_H
					        -a^{(M)}_{\lambda}(\psi(t),w(t)).
$$
							%	Note that it is easy to check that, for all $v$, $w\in V$,
							%	\[
							%	|a^{(M)}_{\lambda}(w,v\varphi_n)|\leq K_3 \Vert w\Vert _{V,n}\Vert v\Vert _{V,n}, \mbox{ with } K_3=\delta_0+K_1+K_2+\lambda.
							%	\]
Now, integrate from  $t$ to $T$ and use $u(T)=\psi(T)$ to derive
		 	\begin{equation}\label{calcul}
		 	\begin{split}
		 	&	\frac{1}{2}\into w(t)^2\varphi_n d\m+
		 	\frac 	{\delta_1} 2\int_t^T ds \Vert w(s)\Vert ^2_{V,n}
		 	+\frac{1}\varepsilon\int_t^T ds \into (-w(s))_+^2\varphi_n d\m\\	&\leq\int_t^T\left(g(s)+ \frac{\partial \psi}{\partial t }(s),w(s)\varphi_n   \right)_Hds+\left|\int_t^Ta^{(M)}_{\lambda}(\psi(s),w(s)\varphi_n) ds\right|  +K_2\int_t^T\Vert  w(s) \Vert _V^2ds,
		 		 	\end{split}
		 \end{equation}
and, in the case $n=0$,
\begin{equation}\label{calcul0}
\begin{split}
&\frac{1}{2}\Vert w(t)\Vert _H^2+
		 	\frac 	{\delta_1} 2\int_t^T  \Vert w(s)\Vert ^2_{V}ds
		 	+\frac{1}\varepsilon\int_t^Tds \into  (-w(s))_+^2 d\m	\\&\quad \leq\int_t^T\!\left(g(s)+ \frac{\partial \psi}{\partial t }(s),w(s)
			  \right)_Hds+\int_t^T\! \left|a^{(M)}_{\lambda}(\psi(s),w(s))\right|  ds.
 \end{split}
\end{equation}

		 We have, for all $\zeta_1>0$,
		 \begin{align*}
		 &\int_t^T\bigg(g(s)+ \frac{\partial \psi}{\partial t }(s),w(s)\varphi_n   \bigg)_Hds\leq \frac {\zeta_1}2 \int_t^Tds\into|w(s)|^2\varphi_nd\m+\frac 1 {2\zeta_1}\int_t^Tds\into\left|g(s)+ \frac{\partial \psi}{\partial t }(s)\right|^2\varphi_n d\m\\
		& \qquad\leq \frac {\zeta_1}2 \int_t^Tds\into|w(s)|^2\varphi_nd\m+\frac 1 {\zeta_1}\|\sqrt{1+y}g\|_{L^2([t,T];H)}^2+\frac 1 {\zeta_1}
	\left	\|\sqrt{1+y}\frac{\partial\psi}{\partial t}\right\|_{L^2([t,T];H)}^2.
		 \end{align*}
		 Moreover,	 it is easy to check that, for all $v_1$, $v_2\in V$,
		 \[
		 |a^{(M)}_{\lambda}(v_1,v_2\varphi_n)|\leq K_3 \Vert v_1\Vert _{V,n}\Vert v_2\Vert _{V,n}, \mbox{\qquad with } K_3=\delta_0+K_1+K_2+\lambda,
		 \]
		 so that, for any $\zeta_2>0$,
		 \begin{align*}
		 \int_t^T |a^{(M)}_{\lambda}(\psi(s),w(s)\varphi_n)| ds&\leq K_3\int_t^Tds\Vert \psi(s)\Vert _{V,n}\Vert w(s)\Vert _{V,n} \leq  \frac {K_3\zeta_2}2\int_t^Tds\Vert w(s)\Vert ^2_{V,n}
                        +\frac {K_3}{2\zeta_2}\int_t^Tds\Vert \psi(s)\Vert ^2_{V,n}.
		 \end{align*}
		 Now, if we chose  $\zeta_1=K_3\zeta_2=\delta_1/4$
		 and we go back to \eqref{calcul} and \eqref{calcul0}, using $\left|\frac{\partial \psi}{\partial t}\right|\leq \Psi$ we get
		 \begin{equation}\label{calculbis}
		 	\begin{split}
		 	&	\frac{1}{2}\into w^2(t)\varphi_n d\m+
		 	\frac 	{\delta_1} 4\int_t^T  \Vert w(s)\Vert ^2_{V,n}ds
		 	+\frac{1}\varepsilon\int_t^T ds \into (-w(s))_+^2\varphi_n d\m\\	&\leq
			\frac 4 {\delta_1}\left(\|\sqrt{1+y}g\|_{L^2([t,T];H)}^2+\|\sqrt{1+y}\Psi\|_{L^2([t,T];H)}^2\right)
			+\frac {2K_3^2}{\delta_1}\int_t^T\Vert \psi(s)\Vert ^2_{V,n}ds+K_2\Vert  w \Vert _{L^2([t,T];H)}^2ds,\\
			&\leq
			\frac 4 {\delta_1}\left(\|\sqrt{1+y}g\|_{L^2([t,T];H)}^2+\|\sqrt{1+y}\Psi\|_{L^2([t,T];H)}^2\right)
			+\frac {4K_3^2}{\delta_1}\left\|\sqrt{1+y}\psi\right\|^2_{L^2([t,T];V)}+K_2\Vert  w \Vert _{L^2([t,T];H)}^2,
		 		 	\end{split}
		 \end{equation}
where the last inequality follows from the estimate $\Vert v\Vert ^2_{V,n}\leq 2\Vert \sqrt{1+y}v\Vert _V^2$, and, in the case $n=0$,
\begin{equation}\label{calcul0bis}
\frac{1}{2}\Vert w(t)\Vert _H^2+
		 	\frac 	{\delta_1} 4\int_t^T \!\! \Vert w(s)\Vert ^2_{V}
		 ds	+\frac{1}\varepsilon\int_t^T \!\!\!ds\! \into \!(-w(s))_+^2 d\m	\leq
			\frac 4 {\delta_1}\left(\|g\|_{L^2([t,T];H)}^2+  \! \|\Psi\|_{L^2([t,T];H)}^2\right)
			+\frac {2K_3^2}{\delta_1}\| \psi\|^2_{L^2([t,T];V)}.
\end{equation}
From \eqref{calcul0bis} recalling that $w=u-\psi$ we deduce 
\begin{equation}\label{calcul0tris}
\begin{split}
\int_t^T\!\!\|u(s)\|_V^2ds&\leq \int_t^T\!\!2( \|w(s)\|_V^2+\|\psi(s)\|_V^2)ds \leq \frac{32}{\delta_1^2}\left(\|g\|_{L^2([t,T];H)}^2+\|\Psi\|_{L^2([t,T];H)}^2\right)+\left(\frac {16K_3^2}{\delta_1^2}+2\right)\| \psi\|^2_{L^2([t,T];V)}.
\end{split}
\end{equation}
Moreover, combining \eqref{calculbis} and \eqref{calcul0bis}, we have
\begin{equation*}
		 	\begin{split}
		 	&	\frac{1}{2}\into w^2(t)\varphi_n d\m+
		 	\frac 	{\delta_1} 4\int_t^T  \Vert w(s)\Vert ^2_{V,n}ds
		 	+\frac{1}\varepsilon\int_t^T ds \into (-w(s))_+^2\varphi_n d\m\\	&\leq
			\left(\frac 4 {\delta_1}+ \frac{16K_2}{\delta_1^2}\right)\left(\|\sqrt{1+y}g\|_{L^2([t,T];H)}^2+\|\sqrt{1+y}\Psi\|_{L^2([t,T];H)}^2\right)
			+\frac {4K_3^2}{\delta_1}\left(1+\frac{2K_2}{\delta_1}\right)\|\sqrt{1+y}\psi\|^2_{L^2([t,T];V)}.
			%+K_2\int_t^T\Vert  u(s)-\psi(s) \Vert _V^2ds
		 		 	\end{split}
		 \end{equation*}
	In particular,
		\begin{equation*}
		\begin{split}
&\int_t^Tds\into y|\nabla u(s)|^2\varphi_n d\m \leq \int_t^T\Vert u(s)\Vert ^2_{V,n }ds\leq 2 \int_t^T  \Vert w(s)\Vert ^2_{V,n}ds+ 2\int_t^T ds\Vert \psi(s)\Vert ^2_{V,n}ds\\
&\leq \frac 8 {\delta_1}  \left(\frac 4 {\delta_1}+ \frac{16K_2}{\delta_1^2}\right)\left(\|\sqrt{1+y}g\|_{L^2([t,T];H)}^2+\|\sqrt{1+y}\Psi\|_{L^2([t,T];H)}^2\right)
\\&\quad+\left(\frac {32K_3^2}{\delta_1^2}\left(1+\frac{2K_2}{\delta_1}\right)+4\right)\|\sqrt{1+y}\psi\|^2_{L^2([t,T];V)} 
\end{split}
		\end{equation*}
		and, 
%		recalling that $\|\sqrt{y}|\nabla u| \|_{H}\leq \infty$ and 
	by	using the Monotone convergence theorem, we deduce 
		\begin{equation}\label{uchapeau}
	\int_t^T\|y|\nabla u(s)| \|_{H}^2 ds\leq K_4\left(\|\sqrt{1+y}g\|_{L^2([t,T];H)}^2+\|\sqrt{1+y}\Psi\|_{L^2([t,T];H)}^2+\|\sqrt{1+y}\psi\|^2_{L^2([t,T];V)} \right),
		\end{equation}
where $K_4=\frac 8 {\delta_1}  \left(\frac 4 {\delta_1}+ \frac{16K_2}{\delta_1^2}\right) \vee\left(\frac {32K_3^2}{\delta_1^2}\left(1+\frac{2K_2}{\delta_1}\right)+4  \right)		$.

We are now in a position to prove \eqref{estim4}.  Taking $v=\frac{\partial u}{\partial t }$ in \eqref{PTCP}, we have
$$
		 		-\left \Vert \frac{\partial u}{\partial t}(t) \right \Vert^2_H+ \bar{a}_\lambda\left(u(t),\frac{\partial u}{\partial t}(t)\right) +
				   \tilde{a}^{(M)}\left(u(t),\frac{\partial u}{\partial t}(t)\right)-\frac{1}{\varepsilon}\left((\psi(t)-u(t))_+,\frac{\partial u}{\partial t}(t)\right)_H
		 		= \left(g(t),\frac{\partial u}{\partial t}(t)\right)_H.
		 		$$
Note that, since $\bar a_\lambda$ is symmetric, $\frac{d}{dt}\bar{a}_\lambda\left(u(t),u(t)\right)=2\bar{a}_\lambda\left(u(t),\frac{\partial u}{\partial t}(t)\right)$.
On the other hand, 
\begin{align*}
\left((\psi(t)-u(t))_+,\frac{\partial u}{\partial t}\right)_H &=-\frac{1}{2}\frac{d}{dt}\Vert (\psi(t)-u(t))_+\Vert _H^2+\left((\psi(t)-u(t))_+,\frac{\partial \psi}{\partial t}(t)\right)_H,
\end{align*}
 so that
\begin{align*}
		 	&	\left\Vert \frac{\partial u}{\partial t} (t)\right \Vert^2_H-\frac{1}{2}\frac{d}{dt}\bar{a}_\lambda\left(u(t),u(t)\right) 
				   -\frac{1}{2\varepsilon}\frac{d}{dt}\Vert (\psi(t)-u(t))_+\Vert _H^2\\&\qquad
		 		 = \tilde{a}^{(M)}\left(u(t),\frac{\partial u}{\partial t}(t)\right) -\left(g(t),\frac{\partial u}{\partial t}(t)\right)_H
				- \frac 1{\varepsilon}\left((\psi(t)-u(t))_+,\frac{\partial \psi}{\partial t}(t)\right)_H\\
				& \qquad\leq \left|\tilde{a}^{(M)}\left(u(t),\frac{\partial u}{\partial t}(t)\right)\right|+
			   \Vert g(t)\Vert _H\left\Vert \frac{\partial u}{\partial t}(t) \right\Vert_H+\frac 1{\varepsilon}\left((\psi(t)-u(t)_+,\Psi (t)\right)_H\\
			   & \qquad\leq \left(K_1 \left\Vert y|\nabla u(t)|\right\Vert_H+\Vert g(t)\Vert _H \right) \left\Vert \frac{\partial u}{\partial t}(t)\right\Vert_H
			       +\frac 1{\varepsilon}\left((\psi(t)-u(t))_+,\Psi(t) \right)_H.
\end{align*}		
		Moreover, if we take $v=\Psi(t)$ in \eqref{PTCP}, we get
		 		$$
		 		-\left( \frac{\partial u}{\partial t }(t),\Psi(t) \right)_H + a_{\lambda}^{(M)}(u(t),\Psi(t))-
				\frac 1{\epsilon} \left((\psi(t)-u(t))_+,\Psi(t)\right)_H= \left(g(t),\Psi(t) \right)_H,
		 		$$
		 		so that
		 		\begin{equation}\label{estavecv1}
		 		\begin{split}
	 &	\frac{1}{\varepsilon}\left( (\psi(t)-u(t))_+,\Psi(t)\right)_H \leq  \left \Vert    \frac{\partial u}{\partial t } (t)\right \Vert_H \Vert \Psi(t)\Vert_H+ \Vert a^{(M)}_\lambda\Vert 
	           \Vert u(t) \Vert_V\Vert \Psi(t)\Vert_V+\Vert g(t)\Vert_H\Vert \Psi(t)\Vert_H.	\end{split}
		 		\end{equation}
		 		 		Therefore,
		 		\begin{align*}
		 	&	\left\Vert \frac{\partial u}{\partial t} (t)\right \Vert^2_H-\frac{1}{2}\frac{d}{dt}\bar{a}_\lambda\left(u(t),u(t)\right) 
				   -\frac{1}{2\varepsilon}\frac{d}{dt}\Vert (\psi(t)-u(t))_+\Vert _H^2
		 				\\&\qquad	 \leq	 \left(K_1 \left\Vert y|\nabla u(t)|\right\Vert_H+\Vert g(t)\Vert _H +\Vert \Psi(t)\Vert_H\right) 
								\left\Vert \frac{\partial u}{\partial t} (t)\right\Vert_H
			   + \Vert a^{(M)}_\lambda\Vert 
	           \Vert u (t)\Vert_V\Vert \Psi(t)\Vert_V+\Vert g(t)\Vert_H\Vert \Psi(t)\Vert_H,
\end{align*}
hence
\begin{align*}
		 	&	\frac{1}{2}\left\Vert \frac{\partial u}{\partial t}(t) \right \Vert^2_H-\frac{1}{2}\frac{d}{dt}\bar{a}_\lambda\left(u(t),u(t)\right) 
				   -\frac{1}{2\varepsilon}\frac{d}{dt}\Vert (\psi(t)-u(t))_+\Vert _H^2
		 				\\&	\qquad	 \leq \frac{1}{2}\left(K_1 \left\Vert y|\nabla u(t)|\right\Vert_H+\|g(t)\Vert _H +\Vert \Psi(t)\Vert_H\right)^2 		
			   +\Vert a^{(M)}_\lambda\Vert 
	           \Vert u(t) \Vert_V^2 \Vert \Psi(t)\Vert_V^2+\Vert g(t)\Vert_H\Vert \Psi(t)\Vert_H.
\end{align*}
		 		Integrating between $t$ and $T$, we get, 
		 		\begin{align*}
		 	&	\frac{1}{2}\left \Vert \frac{\partial u}{\partial s} \right \Vert^2_{L^2([t,T];H)}+ 
				\frac 1 2 \bar{a}_\lambda\left(u(t), u(t)\right) +\frac 1 {2\varepsilon} \Vert (\psi(t)-u(t))_+\Vert^2_H
		 		\leq \frac 1 2 \bar{a}_\lambda(\psi(T),\psi(T))+2\|g\Vert ^2_{L^2([t,T];H)}\\&\qquad  +
				     2\Vert \Psi\Vert_{L^2([t,T];H)}^2 	+\frac{3K_1^2}{2} \left\Vert y|\nabla u|\right\Vert^2_{L^2([t,T];H)}
				        			   +\frac{ \|a^{(M)}_\lambda\|}2\Vert u \Vert_{L^2([t,T];V)}+\frac{ \|a^{(M)}_\lambda\|}{2}\Vert \Psi\Vert_{L^2([t,T];V)},
		 		\end{align*}
		 so, recalling that $\bar{a}_\lambda(u(t),u(t)\geq\delta_1\into y|\nabla u(t)|^2d\m+\lambda\into(1+y)u^2d\m\geq (\delta_1\wedge\lambda) \|u(t)\|_V^2$,
		 \begin{align*}
		 		 		\frac{1}{2}&\left \Vert \frac{\partial u}{\partial s} \right \Vert^2_{L^2([t,T];H)}+ 
		 		 		\frac {\delta_1\wedge\lambda} 2 \|u(t)\|_V^2 +\frac 1 {2\varepsilon} \Vert (\psi(t)-u(t))_+\Vert^2_H\\&
		 		 		\leq \frac {\|\bar{a}_\lambda\|} 2 \|\psi(T)\|_V^2+2\|g\Vert ^2_{L^2([t,T];H)} +
		 		 		2\Vert \Psi\Vert_{L^2([t,T];H)}^2 \\&\quad 	+\frac{3K_1^2}{2} \left\Vert y|\nabla u|\right\Vert^2_{L^2([t,T];H)}
		 		 		+\frac{ \Vert a^{(M)}_\lambda\Vert }2\Vert u \Vert_{L^2([t,T];V)}+\frac{ \Vert a^{(M)}_\lambda\Vert }{2}\Vert \Psi\Vert^2_{L^2([t,T];V)}\\&\leq  \frac {\|\bar{a}_\lambda\|} 2 \|\psi(T)\|_V^2+2\|g\Vert ^2_{L^2([t,T];H)} +
		 		 		2\Vert \Psi\Vert_{L^2([t,T];H)}^2 \\&\quad +
		 		 	\frac{3K_1^2}{2}	K_4\left(\|\sqrt{1+y}g\|_{L^2([t,T];H)}^2+\|\sqrt{1+y}\Psi\|_{L^2([t,T];H)}^2+\|\sqrt{1+y}\psi\|^2_{L^2([t,T];V)}\right )\\
		 		 	&\quad +\frac{ \Vert a^{(M)}_\lambda\Vert }2\left(\frac{32}{\delta_1^2}\left(\|g\|_{L^2([t,T];H)}^2+\|\Psi\|_{L^2([t,T];H)}^2\right)+\left(\frac {16K_3^2}{\delta_1^2}+2\right)\| \psi\|^2_{L^2([t,T];V)}\right)+\frac{ \Vert a^{(M)}_\lambda\Vert }{2}\Vert \Psi\Vert^2_{L^2([t,T];V)},
		 \end{align*}
		 where the last inequality follows from \eqref{calcul0tris} and \eqref{uchapeau}.
		 Rearranging the terms, 
		  we deduce that there exists a constant $C>0$ independent of $M$ and $\varepsilon$ such that 
		   \begin{align*}
		   \frac{1}{2}\left \Vert \frac{\partial u}{\partial s} \right \Vert^2_{L^2([t,T];H)}&+ 
		   \frac {\delta_1\wedge\lambda} 4 \|u(t)\|_V^2 +\frac 1 {2\varepsilon} \Vert (\psi(t)-u(t))_+\Vert^2_H
		  \\&\leq C\left(\|\sqrt{1+y}g\|_{L^2([t,T];H)}^2+\|\Psi\|_{L^2([t,T];V)}^2+\left\|\sqrt{1+y}\psi\right\|^2_{L^2([t,T];V)}+ \|\psi(T)\|_V^2 \right),
		   \end{align*}
		   which concludes the proof.
		 	\end{proof}
		 	\begin{proof}[Proof of Theorem \ref{penalizedcoerciveproblem}: existence]
		 		Assume for a first moment that we have the further assumptions $ \psi(T) \in H^2(\O,\m)$, $ (1+y)\psi(T) \in H$, $\frac{\partial \psi}{\partial t}\in L^2([0,T];V) $  and $\frac{\partial g}{\partial t}\in L^2([0,T];H)$.
		 		Thanks to $\eqref{estim4}$ we can repeat the same arguments as in the proof of Proposition \ref{penalizedcoercivetruncatedproblem} in order to pass to the limit in $j$, but this time as $M\rightarrow \infty $.  
%		 		
%		 		In fact, up to pass to a subsequence, from \eqref{estim4} we can suppose that $\frac{\partial u_{\varepsilon,\lambda,M}}{\partial t}$ weakly converges to a function $u_{\varepsilon,\lambda}'$ in $L^2([0,T];H)$. We deduce that, for any fixed $t\in [0,T]$, $u_j(t)$ converges weakly in $H$ to 
%		 		$$
%		 		u_{\varepsilon,\lambda}(t)= \psi(T)-\int_t^T u_{\varepsilon,\lambda}'(s) ds. 
%		 		$$ 
%		 		Indeed, $u_{\varepsilon,\lambda,M}(t)$ is bounded in $V$, so the convergence is weakly in $V$.
%		 		Moreover, again from $\eqref{estim4}$ and from the fact that there is a subsequence of $u_{\varepsilon,\lambda,M}(t)$ which converges a.e. to $u_{\lambda,M}(t)$, we get that $(\psi(t)-u_{\varepsilon,\lambda,M}(t))_+ $ weakly converges in $H$ to $(\psi(t)-u_{\varepsilon,\lambda}(t))_+ $. 
%		 		We have
%		 		$$
%		 		-\left( \frac{\partial u_{\varepsilon,\lambda,M}}{\partial t }(t),v   \right)_H + a_{\lambda}^{(M)}(u_{\varepsilon,\lambda,M}(t),v)+ (\zeta_\varepsilon(u_{\varepsilon,\lambda,M})(t),v)_H= (g(t),v)_H
%		 		$$
%		 		and, passing to the limit as $M\rightarrow \infty$, we get 
Therefore, we deduce the existence of a function $u_{\varepsilon,\lambda}\in L^2([0,T];V)$ with $\frac{\partial u_{\varepsilon,\lambda}}{\partial t}\in L^2([0,T];H)$ and such that
		 		$$
		 		-\left( \frac{\partial u_{\varepsilon,\lambda}}{\partial t }(t),v   \right)_H + a_\lambda(u_{\varepsilon,\lambda}(t),v)_H-\frac 1 \varepsilon ((\psi(t)-u_{\varepsilon,\lambda}(t))_+,v)_H= (g(t),v)_H.
		 		$$
 The estimates \eqref{sp1}, \eqref{sp2} and \eqref{sp3} directly follow from \eqref{estim4} as $M\rightarrow \infty$.

		 		 		We have now to weaken the assumptions on $g$ and $\psi$. We can do this by a regularization procedure. In fact, let us assume that $\psi$ satisfies Assumption $\mathcal H^1$ (so, in particular, $\left|\frac{\partial \psi}{\partial t}\right|\leq \Psi$ for a certain $\Psi\in L^2([0,T];V)$) and $g$ satisfies Assumption $\mathcal H^0$. Then, by  standard regularization techniques (see  for example \cite[Corollary A.12]{DF}), we can find  sequences of functions $(g_n)_n$, $(\psi_n)_n$ and $(\Psi_n)_n$ of class $C^\infty$ with compact support such that, for any $n\in\N$, $n\in\N$, $|\frac{\partial\psi_n}{\partial t}|\leq \Psi_n$ and all the regularity assumptions required in the first part of the proof are satisfied. Moreover, $\Vert \sqrt{1+y}g_n -\sqrt{1+y}g\Vert_{L^2([0,T];H)}\rightarrow0$, $ \Vert \sqrt{1+y}\psi_n -\sqrt{1+y}\psi\Vert_{L^2([0,T];V)} \rightarrow 0$, $ \Vert\Psi_n-\Psi\Vert_{L^2([0,T];V)} \rightarrow 0$, $\Vert \psi_n(T)-\psi(T)\Vert_V\rightarrow0$ as $n\rightarrow \infty$  (we refer to \cite{T} for the details).  Therefore, the solution $u_{\varepsilon,\lambda,M}^n$ of the equation \eqref{PCP} with source function $g_n$ and obstacle function $\psi_n$ satisfies  
		 			\begin{equation}\label{lessass}
		 			\begin{array}{c}
		 			\int_t^T \left \Vert \frac{\partial u^n_{\varepsilon,\lambda,M}}{\partial s} (s)\right \Vert^2_H \,ds+   \Vert u^n_{\varepsilon,\lambda,M}(t)\Vert_V^2 +\frac 1 {\varepsilon} \Vert (\psi_n(t)-u^n_{\varepsilon,\lambda,M}(t))_+\Vert^2_H\\
		 			\leq C  \left(   \|\sqrt{1+y}g_n\|_{L^2([0,T];H)} +\|\sqrt{1+y}\psi_n\|^2_{L^2([0,T];V)}   +\|\Psi_n\|_{L^2([0,T];V)}^2 +\Vert\psi_n(T)\Vert_V^2 \right).
		 			\end{array}
		 			\end{equation} Then, we can take the limit for $n\rightarrow \infty$ in \eqref{lessass} and the assertion follows as in the  first part of the proof.
		 	\end{proof}

		 	Moreover, we have the following Comparison principle for the coercive penalized problem.
		 	\begin{proposition} \label{CompPrinc1}
	
		 		\begin{enumerate}
		 			\begin{comment}
		 			\item If there exists a constant $M>0$ such that $ 0\leq \psi \leq M$ and $0\leq g \leq \lambda M(1+y)$ , then the solution $u_{\varepsilon,\lambda}$ of the penalized coercive problem \eqref{PCP} satisfies 
		 			$$
		 			0 \leq u_{\varepsilon,\lambda} \leq M.
		 			$$
		 			\end{comment}
		 			\item Assume that $\psi_i$  satisfies Assumption $\mathcal{H}^1$ for $i=1,2$ and $g$ satisfies Assumption $\mathcal{H}^0$.  Let $u^i_{\varepsilon,\lambda}$ be the unique solution of \eqref{PCP}   with  obstacle function $\psi_i$ and source function $g$.  If $\psi_1\leq \psi_2$, then $u^1_{\varepsilon,\lambda}\leq u^2_{\varepsilon,\lambda}$.
		 			\item Assume that  $\psi$ satisfies Assumption $\mathcal{H}^1$  and  $g_i$ satisfy Assumption $\mathcal{H}^0$ for $i=1,2$. Let $u^i_{\varepsilon,\lambda}$ 
		 			be the unique solution of \eqref{PCP} with obstacle function $\psi$ and source function $g_i$.  If $g_1\leq g_2$, then $u^1_{\varepsilon,\lambda}\leq u^2_{\varepsilon,\lambda}$.
		 			\item Assume that $\psi_i$  satisfies Assumption $\mathcal{H}^1$ for $i=1,2$ and $g$ satisfies Assumption $\mathcal{H}^0$.  Let $u^i_{\varepsilon,\lambda}$ be the unique solution of \eqref{PCP}   with  obstacle function $\psi_i$ and source function $g$.  If $\psi_1- \psi_2\in L^\infty$, then $u^1_{\varepsilon,\lambda}- u^2_{\varepsilon,\lambda}\in L^\infty $ and $\Vert  u^1_{\varepsilon,\lambda}- u^2_{\varepsilon,\lambda}\Vert_\infty \leq   \Vert \psi_1- \psi_2\Vert_\infty$.
		 		\end{enumerate}
		 	\end{proposition}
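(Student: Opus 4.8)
The plan is to prove all four comparison statements by the same device: write the equation \eqref{PCP} for each of the two solutions, subtract, and test the resulting identity against the positive part of the difference. Throughout set $w=u^1_{\varepsilon,\lambda}-u^2_{\varepsilon,\lambda}$ and take $v=w_+=(u^1_{\varepsilon,\lambda}-u^2_{\varepsilon,\lambda})_+$, which still belongs to $L^2([0,T];V)$ since truncation preserves the weighted space $V$. Two algebraic facts drive the argument. First, the time term collapses to a total derivative, $\left(\frac{\partial w}{\partial t}(t),w_+(t)\right)_H=\frac12\frac{d}{dt}\Vert w_+(t)\Vert_H^2$. Second, writing $w=w_+-w_-$ and using that $w_+$ and $w_-$ have a.e. disjoint supports and disjoint gradient supports, one checks that $a_\lambda(w_-,w_+)=0$ (the principal, first-order and zeroth-order parts all involve products that vanish a.e.), whence $a_\lambda(w,w_+)=a_\lambda(w_+,w_+)$; the coercivity in Lemma \ref{coercivity} then gives $a_\lambda(w_+,w_+)\geq\frac{\delta_1}{2}\Vert w_+\Vert_V^2\geq0$. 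Once the penalty and source contributions are shown to have the right sign, the subtracted identity yields $\frac12\frac{d}{dt}\Vert w_+(t)\Vert_H^2\geq a_\lambda(w_+,w_+)\geq0$; integrating from $t$ to $T$ and using that $w_+(T)=0$ (the two solutions share the same terminal data up to the ordering of the obstacles) forces $\Vert w_+(t)\Vert_H=0$, i.e. $u^1_{\varepsilon,\lambda}\leq u^2_{\varepsilon,\lambda}$.

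The only item-dependent point is the sign of the penalty term $(\zeta_\varepsilon(t,u^1_{\varepsilon,\lambda})-\zeta_\varepsilon(t,u^2_{\varepsilon,\lambda}),w_+)_H$. For item (3), where the obstacle is common and only the sources differ, this is immediate from the monotonicity of $\zeta_\varepsilon(t,\cdot)$ in Lemma \ref{monotonicity}: on $\{w>0\}$ one has $\zeta_\varepsilon(t,u^1_{\varepsilon,\lambda})\geq\zeta_\varepsilon(t,u^2_{\varepsilon,\lambda})$, so the penalty contribution is nonnegative, while the source contribution $(g_1-g_2,w_+)_H$ is nonpositive since $g_1\leq g_2$ and $w_+\geq0$. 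For item (2), where the sources coincide but $\psi_1\leq\psi_2$, I would instead use that on $\{w>0\}$ we have $\psi_1-u^1_{\varepsilon,\lambda}\leq\psi_2-u^2_{\varepsilon,\lambda}$, so the monotonicity of $s\mapsto s_+$ gives $(\psi_1-u^1_{\varepsilon,\lambda})_+\leq(\psi_2-u^2_{\varepsilon,\lambda})_+$; multiplying by $-1/\varepsilon$ shows the penalty difference is again nonnegative where $w_+>0$. In both cases every term other than $\frac12\frac{d}{dt}\Vert w_+\Vert_H^2$ and $a_\lambda(w_+,w_+)$ has the favourable sign, and the argument above closes.

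For item (4) the plan is a shift trick. Set $c=\Vert\psi_1-\psi_2\Vert_\infty$, so that $\psi_1\leq\psi_2+c$. A direct substitution shows that $u^2_{\varepsilon,\lambda}+c$ solves \eqref{PCP} with obstacle $\psi_2+c$ and source $g+\lambda c(1+y)$: the time derivative and the principal and first-order parts of $a_\lambda$ are unaffected by an additive constant, the only change being the extra contribution $\lambda c\into(1+y)v\,d\m$ from the zeroth-order term, while the penalty is invariant because $(\psi_2+c-(u^2_{\varepsilon,\lambda}+c))_+=(\psi_2-u^2_{\varepsilon,\lambda})_+$. Since $\psi_1\leq\psi_2+c$ and $g\leq g+\lambda c(1+y)$ (as $c,\lambda\geq0$ and $1+y>0$), I combine the obstacle and source monotonicities of items (2) and (3): concretely one repeats the $w_+$ test for $w=u^1_{\varepsilon,\lambda}-u^2_{\varepsilon,\lambda}-c$, where the additional right-hand side $-\lambda c\into(1+y)w_+\,d\m$ is $\leq0$ and $w_+(T)=0$ because $\psi_1(T)-\psi_2(T)\leq c$, to get $u^1_{\varepsilon,\lambda}\leq u^2_{\varepsilon,\lambda}+c$. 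Exchanging the roles of the indices yields $u^2_{\varepsilon,\lambda}\leq u^1_{\varepsilon,\lambda}+c$, hence $\Vert u^1_{\varepsilon,\lambda}-u^2_{\varepsilon,\lambda}\Vert_\infty\leq\Vert\psi_1-\psi_2\Vert_\infty$.

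The main obstacle is analytic rather than structural: making rigorous the chain rule $\left(\frac{\partial w}{\partial t},w_+\right)_H=\frac12\frac{d}{dt}\Vert w_+\Vert_H^2$ and the identity $a_\lambda(w,w_+)=a_\lambda(w_+,w_+)$ for functions with only $w\in L^2([0,T];V)$ and $\frac{\partial w}{\partial t}\in L^2([0,T];H)$. I would handle the first by the standard integration-by-parts formula for the Gelfand triple $V\hookrightarrow H\hookrightarrow V'$ applied to the Lipschitz truncation $s\mapsto s_+$, after a smooth regularization of the positive part and passage to the limit; and the second by Stampacchia's theorem, which gives $\nabla w_+=\ind{\{w>0\}}\nabla w$ and hence the a.e. vanishing of the products $\nabla w_-\cdot\nabla w_+$ and $w_+\,\nabla w_-$ entering the principal and first-order parts of $a_\lambda$. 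The weight $\m$ plays no role in these cancellations, so the whole scheme goes through exactly as in the nondegenerate case.
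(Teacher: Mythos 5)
Your proposal is correct and is exactly the standard Bensoussan--Lions comparison argument that the paper invokes when it omits the proof with a reference to \cite[Chapter 3]{BL}: subtract the two penalized equations, test against $w_+$, use the pointwise monotonicity of $x\mapsto -(\psi-x)_+$ (in the obstacle and in the solution) together with the sign of the source difference, the cancellation $a_\lambda(w_-,w_+)=0$, the coercivity of $a_\lambda$, and $w_+(T)=0$; the additive-constant shift for item (4) is also the standard device, and your verification that the shift only perturbs the source by $\lambda c(1+y)$ and leaves the penalty invariant is the right bookkeeping. No gaps beyond the routine regularization points you already flag.
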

		 Proposition \ref{CompPrinc1} can be proved with standard techniques introduced in \cite[Chapter 3]{BL} so we omit the proof.
		 	\subsubsection{Coercive variational inequality}
		 	\begin{proposition}\label{coercive variational inequality}
		 		Assume that $\psi$ satisfies Assumption $\mathcal{H}^1$  and $g$ satisfies Assumption $\mathcal{H}^0$.
		 			 Moreover, assume that $ 0\leq \psi \leq \Phi$ with $\Phi\in L^2([0,T]; H^2(\mathcal{O},\m))$ such  that $\frac{\partial \Phi}{\partial t}+\mathcal{L}\Phi \leq 0$ and $0\leq g \leq -\frac{\partial \Phi}{\partial t} -\mathcal{L}^\lambda \Phi$.
		 		Then, there exists a unique function $u_\lambda $ such that $ u_\lambda\in L^2([0,T];V), \,\frac{\partial u_\lambda}{\partial t } \in L^2([0,T]; H)$ and 
		 		\begin{equation} \label{CVI}
		 		\begin{cases}
		 	-\left( \frac{\partial u_\lambda}{\partial t },v -u_\lambda  \right)_H + a_\lambda(u_\lambda,v-u_\lambda)\geq (g,v-u_\lambda)_H, \qquad \mbox{ a.e. in } [0,T]\qquad v\in  L^2([0,T];V), \ v\geq \psi, \\u_\lambda(T)=\psi(T),\\u_\lambda \geq \psi \mbox{ a.e. in } [0,T]\times \R \times (0,\infty).
		 		\end{cases}
		 		\end{equation}
		 		Moreover, $0 \leq u_\lambda \leq  \Phi$.
		 	\end{proposition}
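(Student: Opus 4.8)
The plan is to construct $u_\lambda$ as the limit, as $\varepsilon\to 0$, of the solutions $u_{\varepsilon,\lambda}$ of the penalized coercive problem \eqref{PCP} with source $g$, whose existence is guaranteed by Theorem \ref{penalizedcoerciveproblem} under the present hypotheses. The uniform estimates \eqref{sp1} and \eqref{sp2} bound $(u_{\varepsilon,\lambda})_\varepsilon$ in $L^\infty([0,T];V)$ and $(\partial_t u_{\varepsilon,\lambda})_\varepsilon$ in $L^2([0,T];H)$, so along a subsequence I would take $u_{\varepsilon,\lambda}\rightharpoonup u_\lambda$ weakly in $L^2([0,T];V)$ with $\partial_t u_{\varepsilon,\lambda}\rightharpoonup \partial_t u_\lambda$ weakly in $L^2([0,T];H)$, which already secures the required regularity of $u_\lambda$. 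Estimate \eqref{sp3} gives $\|(\psi-u_{\varepsilon,\lambda})^+\|_{L^\infty([0,T];H)}\le K\sqrt{\varepsilon}\to 0$, so the limit satisfies $u_\lambda\geq\psi$ a.e. in $[0,T]\times\R\times(0,\infty)$, and the terminal condition $u_{\varepsilon,\lambda}(T)=\psi(T)$ is preserved in the limit.

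To verify the variational inequality, I would test \eqref{PCP} against $v-u_{\varepsilon,\lambda}$ for an arbitrary admissible $v\in L^2([0,T];V)$ with $v\geq\psi$. Since $x\mapsto-(\psi-x)_+$ is nondecreasing, and $v-u_{\varepsilon,\lambda}\geq\psi-u_{\varepsilon,\lambda}$ on $\{\psi>u_{\varepsilon,\lambda}\}$, the penalization term obeys $(\zeta_\varepsilon(t,u_{\varepsilon,\lambda}),v-u_{\varepsilon,\lambda})_H=-\tfrac1\varepsilon((\psi-u_{\varepsilon,\lambda})^+,v-u_{\varepsilon,\lambda})_H\leq 0$; dropping this nonpositive contribution yields
\[
-\left(\frac{\partial u_{\varepsilon,\lambda}}{\partial t},v-u_{\varepsilon,\lambda}\right)_H+a_\lambda(u_{\varepsilon,\lambda},v-u_{\varepsilon,\lambda})\geq (g,v-u_{\varepsilon,\lambda})_H.
\]
I would then integrate over $[0,T]$ and let $\varepsilon\to0$. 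The terms $\int_0^T a_\lambda(u_{\varepsilon,\lambda},v)\,dt$, $\int_0^T(\partial_t u_{\varepsilon,\lambda},v)\,dt$ and $\int_0^T(g,v-u_{\varepsilon,\lambda})\,dt$ pass directly by weak convergence, while $\int_0^T(\partial_t u_{\varepsilon,\lambda},u_{\varepsilon,\lambda})\,dt=\tfrac12\|\psi(T)\|_H^2-\tfrac12\|u_{\varepsilon,\lambda}(0)\|_H^2$ is controlled in the right direction by the weak lower semicontinuity of $w\mapsto\|w\|_H^2$ applied to $u_{\varepsilon,\lambda}(0)$.

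The main obstacle is the quadratic term $-\int_0^T a_\lambda(u_{\varepsilon,\lambda},u_{\varepsilon,\lambda})\,dt$, for which weak convergence alone does not suffice. Writing $a_\lambda=\bar a_\lambda+\tilde a$, I would use that $\bar a_\lambda$ is symmetric and coercive, so that $w\mapsto\int_0^T\bar a_\lambda(w,w)\,dt$ is weakly lower semicontinuous and hence $\limsup_\varepsilon\bigl(-\int_0^T\bar a_\lambda(u_{\varepsilon,\lambda},u_{\varepsilon,\lambda})\,dt\bigr)\leq-\int_0^T\bar a_\lambda(u_\lambda,u_\lambda)\,dt$, which is exactly the favorable inequality. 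The delicate point is the first-order part $\tilde a(u_{\varepsilon,\lambda},u_{\varepsilon,\lambda})$, which is not sign-definite and must be passed to the limit \emph{exactly}: this requires strong convergence of $u_{\varepsilon,\lambda}$ in a suitable weighted $L^2$ space. I would obtain it from an Aubin--Lions-type argument (the embedding $V\hookrightarrow H$ being compact on subsets relatively compact in $\mathcal{O}$, as already exploited in Proposition \ref{penalizedcoercivetruncatedproblem}), combined with a uniform control of the tails furnished by the energy estimates and the weight $\m$. Collecting these facts gives $-\left(\tfrac{\partial u_\lambda}{\partial t},v-u_\lambda\right)_H+a_\lambda(u_\lambda,v-u_\lambda)\geq(g,v-u_\lambda)_H$ for a.e. $t$.

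For the bounds $0\leq u_\lambda\leq\Phi$ I would argue at the penalized level and then pass to the limit, the set $\{0\leq w\leq\Phi\}$ being convex and closed, hence weakly closed. The lower bound is immediate once $u_\lambda\geq\psi\geq 0$. For the upper bound, $\Phi\geq\psi$ forces $\zeta_\varepsilon(t,\Phi)=0$, while $\tfrac{\partial\Phi}{\partial t}+\mathcal{L}\Phi\leq 0$ and $g\leq-\tfrac{\partial\Phi}{\partial t}-\mathcal{L}^\lambda\Phi$ make $\Phi$ a supersolution of the penalized problem with $\Phi(T)\geq\psi(T)=u_{\varepsilon,\lambda}(T)$; testing the difference against $(u_{\varepsilon,\lambda}-\Phi)^+$ and using the monotonicity of $\zeta_\varepsilon$ (Lemma \ref{monotonicity}, in the spirit of \cite[Chapter 3]{BL}) gives $u_{\varepsilon,\lambda}\leq\Phi$, hence $u_\lambda\leq\Phi$. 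Finally, for uniqueness, given two solutions $u_\lambda^1,u_\lambda^2$, each is admissible in the other's inequality; adding the inequalities obtained with $v=u_\lambda^2$ and $v=u_\lambda^1$ and setting $w=u_\lambda^1-u_\lambda^2$, the source terms cancel and I obtain $\tfrac12\tfrac{d}{dt}\|w\|_H^2\geq a_\lambda(w,w)\geq\tfrac{\delta_1}2\|w\|_V^2\geq 0$ by the coercivity from Lemma \ref{coercivity}. Since $w(T)=0$ and $\|w(t)\|_H^2$ is thus nondecreasing and nonnegative, it vanishes identically, so $u_\lambda^1=u_\lambda^2$.
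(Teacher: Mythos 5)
Your overall strategy is the paper's: solve the penalized coercive problem \eqref{PCP} with source $g$, use the uniform bounds \eqref{sp1}--\eqref{sp3} to extract weak limits, drop the (nonpositive) penalty term when testing against $v-u_{\varepsilon,\lambda}$ with $v\geq\psi$, pass to the limit using lower semicontinuity for the quadratic term, and obtain $0\leq u_\lambda\leq\Phi$ by comparison at the penalized level (the paper does this by observing that $\Phi$ is itself the solution of \eqref{PCP} with obstacle $\Phi$ and source $-\frac{\partial\Phi}{\partial t}-\mathcal{L}^\lambda\Phi$ and invoking Proposition \ref{CompPrinc1}, which is the same computation as your supersolution test). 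The uniqueness argument you supply explicitly (the paper defers it to the references) is correct, precisely because $a_\lambda$ is coercive.

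The one step that would not close as written is your compactness argument. You justify the strong convergence needed for the first-order part $\tilde a$ by an Aubin--Lions argument ``combined with a uniform control of the tails furnished by the energy estimates and the weight $\m$''. The $V$-bound does control the tails in $y$ (because of the factor $1+y$ in the $V$-norm), but it gives no decay in $x$: the weight is already built into the norm, so a sequence bounded in $V$ can carry unit mass off to $|x|=\infty$. The paper obtains the tail control from the pointwise domination $0\leq u_{\varepsilon,\lambda}\leq\Phi$ together with $\Phi\in L^2([0,T];H)$, splitting $\mathcal O$ into $\mathcal O_\delta=(-\frac1\delta,\frac1\delta)\times(\delta,\frac1\delta)$ and its complement and using $\int_{\mathcal O_\delta^c}\Phi^2\,d\m\to0$. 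You do prove this domination, but only at the end of your argument; it has to be established \emph{before} the limit passage and used as the source of tail control. That said, the strong convergence is not actually needed where you invoke it: since the full form $a_\lambda$ (not only its symmetric part $\bar a_\lambda$) is coercive by Lemma \ref{coercivity}, expanding $0\leq a_\lambda(u_{\varepsilon,\lambda}-u_\lambda,u_{\varepsilon,\lambda}-u_\lambda)$ and using weak convergence in $V$ already gives $\liminf_\varepsilon a_\lambda(u_{\varepsilon,\lambda},u_{\varepsilon,\lambda})\geq a_\lambda(u_\lambda,u_\lambda)$, with no need to treat $\tilde a$ separately. (Strong convergence in $H$ is still convenient for identifying the weak limit of $(\psi-u_{\varepsilon,\lambda})_+$ and for the term $(\frac{\partial u_{\varepsilon,\lambda}}{\partial t},u_{\varepsilon,\lambda})_H$ pointwise in $t$, which your integrated-in-time variant circumvents at the cost of a localization step to recover the a.e.-in-$t$ statement.)
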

%		 	\begin{proof}[Proof of uniqueness in Proposition \ref{coercive variational inequality}]
%		 		Suppose that there are two functions $u_1 $ and $u_2$ which satisfy \eqref{CVI}. We can take $v=u_2$ in the equation satisfied by $u_1$ and $v=u_1$ in the one satisfied by $u_2$ and we get
%		 		$$
%		 		-\left( \frac{\partial u_1}{\partial t },u_2-u_1  \right)_H + a_\lambda(u_1, u_2-u_1)\geq (g,u_2-u_1)_H,
%		 		$$
%		 		$$
%		 		-\left( \frac{\partial u_2}{\partial t },u_1-u_2 \right)_H + a_\lambda(u_2, u_1-u_2)\geq (g,u_1-u_2)_H.
%		 		$$
%		 		Setting $w:= u_2-u_1$ and adding the second equation from the first one we obtain
%		 		$$
%		 		\left( \frac{\partial w}{\partial t },w \right)_H - a_\lambda(w,w)\geq 0,
%		 		$$
%so that
%		 		$$
%		 		\left( \frac{\partial w}{\partial t },w \right)_H =\frac 1 2\frac{d}{d t} \Vert w \Vert_H \geq 0 .
%		 		$$
%		 		But $w(T)=u_1(T)-u_2(T)=\psi(T)-\psi(T)=0$ and, therefore, $w\equiv0$, that is $u_1=u_2 $.
%		 	\end{proof}
		 	\begin{proof}
		 		The uniqueness of the solution of \eqref{CVI} follows by a standard monotonicity argument introduced in \cite[Chapter 3]{BL} (see \cite{T}). 
		 		As regards the existence of a solution, we follow  the lines of the proof of \cite[Theorem 2.1]{BL} but we repeat here the details since we use a compactness argument which is not present in the classical theory.
		 		
		 		For each fixed $\varepsilon >0$ we have the estimates \eqref{sp1} and \eqref{sp2}, so, for every $t\in[0,T]$, we can extract a subsequence $u_{\varepsilon,\lambda}$ such that $u_{\varepsilon,\lambda}(t) \rightharpoonup u_\lambda(t)$ in $V$ as $\varepsilon \rightarrow 0 $  and $u'_\varepsilon(t) \rightharpoonup u'_\lambda(t)$ in $H$ for some function $u_\lambda \in V$.
		 		
		 		Note that $u=0 $ is the unique solution of \eqref{PCP} when $\psi=g=0$, while $u=\Phi $ is the unique solution of \eqref{PCP} when $\psi=\Phi$ and $g=-\frac{\partial \Phi}{\partial t}-\mathcal{L}^\lambda \Phi=-\frac{\partial \Phi}{\partial t}-\mathcal{L} \Phi +\lambda(1+y)\Phi  $. Therefore, Proposition \ref{CompPrinc1} implies that $0\leq u_{\varepsilon,\lambda} \leq \Phi$. Recall that $u_{\varepsilon,\lambda}(t) \rightarrow u_\lambda(t)$ in $L^2(\mathcal{U}, \m ) $ for every relatively compact open $\mathcal{U} \subset \mathcal{O}$. This, together with the fact that $d\m$ is a finite measure, allows to conclude that we have strong convergence of $u_{\varepsilon,\lambda}$ to $u_\lambda$ in $H$. In fact, if $\delta >0$ and  $\mathcal{O}_{\delta}:=(-\frac 1 \delta, \frac 1 \delta )\times (\delta, \frac 1 \delta)$, 
		 		\begin{align*}
		 	\int_0^Tds	\int_{\mathcal{O}} |u_{\varepsilon,\lambda}(s)- u_\lambda(s)|^2d\m&\leq  	\int_0^Tds\int_{\mathcal{O}_{\delta}} |u_{\varepsilon,\lambda}(s)- u_\lambda(s)|^2d\m+	\int_0^Tds\int_{\mathcal{O}^c_{\delta}} |u_{\varepsilon,\lambda}(s)- u_\lambda(s)|^2d\m\\&\leq 	\int_0^Tds\int_{\mathcal{O}_{\delta}} |u_{\varepsilon,\lambda}(s)- u_\lambda(s)|^2d\m + 	\int_0^Tds\int_{\mathcal{O}^c_{\delta}} 4\Phi^2(s) d\m
		 		\end{align*} 
		 		%	\begin{align*}
		 		%	\int_{\mathcal{O}} |u_{\varepsilon,\lambda}- u_\lambda|^2d\m&\leq  \int_{\mathcal{O}_{\delta}} |u_{\varepsilon,\lambda}- u_\lambda|^2d\m+\int_{\mathcal{O}^c_{\delta}} |u_{\varepsilon,\lambda}- u_\lambda|^2d\m\\
		 		%	&\leq \int_{\mathcal{O}_{\delta}} |u_{\varepsilon,\lambda}- u_\lambda|^2d\m + 4M^2 \m(\mathcal{O}^c_{\delta}),
		 		%	\end{align*} 
		 		and it is enough to let $\delta$ goes to 0.
		 		
		 		From \eqref{sp3} we also have that $(\psi(t)-u_{\varepsilon,\lambda}(t))^+ \rightarrow 0 $ strongly in $H$ as $\varepsilon \rightarrow 0$ . On the other hand $(\psi(t)-u_{\varepsilon,\lambda}(t))_+\rightharpoonup \chi(t)$ weakly in $H$ and $\chi =(\psi-u_\lambda)_+$ since there exists a subsequence of $u_{\varepsilon,\lambda} (t)
		 		$ which converges pointwise to $u_\lambda(t)$. Therefore, $(\psi(t)-u_\lambda(t))^+=0$, which means $u_\lambda(t) \geq \psi(t)$.
		 		
		 		Then we consider the penalized coercive equation in \eqref{PCP} replacing $v$ by $v-u_{\varepsilon,\lambda}(t)$, with $v \geq \psi(t)$. Since $ \zeta_\varepsilon(t,v)=0$ and $ (\zeta_\varepsilon(t,v)- \zeta_\varepsilon(t,u_{\varepsilon,\lambda}(t)),v-u_{\varepsilon,\lambda}(t))_H\geq 0$ we easily deduce that
%		 		If we consider the penalized coercive equation in \eqref{PCP} replacing $v$ by $v-u_{\varepsilon,\lambda}$, with $v \geq \psi$, we have 
%		 		$$
%		 		-\left( \frac{\partial u_{\varepsilon,\lambda}}{\partial t },v-u_{\varepsilon,\lambda}   \right)_H + a_\lambda(u_{\varepsilon,\lambda},v-u_{\varepsilon,\lambda})_H+ (\zeta_\varepsilon(u_{\varepsilon,\lambda}),v-u_{\varepsilon,\lambda})_H= (g,v-u_{\varepsilon,\lambda})_H.
%		 		$$
%		 		Since $ \zeta_\varepsilon(v)=0$, we can write
%		 		$$
%		 		-\left( \frac{\partial u_{\varepsilon,\lambda}}{\partial t },v-u_{\varepsilon,\lambda}   \right)_H + a_\lambda(u_{\varepsilon,\lambda},v-u_{\varepsilon,\lambda})_H- \underbrace{(\zeta_\varepsilon(v)- \zeta_\varepsilon(u_{\varepsilon,\lambda}),v-u_{\varepsilon,\lambda})_H}_{\geq 0 }= (g,v-u_{\varepsilon,\lambda})_H.
%		 		$$
		 %		Therefore 
		 		$$
		 		-\left( \frac{\partial u_{\varepsilon,\lambda}}{\partial t }(t),v-u_{\varepsilon,\lambda}(t)   \right)_H + a_\lambda(u_{\varepsilon,\lambda}(t),v-u_{\varepsilon,\lambda}(t))\geq (g(t),v-u_{\varepsilon,\lambda}(t))_H
		 		$$ 
		 		so that, letting $\varepsilon $ goes to 0, we have
		 		\begin{align*}
		 		-\left( \frac{\partial u_\lambda}{\partial t }(t),v-u_\lambda (t) \right)_H + a_\lambda(u_\lambda(t),v)&\geq (g(t),v-u_\lambda(t))_H+ \liminf_{\varepsilon\rightarrow 0} a_\lambda(u_{\varepsilon,\lambda}(t),u_{\varepsilon,\lambda}(t))\\& \geq (g(t),v-u_\lambda(t))_H+  a_\lambda(u_\lambda(t),u_\lambda(t)).
		 		\end{align*}
			 		
		 		Moreover, since $0\leq u_{\varepsilon,\lambda}\leq \Phi$ for every $\varepsilon>0$ and $u_\lambda=\lim_{\varepsilon  \rightarrow 0} u_{\varepsilon,\lambda}$, we have $0\leq u_\lambda \leq \Phi$ and the assertion follows.
		 	\end{proof}
		 	The following Comparison Principle is a direct consequence of Proposition \ref{CompPrinc1},.
		 	\begin{proposition}\label{CompPrinc2}
	
		 		\begin{enumerate}
		 			\item 
		 			For $i=1,\,2$, assume that  $\psi_i$ satisfies Assumption $\mathcal{H}^1$, $g$ satisfies Assumption $\mathcal{H}^0$ and $ 0\leq \psi_i \leq \Phi$ with $\Phi\in L^2([0,T];H^2(\mathcal{O},\m))$ such  that $\frac{\partial \Phi}{\partial t}+\mathcal{L}\Phi \leq 0$ and $0\leq g \leq -\frac{\partial \Phi}{\partial t} -\mathcal{L}^\lambda \Phi$. Let  $u^i_{\lambda}$  be the unique solution of \eqref{CVI}  with obstacle function $\psi_i$ and source function $g$.  If $\psi_1\leq \psi_2$, then $u^1_{\lambda}\leq u^2_{\lambda}$.
		 			\item For $i=1,\,2$, assume that  $\psi$ satisfies Assumption $\mathcal{H}^1$, $g_i$ satisfy Assumption $\mathcal{H}^0$ and $ 0\leq \psi \leq \Phi$ with $\Phi\in L^2([0,T];H^2(\mathcal{O},\m))$ such  that $\frac{\partial \Phi}{\partial t}+\mathcal{L}\Phi \leq 0$ and $0\leq g_i \leq -\frac{\partial \Phi}{\partial t} -\mathcal{L}^\lambda \Phi$. Let  $u^i_{\lambda}$  be the unique solution of \eqref{CVI}  with obstacle function $\psi$ and source function $g_i$. If $g_1\leq g_2$, then $u^1_{\lambda}\leq u^2_{\lambda}$.
		 			\item 	For $i=1,\,2$, assume that  $\psi_i$ satisfies Assumption $\mathcal{H}^1$, $g$ satisfies Assumption $\mathcal{H}^0$ and $ 0\leq \psi_i \leq \Phi$ with $\Phi\in L^2([0,T];H^2(\mathcal{O},\m))$ such  that $\frac{\partial \Phi}{\partial t}+\mathcal{L}\Phi \leq 0$ and $0\leq g \leq -\frac{\partial \Phi}{\partial t} -\mathcal{L}^\lambda \Phi$. Let  $u^i_{\lambda}$  be the unique solution of \eqref{CVI}  with obstacle function $\psi_i$ and source function $g$.  If $\psi_1- \psi_2\in L^\infty$, then $u^1_{\lambda}- u^2_{\lambda}\in L^\infty $ and $\Vert  u^1_{\lambda}- u^2_{\lambda}\Vert_\infty \leq   \Vert \psi_1- \psi_2\Vert_\infty$.
		 		\end{enumerate}
		 	\end{proposition}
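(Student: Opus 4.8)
The plan is to lift each of the three comparison statements from the penalized level to the variational inequality by passing to the limit $\varepsilon\to 0$, reusing the construction of the solution of \eqref{CVI} carried out in Proposition \ref{coercive variational inequality}. Recall that there the solution $u_\lambda$ of \eqref{CVI} was obtained as the limit, as $\varepsilon\to 0$, of the solutions $u_{\varepsilon,\lambda}$ of the penalized coercive problem \eqref{PCP}: we established strong convergence $u_{\varepsilon,\lambda}\to u_\lambda$ in $L^2([0,T];H)$ and, up to extraction of a subsequence, pointwise almost everywhere convergence on $[0,T]\times\mathcal{O}$. Since in each item of the statement we deal with a pair of data (two obstacles, or two sources) and the corresponding solutions $u^1_{\varepsilon,\lambda},u^2_{\varepsilon,\lambda}$, I would first fix a single subsequence $\varepsilon_k\to 0$ along which both $u^1_{\varepsilon_k,\lambda}$ and $u^2_{\varepsilon_k,\lambda}$ converge almost everywhere to $u^1_\lambda$ and $u^2_\lambda$ respectively; this is achieved by a standard diagonal extraction and is the only mild bookkeeping needed.

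For items (1) and (2) the argument is then immediate. In case (1), the hypotheses guarantee that $u^i_{\varepsilon,\lambda}$ solves \eqref{PCP} with obstacle $\psi_i$ and source $g$, and $0\le\psi_i\le\Phi$ ensures (via Proposition \ref{CompPrinc1}) that these penalized solutions are well defined and ordered; applying Proposition \ref{CompPrinc1}(1) with $\psi_1\le\psi_2$ yields $u^1_{\varepsilon,\lambda}\le u^2_{\varepsilon,\lambda}$ for every $\varepsilon>0$. Passing to the limit along $\varepsilon_k$ in this pointwise inequality preserves it, so $u^1_\lambda\le u^2_\lambda$ almost everywhere, which is the claim. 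Case (2) is identical, invoking Proposition \ref{CompPrinc1}(3) for the ordering $g_1\le g_2$ of the source terms in place of the obstacles. In both cases no estimate beyond the already established almost everywhere convergence is required, since the order relation is stable under pointwise limits.

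For item (3) I would instead invoke Proposition \ref{CompPrinc1}(4): under the assumption $\psi_1-\psi_2\in L^\infty$ it gives the uniform bound $\Vert u^1_{\varepsilon,\lambda}-u^2_{\varepsilon,\lambda}\Vert_\infty\le\Vert\psi_1-\psi_2\Vert_\infty$ for every $\varepsilon>0$. Equivalently, $|u^1_{\varepsilon,\lambda}-u^2_{\varepsilon,\lambda}|\le\Vert\psi_1-\psi_2\Vert_\infty$ almost everywhere, with a right-hand side independent of $\varepsilon$; letting $\varepsilon_k\to 0$ and using the pointwise convergence of both families yields $|u^1_\lambda-u^2_\lambda|\le\Vert\psi_1-\psi_2\Vert_\infty$ almost everywhere. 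Hence $u^1_\lambda-u^2_\lambda\in L^\infty$ and $\Vert u^1_\lambda-u^2_\lambda\Vert_\infty\le\Vert\psi_1-\psi_2\Vert_\infty$, as asserted. The only point demanding a little care, and the closest thing to an obstacle here, is that the passage to the limit must be performed in a convergence mode strong enough to preserve these inequalities; this is precisely why I rely on the almost everywhere convergence extracted in the proof of Proposition \ref{coercive variational inequality} rather than on the mere weak convergence in $V$. Once that convergence is in hand, all three statements follow directly from Proposition \ref{CompPrinc1}, as announced.
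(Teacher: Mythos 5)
Your argument is correct and is exactly what the paper intends: it gives no written proof of this proposition, stating only that it is a direct consequence of Proposition \ref{CompPrinc1}, and your passage to the limit $\varepsilon\to 0$ along the a.e.\ convergent penalized approximations from the proof of Proposition \ref{coercive variational inequality} is the natural way to make that precise. The only slip is bookkeeping: the source-comparison and $L^\infty$ statements are items (2) and (3) of Proposition \ref{CompPrinc1}, not (3) and (4).
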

		 	\subsubsection{Non-coercive variational inequality}
		We can finally prove Theorem \ref{variationalinequality}. Again, we first study the uniqueness of the solution and then we deal with the existence.
%		 	\begin{theorem} \label{ncvi}
%		 		%	Assume that $\psi$ is such that $\psi, (1+y)\psi \in L^2([0,T];V)$ and there exists a constant $M>0$ such that $0\leq \psi \leq M$. 
%		 	Let $\psi$ be a function such that $\psi, (1+y)\psi \in L^2([0,T];V)$ and $\left| \frac{\partial \psi}{\partial t} \right|\leq C$ for a certain constant $C>0$. Moreover, assume that there exists a function $M\in H^2(\mathcal{O},\m)$ and such that $\mathcal{L}M \leq 0$, $0\leq \psi \leq M$, $(1+y)^2M\in H$ and $M\in L^2(\mathcal{O},\m_{\gamma,\mu'})$ for a certain $\mu'<\mu$. 
%		 		Then, there exists a unique function $u$ such that
%		 		\begin{equation} 
%		 		\begin{cases}
%		 		u\in L^2([0,T];V)\mbox{ and }\frac{\partial u}{\partial t } \in L^2([0,T], H), \\u(T)=\psi(T),\\u\geq \psi \mbox{ a.e. in } [0,T]\times \R \times (0,\infty),\\
%		 		\forall v\in V, \ v\geq \psi  \\
%		 		\qquad-\left( \frac{\partial u}{\partial t },v -u  \right)_H + a(u,v-u)\geq 0, \qquad \mbox{ ae in } [0,T],\\0\leq u \leq M.
%		 		\end{cases}
%		 		\end{equation}\end{theorem}
		 	\begin{proof}[Proof of uniqueness in Theorem \ref{variationalinequality}]
		 		Suppose that there are two functions $u_1 $ and $u_2$ which satisfy \eqref{VI}. As usual, we take $v=u_2$ in the equation satisfied by $u_1$ and $v=u_1$ in the one satisfied by $u_2$ and we add the resulting equations. 
%		 		We get
%		 		$$
%		 		-\left( \frac{\partial u_1}{\partial t },u_2 -u_1  \right)_H + a(u_1,u_2-u_1)\geq 0
%		 		$$
%		 		and
%		 		$$
%		 		-\left( \frac{\partial u_2}{\partial t },u_1 -u_2  \right)_H + a(u_2,u_1-u_2)\geq 0.
%		 		$$
		 		Setting $w:= u_2-u_1$,\label{key} 
		 		 we get that,  a.e. in $[0,T]$,
		 		$$
		 		\left( \frac{\partial w}{\partial t }(t),w(t) \right)_H - a(w(t),w(t))\geq 0. 
		 		$$
		 		From the energy estimate \eqref{ub}, we know that
		 		$$
		 		a(u(t),u(t)) \geq  C_1 \Vert u(t) \Vert^2_V -C_2\Vert (1 + y)^{\frac 1 2 }u(t)\Vert^2_H,
		 		$$
		 	so that
		 		$$
		 		\frac 1 2 \frac{d}{d t } \Vert w(t)\Vert_H^2+ C_2\Vert (1 + y)^{\frac 1 2 }w(t)\Vert^2_H \geq 0.
		 		$$
		 		By integrating from $t$ to $T$, since $w(T)=0$, we have
		 		\begin{align*}
		 		\Vert &w(t)\Vert_H^2\leq C_2\int_t^T \Vert (1 + y)^{\frac 1 2 }w(s)\Vert^2_Hds\\
		 		& \leq C_2 \bigg( \int_t^T ds \int_{\mathcal{O}} \mathbbm{1}_{\{y\leq \lambda\}}  (1 + y)w^2(s)d\m+ \int_t^T ds \int_{\mathcal{O}} \mathbbm{1}_{\{y> \lambda\}}(1 + y)w^2(s)d\m \bigg)\\
		 		&\leq C \bigg( \int_t^T ds \int_{\mathcal{O}}(1 + \lambda)w^2(s)y^{\beta-1} e^{-\gamma |x| } e^{-\mu y}dxdy + \int_t^T ds \int_{\mathcal{O}} \mathbbm{1}_{\{y> \lambda\}}(1 + y)w^2(s)y^{\beta-1} e^{-\gamma |x| } e^{-(\mu-\mu') y }e^{-\mu'y}dxdy \bigg)\\
		 		&\leq C \bigg( \int_t^T ds \int_{\mathcal{O}}dxdy (1 + \lambda)w^2(s)y^{\beta-1} e^{-\gamma |x| } e^{-\mu y}+e^{-(\mu-\mu') \lambda} \int_t^T ds \int_{\mathcal{O}}dxdy(1 + y)\Phi^2(s)y^{\beta-1} e^{-\gamma |x| } e^{-\mu'y}\bigg),
		 		\end{align*}
		 		where $\mu'<\mu$ and $\lambda>0$.
		 		%and we have used the fact that $w=u_2-u_1$ is bounded. 
		 		Since $ C_2= \int_{\mathcal{O}}dxdy (1 + y)\Phi^2(s)y^{\beta-1} e^{-\gamma |x| } e^{-\mu' y }<\infty$, we have
		 		\begin{align*}
		 		\Vert w(t)\Vert_H^2&\leq C  (1+\lambda )\int_t^T  \Vert w(s)\Vert_H^2ds +C_2(T-t)e^{-(\mu-\mu') \lambda},
		 		\end{align*}
		 		so, by using the Gronwall Lemma, 
		 		$$
		 		\Vert w(t)\Vert_H^2\leq C_2Te^{-(\mu-\mu') \lambda+C(T-t)(1+\lambda)}.
		 		$$
		 	Sending  $\lambda\rightarrow \infty $, we deduce that $w(t)=0$ in $[T,t]$ for $t$ such that $T-t< \frac{\mu-\mu'}{C}$. Then, we iterate the same argument: we integrate between $t'$ and $t$ with $t-t'<\frac{\mu-\mu'}{C}$ and we have $w(t)=0$ in $[T,t']$  and so on. We deduce that $w(t)=0$ for all $t\in [0,T]$ so the assertion follows.
		 	\end{proof}

		 	\begin{proof}[Proof of existence in Theorem \ref{variationalinequality}]
		 		Given $u_0=\Phi$, we can construct a sequence $(u_n)_n\subset V$ such that
		 		
		 		\begin{equation}\label{claim1}
		 		u_n\geq \psi \mbox{ a.e. in } [0,T] \times \mathcal{O}, \qquad n\geq 1,
		 		\end{equation}
		 		\begin{equation}\label{claim2}
		 		\begin{split}
		 		-\left( \frac{\partial u_n}{\partial t },v -u_n  \right)_H +a(u_n,v-u_n) + \lambda ((1+y)u_n,v-u_n)_H \geq  \lambda ((1+y)u_{n-1},v-u_n)_H,  \\ v\in V, \quad v\geq \psi , \quad
	 \mbox{ a.e. on } [0,T]\times \mathcal{O}, \qquad n \geq 1,
	 \end{split}
		 		\end{equation}
		 		\begin{equation}
		 		u_n(T)=\psi(T), \qquad \mbox{ in } \mathcal{O},
		 		\end{equation}
		 		\begin{equation}\label{claim3}
		 		\Phi\geq u_1\geq u_2\geq \dots \geq u_{n-1}\geq u_n \geq \dots \geq 0 ,\qquad  \mbox{ a.e. on } [0,T]\times \mathcal{O}.
		 		\end{equation}
		 		In fact, if   we have $0\leq u_{n-1} \leq \Phi$ for all $ n\in \N$, then the assumptions of Proposition \ref{coercive variational inequality} are satisfied with 
		 		$$
		 		g_n= \lambda(1+y)u_{n-1}.
		 		$$
		 		Indeed, since $(1+y)^{\frac 3 2 }\Phi\in L^2([0,T];H)$, we have that  $g_n$ and $\sqrt{1+y}g_n$ belong to $ L^2([0,T];H) $  and, moreover, $0\leq g_n \leq \lambda(1+y)\Phi \leq - \frac{\partial \Phi}{\partial t} -\L_\lambda \Phi$. Therefore, step by step, we can deduce the existence and the uniqueness of a solution $u_n$ to \eqref{claim2} such that $0\leq u_n \leq \Phi$.  \eqref{claim3} is a simple consequence of Proposition \ref{CompPrinc2}. In fact, proceeding by induction, at each step we have 
		 		$$
		 		g_n=	\lambda(1+y)u_{n-1} \leq 	\lambda(1+y)u_{n-2}=g_{n-1}
		 		$$
		 		so that $u_{n} \leq 	u_{n-1}$.
		 		Now, recall that 
		 		\begin{equation*}
		 		\Vert u_{n} \Vert_{L^\infty([0,T],V)}\leq K,
		 		\end{equation*}
		 		\begin{equation*}
		 		\left\Vert \frac{\partial u_{n}}{\partial t }\right \Vert_{L^2([0,T];H)}\leq K,
		 		\end{equation*}
		 		where $K=C \left(    \Vert \Psi \Vert_{L^2([0,T];V)} + \Vert \sqrt{1+y}g_n\Vert_{L^2([0,T];H)}  + \Vert\sqrt{1+y}\psi \Vert_{L^2([0,T];V)}+\Vert\psi(T)\Vert_V \right)$.
		 		Note that the constant $K$ is independent of $n$ since $|g_n|=|\lambda(1+y)u_{n-1},|\leq \lambda (1+y)\Phi,$ for every $n \in \N.$
		 		Therefore, by passing to a subsequence, we can assume that there exists a function $u$ such that $u\in L^2([0,T];V) $, $ \frac{\partial u }{\partial t} \in L^2([0,T];H) $ and for every $t\in [0,T]$, $u'_n(t) \rightharpoonup u'(t) $ in $H$ and $u_n (t) \rightharpoonup u(t) $ in $V$. Indeed, again thanks to the fact that $0\leq u_n \leq \Phi$, we can deduce that $u_n(t)\rightarrow u(t)$ in $H$.  Therefore we can pass to the limit in 
		 		\begin{align*}
		 		-\left( \frac{\partial u_n}{\partial t },u_n-v \right)_H +a(u_n,v-u_n) + \lambda ((1+y)u_n,v-u_n)_H  \geq  \lambda((1+y)u_{n-1},v-u_n)_H
		 		\end{align*}
		 		and the assertion follows.
		 	\end{proof}
\begin{remark}
Keeping in mind our purpose of identifying  the solution of the variational inequality \eqref{VI} with the American option price we have considered the case without source term ($g=0$) in the variational inequality \eqref{VI}. However, under the same assumptions of Theorem \ref{variationalinequality}, we can prove in the same way the existence and the uniqueness of a solution of 
	\begin{equation*}
	\begin{cases}
	-\left( \frac{\partial u}{\partial t },v -u  \right)_H + a(u,v-u)\geq (g,v-u)_H, \quad \mbox{a.e. in } [0,T] \quad v\in  L^2([0,T];V), \ v\geq \psi,\\
	u\geq \psi \mbox{ a.e. in } [0,T]\times \R \times (0,\infty),\\
	u(T)=\psi(T),\\
	0\leq u \leq \Phi,
	\end{cases}
	\end{equation*}
where $g$ satisfies Assumption $\mathcal{H}^0$  and $0\leq g\leq -\frac{\partial \Phi}{\partial t }-\L\Phi$.
\end{remark}
		We conclude stating the following Comparison Principle, whose proof  is a direct consequence of  Proposition \ref{CompPrinc2} and the proof of Proposition \ref{variationalinequality}.
		 			 	\begin{proposition}\label{CompPrinc3}
		 			 	For $i=1,2$, assume that $\psi_i$ satisfies Assumption $\mathcal{H}^1$ and $0\leq \psi_i\leq \Phi$ with  $\Phi$ satisfying Assumption $\mathcal{H}^2$. Let $u^i_{\lambda}$  be the unique solution of \eqref{CVI} with obstacle function $\psi_i$. Then:
		 			 		\begin{enumerate}
		 			 			\item   If $\psi_1\leq \psi_2$, then $u^1_{\lambda}\leq u^2_{\lambda}$.
		 			 			\item  If $\psi_1- \psi_2\in L^\infty$, then $u^1_{\lambda}- u^2_{\lambda}\in L^\infty $ and $\Vert  u^1_{\lambda}- u^2_{\lambda}\Vert_\infty \leq   \Vert \psi_1- \psi_2\Vert_\infty$.
		 			 		\end{enumerate}
		 			 	\end{proposition}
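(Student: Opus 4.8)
The plan is to reproduce, for each obstacle $\psi_i$, the iterative construction used in the existence part of the proof of Theorem~\ref{variationalinequality}, and then to compare the two approximating sequences term by term through Proposition~\ref{CompPrinc2}. Write $u_\lambda(\psi,g)$ for the unique solution of the coercive variational inequality~\eqref{CVI} with obstacle $\psi$ and source $g$. Starting from $u_0^1=u_0^2=\Phi$, I build, for $i=1,2$, the sequences $(u_n^i)_n$ defined by $u_n^i=u_\lambda\bigl(\psi_i,\lambda(1+y)u_{n-1}^i\bigr)$. As in the proof of Theorem~\ref{variationalinequality}, each $u_n^i$ is well defined, satisfies $0\le u_n^i\le\Phi$, and $u_n^i\to u_\lambda^i$ as $n\to\infty$ with $u_n^i(t)\rightharpoonup u_\lambda^i(t)$ in $V$, strongly in $H$, and, along a subsequence, $\m$-a.e. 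Since the inequalities I will establish at the level of the iterates are pointwise, it suffices to prove them for every $u_n^i$ and then pass to the a.e. limit.

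For part (1) I argue by induction that $u_n^1\le u_n^2$ for all $n$. The base case holds because $u_0^1=\Phi=u_0^2$. If $u_{n-1}^1\le u_{n-1}^2$, then the sources satisfy $g_n^1:=\lambda(1+y)u_{n-1}^1\le\lambda(1+y)u_{n-1}^2=:g_n^2$; inserting the intermediate solution $u_\lambda(\psi_2,g_n^1)$ and combining monotonicity in the obstacle (Proposition~\ref{CompPrinc2}(1), using $\psi_1\le\psi_2$) with monotonicity in the source (Proposition~\ref{CompPrinc2}(2), using $g_n^1\le g_n^2$) gives $u_n^1=u_\lambda(\psi_1,g_n^1)\le u_\lambda(\psi_2,g_n^1)\le u_\lambda(\psi_2,g_n^2)=u_n^2$. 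Letting $n\to\infty$ yields $u_\lambda^1\le u_\lambda^2$.

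For part (2) set $c=\|\psi_1-\psi_2\|_\infty$ and prove by induction that $\|u_n^1-u_n^2\|_\infty\le c$. The crucial point is a translation identity: since the first-order bilinear form $a$ annihilates constants, one has $a_\lambda(c,v)=(\lambda(1+y)c,v)_H$, and a direct check on~\eqref{CVI} (testing against $v+c\ge\psi_2+c$) shows that $u_n^2+c$ is the unique solution with obstacle $\psi_2+c$ and source $g_n^2+\lambda(1+y)c$, that is $u_\lambda\bigl(\psi_2+c,\,g_n^2+\lambda(1+y)c\bigr)=u_n^2+c$. Now $\psi_1\le\psi_2+c$, while the inductive hypothesis gives $g_n^1\le g_n^2+\lambda(1+y)c$; hence the monotonicity of Proposition~\ref{CompPrinc2} yields $u_n^1\le u_n^2+c$, and the symmetric argument gives $u_n^2\le u_n^1+c$. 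Passing to the $\m$-a.e. limit along the subsequence gives $|u_\lambda^1-u_\lambda^2|\le c$ a.e., that is $\|u_\lambda^1-u_\lambda^2\|_\infty\le\|\psi_1-\psi_2\|_\infty$.

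The step I expect to require the most care is ensuring that the comparison in part (2) really falls under the hypotheses of Proposition~\ref{CompPrinc2}: after translation the obstacle $\psi_2+c$ and the source $g_n^2+\lambda(1+y)c$ are no longer dominated by $\Phi$, so I would carry out the comparison with the dominating function $\Phi+c$ in place of $\Phi$. One checks that $\Phi+c$ still satisfies Assumption~$\mathcal H^2$ --- in particular $\frac{\partial}{\partial t}(\Phi+c)+\mathcal L(\Phi+c)=\frac{\partial\Phi}{\partial t}+\mathcal L\Phi\le 0$, because $\mathcal L$ kills constants, and the weighted integrability requirements for the added constant reduce to the finiteness of $\into(1+y)^3\,d\m$ --- and that $g_n^2+\lambda(1+y)c\le-\frac{\partial}{\partial t}(\Phi+c)-\mathcal L^\lambda(\Phi+c)$, which follows from $g_n^2\le-\frac{\partial\Phi}{\partial t}-\mathcal L^\lambda\Phi$. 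With these verifications all the constraints needed to invoke Proposition~\ref{CompPrinc2} are met, and the argument closes.
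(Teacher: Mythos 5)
Your proof is correct and follows exactly the route the paper indicates (the paper omits the details, stating only that the result is a direct consequence of Proposition~\ref{CompPrinc2} and the iterative construction in the existence proof of Theorem~\ref{variationalinequality}): you compare the approximating sequences $u_n^i=u_\lambda(\psi_i,\lambda(1+y)u_{n-1}^i)$ term by term via Proposition~\ref{CompPrinc2} and pass to the monotone limit. The translation identity $u_\lambda(\psi+c,\,g+\lambda(1+y)c)=u_\lambda(\psi,g)+c$, together with the replacement of $\Phi$ by $\Phi+c$, is the right device to reduce part~(2) to the monotonicity statements when the sources differ, and your verification that the hypotheses of Proposition~\ref{CompPrinc2} survive the translation is sound.
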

		 	
 		 	\section{Connection with the optimal stopping problem}
		 	Once we have the existence and the uniqueness of a solution $u$ of the variational inequality \eqref{variationalinequality}, our aim is to prove that  it matches the solution of the optimal stopping problem, that is
		 	\begin{equation*}
		 	u(t,x,y)=u^*(t,x,y), \qquad \mbox{ on } [0,T] \times \bar{\mathcal{O}},
		 	\end{equation*}
		 	where $u^*$ is defined by
		 	$$
		 	u^*(t,x,y)= \sup_{\tau \in \mathcal{T}_{t,T}}\E \left[  \psi(\tau,X_\tau^{t,x,y}, Y_\tau^{t,y})     \right],
		 	$$ 
		 $	\mathcal{T}_{t,T}$ being the set of the stopping times with values in $[t,T]$.
		Since the function $u$ is not regular enough to apply It\^{o}'s Lemma, we use another strategy in order to prove the above identification.
So, we first show, by using the affine character of the underlying diffusion, that the semigroup associated with the bilinear form $a_\lambda$ coincides with the transition semigroup of the two dimensional diffusion $(X,Y)$ with a killing term. Then, we prove suitable estimates on the joint law of $(X,Y)$ and $L^p$-regularity results on the solution of the variational inequality and we deduce from them the probabilistic interpretation.

		 	\subsection{Semigroup associated with the bilinear form }

We introduce now the semigroup associated with the coercive  bilinear form $a_\lambda$. With a natural notation, we define the following spaces
$$
L^2_{loc}(\R^+;H)=\{f:\R^+\rightarrow H : \forall t\geq 0 \int_0^t \|f(s)\|_H^2ds<\infty  \},
$$
$$
L^2_{loc}(\R^+;V)=\{f:\R^+\rightarrow V :  \forall t\geq 0 \int_0^t \|f(s)\|_V^2ds<\infty   \}.
$$
First of all, we  state the following result:
	\begin{proposition}\label{propsg2}
For every $\psi\in V$,
 $f\in L^2_{loc}(\R^+;H)$ with $\sqrt{y}f\in L^2_{loc}(\R^+;H)$,   there exists a unique function $u\in L^2_{loc}(\R^+;V)$ such that
		$\frac{\partial u}{\partial t}\in L^2_{loc}(\R^+;H)$, $u(0)=\psi$ and
	\begin{equation}\label{var_eq}
		\left(\frac{\partial u}{\partial t},v\right)_H+a_\lambda(u,v)=(f,v)_H, \quad v\in V.
\end{equation}
		Moreover we have, for every $t\geq 0$,
		\begin{equation}
		\Vert u(t)\Vert _H^2+\frac{\delta_1}{2}\int_0^t\Vert u(s)\Vert ^2_Vds\leq \Vert \psi \Vert ^2_H
		+\frac{2}{\delta_1} \int_0^t \Vert f(s)\Vert _H^2ds \label{estimL2bis}
		\end{equation}
		and
		$$
		||u(t)||^2_V+	\int_0^t||u_t(s)||^2_Hds \leq C\left(||\psi||_V^2+\frac{1}{2}\int_0^t||\sqrt{1+y}f(s)||^2_H
			ds \right),
$$
with $C>0$.
	\end{proposition}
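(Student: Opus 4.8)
The plan is to read \eqref{var_eq} as a standard linear parabolic variational problem governed by the bilinear form $a_\lambda$, which by Lemma \ref{coercivity} is continuous and coercive on $V$, and to solve it by the same Galerkin-plus-truncation machinery developed in Section 3, extracting the two energy estimates from judicious choices of the test function. I first note that the two hypotheses on $f$ are jointly equivalent to $\sqrt{1+y}f\in L^2_{loc}(\R^+;H)$, since $\Vert\sqrt{1+y}f\Vert_H^2=\Vert f\Vert_H^2+\Vert\sqrt{y}f\Vert_H^2$; it is this combined quantity that appears in the second estimate. Uniqueness is immediate: if $w$ is the difference of two solutions with the same data, then $w(0)=0$ and testing \eqref{var_eq} with $v=w$ gives $\frac12\frac{d}{dt}\Vert w\Vert_H^2+a_\lambda(w,w)=0$, so coercivity yields $\frac{d}{dt}\Vert w\Vert_H^2\leq0$ and $w\equiv0$. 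For existence I would fix an increasing sequence $(V_j)_j$ of finite-dimensional subspaces of $V$ with dense union and solve the finite-dimensional ODE system $\left(\frac{\partial u_j}{\partial t},v\right)_H+a_\lambda(u_j,v)=(f,v)_H$ for $v\in V_j$, with $u_j(0)=P_j\psi$, which has a unique global solution because the coefficients are linear and $L^2_{loc}$ in time.

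For the \emph{first estimate} \eqref{estimL2bis}, choosing $v=u_j(t)$ gives $\frac12\frac{d}{dt}\Vert u_j\Vert_H^2+a_\lambda(u_j,u_j)=(f,u_j)_H$. Using the coercivity bound $a_\lambda(u_j,u_j)\geq\frac{\delta_1}2\Vert u_j\Vert_V^2$, the elementary inequality $\Vert u_j\Vert_H\leq\Vert u_j\Vert_V$ and \eqref{inequality} with $\zeta=2/\delta_1$ on $(f,u_j)_H$, one absorbs half of the coercivity term and integrates in time to obtain exactly \eqref{estimL2bis} for $u_j$, uniformly in $j$. Hence $(u_j)$ is bounded in $L^\infty_{loc}(\R^+;H)\cap L^2_{loc}(\R^+;V)$, a subsequence converges weakly to a limit $u$ solving \eqref{var_eq}, and \eqref{estimL2bis} survives by weak lower semicontinuity.

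The \emph{second estimate} is the delicate point and I would treat it exactly as in Lemma \ref{lemma_estim}. To make the first-order coefficients bounded I would first replace $a_\lambda$ by the truncated form $a^{(M)}_\lambda=\bar a_\lambda+\tilde a^{(M)}$; for the truncated problem, testing the finite-dimensional equation with $v=\frac{\partial u_j}{\partial t}\in V_j$ is legitimate and, since the coefficient $y\wedge M$ is bounded, produces (as in \eqref{estim2}) an $M$-dependent bound yielding a genuine solution $u^M$ with $\frac{\partial u^M}{\partial t}\in L^2_{loc}(\R^+;V)$. I would then derive an $M$-uniform estimate on $u^M$: using the symmetry of $\bar a_\lambda$ one has $\bar a_\lambda(u^M,\frac{\partial u^M}{\partial t})=\frac12\frac{d}{dt}\bar a_\lambda(u^M,u^M)$ with $\bar a_\lambda(u^M,u^M)\geq(\delta_1\wedge\lambda)\Vert u^M\Vert_V^2$, while the remaining term $\tilde a^{(M)}(u^M,\frac{\partial u^M}{\partial t})$ is controlled only by $K_1\Vert y|\nabla u^M|\Vert_H\Vert\frac{\partial u^M}{\partial t}\Vert_H$. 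The factor $\Vert y|\nabla u^M|\Vert_H$ is strictly stronger than the $V$-norm, so I would first establish the auxiliary weighted-gradient bound of type \eqref{uchapeau} by testing with $v=\varphi_n u^M=(1+y\wedge n)u^M$ (admissible in $V$ since $\varphi_n$ is bounded) and letting $n\to\infty$ by monotone convergence; it is precisely here that $(f,\varphi_n u^M)_H\leq\Vert\sqrt{1+y}f\Vert_H\Vert\sqrt{1+y}u^M\Vert_H$ introduces the weight $\sqrt{1+y}$ on $f$. Feeding this into the previous identity, applying \eqref{inequality} to absorb $\frac12\Vert\frac{\partial u^M}{\partial t}\Vert_H^2$ and integrating in time gives a bound independent of $M$, analogous to \eqref{estim4}, and passing to the limit $M\to\infty$ yields the second estimate.

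The step I expect to be the main obstacle is exactly the non-symmetric first-order part $\tilde a$: because its coefficients grow linearly in $y$, the pairing $\tilde a(u,\frac{\partial u}{\partial t})$ cannot be dominated by $\Vert u\Vert_V\Vert\frac{\partial u}{\partial t}\Vert_H$, and this is what forces the truncation device and the auxiliary control of $\Vert y|\nabla u|\Vert_H$, and what makes the hypothesis $\sqrt{y}f\in L^2_{loc}(\R^+;H)$ indispensable. The only other technical point, namely the rigorous justification of differentiating $\bar a_\lambda(u,u)$ and testing against the time derivative, is handled cleanly by working with the truncated solutions $u^M$, for which $\frac{\partial u^M}{\partial t}\in L^2_{loc}(\R^+;V)$ is an admissible test function, and by passing to the limit only at the very end.
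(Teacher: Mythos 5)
Your proposal is correct and follows essentially the same route the paper intends: the paper omits this proof, referring to the machinery behind Proposition \ref{coercive variational inequality}, namely Galerkin approximation with the truncated form $a_\lambda^{(M)}$, the $L^2$ estimate from testing with $u_j$, and the $M$-uniform control of $\Vert y|\nabla u|\Vert_H$ via the weighted test functions $\varphi_n u$ before testing with $\partial_t u_j$, exactly as you describe. Your observation that $\sqrt{y}f\in L^2_{loc}(\R^+;H)$ enters precisely through the pairing $(f,\varphi_n u)_H$ is the right reading of the hypothesis, and your constants in \eqref{estimL2bis} check out.
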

	The proof follows the same lines as the proof of Proposition \ref{coercive variational inequality} so we omit it. Moreover, we can prove a Comparison Principle for the equation \eqref{var_eq} as we have done for the variational inequality.  
	
	We denote $u(t)=\bar{P}^\lambda_t\psi$ the solution of \eqref{var_eq} corresponding to $u(0)=\psi$ and $f=0$. From \eqref{estimL2bis} we deduce that the operator $\bar{P}^\lambda_t$ is a linear contraction on $H$ and, from uniqueness, we have the semigroup property. 
	\begin{proposition}\label{propsg3}
		Let us consider $f:\R^+\to H$ such that
		$\sqrt{1+y}f\in L^2_{loc}(\R^+,H)$.  Then, the solution of
		\[
	\begin{cases}
 \left(\frac{\partial u}{\partial t},v\right)_H+a_\lambda(u,v)=(f,v)_H,\quad v\in V,\\
		u(0)=0,
			\end{cases}
		\]
		is given by $ u(t)=\int_0^t  \bar P^\lambda_sf(t-s)ds=\int_0^t\bar P^\lambda_{t-s}f(s)ds$.
	\end{proposition}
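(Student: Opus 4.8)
Proposition \ref{propsg3} is the variation-of-constants (Duhamel) formula for the inhomogeneous problem, and the plan is to verify it directly for time-regular sources and then to identify it with the solution furnished by Proposition \ref{propsg2} through a density argument. First note that the two expressions coincide: the change of variable $\sigma=t-s$ gives $\int_0^t \bar P^\lambda_s f(t-s)\,ds=\int_0^t\bar P^\lambda_{t-s}f(s)\,ds$, so it suffices to treat $w(t):=\int_0^t\bar P^\lambda_{t-s}f(s)\,ds$. Since $\sqrt{1+y}f\in L^2_{loc}(\R^+;H)$ implies both $f\in L^2_{loc}(\R^+;H)$ and $\sqrt{y}f\in L^2_{loc}(\R^+;H)$, Proposition \ref{propsg2} applies and produces a unique $u\in L^2_{loc}(\R^+;V)$ with $\frac{\partial u}{\partial t}\in L^2_{loc}(\R^+;H)$, $u(0)=0$, solving \eqref{var_eq}. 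The whole task is therefore to prove $w=u$.

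The key computation I would carry out first for $f$ regular in time. Recall that, by the very definition of $\bar P^\lambda$, for every $\phi\in V$ and $v\in V$ the scalar map $\tau\mapsto(\bar P^\lambda_\tau\phi,v)_H$ is absolutely continuous with $\frac{d}{d\tau}(\bar P^\lambda_\tau\phi,v)_H=-a_\lambda(\bar P^\lambda_\tau\phi,v)$. Assuming $f\in C^1([0,\infty);H)$ with $f(s)\in V$ for every $s$, I would differentiate $w$ under the integral sign and apply the Leibniz rule for the moving endpoint to get, for every $v\in V$,
\begin{align*}
\frac{d}{dt}(w(t),v)_H &=(f(t),v)_H+\int_0^t\frac{\partial}{\partial t}(\bar P^\lambda_{t-s}f(s),v)_H\,ds\\
&=(f(t),v)_H-\int_0^t a_\lambda(\bar P^\lambda_{t-s}f(s),v)\,ds.
\end{align*}
Because $a_\lambda(\cdot,v)$ is linear and continuous on $V$, the last integral equals $a_\lambda(w(t),v)$, so that $(\frac{\partial w}{\partial t},v)_H+a_\lambda(w,v)=(f,v)_H$ together with $w(0)=0$. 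The uniqueness part of Proposition \ref{propsg2} then forces $w=u$ in this regular case.

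To reach a general $f$ with $\sqrt{1+y}f\in L^2_{loc}(\R^+;H)$, I would argue by density and linearity. Pick regular $f_n$ as above with $\sqrt{1+y}f_n\to\sqrt{1+y}f$ in $L^2_{loc}(\R^+;H)$, and let $u_n$ (respectively $w_n$) denote the corresponding solution of \eqref{var_eq} (respectively Duhamel integral); by the previous step $u_n=w_n$. Applying the energy estimate \eqref{estimL2bis} to $u_n-u_m$, which solves \eqref{var_eq} with zero initial datum and source $f_n-f_m$, shows that $(u_n)_n$ is Cauchy and that $u_n\to u$. On the other hand, since $\bar P^\lambda_\tau$ is a contraction on $H$, one has $\Vert w_n(t)-w(t)\Vert_H\le\int_0^t\Vert f_n(s)-f(s)\Vert_H\,ds$, so $w_n(t)\to w(t)$ in $H$ for every $t$; here I use that $\Vert\cdot\Vert_H\le\Vert\sqrt{1+y}\,\cdot\Vert_H$ and Cauchy--Schwarz on bounded intervals. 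Passing to the limit in $u_n=w_n$ yields $u=w$, which is the claim.

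The hard part will be the regularity of the Duhamel integral and the justification of the differentiation under the integral sign in the second step, namely the control of $\bar P^\lambda_{t-s}f(s)$ in $V$ as $s\uparrow t$. For data only in $H$ the parabolic smoothing of $\bar P^\lambda$ (inherited from the coercivity and continuity of $a_\lambda$ established in Lemma \ref{coercivity}) makes $\Vert\bar P^\lambda_\tau g\Vert_V$ blow up at worst like $\tau^{-1/2}$ as $\tau\to0$, which is integrable on $[0,t]$, so $\int_0^t a_\lambda(\bar P^\lambda_{t-s}f(s),v)\,ds$ is well defined. Working first with time-regular $f$ and then passing to the limit via the estimates of Proposition \ref{propsg2} is precisely what lets one avoid making this smoothing bound quantitative for a general source.
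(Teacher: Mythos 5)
Your proof is correct and follows the same overall strategy as the paper: reduce to a dense class of sources via the energy estimate \eqref{estimL2bis} and the contraction property of $\bar P^\lambda_t$, verify the Duhamel formula directly on that class, and pass to the limit. The difference lies in the choice of dense class, and it is worth noting what each choice buys. The paper takes $f=\ind{(t_1,t_2]}\psi$ with $\psi\in V$; for such step functions the Duhamel integral collapses, after a change of variables, to $\int_{0}^{t-t_1}\bar P^\lambda_s\psi\,ds$ or $\int_{t-t_2}^{t-t_1}\bar P^\lambda_s\psi\,ds$, and the equation is checked using only the \emph{integrated} identity $(\bar P^\lambda_t\psi,v)_H+\int_0^t a_\lambda(\bar P^\lambda_s\psi,v)\,ds=(\psi,v)_H$ — no differentiation under the integral sign and no pointwise-in-time control of $\bar P^\lambda_\tau$ in $V$ are needed. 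You instead take $f$ of class $C^1$ in time with values in $V$ and apply the Leibniz rule, which forces you to control $\bar P^\lambda_{t-s}f(s)$ in $V$ as $s\uparrow t$. For $V$-valued data this is not actually a problem: the second estimate of Proposition \ref{propsg2} (with zero source) gives $\Vert\bar P^\lambda_\tau\phi\Vert_V\leq C\Vert\phi\Vert_V$ uniformly in $\tau$, so your integral $\int_0^t a_\lambda(\bar P^\lambda_{t-s}f(s),v)\,ds$ converges with no singularity at all, and your closing appeal to a parabolic smoothing bound $\Vert\bar P^\lambda_\tau g\Vert_V\lesssim\tau^{-1/2}$ for $g\in H$ is both unnecessary and unavailable here — the paper explicitly declines to prove analyticity of the semigroup. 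With that appeal replaced by the uniform $V$-bound on $V$-data, your argument is complete and is a legitimate, if slightly more computational, alternative to the paper's step-function reduction.
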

	\begin{proof}
		Note that $V$ is dense in $H$ and recall the estimate \eqref{estimL2bis}, so  it is enough to prove the assertion for  $f=\ind{(t_1,t_2]}\psi$, with $0\leq t_1<t_2$ and $\psi\in V$. If we set $u(t)=\int_0^t\bar P^\lambda_{t-s}f(s)ds$, we have
		\begin{align*}
			u(t)&=\ind{\{t\geq t_1\}}\int_{t_1}^{t\wedge t_2}\bar P^\lambda_{t-s}\psi ds\\
		&	=\begin{cases}
		\int_{t_1}^{t_2}\bar P^\lambda_{t-s}\psi ds=\int_{t-t_2}^{t-t_1}\bar P^\lambda_{s}\psi ds\quad&\mbox{ if } t\geq t_2\\
					\displaystyle \int_{t_1}^{t}\bar P^\lambda_{t-s}\psi ds=\int_{0}^{t-t_1}\bar P^\lambda_{s}\psi ds \quad &\mbox{ if } t\in[t_1, t_2)
		\end{cases}.
		\end{align*}
		Therefore, for every $v\in V$, we have $(u_t,v)_H+a_\lambda(u,v)=0$ if $t\leq t_1$ and, if $t\geq t_1$,
	$$
			\left(\frac{\partial u}{\partial t},v\right)_H+a_\lambda(u(t),v)=
		\begin{cases}
		\left(\bar P^\lambda_{t-t_1}\psi-\bar P^\lambda_{t-t_2}\psi, v\right)_H+
		a_\lambda\left(\int_{t-t_2}^{t-t_1}\bar P^\lambda_{s}\psi ds, v\right)\quad &\mbox{ if } t\geq t_2\\
		\left(\bar P^\lambda_{t-t_1}\psi, v\right)_H+
		a_\lambda\left(\int_{0}^{t-t_1}\bar P^\lambda_{s}\psi ds, v\right)  \quad&\mbox{ if } t\in[t_1, t_2)
		\end{cases}.
$$
		The assertion follows from $(\bar P^\lambda_t\psi,v)_H+\int_0^t a_\lambda(\bar P_s\psi,v)ds  =(\psi,v)_H$.
	\end{proof} 
	\begin{remark}
		It is not difficult to prove that  $\bar{P}^\lambda_t:L^p(\mathcal{O},\m) \rightarrow L^p(\mathcal{O},\m) $  is a contraction for every $ p\geq 2$, and it is an analytic semigroup. This is not useful to our purposes so we omit the proof.
	\end{remark}
\subsection{Transition semigroup}
	We define $\E_{x_0,y_0}(\quad)= \E(\quad| X_0=x_0, Y_0=y_0)$. 
	Fix $\lambda >0$.  For every measurable positive function $f$ defined on $\R\times [0,+\infty)$, we define
	\[
	P^\lambda_tf(x_0,y_0)=\E_{x_0,y_0}\left(e^{-\lambda\int_0^t(1+Y_s)ds} f(X_t,Y_t)\right).
	\]
The operator	$	P^\lambda_t$ is the transition semigroup of the two dimensional diffusion $(X,Y)$ with the killing term $e^{-\lambda\int_0^t(1+Y_s)ds}$.

%	Recall that the CIR process satisfies the SDE  
%	$$
%	dY_t=\kappa(\theta-Y_t)dt+\sigma\sqrt{Y_t}dW_t,\qquad Y_0=y_0\geq0,
%	$$
%	with $\sigma>0,$ $k \geq 0 $, $\theta\geq0$. 
	Set $\E_{y_0}(\quad)= \E(\quad| Y_0=y_0)$. 
We first prove some useful results about the Laplace transform of the pair $(Y_t,\int_0^tY_sds)$. These results rely on the affine structure of the model and have already appeared in slightly different forms in the literature (see, for example, \cite[Section 4.2.1]{Abook}). We include a proof for convenience.
	\begin{proposition}\label{Laplace}
		Let $z$  and  $w$ be two complex numbers with nonpositive real parts. The equation
		\begin{equation}\label{*}
		\psi'(t)=\frac{\sigma^2}{2}\psi^2(t)-\kappa \psi(t)+w
		\end{equation}
		has a unique solution $ \psi_{z,w}$ defined on $[0,+\infty)$, such that $\psi_{z,w}(0)=z$. Moreover,
		for every $t\geq 0$,
		\[
		\E_{y_0}\left( e^{z Y_t+w \int_0^t Y_s ds}\right)=e^{y_0\psi_{z,w}(t) +\theta\kappa\phi_{z,w}(t)},
		\]
		with $\phi_{z,w}(t)=\int_0^t \psi_{z,w}(s)ds$.
	\end{proposition}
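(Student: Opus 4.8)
I would split the statement into its ODE part (existence, uniqueness and global definition of $\psi_{z,w}$) and its probabilistic part (the affine formula), treating them in that order since the ODE analysis feeds directly into the verification argument. For the Riccati equation, the right-hand side $\psi\mapsto\frac{\sigma^2}{2}\psi^2-\kappa\psi+w$ is a polynomial, hence locally Lipschitz on $\mathbb{C}$, so Cauchy--Lipschitz provides a unique maximal solution $\psi_{z,w}$ on some $[0,T^*)$ with $\psi_{z,w}(0)=z$; the only real work is to show $T^*=+\infty$, i.e. to exclude finite-time blow-up. Writing $p=\mathrm{Re}\,\psi_{z,w}$, $q=\mathrm{Im}\,\psi_{z,w}$ and taking real parts in \eqref{*},
\[
p'=\frac{\sigma^2}{2}p^2-\kappa p+\Big(\mathrm{Re}(w)-\frac{\sigma^2}{2}q^2\Big)\le \frac{\sigma^2}{2}p^2-\kappa p,
\]
because $\mathrm{Re}(w)\le 0$. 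Comparing this differential inequality with the scalar equation $\xi'=\frac{\sigma^2}{2}\xi^2-\kappa\xi$, $\xi(0)=0$ (whose solution is $\xi\equiv 0$) yields $p(t)\le 0$ on $[0,T^*)$. I would then use $p\le 0$ to control the modulus: a direct computation gives
\[
\frac{d}{dt}|\psi_{z,w}|^2=\sigma^2 p\,|\psi_{z,w}|^2-2\kappa|\psi_{z,w}|^2+2\big(p\,\mathrm{Re}(w)+q\,\mathrm{Im}(w)\big)\le -2\kappa|\psi_{z,w}|^2+2|w|\,|\psi_{z,w}|,
\]
so $|\psi_{z,w}|$ stays bounded on its maximal interval, which forbids explosion and forces $T^*=+\infty$.

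For the formula I would fix $t>0$ and $y_0\ge 0$, set $\phi_{z,w}(r)=\int_0^r\psi_{z,w}$, and introduce the candidate exponential martingale
\[
M_s=\exp\Big(Y_s\,\psi_{z,w}(t-s)+\theta\kappa\,\phi_{z,w}(t-s)+w\int_0^s Y_u\,du\Big),\qquad s\in[0,t],
\]
which is deterministic at $s=0$, $M_0=e^{y_0\psi_{z,w}(t)+\theta\kappa\phi_{z,w}(t)}$, and equals $M_t=e^{zY_t+w\int_0^t Y_u\,du}$ at $s=t$ since $\psi_{z,w}(0)=z$ and $\phi_{z,w}(0)=0$. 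Writing $M_s=e^{F_s}$ and applying Itô's formula with the dynamics $dY_s=\kappa(\theta-Y_s)\,ds+\sigma\sqrt{Y_s}\,dW_s$ from \eqref{hest} and $\frac{d}{ds}\phi_{z,w}(t-s)=-\psi_{z,w}(t-s)$, the drift of $dM_s/M_s$ equals
\[
Y_s\Big(\frac{\sigma^2}{2}\psi_{z,w}^2-\kappa\psi_{z,w}+w-\psi_{z,w}'\Big)(t-s)+\kappa\theta\,\psi_{z,w}(t-s)-\theta\kappa\,\psi_{z,w}(t-s).
\]
The last two terms cancel, and the coefficient of $Y_s$ vanishes by \eqref{*}, so $M$ is a local martingale with $dM_s=\sigma\,\psi_{z,w}(t-s)\sqrt{Y_s}\,M_s\,dW_s$.

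It then remains to upgrade $M$ from a local to a genuine martingale, and here the bound $p=\mathrm{Re}\,\psi_{z,w}\le 0$ is used a second time: since $Y_s\ge 0$, $\mathrm{Re}(w)\le 0$ and $\mathrm{Re}\,\phi_{z,w}(r)=\int_0^r p\le 0$, the exponent satisfies $\mathrm{Re}(F_s)\le 0$, whence $|M_s|\le 1$ on $[0,t]$. A bounded local martingale is a martingale, so $\E_{y_0}[M_t]=\E_{y_0}[M_0]=M_0$, and reading off $M_0$ and $M_t$ delivers the claimed identity. The hard part is the global well-posedness of the Riccati equation for complex data, i.e. excluding blow-up; the decisive point is that the single a priori estimate $\mathrm{Re}\,\psi_{z,w}\le 0$ does double duty, preventing explosion of the ODE and simultaneously furnishing the uniform bound $|M_s|\le 1$ required to justify the Feynman--Kac verification. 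Everything else (the Itô expansion and the drift cancellation) is routine once \eqref{*} is available.
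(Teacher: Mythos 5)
Your proposal is correct and follows essentially the same route as the paper: both establish $\mathrm{Re}\,\psi_{z,w}\le 0$ from the real part of the Riccati equation, use it together with the differential inequality for $|\psi_{z,w}|^2$ to rule out finite-time blow-up, and then verify the affine formula by showing that the same exponential process is a local martingale bounded by $1$ in modulus, hence a true martingale. The only cosmetic differences are that you phrase the sign argument as an ODE comparison with $\xi\equiv 0$ where the paper uses an integrating factor, and you compute the drift by a direct It\^o expansion where the paper checks the associated PDE for $F_{z,w}(t,y)$; these are equivalent.
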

	\begin{proof}
		Let $\psi$ be the solution of \eqref{*}. We define $\psi_1$ (resp. $w_1$) and $\psi_2$
		(resp. $w_2$) the real and the imaginary part of
		$\psi$ (resp. $w$). We have
		\[
		\left\{
		\begin{array}{l}
		\psi'_1(t)=\frac{\sigma^2}{2}\left(\psi_1^2(t)-\psi_2^2(t)\right)-\kappa \psi_1(t)+w_1,\\
		\psi'_2(t)=\sigma^2\psi_1(t)\psi_2(t)-\kappa \psi_2(t)+w_2.
		\end{array}
		\right.
		\]
		From the first equation we deduce that $ \psi'_1(t)\leq \frac{\sigma^2}{2}\left(\psi_1(t)-\frac{2\kappa}{\sigma^2}\right) \psi_1(t)+w_1$
		and, since $w_1\leq 0$, the function $t\mapsto \psi_1(t)e^{-\frac{\sigma^2}2\int_0^t(\psi_1(s)-\frac{2\kappa}{\sigma^2})ds}$ is nonincreasing. Therefore $\psi_1(t)\leq 0$
	if $\psi_1(0)\leq 0$. Multiplying the first equation by $\psi_1(t)$ and the second one by $\psi_2(t)$ and adding we get
		\begin{eqnarray*}
		\frac 1 2 	\frac{d}{dt}\left(|\psi(t)|^2\right)&=&\left(\frac{\sigma^2}{2}\psi_1(t)-\kappa\right)|\psi(t)|^2+w_1\psi_1(t)+w_2\psi_2(t)\\
			&\leq &\left(\frac{\sigma^2}{2}\psi_1(t)-\kappa\right)|\psi(t)|^2+|w||\psi(t)|\\
			&\leq &\left(\frac{\sigma^2}{2}\psi_1(t)-\kappa\right)|\psi(t)|^2+\epsilon |\psi(t)|^2+\frac{|w|^2}{4\epsilon}.
		\end{eqnarray*}
		We deduce that $|\psi(t)|$ cannot explode in finite time and, therefore,  $\psi_{z,w}$
		actually exists on $[0, +\infty)$. 
		
		Now, let us define the function  $F_{z,w}(t,y)=e^{y\psi_{z,w}(t) +\theta\kappa\phi_{z,w}(t)}$.
		$F_{z,w}$ is  $C^{1,2}$ on $[0,+\infty)\times \R$ and it satisfies by construction the following equation
		\[
		\frac{\partial F_{z,w}}{\partial t}=\frac{\sigma^2}{2}y\frac{\partial^2 F_{z,w}}{\partial y^2}+
		\kappa(\theta-y)\frac{\partial F_{z,w}}{\partial y}+w y F_{z,w}.
		\]
		Therefore, for every $T>0$, the process $(M_t)_{0\leq t\leq T}$ defined by \begin{equation}\label{Mt}
M_t=e^{w\int_0^t Y_sds}F_{z,w}(T-t, Y_t)
\end{equation} 
is a local martingale. On the other hand, note that 
$$
|M_t|= \left|  e^{w\int_0^t Y_sds} \right|\left|  e^{Y_t\psi_{z,w}(T-t) +\theta\kappa\phi_{z,w}(T-t)} \right|\leq 1
$$ 
since $w$, $\psi_{z,w}(t)$ and $\phi_{z,w}(t)=\int_0^t\psi_{z,w}(s)ds$ all have nonpositive real parts. Therefore the process $(M_t)_t$ is a  true martingale indeed. We deduce that 
		$F_{z,w}(T,y_0)=\E_{y_0}\left(e^{w\int_0^T Y_sds}e^{zY_T}\right)$ and the assertion follows.
	\end{proof}
		We also have the following result which specifies the behaviour of the Laplace transform  of $(Y_t,\int_0^tY_sds)$ when evaluated in two real numbers, not necessarily nonpositive.
	\begin{proposition}\label{Laplace2}
		Let $\lambda_1$  and $\lambda_2$ be two real numbers such that 
		\[
		\frac{\sigma^2}{2}\lambda_1^2-\kappa\lambda_1+\lambda_2\leq 0.
		\]
		Then, the equation
		\begin{equation}\label{**}
		\psi'(t)=\frac{\sigma^2}{2}\psi^2(t)-\kappa \psi(t)+\lambda_2
		\end{equation}
		has a  unique solution $ \psi_{\lambda_1,\lambda_2}$ defined on $[0,+\infty)$ such that $\psi_{\lambda_1,\lambda_2}(0)=\lambda_1$. Moreover, for every  $t\geq 0$, we have
		\[
		\E_{y_0}\left( e^{\lambda_1 Y_t+\lambda_2 \int_0^t Y_s ds}\right)\leq 
		e^{y_0\psi_{\lambda_1,\lambda_2}(t) +\theta\kappa\phi_{\lambda_1,\lambda_2}(t)},
		\]
		with $\phi_{\lambda_1,\lambda_2}(t)=\int_0^t \psi_{\lambda_1,\lambda_2}(s)ds$.
	\end{proposition}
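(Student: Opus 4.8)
The plan is to mirror the two-part structure of the proof of Proposition \ref{Laplace}, the only genuine change being that the boundedness argument used there (which produced an equality) must be replaced by a nonnegative–local–martingale argument, which is exactly what degrades the conclusion into an inequality. First I would analyse the ODE \eqref{**}. Writing $Q(x)=\frac{\sigma^2}{2}x^2-\kappa x+\lambda_2$, the hypothesis reads $Q(\lambda_1)\leq 0$. Since $Q$ is an upward parabola, its minimum is $\leq Q(\lambda_1)\leq 0$, so $Q$ has two real roots $\lambda_-\leq\lambda_+$ with $\lambda_1\in[\lambda_-,\lambda_+]$ and $Q\leq 0$ on this interval. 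As $\psi'=Q(\psi)$ and $\lambda_\pm$ are stationary points, the solution issued from $\lambda_1$ is nonincreasing and stays trapped in $[\lambda_-,\lambda_1]$; in particular it is bounded. Local Lipschitz continuity of $Q$ together with this a priori bound yields existence and uniqueness of $\psi_{\lambda_1,\lambda_2}$ on all of $[0,+\infty)$, and then $\phi_{\lambda_1,\lambda_2}(t)=\int_0^t\psi_{\lambda_1,\lambda_2}(s)\,ds$ is well defined.

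Next I would introduce $F_{\lambda_1,\lambda_2}(t,y)=e^{y\psi_{\lambda_1,\lambda_2}(t)+\theta\kappa\phi_{\lambda_1,\lambda_2}(t)}$, which is $C^{1,2}$ on $[0,+\infty)\times\R$. A direct computation, identical to the one in Proposition \ref{Laplace} with the $\theta\kappa\psi$ terms cancelling, shows that \eqref{**} is precisely the condition for $F_{\lambda_1,\lambda_2}$ to solve
\[
\frac{\partial F_{\lambda_1,\lambda_2}}{\partial t}=\frac{\sigma^2}{2}y\frac{\partial^2 F_{\lambda_1,\lambda_2}}{\partial y^2}+\kappa(\theta-y)\frac{\partial F_{\lambda_1,\lambda_2}}{\partial y}+\lambda_2\,y\,F_{\lambda_1,\lambda_2}.
\]
Consequently, fixing $T>0$ and applying It\^o's formula exactly as before, the process $M_t=e^{\lambda_2\int_0^t Y_s\,ds}F_{\lambda_1,\lambda_2}(T-t,Y_t)$, for $0\leq t\leq T$, has vanishing drift and is therefore a local martingale for the diffusion started at $Y_0=y_0$.

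The decisive difference with Proposition \ref{Laplace} is that here $\lambda_1,\lambda_2$ need not be nonpositive, so $M_t$ is no longer bounded and the true–martingale argument is unavailable. However $M_t\geq 0$, being a product of exponentials, so $M$ is a nonnegative local martingale and hence a supermartingale: choosing a localizing sequence $(\tau_n)_n$ one has $\E_{y_0}[M_{T\wedge\tau_n}]=M_0$, and Fatou's lemma yields $\E_{y_0}[M_T]\leq\liminf_n\E_{y_0}[M_{T\wedge\tau_n}]=M_0$. Since $M_0=F_{\lambda_1,\lambda_2}(T,y_0)=e^{y_0\psi_{\lambda_1,\lambda_2}(T)+\theta\kappa\phi_{\lambda_1,\lambda_2}(T)}$ and, using $\psi_{\lambda_1,\lambda_2}(0)=\lambda_1$ and $\phi_{\lambda_1,\lambda_2}(0)=0$, $M_T=e^{\lambda_1 Y_T+\lambda_2\int_0^T Y_s\,ds}$, this is exactly the claimed bound. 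The only delicate point is the passage from the local martingale to the supermartingale inequality, that is the Fatou step, which is what turns the equality of Proposition \ref{Laplace} into the inequality here; everything else is a verbatim adaptation of the preceding proof.
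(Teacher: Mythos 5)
Your proof is correct and follows essentially the same route as the paper's: global existence of $\psi_{\lambda_1,\lambda_2}$ via an a priori bound trapping the solution between the roots of the quadratic, followed by the observation that the now unbounded local martingale $M_t$ is nonnegative, hence a supermartingale, which gives the inequality. The only cosmetic difference is that the paper obtains the monotonicity of $\psi$ from the identity $\psi''=(\sigma^2\psi-\kappa)\psi'$ (so $\psi'$ keeps the sign of $\psi'(0)=Q(\lambda_1)\leq 0$), whereas you argue directly on the phase line; both yield the same confinement to $[\lambda_-,\lambda_1]$.
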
 
	\begin{proof}
	Let $\psi$ be the solution of \eqref{**} with $\psi(0)=\lambda_1$. We have
		\[
		\psi''(t)=(\sigma^2 \psi(t)-\kappa)\psi'(t).
		\]
		Therefore, the function $t\mapsto \psi'(t)e^{-\int_0^t(\sigma^2\psi(s)-\kappa)ds}$is a constant, hence $\psi'(t)$ has constant sign.  Moreover, the assumption on $\lambda_1$
		and $\lambda_2$ ensures that  $\psi'(0)\leq 0$. We deduce that $\psi'(t)\leq 0$
		and $\psi(t)$ remains between the solutions of the equation
		\[
		\frac{\sigma^2}{2}\lambda^2-\kappa\lambda+\lambda_2=0.
		\]
		This proves that the solution is defined on the whole interval
		$[0,+\infty)$. Now the assertion follows as in the proof of Proposition \ref{Laplace}: just note that  the process $(M_t)_t$ defined as in \eqref{Mt} is no more uniformly bounded, so we cannot directly deduce that it is a martingale. However,  it remains a positive local martingale, hence a supermartingale.
			\end{proof}
			
		\begin{remark}\label{remarksemigroup}
	Let us now consider	two real numbers $\lambda_1$  and $\lambda_2$  such that 
	\[
	\frac{\sigma^2}{2}\lambda_1^2-\kappa\lambda_1+\lambda_2<0.
	\]	 From the proof of Proposition \ref{Laplace2}, by using the optional sampling theorem we have
		\begin{eqnarray*}
			\sup_{\tau\in\T_{0,T}}\E_y\left(e^{\lambda_2\int_0^\tau Y_sds}e^{\psi_{\lambda_1,\lambda_2}(T-\tau)Y_\tau+\theta\kappa\phi_{\lambda_1,\lambda_2}(T-\tau)}\right)
			&\leq&e^{y\psi_{\lambda_1,\lambda_2}(T)+\theta\kappa\phi_{\lambda_1,\lambda_2}(T)}.
		\end{eqnarray*}
		
		Consider now $\epsilon>0$ and let $\lambda_1^\epsilon=(1+\epsilon)\lambda_1$ and $\lambda_2^\epsilon = (1+\epsilon)\lambda_2$.
		For $\epsilon$ small enough, we have $ \frac{\sigma^2}{2}(\lambda_1^\epsilon)^2-\kappa\lambda_1^\epsilon+\lambda_2^\epsilon< 0$.
		Therefore
		\begin{eqnarray*}
			\sup_{\tau\in\T_{0,T}}\E_y\left(e^{\lambda_2^\epsilon\int_0^\tau Y_sds}e^{\psi_{\lambda_1^\epsilon,\lambda_2^\epsilon}(T-\tau)Y_\tau+
				\theta\kappa\phi_{\lambda_1^\epsilon,\lambda_2^\epsilon}(T-\tau)}\right)
			&\leq&e^{y\psi_{\lambda_1^\epsilon,\lambda_2^\epsilon}(T)+\theta\kappa\phi_{\lambda_1^\epsilon,\lambda_2^\epsilon}(T)}.
		\end{eqnarray*}
		If we have $\psi_{\lambda_1^\epsilon,\lambda_2^\epsilon}\geq (1+\epsilon)\psi_{\lambda_1,\lambda_2}$,
		we can deduce that
		\begin{eqnarray*}
			\sup_{\tau\in\T_{0,T}}\E_y\left(e^{\lambda_2(1+\epsilon)\int_0^\tau Y_sds}
			e^{(1+\epsilon)\left(\psi_{\lambda_1,\lambda_2}(T-\tau)Y_\tau+\theta\kappa\phi_{\lambda_1,\lambda_2}(T-\tau)\right)}\right)
			&\leq&e^{y\psi_{\lambda_1^\epsilon,\lambda_2^\epsilon}(T)+\theta\kappa\phi_{\lambda_1^\epsilon,\lambda_2^\epsilon}(T)},
		\end{eqnarray*}
		and, therefore, that the family  $\left(e^{\lambda_2\int_0^\tau Y_sds}e^{\psi_{\lambda_1,\lambda_2}(T-\tau)Y_\tau+\theta\kappa\phi_{\lambda_1,\lambda_2}(T-\tau)}\right)_{\tau\in\T_{0,T}}$
		is uniformly integrable. As a consequence, the process $(M_t)_t$ is a true  martingale and we have
		\[
		\E_{y}\left( e^{\lambda_1 Y_t+\lambda_2 \int_0^t Y_s ds}\right) = 
		e^{y\psi_{\lambda_1,\lambda_2}(t) +\theta\kappa\phi_{\lambda_1,\lambda_2}(t)}.
		\]
		So, it remains to show  that $\psi_{\lambda_1^\epsilon,\lambda_2^\epsilon}\geq (1+\epsilon)\psi_{\lambda_1,\lambda_2}$. In order to do this
		we set $g_\epsilon(t)=\psi_{\lambda_1^\epsilon,\lambda_2^\epsilon}(t)-(1+\epsilon)\psi_{\lambda_1,\lambda_2}(t)$.
		From the equations satisfied by $\psi_{\lambda_1^\epsilon,\lambda_2^\epsilon}$ and
		$\psi_{\lambda_1,\lambda_2}$ we deduce that
		\begin{eqnarray*}
			g_\epsilon'(t)&=&\frac{\sigma^2}{2}\left(\psi^2_{\lambda_1^\epsilon,\lambda_2^\epsilon}(t)-(1+\epsilon)
			\psi^2_{\lambda_1,\lambda_2}(t)\right)-\kappa \left(\psi_{\lambda_1^\epsilon,\lambda_2^\epsilon}(t)-
			(1+\epsilon) \psi_{\lambda_1,\lambda_2}(t)\right)\\
			&=&
			\frac{\sigma^2}{2}\left(\psi^2_{\lambda_1^\epsilon,\lambda_2^\epsilon}(t)-(1+\epsilon)^2
			\psi^2_{\lambda_1,\lambda_2}(t)\right)
			-\kappa g_\epsilon(t)+\frac{\sigma^2}{2}\left((1+\epsilon)^2-(1+\epsilon)\right)\psi^2_{\lambda_1,\lambda_2}(t)\\
			&=&\frac{\sigma^2}{2}\left(\psi_{\lambda_1^\epsilon,\lambda_2^\epsilon}(t)+(1+\epsilon)
			\psi_{\lambda_1,\lambda_2}(t)\right)g_\epsilon(t)
			-\kappa g_\epsilon(t)+\frac{\sigma^2}{2}\epsilon(1+\epsilon)\psi^2_{\lambda_1,\lambda_2}(t)\\
			&=&f_\epsilon(t) g_\epsilon(t)+\frac{\sigma^2}{2}\epsilon(1+\epsilon)\psi^2_{\lambda_1,\lambda_2}(t),
		\end{eqnarray*}
		where
		\[
		f_\epsilon(t)=\frac{\sigma^2}{2}\left(\psi_{\lambda_1^\epsilon,\lambda_2^\epsilon}(t)+(1+\epsilon)
		\psi_{\lambda_1,\lambda_2}(t)\right)
		-\kappa.
		\]
		Therefore, the function $g_\epsilon(t)e^{-\int_0^t f_\epsilon(s)ds}$ is nondecreasing  and, since $g_\epsilon(0)=0$,
		we have $g_\epsilon(t)\geq 0$.
\end{remark}    

We can now prove the following Lemma, which will be useful in  Section \ref{sect-estimjointlaw} to  prove suitable estimates on the joint law of the process $(X,Y)$.
	\begin{lemma}\label{RemarqueMomentsNeg}
		For every $q>0$  there exists $C>0$ such that for all $y_0\geq 0$,
		\begin{equation}
			\E_{y_0}\left(\int_0^t Y_v dv\right)^{-q}\leq \frac{C}{t^{2q}}.
		\end{equation}
	\end{lemma}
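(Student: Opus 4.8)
The plan is to reduce everything to the Laplace transform of $\int_0^t Y_v\,dv$, which Proposition \ref{Laplace} controls explicitly. The starting point is the elementary identity, valid for any positive random variable $Z$ with $Z>0$ a.s. and any $q>0$,
\[
Z^{-q}=\frac{1}{\Gamma(q)}\int_0^\infty \lambda^{q-1}e^{-\lambda Z}\,d\lambda ,
\]
which, applied to $Z=\int_0^t Y_v\,dv$ (positive a.s., since $Y$ is nonnegative and instantaneously enters $(0,\infty)$) together with Tonelli's theorem, gives
\[
\E_{y_0}\!\left(\int_0^t Y_v\,dv\right)^{-q}=\frac{1}{\Gamma(q)}\int_0^\infty \lambda^{q-1}\,\E_{y_0}\!\left(e^{-\lambda\int_0^t Y_v\,dv}\right)d\lambda .
\]
I would then apply Proposition \ref{Laplace} with $z=0$ and $w=-\lambda$ (both of nonpositive real part) and recall from its proof that $\psi_{0,-\lambda}(s)\le 0$ for all $s$, to obtain, uniformly in $y_0\ge0$,
\[
\E_{y_0}\!\left(e^{-\lambda\int_0^t Y_v\,dv}\right)=e^{y_0\psi_{0,-\lambda}(t)+\theta\kappa\phi_{0,-\lambda}(t)}\le e^{\theta\kappa\phi_{0,-\lambda}(t)} .
\]
This is exactly where the uniformity in $y_0$ comes from: the factor $e^{y_0\psi_{0,-\lambda}(t)}\le 1$.

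The heart of the argument is a quantitative lower bound on $-\phi_{0,-\lambda}(t)=\int_0^t h_\lambda(s)\,ds$, where $h_\lambda=-\psi_{0,-\lambda}\ge 0$ solves the Riccati equation $h'=-\frac{\sigma^2}{2}h^2-\kappa h+\lambda$, $h_\lambda(0)=0$, and increases monotonically towards its equilibrium $h_+=\frac{R-\kappa}{\sigma^2}$, with $R:=\sqrt{\kappa^2+2\sigma^2\lambda}$. Factoring the right-hand side as $\frac{\sigma^2}{2}(h_+-h)(h-h_{--})$ with $h_{--}<0$, one checks that while $h\le h_+/2$ one has $h'\ge \frac{h_+R}{4}$, so $h_\lambda$ reaches $h_+/2$ before time $2/R$; consequently, whenever $t\ge 4/R$, $\int_0^t h_\lambda\ge \frac{t}{2}\cdot\frac{h_+}{2}=\frac{t h_+}{4}$. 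Since $h_+\ge \frac{\sqrt{2\lambda}}{2\sigma}$ as soon as $R\ge 2\kappa$, this produces a constant $c_1>0$ (depending only on the parameters) with
\[
-\phi_{0,-\lambda}(t)\ge c_1\,t\sqrt{\lambda}\qquad\text{for all }\lambda\ge\lambda_0(t):=\frac{8}{\sigma^2 t^2}+\frac{3\kappa^2}{2\sigma^2}.
\]

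Plugging this back in, I would split the $\lambda$-integral at $\lambda_0(t)$. On $(\lambda_0(t),\infty)$ the exponential estimate gives at most $\int_0^\infty\lambda^{q-1}e^{-\theta\kappa c_1 t\sqrt\lambda}\,d\lambda$, and the substitution $u=t\sqrt\lambda$ turns this into $\frac{2}{t^{2q}}\int_0^\infty u^{2q-1}e^{-\theta\kappa c_1 u}\,du$, which is precisely of the announced order $t^{-2q}$. On $(0,\lambda_0(t))$ I would merely use $e^{\theta\kappa\phi_{0,-\lambda}(t)}\le 1$, so that this part contributes at most $\lambda_0(t)^q/q$, which for $t\le T$ is bounded by $C_T\,t^{-2q}$. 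Adding the two pieces yields the claim, with $C$ depending on $q$, the model parameters and $T$.

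The main obstacle is the sharp, $y_0$-uniform control of $\phi_{0,-\lambda}(t)$ for large $\lambda$: one must show that the time integral of $h_\lambda$ grows like $t\sqrt{\lambda}$, since it is exactly the $\sqrt{\lambda}$ rate that integrates to the exponent $2q$ in $t^{-2q}$. I would also flag that this is intrinsically a short-time (bounded-horizon) statement: as $t\to\infty$ the ergodic average $\frac1t\int_0^t Y_v\,dv\to\theta$ forces $\E_{y_0}(\int_0^t Y_v\,dv)^{-q}\sim(\theta t)^{-q}$, so no constant can make $C/t^{2q}$ hold for arbitrarily large $t$; the constant therefore genuinely depends on the fixed horizon $T$, which is the regime in which the lemma is used.
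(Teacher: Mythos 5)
Your proof is correct, and its skeleton coincides with the paper's: both start from the identity $y^{-q}=\frac{1}{\Gamma(q)}\int_0^\infty s^{q-1}e^{-sy}\,ds$, reduce to the Laplace transform of $\int_0^t Y_v\,dv$ via the Riccati equation, use $e^{y_0\psi_{0,-s}(t)}\leq 1$ for the uniformity in $y_0$, and extract the rate $t^{-2q}$ from a lower bound of order $t\sqrt{s}$ on $-\phi_{0,-s}(t)$. The execution of that key lower bound differs, though. The paper integrates $\psi'_{0,-s}(u)=-s\,e^{\int_0^u(\sigma^2\psi-\kappa)dv}$ together with the a priori bound $\psi_{0,-s}\geq -\sqrt{2s/\sigma^2}$ to get the explicit estimate $\psi_{0,-s}(t)\leq -\frac{s}{\lambda_s}(1-e^{-t\lambda_s})$ with $\lambda_s=\sigma\sqrt{2s}+\kappa$, then integrates once more and splits the $s$-integral at the fixed threshold $s=1$; you instead run a phase-plane comparison on $h=-\psi_{0,-s}$, showing it reaches half its equilibrium $h_+\sim\sqrt{2s}/\sigma$ by time $2/R$, and split at the $t$-dependent threshold $\lambda_0(t)\sim t^{-2}$. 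Both routes are of comparable length; yours has the merit of making the dependence of the constant on the horizon $T$ completely explicit, and your closing remark is on point: the paper's last step ($C+C't^{-2q}\leq C''t^{-2q}$, coming from the contribution of $s\in(0,1)$) also implicitly requires $t$ to range over a bounded interval, which is indeed the only regime in which the lemma is invoked (namely $t\in(0,T]$ in the proof of Theorem \ref{theotransition}).
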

		\begin{proof}
		If we take $\lambda_1=0$ and $\lambda_2=-s$ with $s>0$  in Proposition \ref{Laplace2},
		we get
		\[
		\E_{y_0}\left( e^{-s \int_0^t Y_v dv}\right)=e^{y_0\psi_{0,-s}(t) +\theta\kappa\phi_{0,-s}(t)}.
		\]
		Since $\psi'_{0,-s}(0)=-s<0$, we can deduce by the  proof of Proposition \ref{Laplace2} that $\psi'_{0,-s}(t)=-se^{\int_0^t(\sigma^2\psi(u)-\kappa)du}$. Therefore, since $\psi_{0,-s}=0$, we have  
\begin{equation}\label{psi0-s}
\psi_{0,-s}(t)=-s\int_0^te^{\int_0^u(\sigma^2\psi(v)-\kappa)dv}du.
\end{equation}
 Again from the proof of Proposition \ref{Laplace2},
		\[
		\psi_{0,-s}(t)\geq 
		\frac{\kappa}{\sigma^2}-\sqrt{\left(\frac{\kappa}{\sigma^2}\right)^2+2\frac{s}{\sigma^2}}
		\geq -\sqrt{2s/\sigma^2},
		\]
so, by using \eqref{psi0-s}, we deduce that
		$$
		\psi_{0,-s}(t)\leq 
	-s\int_0^te^{\int_0^u-(\sigma \sqrt{2s}+\kappa)dv}du=-s\int_0^te^{-\lambda_su}du=-\frac{s}{\lambda_s}(1-e^{-t\lambda_s}).
$$	
where $\lambda_s=\sigma\sqrt{2s}+\kappa$. Since $\phi_{0,-s}(t)=\int_0^t \psi_{0,-s}(u)du$, we have
$$
\phi_{0,-s}(t)\leq-\frac{s}{\lambda^2_s} \left( t\lambda_s   -1+e^{-t\lambda_s}\right).
$$
Therefore, since $\psi_{0,-s}(t)\leq 0$,  for any $y_0\geq 0$ we get
\begin{align*}
\E_{y_0}\left(e^{-s\int_0^tY_vdv}\right)&\leq e^{\kappa\theta\phi_{0,-s}(t)}\leq e^{ -\frac{\kappa\theta s}{\lambda_s^2} (t\lambda_s-1+e^{-t\lambda_s})    } .
\end{align*}
		Now, recall that for every $q>0$ we can write
		$$
		\frac{1}{y^q}=\frac{1}{\Gamma(q)}\int_0^\infty s^{q-1}e^{-sy}ds.
		$$
	Therefore
		\begin{align*}
			\E_{y_0}\left(\int_0^t Y_v dv\right)^{-q}&=\E_{y_0}\left(\frac{1}{\Gamma(q)}\int_0^\infty s^{q-1}
			e^{-s\int_0^t Y_v dv}ds\right)\\
			&\leq\frac{1}{\Gamma(q)}\int_0^1s^{q-1}
	e^{  -\frac{\kappa\theta s}{\lambda_s^2} (t\lambda_s-1+e^{-t\lambda_s})} ds+ \frac{1}{\Gamma(q)}\int_1^\infty s^{q-1}
		e^{  -\frac{\kappa\theta s}{\lambda_s^2} (t\lambda_s-1+e^{-t\lambda_s})} ds.
		\end{align*}
		Recall that $\lambda_s=\sigma\sqrt{2s}+\kappa$, so the first terms in the right hand side is finite. Moreover, for $s>1$, we have $\frac{\kappa\theta s}{\lambda_s^2}\leq C$.  Then, by noting that the function $u\mapsto tu-1+e^{-tu}$ is nondecreasing, we have
			\begin{align*}
	\E_{y_0}\left(\int_0^t Y_v dv\right)^{-q}&\leq C+ \frac{1}{\Gamma(q)}\int_1^\infty s^{q-1}
		e^{  -C (t\sigma\sqrt{2s}-1+e^{-t\sigma\sqrt{2s}})} ds\\
		&\leq C+ \frac{1}{t^{2q}\Gamma(q)}\int_0^\infty v^{q-1}
		e^{  -C (\sigma\sqrt{2v}-1+e^{-\sigma\sqrt{2v}})} dv\\
		&\leq \frac{C}{t^{2q}},
		\end{align*}
		which concludes  the proof.
	\end{proof}

	Now recall that the diffusion $(X,Y)$ evolves according to the following stochastic differential system
	
	\begin{equation*}
	\begin{cases}
	dX_t=\left( \frac{\rho\kappa\theta}{\sigma}-\frac{Y_t}{2}\right)dt + \sqrt{Y_t}dB_t,\\
	dY_t=\kappa(\theta-Y_t)dt+\sigma\sqrt{Y_t}dW_t.
	\end{cases}
	\end{equation*}	 	
	If we set $\tilde{X}_t=X_t-\frac{\rho}{\sigma}Y_t$, we have 
\begin{equation}
\label{model2}
\begin{cases}
d\tilde{X}_t=\left( \frac{\rho\kappa}{\sigma}-\frac{1}{2}\right)Y_tdt+\sqrt{1-\rho^2}\sqrt{Y_t} d\tilde{B}_t,\\
dY_t=\kappa(\theta-Y_t)dt+\sigma\sqrt{Y_t}dW_t.
\end{cases}
\end{equation}
	where $\tilde{B}_t=(1-\rho^2)^{-1/2}\left(B_t-\rho W_t\right)$. Note that $\tilde B $ is a standard Brownian motion with $\langle \tilde{B},W\rangle_t=0$.
%	We can now prove the following result.
		\begin{proposition}\label{prop-TF}
			For all  $u, \, v \in \R$, for all $\lambda\geq 0$
			and for all $(x_0,y_0)\in \R\times [0,+\infty)$ we have
			\[
			\E_{x_0,y_0}\left( e^{iu X_t+ivY_t}e^{-\lambda\int_0^t Y_s  ds}\right)=
			e^{iu x_0+y_0(\psi_{\lambda_1,\mu}(t)-iu\frac{\rho}{\sigma})+\theta\kappa\phi_{\lambda_1,\mu}(t)},
			\]
			where $\lambda_1= i(u\frac{\rho}{\sigma}+v)$, $\mu=iu\left(\frac{\rho\kappa}{\sigma}-\frac{1}{2}\right)-\frac{u^2}{2}(1-\rho^2)-\lambda$
			and the function $\psi_{\lambda_1,\mu}$
			and $\phi_{\lambda_1,\mu}$ are defined in Proposition \ref{Laplace}.
		\end{proposition}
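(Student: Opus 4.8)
The plan is to reduce the two-dimensional characteristic function to the one-dimensional Laplace transform of $(Y_t,\int_0^t Y_s\,ds)$ already computed in Proposition \ref{Laplace}, by exploiting the decomposition \eqref{model2}. First I would pass to the auxiliary variable $\tilde X_t=X_t-\frac{\rho}{\sigma}Y_t$, which absorbs the correlation. Since $X_t=\tilde X_t+\frac{\rho}{\sigma}Y_t$, we have
\[
iuX_t+ivY_t=iu\tilde X_t+\lambda_1 Y_t,\qquad \lambda_1=i\Big(u\tfrac{\rho}{\sigma}+v\Big),
\]
so the quantity to evaluate is $\E_{x_0,y_0}\big(e^{iu\tilde X_t+\lambda_1 Y_t}e^{-\lambda\int_0^t Y_s\,ds}\big)$, where the factor $e^{\lambda_1 Y_t}e^{-\lambda\int_0^t Y_s\,ds}$ depends only on the volatility path.

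The key step is to integrate out $\tilde X_t$ by conditioning on the Brownian motion $W$ driving the volatility. From \eqref{model2},
\[
\tilde X_t=\Big(x_0-\tfrac{\rho}{\sigma} y_0\Big)+\Big(\tfrac{\rho\kappa}{\sigma}-\tfrac12\Big)\int_0^t Y_s\,ds+\sqrt{1-\rho^2}\int_0^t\sqrt{Y_s}\,d\tilde B_s,
\]
and, because $\langle\tilde B,W\rangle\equiv 0$, the Brownian motions $\tilde B$ and $W$ are independent, so $\tilde B$ is independent of the whole path of $Y$ (which is $\sigma(W)$-measurable). Hence, conditionally on $\mathcal F^W_t=\sigma(W_s:s\le t)$, the integrand $\sqrt{Y_s}$ is deterministic and $\int_0^t\sqrt{Y_s}\,d\tilde B_s$ is centred Gaussian with variance $\int_0^t Y_s\,ds$. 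Using $\E[e^{iuZ}]=e^{ium-\frac{u^2}2 s^2}$ for $Z\sim\mathcal N(m,s^2)$, I would obtain
\[
\E\big(e^{iu\tilde X_t}\mid\mathcal F^W_t\big)=\exp\Big(iu\big(x_0-\tfrac{\rho}{\sigma} y_0\big)+\Big[iu\big(\tfrac{\rho\kappa}{\sigma}-\tfrac12\big)-\tfrac{u^2}2(1-\rho^2)\Big]\int_0^t Y_s\,ds\Big).
\]

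Since $e^{\lambda_1 Y_t}e^{-\lambda\int_0^t Y_s\,ds}$ is $\mathcal F^W_t$-measurable, the tower property then gives
\[
\E_{x_0,y_0}\big(e^{iuX_t+ivY_t}e^{-\lambda\int_0^t Y_s\,ds}\big)=e^{iu(x_0-\frac{\rho}{\sigma} y_0)}\,\E_{y_0}\big(e^{\lambda_1 Y_t+\mu\int_0^t Y_s\,ds}\big),
\]
where collecting the coefficients of $\int_0^t Y_s\,ds$ yields exactly $\mu=iu(\frac{\rho\kappa}{\sigma}-\frac12)-\frac{u^2}2(1-\rho^2)-\lambda$. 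It remains to apply Proposition \ref{Laplace} with $z=\lambda_1$ and $w=\mu$: both have nonpositive real part, since $\lambda_1$ is purely imaginary and $\Re\mu=-\frac{u^2}2(1-\rho^2)-\lambda\le 0$ (recall $\rho\in(-1,1)$ and $\lambda\ge 0$), so the hypotheses are met and $\E_{y_0}(e^{\lambda_1 Y_t+\mu\int_0^t Y_s\,ds})=e^{y_0\psi_{\lambda_1,\mu}(t)+\theta\kappa\phi_{\lambda_1,\mu}(t)}$. Recombining the factors $e^{-iu\frac{\rho}{\sigma} y_0}$ and $e^{y_0\psi_{\lambda_1,\mu}(t)}$ into $e^{y_0(\psi_{\lambda_1,\mu}(t)-iu\frac{\rho}{\sigma})}$ gives the announced formula.

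The only genuinely delicate point is the conditioning step: one must justify rigorously that, given $\mathcal F^W_t$, the stochastic integral $\int_0^t\sqrt{Y_s}\,d\tilde B_s$ is Gaussian with the stated conditional variance. This is the standard mixing argument relying on the independence of $\tilde B$ and $W$ (equivalently, that $(\tilde B,W)$ is a planar Brownian motion with independent coordinates); once it is granted, everything else is bookkeeping of exponents and a direct appeal to Proposition \ref{Laplace}. A clean way to make the conditioning precise is to work on a product space and apply Fubini, or to approximate the Wiener integral by Riemann sums whose conditional law given $\mathcal F^W_t$ is explicitly Gaussian and to pass to the limit.
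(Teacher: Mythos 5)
Your proposal is correct and follows essentially the same route as the paper: pass to $\tilde X_t=X_t-\frac{\rho}{\sigma}Y_t$, condition on $W$ to integrate out the Gaussian integral $\int_0^t\sqrt{Y_s}\,d\tilde B_s$ using the independence of $\tilde B$ and $W$, and then apply Proposition \ref{Laplace} with $z=\lambda_1$ and $w=\mu$, both of which have nonpositive real parts. The exponent bookkeeping matches the paper's computation exactly.
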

		\begin{proof}
			We have
			\[
			\E_{x_0,y_0}\left( e^{iu X_t+ivY_t-\lambda\int_0^t Y_s  ds}\right)=\E_{x_0,y_0}\left( e^{iu (\tilde{X_t}+\frac{\rho}{\sigma}Y_t)+ivY_t-\lambda\int_0^t Y_s  ds}\right)
			\]
			and 
			\[
			\tilde{X_t}=x_0-\frac{\rho}{\sigma}y_0+\int_0^t \left( \frac{\rho\kappa}{\sigma}-\frac{1}{2}\right)Y_sds+\int_0^t\sqrt{(1-\rho^2)Y_s}d\tilde{B}_s.
			\]
			Since $\tilde{B}$ and $W$ are independent,
			\[
			\E\left(e^{iu\tilde{X}_t} \;|\; W\right)=e^{iu\left(x_0-\frac{\rho}{\sigma}y_0+\int_0^t \left(\frac{\rho\kappa}{\sigma}-\frac{1}{2}\right)Y_sds\right)
				-\frac{u^2}{2}(1-\rho^2)\int_0^t Y_s ds}
			\]
			and
			\begin{eqnarray*}
				\E_{x_0,y_0}\left( e^{iu X_t+ivY_t-\lambda\int_0^t Y_s  ds}\right)=e^{iu\left(x_0-\frac{\rho}{\sigma}y_0\right)}
				\E_{y_0}\left(e^{i\left(u\frac{\rho}{\sigma}+v\right)Y_t+
					\left(iu(\frac{\rho\kappa}{\sigma}-\frac{1}{2})-\frac{u^2}{2}(1-\rho^2)-\lambda\right)\int_0^tY_sds}\right).
			\end{eqnarray*}
			Then the assertion follows by using  Proposition~\ref{Laplace}.
		\end{proof}

\subsection{Identification of the semigroups}
We now show that the semigroup $\bar P^\lambda_t$ associated with the coercive bilinear form  can be actually identified with the transition semigroup $ P^\lambda_t$.  Recall  the Sobolev  spaces $L^p(\O,\m_{\gamma,\mu})$ introduced in Definition \ref{def-sob} for $p\geq 1$. In order to prove the identification of the semigroups, we need the following property of the transition semigroup.
\begin{theorem}\label{theotransition}
	For all $p>1$, $\gamma>0$ and $\mu>0$ there exists $\lambda >0$ such that,
	for every compact $K\subseteq\R\times[0,+\infty)$ and for every  $T>0$, there is $C_{p,K, T}>0$ such that
	\[
	P^\lambda_t f(x_0,y_0)\leq \frac{C_{p,K, T}}{t^{\frac{\beta}{p}+\frac{3}{2p}}}||f||_{L^p(\O,\m_{\gamma,\mu})},\qquad (x_0,y_0)\in K.
	\]
	for every measurable positive function  $f$ on $\R\times[0,+\infty)$ and for every $t\in(0,T]$.
\end{theorem}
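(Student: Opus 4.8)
The plan is to bound the killed transition density of $(X,Y)$ and then pair it with $\|f\|_{L^p(\O,\m_{\gamma,\mu})}$ through Hölder's inequality. Since $e^{-\lambda\int_0^t(1+Y_s)ds}=e^{-\lambda t}e^{-\lambda\int_0^tY_s ds}$ and $e^{-\lambda t}\le 1$, it suffices to treat the killing $e^{-\lambda\int_0^tY_s ds}$. The mechanism producing smoothing in the $x$–variable is the decoupling \eqref{model2}: writing $\tilde X_t=X_t-\frac{\rho}{\sigma}Y_t$ and conditioning on $\mathcal F^{W}=\sigma(W_s,\ s\le t)$ (which carries the whole trajectory of $Y$), the stochastic integral $\sqrt{1-\rho^2}\int_0^t\sqrt{Y_s}\,d\tilde B_s$ is conditionally centered Gaussian because $\tilde B$ is independent of $W$. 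Hence, conditionally on $\mathcal F^{W}$, the variable $X_t$ is Gaussian with variance $V_t=(1-\rho^2)\int_0^tY_s\,ds$ and mean $m_t=x_0-\frac{\rho}{\sigma}y_0+\frac{\rho}{\sigma}Y_t+\big(\frac{\rho\kappa}{\sigma}-\frac12\big)\int_0^tY_s\,ds$, while $Y_t$ itself is $\mathcal F^{W}$–measurable.

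Set $q=\frac{p}{p-1}$ and $F(y)=\int_\R f(x,y)^pe^{-\gamma|x|}\,dx$, so that $\|f\|_{L^p(\O,\m_{\gamma,\mu})}^p=\int_0^\infty F(y)\,y^{\beta-1}e^{-\mu y}\,dy$. First I would integrate out $x$: applying Hölder in the $x$–variable with the splitting $f=(fe^{-\gamma|x|/p})\,e^{\gamma|x|/p}$ against the conditional Gaussian density, and using the elementary estimate $\int_\R e^{b|x|}e^{-(x-m)^2/2V}dx\le C\sqrt V\,e^{b^2V/2}\,e^{b|m|}$, yields
\[
\E\big[f(X_t,Y_t)\mid\mathcal F^{W}\big]\le C_p\,V_t^{-\frac1{2p}}\,e^{\frac{\gamma^2}{2p^2}V_t}\,e^{\frac{\gamma}{p}|m_t|}\,F(Y_t)^{\frac1p}.
\]
Inserting the killing and taking $\E_{x_0,y_0}$, I would bound $e^{\frac{\gamma}{p}|m_t|}$ by the product of a constant depending on $(x_0,y_0)$ only (bounded on the compact $K$) and exponentials of $Y_t$ and of $\int_0^tY_s\,ds$; for $\lambda$ large the whole growth in $\int_0^tY_s\,ds$ is dominated by the killing. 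A second application of Hölder with exponents $(p,q)$ then separates the $Y$–marginal, giving
\[
P^\lambda_tf(x_0,y_0)\le C_{x_0,y_0}\Big(\E_{x_0,y_0}\big[e^{-\lambda_0\int_0^tY_s ds}e^{cY_t}F(Y_t)\big]\Big)^{\frac1p}\Big(\E_{x_0,y_0}\big[V_t^{-\frac{q}{2p}}e^{-\lambda_0 q\int_0^tY_s ds}e^{cqY_t}\big]\Big)^{\frac1q},
\]
for suitable constants $c>0$ and $\lambda_0>0$ (increasing with $\lambda$).

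It remains to estimate the two factors, and this is where the powers of $t$ appear. For the second factor, Cauchy–Schwarz isolates a negative moment $\E_{x_0,y_0}\big[(\int_0^tY_s ds)^{-q/p}\big]$, at most $Ct^{-2q/p}$ by Lemma \ref{RemarqueMomentsNeg}, while the surviving exponential moment $\E_{x_0,y_0}\big[e^{2cqY_t-2\lambda_0 q\int_0^tY_s ds}\big]$ is finite and bounded uniformly for $t\in(0,T]$ and $(x_0,y_0)\in K$ once $\lambda$ (hence $\lambda_0$) is large, by Proposition \ref{Laplace2} together with Remark \ref{remarksemigroup}; this produces a factor $Ct^{-1/p}$. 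For the first factor I would write $\E_{x_0,y_0}\big[e^{-\lambda_0\int_0^tY_s ds}e^{cY_t}F(Y_t)\big]=\int_0^\infty F(y)\,\widetilde\rho_t^{\,y_0}(y)\,dy$ with $\widetilde\rho_t^{\,y_0}$ the tilted, killed one–dimensional law of $Y_t$, and compare it to $y^{\beta-1}e^{-\mu y}$: the explicit CIR (noncentral chi–square) transition density shows that $\widetilde\rho_t^{\,y_0}(y)\le C(t)\,y^{\beta-1}e^{-\mu y}$ uniformly in $y_0\in K$, with $C(t)\le Ct^{-\beta-1/2}$; here the exponent combines the $t^{-\beta}$ singularity of the CIR density at the degenerate boundary $y=0$ with the $t^{-1/2}$ diffusive peak at interior starting points, and the tail decay $e^{-\mu y}$ is guaranteed by the same large–$\lambda$ choice through the affine Laplace transform of Proposition \ref{Laplace}. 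Collecting the contributions—$t^{-(\beta+1/2)/p}$ from the $Y$–marginal and $t^{-1/p}$ from the Gaussian smoothing—gives $P^\lambda_tf(x_0,y_0)\le C_{p,K,T}\,t^{-\beta/p-3/(2p)}\|f\|_{L^p(\O,\m_{\gamma,\mu})}$.

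The delicate point is the uniform control as $t\to0^+$ together with the selection of a single $\lambda$ playing two roles at once: absorbing the growth in $\int_0^tY_s\,ds$ generated by the weight $e^{-\gamma|x|}$ and by the drift of $X$, and forcing the tilted killed law of $Y_t$ to decay at least like $e^{-\mu y}$ even for $t$ close to $T$, where the free CIR density would decay too slowly; both are handled through the affine identities of Propositions \ref{Laplace}–\ref{Laplace2} and Remark \ref{remarksemigroup}, which is exactly why the statement only claims the existence of an admissible $\lambda$. A second source of difficulty is the degeneracy at the boundary: $K$ may meet $\{y=0\}$, and it is there that the $Y$–density is most singular in $t$, which is what dictates the exponent $\beta/p$. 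Writing the uniform bound as $t^{-\beta-1/2}$ rather than $t^{-\max(\beta,1/2)}$ is the only mild non-sharpness, and it is harmless for the later identification of $u$ with the optimal stopping value.
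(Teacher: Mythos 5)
Your architecture is the same as the paper's: decouple via $\tilde X_t=X_t-\frac{\rho}{\sigma}Y_t$, condition on the Brownian motion driving $Y$ so that $X_t$ is conditionally Gaussian with variance proportional to $\int_0^tY_s\,ds$, split by H\"older against the weight $e^{-\gamma|x|}$, control the negative moment of $\int_0^tY_s\,ds$ by Lemma \ref{RemarqueMomentsNeg} (giving $t^{-1/p}$), control the exponential moments generated by the drift and the weight by Proposition \ref{Laplace2} for $\lambda$ large, and use the noncentral chi-square density bound (Proposition \ref{density_CIR}) for the $t^{-(\beta+1/2)/p}$ factor. The exponent bookkeeping is correct.

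There is, however, one genuine gap, located in your treatment of the first H\"older factor. You take $F(y)=\int_\R f(x,y)^pe^{-\gamma|x|}dx$ \emph{without} the weight $e^{-\mu y}$, and you then need the pointwise bound $\widetilde\rho_t^{\,y_0}(y)\leq C\,t^{-\beta-1/2}y^{\beta-1}e^{-\mu y}$ for the \emph{tilted, killed} law of $Y_t$, i.e. the law weighted by $e^{cY_t-\lambda_0\int_0^tY_s\,ds}$ with $c>0$. The explicit CIR transition density only gives decay $e^{-y/(2L_t)}$ with $\frac{1}{2L_t}\downarrow\frac{2\kappa}{\sigma^2}$, so bounding the killing crudely by $1$ yields at best $e^{-(\frac{2\kappa}{\sigma^2}-c)y}$, which fails to dominate $e^{-\mu y}$ once $\mu\geq \frac{2\kappa}{\sigma^2}-c$ — and the theorem is claimed for all $\mu>0$. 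You appeal to ``the same large-$\lambda$ choice through the affine Laplace transform'' to rescue the tail, but Propositions \ref{Laplace} and \ref{Laplace2} control \emph{expectations} of exponentials of $(Y_t,\int_0^tY_s\,ds)$, not pointwise densities of the killed law; converting the one into the other would require the joint (or conditional) density of $(Y_t,\int_0^tY_s\,ds)$, which neither you nor the paper establishes. The paper avoids this entirely by performing H\"older with respect to the measure $e^{-\gamma|z|-\bar\mu Y_t}\,dz\,d\P_{y_0}$, so that the factor containing $f^p$ already carries $e^{-\bar\mu y}$ and only the \emph{free} CIR density bound is needed, while the conjugate factor picks up $e^{(q-1)\bar\mu Y_t}$ — an exponential \emph{moment}, which the killing does absorb via Proposition \ref{Laplace2} for $\lambda$ large. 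Moving the weight $e^{-\bar\mu Y_t}$ (with $\bar\mu=\mu+\gamma|\rho|/\sigma$ to survive the final change of variables in $x$) into your second H\"older application repairs the argument with no other changes.
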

Theorem \ref{theotransition} will  also play a crucial role in order to prove Theorem \ref{theorem2}.   Its proof relies on suitable estimates on the joint law of the diffusion $(X,Y)$ and we postpone it to the following section. Then, we can prove the following result.
\begin{proposition}\label{identificationsemigroup}
	There exists $\lambda>0$ such that, for every function $f\in H$ and for every $t\geq 0$,
	\[
	\bar{P}^\lambda_t f(x,y)=P^\lambda_tf(x,y),\quad dxdy \mbox{ a.e.}
	\]
\end{proposition}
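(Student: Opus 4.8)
The plan is to show that both semigroups solve the same abstract Cauchy problem and then invoke the uniqueness statement of Proposition \ref{propsg2}. Since $\m$ is a finite measure, every bounded function lies in $H$; in particular the exponentials $g_{u,v}(x,y):=e^{iux+ivy}$, $(u,v)\in\R^2$, belong to $H$, and I would first establish the identity on their linear span. That span is dense in $H$: if $h\in H$ satisfies $(h,g_{u,v})_H=0$ for all $(u,v)$, then the Fourier transform of the finite complex measure $h\,d\m$ vanishes identically, so $h\,d\m=0$ and $h=0$ $\m$-a.e.

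Next, for fixed $(u,v)$ I would set $\phi(t):=P^\lambda_t g_{u,v}$ and use Proposition \ref{prop-TF} (with an extra factor $e^{-\lambda t}$ coming from the $(1+Y_s)$ in the killing term) to get the closed form
\[
\phi(t,x,y)=e^{-\lambda t}\,e^{\,iux+y(\psi_{\lambda_1,\mu}(t)-iu\rho/\sigma)+\theta\kappa\phi_{\lambda_1,\mu}(t)},
\]
with $\lambda_1,\mu$ as in Proposition \ref{prop-TF}. Since $\lambda_1$ is purely imaginary and $\Re\mu=-\frac{u^2}{2}(1-\rho^2)-\lambda\le0$, the arguments have nonpositive real part, so $\Re\psi_{\lambda_1,\mu}(t)\le0$ and $|\phi(t,\cdot)|\le1$. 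Writing $\alpha(t)=\psi_{\lambda_1,\mu}(t)-iu\rho/\sigma$, the spatial derivatives of $\phi$ are products of $\phi$ with polynomials in $iu$ and $\alpha(t)$, while $\partial_t\phi$ equals $\phi$ times an affine function of $y$. Because $\m$ has finite moments of every order in $y$ together with the decay $e^{-\mu y}$, it follows that $\phi(t)\in H^2(\O,\m)\subset V$ with $\sup_{t\le T}\|\phi(t)\|_{H^2(\O,\m)}<\infty$, that $\phi\in L^2_{loc}(\R^+;V)$ and $\partial_t\phi\in L^2_{loc}(\R^+;H)$, and that $\phi(0)=g_{u,v}$ (since $\psi_{\lambda_1,\mu}(0)=\lambda_1$ and $\phi_{\lambda_1,\mu}(0)=0$).

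By construction $\phi$ is a classical solution of $\partial_t\phi=\mathcal{L}^\lambda\phi$ (the very PDE used to build $F_{z,w}$ in the proof of Proposition \ref{Laplace}, corrected by the killing term $-\lambda(1+y)$). Testing this identity against any $v\in V$ and using Proposition \ref{Prop_integrationbyparts}, applicable since $\phi(t)\in H^2(\O,\m)$, would give $(\partial_t\phi,v)_H+a_\lambda(\phi,v)=0$ for all $v\in V$. Thus $\phi$ solves \eqref{var_eq} with $f=0$ and datum $g_{u,v}$, and the uniqueness part of Proposition \ref{propsg2} forces $\phi=\bar P^\lambda_t g_{u,v}$. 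By linearity $P^\lambda_t=\bar P^\lambda_t$ on the span of the $g_{u,v}$.

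Finally I would extend the identity to all $f\in H$. Choose $f_n$ in that span with $f_n\to f$ in $H$. On one hand $\bar P^\lambda_t$ is an $H$-contraction, so $\bar P^\lambda_t f_n\to\bar P^\lambda_t f$ in $H$, hence $\m$-a.e.\ along a subsequence. On the other hand, fixing $p\in(1,2]$ and taking $\lambda$ as in Theorem \ref{theotransition} (enlarged if necessary so the coercivity threshold of Lemma \ref{coercivity} is also met), the bound $|P^\lambda_t(f_n-f)|\le P^\lambda_t|f_n-f|\le C_{p,K,T}\,t^{-(\beta/p+3/(2p))}\|f_n-f\|_{L^p(\O,\m)}$, together with the inclusion $L^2(\O,\m)\subset L^p(\O,\m)$ (valid as $\m$ is finite), shows that $P^\lambda_t f_n\to P^\lambda_t f$ pointwise on every compact, hence $\m$-a.e. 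Identifying the two limits yields $P^\lambda_t f=\bar P^\lambda_t f$ $\m$-a.e. \emph{The main obstacle} is the middle step: verifying carefully that the explicit $\phi$ lands in the weighted spaces $H^2(\O,\m)$ and $L^2_{loc}(\R^+;V)$ with $\partial_t\phi\in L^2_{loc}(\R^+;H)$ — which is exactly where the finiteness of $\m$ and the sign $\Re\psi_{\lambda_1,\mu}\le0$ are indispensable — and then using the smoothing estimate of Theorem \ref{theotransition} to upgrade $H$-convergence to a.e.\ convergence in the density argument.
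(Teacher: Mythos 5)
Your proposal is correct and follows essentially the same route as the paper: reduce by density (via Theorem \ref{theotransition} for $P^\lambda_t$ and the $H$-contraction property for $\bar P^\lambda_t$) to the complex exponentials $e^{iux+ivy}$, compute $P^\lambda_t$ explicitly on these via Proposition \ref{prop-TF}, verify the resulting function lies in $H^2(\O,\m)$ and solves the variational equation, and conclude by the uniqueness in Proposition \ref{propsg2}. Your additional details (the Fourier-transform density argument, the sign $\Re\psi_{\lambda_1,\mu}\le 0$, and the membership checks in the weighted spaces) are all consistent with what the paper leaves as ``straightforward.''
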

\begin{proof}
	We can easily deduce from Theorem \ref{theotransition} with $p=2$ that, for $\lambda$ large enough, if $(f_n)_n$ is a sequence of functions which converges to $f$ in $H$, then the sequence  $(P^\lambda_tf_n)_n$ converges uniformly to $P^\lambda_tf$ on the compact sets. 	On the other hand, recall  that $\bar P^\lambda_t$ is a contraction  semigroup on $H$ so that  the function $f\mapsto \bar P^\lambda_tf$ is continuous and we have $\bar P^\lambda_t f_n \rightarrow \bar P^\lambda_t f$ in $H$.

	Therefore, by density arguments, it is enough to prove the equality for $
	f(x,y)=e^{iux+ivy}$
	with $u$, $v\in \R$. We  have, by using  Proposition \ref{prop-TF},
	\begin{eqnarray*}
		P^\lambda_tf(x,y)&=&\E_{x,y}\left(e^{-\lambda\int_0^t(1+Y_s)ds} e^{iuX_t+ivY_t}\right)\\
		&=& e^{-\lambda t}e^{iu x+y\left(\psi_{\lambda_1,\mu}(t)-iu\frac{\rho}{\sigma}\right)+\theta\kappa\phi_{\lambda_1,\mu}(t)},
	\end{eqnarray*}
	with $\lambda_1= i(u\frac{\rho}{\sigma}+v)$, $\mu=iu\left(\frac{\rho\kappa}{\sigma}-\frac{1}{2}\right)-\frac{u^2}{2}(1-\rho^2)-\lambda$.
	The function $F(t,x,y)$ defined by
	$F(t,x,y)=e^{-\lambda t}e^{iu x+y\left(\psi_{\lambda_1,\mu}(t)-iu\frac{\rho}{\sigma}\right)+\theta\kappa\phi_{\lambda_1,\mu}(t)}$
	satisfies $F(0,x,y)=e^{iux+ivy}$ and
	\[
	\frac{\partial F}{\partial t}=\left(\mathcal{L}-\lambda (1+y)\right)F.
	\]
	Moreover, since the real parts of $\lambda_1$ and $\mu$ are nonnegative, we can deduce from the proof of Proposition \ref{Laplace} that the real part of the function $t\rightarrow \psi(t)$ is nonnegative. Then, it is straightforward to see  that, for every $t\geq 0$, we have $F(t,\cdot,\cdot)\in H^2(\O,\m)$ and $t\mapsto F(t,\cdot,\cdot)$ is continuous,
so that,
	for every $v\in V$,  $(\mathcal{L} F(t,.,.), v)_H=-a(F(t,.,.),v)$. Therefore
	\[
	\left(\frac{\partial F}{\partial t}, v\right)_H +a_\lambda(F(t,.,.),v)=0 \quad v\in V,
	\]
	and $F(t,.,.)=\bar{P}^\lambda_tf$.
\end{proof}

		 		\subsection{Estimates on the joint law}\label{sect-estimjointlaw}
		 		In this section we prove Theorem \ref{theotransition}. We first recall some results about the density of the process $Y$.
		 		
		 	With the notations
		 		\[
		 		\nu=\beta-1=\frac{2\kappa\theta}{\sigma^2}-1,\quad y_t=y_0e^{-\kappa t}, \quad L_t=\frac{\sigma^2}{4\kappa}\left(1-e^{-\kappa t}\right),
		 		\]
		 		it is well known (see, for example, \cite[Section 6.2.2]{LL}) that the transition density of the process $Y$ is given by
		 		\[
		 		p_t(y_0,y)=\frac{e^{-\frac{y_t}{2L_t}}}{2y_t^{\nu/2}L_t}e^{-\frac{y}{2L_t}}y^{\nu/2}I_\nu\left(\frac{\sqrt{yy_t}}{L_t}\right),
		 		\]
		 		where $I_\nu$ is the first-order modified Bessel function with index $\nu$, defined by
		 		\[
		 		I_\nu(y)=\left(\frac{y}{2}\right)^\nu\sum_{n=0}^\infty \frac{(y/2)^{2n}}{n!\Gamma(n+\nu+1)}.
		 		\]
		 	It is clear that near $y=0$ we have $I_\nu(y)\sim \frac{1}{\Gamma(\nu+1)}\left(\frac{y}{2}\right)^\nu$ while, for $y\rightarrow \infty$, we have the asymptotic behaviour  $I_\nu(y)\sim e^y/\sqrt{2\pi y}$ (see \cite[page 377]{AS}).
		 	\begin{proposition}\label{density_CIR}
		 	There exists a constant $C_\beta>0$ (which depends only on $\beta$) such that, for every $t>0$,
		 		\[
		 		p_t(y_0,y)\leq
		 		\frac{C_\beta}{L_t^{\beta+\frac{1}{2}}}
		 		e^{-\frac{(\sqrt{y}-\sqrt{y_t})^2}{2L_t}}
		 		y^{\beta -1}
		 		\left(L_t^{1/2}
		 		+
		 		(yy_t)^{1/4}\right), \qquad (y_0,y)\in [0,+\infty)\times]0,+\infty).
		 		\]
		 	\end{proposition}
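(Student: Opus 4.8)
The plan is to start from the explicit formula for $p_t(y_0,y)$ and to control the modified Bessel function $I_\nu$ by a single inequality valid uniformly in its argument, one that reflects both of its asymptotic regimes. Concretely, I would first prove that there is a constant $C_\nu>0$, depending only on $\nu=\beta-1$, such that
\[
I_\nu(z)\leq C_\nu\,\frac{z^\nu e^z}{(1+z)^{\nu+\frac12}},\qquad z>0.
\]
To see this, set $h(z)=z^{-\nu}I_\nu(z)(1+z)^{\nu+\frac12}e^{-z}$, which is continuous and strictly positive on $(0,\infty)$. From the series expansion $z^{-\nu}I_\nu(z)=2^{-\nu}\sum_{n\geq0}(z/2)^{2n}/(n!\,\Gamma(n+\nu+1))$ together with the stated asymptotics $I_\nu(z)\sim \Gamma(\nu+1)^{-1}(z/2)^\nu$ as $z\to0$ and $I_\nu(z)\sim e^z/\sqrt{2\pi z}$ as $z\to+\infty$, one checks that $h$ has finite limits $2^{-\nu}/\Gamma(\nu+1)$ and $1/\sqrt{2\pi}$ at $0$ and $+\infty$ respectively. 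A continuous positive function with finite limits at both ends of $(0,\infty)$ is bounded, and this bound is exactly the claimed inequality.

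Next I would substitute this estimate into the given expression for $p_t(y_0,y)$, taking $z=\sqrt{yy_t}/L_t$. The factor $e^z$ coming from the Bessel bound combines with the exponential prefactor $e^{-(y_t+y)/(2L_t)}$ to complete the square:
\[
-\frac{y_t+y}{2L_t}+\frac{\sqrt{yy_t}}{L_t}=-\frac{(\sqrt y-\sqrt{y_t})^2}{2L_t}.
\]
The remaining algebraic prefactors simplify cleanly: writing $z^\nu=(yy_t)^{\nu/2}L_t^{-\nu}$, the powers of $y_t$ cancel and, recalling $\nu=\beta-1$, one is left with
\[
p_t(y_0,y)\leq \frac{C_\nu}{2}\,\frac{y^{\beta-1}}{L_t^{\beta}}\,e^{-\frac{(\sqrt y-\sqrt{y_t})^2}{2L_t}}\,\frac{1}{(1+z)^{\nu+\frac12}}.
\]

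It then remains to dominate the last factor. Since $\beta=2\kappa\theta/\sigma^2>0$, I would treat the sign of $\nu+\frac12=\beta-\frac12$ by cases: if $\beta\geq\frac12$ the exponent is nonnegative, so $(1+z)^{-(\nu+\frac12)}\leq 1\leq 1+z^{1/2}$; if $0<\beta<\frac12$, then $0<\frac12-\beta<\frac12$, whence $(1+z)^{-(\nu+\frac12)}=(1+z)^{\frac12-\beta}\leq(1+z)^{1/2}\leq 1+z^{1/2}$, the last step following from $(1+z^{1/2})^2=1+2z^{1/2}+z\geq 1+z$. In either case $(1+z)^{-(\nu+\frac12)}\leq C_\beta\big(1+z^{1/2}\big)$, and rewriting $1+z^{1/2}=L_t^{-1/2}\big(L_t^{1/2}+(yy_t)^{1/4}\big)$ turns the previous display into precisely the asserted bound, with the final constant depending only on $\beta$. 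The only genuinely delicate point is the uniform Bessel estimate of the first step; once it is in place, the rest reduces to completing the square and applying an elementary power inequality.
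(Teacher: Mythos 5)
Your proof is correct and follows essentially the same route as the paper's: both rest on a bound for $I_\nu$ extracted from its asymptotic behaviour at $0$ and $+\infty$, followed by completing the square in the exponent. The only difference is packaging — the paper uses the two-regime estimate $I_\nu(x)\le C_\nu\left(x^\nu\ind{\{x\le 1\}}+e^x x^{-1/2}\ind{\{x>1\}}\right)$ and carries the indicator split through the computation, whereas you fold both regimes into the single interpolating bound $I_\nu(z)\le C_\nu z^\nu e^z(1+z)^{-\nu-\frac12}$ and recover the two terms $L_t^{1/2}+(yy_t)^{1/4}$ at the end via $(1+z)^{-\nu-\frac12}\le 1+z^{1/2}$.
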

		 	\begin{proof}
		 From the asymptotic behaviour of  $I_\nu$ near $0$ and $\infty$ we deduce the existence of a constant  $C_\nu>0$ such that
		 		\[
		 		I_\nu(x)\leq C_\nu\left(x^\nu\ind{\{x\leq 1\}}+ \frac{e^x}{\sqrt{x}}\ind{\{x>1\}}\right).
		 		\]
		 	Therefore 
		 		\begin{eqnarray*}
		 			p_t(y_0,y)&=&\frac{e^{-\frac{y_t+y}{2L_t}}}{2y_t^{\nu/2}L_t}y^{\nu/2}I_\nu\left(\frac{\sqrt{yy_t}}{L_t}\right)\\
		 			&\leq &\frac{e^{-\frac{y_t+y}{2L_t}}}{2y_t^{\nu/2}L_t}y^{\nu/2}
		 			C_\nu\left(\frac{(yy_t)^{\nu/2}}{L_t^\nu}
		 			\ind{\{yy_t\leq L_t^2\}}+ \frac{e^{\frac{\sqrt{yy_t}}{L_t}}}{(yy_t)^{1/4}/L_t^{1/2}}
		 			\ind{\{yy_t> L_t^2\}}\right)\\
		 			&=&
		 			\frac{C_\nu}{2}e^{-\frac{y_t+y}{2L_t}}\left(\frac{y^\nu}{L_t^{\nu+1}}\ind{\{yy_t\leq L_t^2\}}
		 			+
		 			\frac{y^{\frac{\nu}{2}-\frac{1}{4}}e^{\frac{\sqrt{yy_t}}{L_t}}}{(y_t)^{\frac{\nu}{2}+\frac{1}{4}}L_t^{1/2}}
		 			\ind{\{yy_t> L_t^2\}}\right).
		 		\end{eqnarray*}
		 		On $\{yy_t> L_t^2\}$, we have $y_t^{-1}\leq y/L_t^2$ and, since $\nu+1>0$,
		 		\begin{eqnarray*}
		 			\frac{y^{\frac{\nu}{2}-\frac{1}{4}}}{(y_t)^{\frac{\nu}{2}+\frac{1}{4}}}=
		 			y_t^{1/4} \frac{y^{\frac{\nu}{2}-\frac{1}{4}}}{(y_t)^{\frac{\nu}{2}+\frac{1}{2}}}
		 			\leq 
		 			y_t^{1/4} \frac{y^{\nu+\frac{1}{4}}}{L_t^{\nu+1}}.
		 		\end{eqnarray*}
	So
		 		\begin{eqnarray*}
		 			p_t(y_0,y)&\leq&
		 			\frac{C_\nu}{2}e^{-\frac{y_t+y}{2L_t}}\left(\frac{y^\nu}{L_t^{\nu+1}}\ind{\{yy_t\leq L_t^2\}}
		 			+
		 			\frac{(yy_t)^{1/4}y^{\nu}
		 				e^{\frac{\sqrt{yy_t}}{L_t}}}{L_t^{\nu+\frac{3}{2}}}\ind{\{yy_t> L_t^2\}}\right)\\
		 			&\leq&
		 			\frac{C_\nu}{2L_t^{\nu+\frac{3}{2}}}
		 			e^{-\frac{y_t+y}{2L_t}}
		 			y^{\nu}e^{\frac{\sqrt{yy_t}}{L_t}}
		 			\left(L_t^{1/2}\ind{\{yy_t\leq L_t^2\}}
		 			+
		 			(yy_t)^{1/4}\ind{\{yy_t> L_t^2\}}\right)  \\
		 			&=&
		 			\frac{C_\nu}{2L_t^{\nu+\frac{3}{2}}}
		 			e^{-\frac{(\sqrt{y}-\sqrt{y_t})^2}{2L_t}}
		 			y^{\nu}
		 			\left(L_t^{1/2}\ind{\{yy_t\leq L_t^2\}}
		 			+
		 			(yy_t)^{1/4}\ind{\{yy_t> L_t^2\}}\right),
		 		\end{eqnarray*}
		 and the assertion follows.
		 	\end{proof}
		 	We are now ready to prove Theorem \ref{theotransition}, which we have used in order to prove the identification of the semigroups in Proposition \ref{identificationsemigroup} and which we will use again later on in this paper. 
%	Recall now the Sobolev  spaces $L^p(\O,\m_{\gamma,\mu})$ introduced in Definition \ref{def-sob} for $p\geq 1$. We can prove the following property of the transition semigroup.
%\begin{theorem}\label{theotransition}
%For all $p>1$, $\gamma>0$ and $\mu>0$ there exists $\lambda >0$ such that,
%for every compact $K\subseteq\R\times[0,+\infty)$ and for every  $T>0$, there is $C_{p,K, T}>0$ such that
%	\[
%P^\lambda_t f(x_0,y_0)\leq \frac{C_{p,K, T}}{t^{\frac{\beta}{p}+\frac{3}{2p}}}||f||_{L^p(\O,\m_{\gamma,\mu})},\qquad (x_0,y_0)\in K.
%	\]
%	for every measurable positive function  $f$ on $\R\times[0,+\infty)$ and for every $t\in(0,T]$.
%\end{theorem}
\begin{proof}[Proof of Theorem \ref{theotransition}]
	Note that
	\begin{eqnarray*}
		P^\lambda_tf(x_0,y_0)&=&\E_{x_0,y_0}\left(e^{-\lambda\int_0^t(1+Y_s)ds} \tilde{f}(\tilde X_t,Y_t)\right),
	\end{eqnarray*}
where
	\[
	\tilde{f}(x,y)=f\left(x+\frac{\rho}{\sigma}y,y\right) \quad\mbox{and}\quad \tilde X_t=X_t-\frac{\rho}{\sigma} Y_t.
	\]
	Recall that the dynamics of $\tilde X$ is given by \eqref{model2} so we have
	\[
	\tilde X_t=\tilde{x}_0+\bar\kappa\int_0^tY_sds+\bar\rho\int_0^t\sqrt{Y_s}d\tilde{B}_s,
	\]
with
	\[
	\tilde{x}_0=x_0-\frac{\rho}{\sigma}y_0,\quad \bar\kappa=\frac{\rho\kappa}{\sigma}-\frac{1}{2},\quad
	\bar\rho=\sqrt{1-\rho^2}.
	\]
Recall that the Brownian motion $\tilde{B}$ is independent of the process $Y$. We set
$
	\Sigma_t=\sqrt{\int_0^t Y_sds}
	$
% $\E_{y_0}(\,)=\E\left(\;\;\;|\,Y_0=y_0\right)$
  and $n(x)= \frac{1}{\sqrt{2\pi}}e^{-x^2/2}$. Therefore
	\begin{eqnarray*}
		P^\lambda_t f(x_0,y_0)&=&\E_{y_0}\left( e^{-\lambda t-\lambda\Sigma_t^2}
		\int \tilde{f}\left(\tilde{x}_0+\bar\kappa\Sigma_t^2+\bar\rho\Sigma_t z,Y_t\right)n(z)dz\right)\\
		&\leq&\E_{y_0}\left( e^{-\lambda\Sigma_t^2}
		\int \tilde{f}\left(\tilde{x}_0+\bar\kappa\Sigma_t^2+\bar\rho\Sigma_t z,Y_t\right)n(z)dz\right)\\
		&=&\E_{y_0}\left( e^{-\lambda\Sigma_t^2}
		\int \tilde{f}\left(
		\tilde{x}_0+z,Y_t\right)n\left(\frac{z-\bar\kappa\Sigma_t^2}{\bar\rho\Sigma_t} \right)
		\frac{dz}{\bar\rho\Sigma_t}\right).
	\end{eqnarray*}
H\"{o}lder's inequality with respect to the measure $e^{-\gamma|z|-\bar{\mu}Y_t}dzd\P_{y_0}$,
	where $\gamma >0$ and $\bar \mu>0$ will be chosen later on, gives, for every $p>1$
	\begin{eqnarray}
	P^\lambda_t f(x_0,y_0)&\leq &
	\left[\E_{y_0}\left(\int e^{-\gamma|z|-\bar\mu Y_t} \tilde{f}^p\left(
	\tilde{x}_0+z,Y_t\right)dz\right)\right]^{1/p}J_q,\label{eq-P1}
	\end{eqnarray}
	with $q=p/(p-1)$ and
	\[
	(J_q)^q=\E_{y_0}\left(\int e^{(q-1)\gamma|z|+(q-1)\bar\mu Y_t-q\lambda \Sigma_t^2}
	n^q\left(\frac{z-\bar\kappa\Sigma_t^2}{\bar\rho\Sigma_t} \right)
	\frac{dz}{(\bar\rho\Sigma_t)^q}\right).
	\]
	Using Proposition \ref{density_CIR} we can write, for every $z\in\R$,
	\begin{eqnarray*}
		\E_{y_0} \left(e^{-\bar\mu Y_t} \tilde{f}^p\left(
		\tilde{x}_0+z,Y_t\right)\right)&=&\int_0^\infty dy p_t(y_0,y) e^{-\bar\mu y}
		\tilde{f}^p\left(
		\tilde{x}_0+z,y \right)\\
		&\leq& \frac{C_\beta\left(\sqrt{\frac{\sigma^2}{4\kappa}}+y_0^{1/4}\right)}{L_t^{\beta+\frac{1}{2}}}\int_0^\infty \!\!\!dy
		e^{-\frac{(\sqrt{y}-\sqrt{y_t})^2}{2L_t}-\bar\mu y}
		y^{\beta -1}
		\left(1
		+
		y^{1/4}\right) \tilde{f}^p\left(
		\tilde{x}_0+z,y \right).
	\end{eqnarray*}
If we set $L_\infty=\sigma^2/(4\kappa)$,  for every $\epsilon\in(0,1)$  we have
	\begin{align*}
		e^{-\frac{(\sqrt{y}-\sqrt{y_t})^2}{2L_t}}&\leq e^{-\frac{(\sqrt{y}-\sqrt{y_t})^2}{2L_\infty}}\\
		&= e^{-\frac{y}{2L_\infty}}e^{\frac{\sqrt{yy_t}}{L_\infty}-\frac{y_t}{2L_\infty}}\\
		&\leq e^{-\frac{y}{2L_\infty}}e^{\epsilon\frac{y}{2L_\infty}}e^{\frac{y_t}{2\epsilon L_\infty}}
		e^{-\frac{y_t}{2L_\infty}}\\
		&=e^{-(1-\epsilon)\frac{y}{2L_\infty}}e^{\frac{y_t}{2\epsilon L_\infty}(1-\epsilon)}\\
		&\leq e^{-(1-\epsilon)\frac{y}{2L_\infty}}e^{\frac{y_0}{2\epsilon L_\infty}(1-\epsilon)}.
	\end{align*}
It is easy to see that $e^{-y\left( \bar \mu +\frac{1-\epsilon}{2L_\infty}   \right)}(1+y^{1/4})\leq C_{\epsilon,\sigma,\kappa}e^{-y\left( \bar \mu +\frac{1-2\epsilon}{2L_\infty}   \right)}$. Therefore,  we can write
	\begin{eqnarray*}
		\E_{y_0} \left(e^{-\bar\mu Y_t} \tilde{f}^p\left(
		\tilde{x}_0+z,Y_t\right)\right)&\leq&
		\frac{C_\beta e^{\frac{y_0(1-\epsilon)}{2\epsilon L_\infty}}
		\left(\sqrt{\frac{\sigma^2}{4\kappa}}+y_0^{1/4}\right)}{L_t^{\beta+\frac{1}{2}}}\int_0^\infty \!\!\!dy
		e^{-y\left(\bar\mu + \frac{1-\epsilon}{2L_\infty}\right)}
		y^{\beta -1}
		\left(1
		+
		y^{1/4}\right) \tilde{f}^p\left(
		\tilde{x}_0+z,y \right)\\
		&\leq&
		\frac{C_{\beta, \sigma,\kappa, \epsilon} e^{\frac{y_0(1-\epsilon)}{\epsilon L_\infty}}
		}{L_t^{\beta+\frac{1}{2}}}\int_0^\infty \!\!\!dy
		e^{-y\left(\bar\mu + \frac{1-2\epsilon}{2L_\infty}\right)}
		y^{\beta -1}
		\tilde{f}^p\left(
		\tilde{x}_0+z,y \right).
	\end{eqnarray*}
	As regards $J_q$, setting $z'=\frac{z-\bar\kappa\Sigma_t^2}{\bar\rho\Sigma_t}$, we have
	\begin{eqnarray*}
		(J_q)^q&=&\E_{y_0}\left(\int e^{(q-1)\gamma|z'\bar\rho\Sigma_t +\bar\kappa\Sigma_t^2|+(q-1)\bar\mu Y_t-q\lambda \Sigma_t^2}
		n^q\left(z' \right)
		\frac{dz'}{(\bar\rho\Sigma_t)^{q-1}}\right)\\
		&\leq &
		\E_{y_0}\left(
		\int e^{(q-1)\gamma\bar\rho\Sigma_t |z|
			+(q-1)\bar\mu Y_t+ ((q-1)| \bar\kappa|\gamma-q\lambda )\Sigma_t^2}
		n^q\left(z \right)
		\frac{dz}{(\bar\rho\Sigma_t)^{q-1}}\right).
	\end{eqnarray*}
Note that
	\begin{eqnarray*}
		\int e^{(q-1)\gamma\bar\rho\Sigma_t |z|
		}
		n^q\left(z \right)
		dz
		&=&\frac{1}{(\sqrt{2\pi})^{q}}\int e^{(q-1)\gamma\bar\rho\Sigma_t |z|
		}
		e^{-qz^2/2}
		dz\\
			&\leq &\frac{2}{\sqrt{2\pi}} \int e^{(q-1)\gamma\bar\rho\Sigma_t z
		}
		e^{-qz^2/2}
		dz\\
		&=&\frac{2}{\sqrt{2\pi}}e^{\frac{(q-1)^2}{2q}\gamma^2\bar\rho^2 \Sigma_t^2}
		\int e^{-\frac 1 2\left( \sqrt{q }z-\frac{(q-1)\gamma\bar\rho\Sigma_t }{\sqrt{ q}}\right)^2}	
	dz\\
		&=&\frac 2 {\sqrt q}e^{\frac{(q-1)^2}{2q}\gamma^2\bar\rho^2 \Sigma_t^2},
	\end{eqnarray*}
	so that
	\begin{eqnarray*}
		(J_q)^q&\leq&\frac 2 {\sqrt q}\E_{y_0}\left(
		e^{
			(q-1)\bar\mu Y_t+ \bar\lambda_q\Sigma_t^2}
		\frac{1}{(\bar\rho\Sigma_t)^{q-1}}
		\right),
	\end{eqnarray*}
	with
	\[
	\bar\lambda_q= (q-1)| \bar\kappa|\gamma +\frac{(q-1)^2}{2q}\gamma^2\bar\rho^2 -q\lambda
	=\frac{1}{p-1}\left(| \bar\kappa|\gamma+\frac{1}{2p}\gamma^2\bar\rho^2-p\lambda\right).
	\]
	Using  H\"older's  inequality again
 we get, for every $p_1>1$ and $q_1=p_1/(p_1-1)$,
	\begin{eqnarray*}
		(J_q)^q&\leq&\sqrt{\frac 2 q}\left(
		\E_{y_0}\left(
		e^{
			p_1(q-1)\bar\mu Y_t+ p_1\bar\lambda_q\Sigma_t^2}\right)\right)^{1/p_1}
		\left(
		\E_{y_0}\left(
		\frac{1}{(\bar\rho\Sigma_t)^{q_1(q-1)}}\right)\right)^{1/q_1}\\
		&\leq&
		\frac{C_{q,q_1}}{t^{q-1}} \left(\E_{y_0}\left(
		e^{
			p_1(q-1)\bar\mu Y_t+ p_1\bar\lambda_q\Sigma_t^2}\right)\right)^{1/p_1},
	\end{eqnarray*}
where the last inequality follows from Lemma \ref{RemarqueMomentsNeg}.
	
	We now apply Proposition~\ref{Laplace2} with $\lambda_1=p_1(q-1)\bar\mu$
	and $\lambda_2=p_1\bar\lambda_q$. The assumption on $\lambda_1$ and $\lambda_2$ becomes
	\[ \frac{\sigma^2}{2}p_1(q-1)\bar\mu^2-\kappa \bar\mu+| \bar\kappa|\gamma+\frac{1}{2p}\gamma^2\bar\rho^2-p\lambda\leq 0
	%p_1(q-1)\bar\mu\leq \frac{\kappa}{\sigma^2}+\sqrt{\left(\frac{\kappa}{\sigma^2}\right)^2-
	%     2\frac{p_1\bar\lambda_q}{\sigma^2}}
	\]
or, equivalently, 
	\[
	\lambda\geq \frac{\sigma^2}{2p(p-1)}p_1\bar\mu^2-\kappa \frac{\bar\mu}{p}+| \bar\kappa|\frac{\gamma}{p}+\frac{1}{2p^2}\gamma^2\bar\rho^2.
	\]
Note that the last inequality is satisfied for at least a  $p_1>1$ if and only if
\begin{equation}\label{assumption_lambda}
	\lambda> \frac{\sigma^2}{2p(p-1)}\bar\mu^2-\kappa \frac{\bar\mu}{p}+| \bar\kappa|\frac{\gamma}{p}+\frac{1}{2p^2}\gamma^2\bar\rho^2.
\end{equation}
%et on peut trouver un $p_1>1$ qui la v\'erifie si et seulement si
	%\[
	%\frac{\sigma^2}{2p(p-1)}\bar\mu^2<\frac{\kappa\bar\mu}{p}+\lambda-
	%     \frac{|\bar\kappa|\gamma}{p}-\frac{1}{p^2}\gamma^2\bar\rho^2.
	%\]
	%Donc, pour $p$ fix\'e, si $\bar\mu< \frac{\kappa}{\sigma^2}$  la seule condition
	% que $\lambda$ doit v\'erifier est
	%\[
	%\lambda\geq  \frac{|\bar\kappa|\gamma}{p}+\frac{1}{p^2}\gamma^2\bar\rho^2
	%\]
	%Si $\bar\mu\geq  \frac{\kappa}{\sigma^2}$, on doit avoir en outre
	%\[
	%\lambda+\frac{\kappa\bar\mu}{p}>\frac{\sigma^2}{2p(p-1)}\bar\mu^2+
	% \frac{|\bar\kappa|\gamma}{p}+\frac{1}{p^2}\gamma^2\bar\rho^2.
	%\]
	Going back to \eqref{eq-P1} under the condition \eqref{assumption_lambda}, we have
\begin{eqnarray*}
	P^\lambda_t f(x_0,y_0)&\leq &
	\frac{C_{p,\epsilon}}{L_t^{\frac{\beta}{p}+\frac{1}{2p}}t^{1/p}}e^{A_{p, \epsilon}y_0}
	\left(\int dz e^{-\gamma|z|}\int_0^\infty \!\!\!dy
	e^{-y\left(\bar\mu + \frac{1-2\epsilon}{2L_\infty}\right)}
	y^{\beta -1}
	\tilde{f}^p\left(
	\tilde{x}_0+z,y \right)\right)^{1/p}\\
	&\leq&
	\frac{C_{p,\epsilon}e^{A_{p, \epsilon}y_0}}{t^{\frac{\beta}{p}+\frac{3}{2p}}}
	\left(\int dz e^{-\gamma|z|}\int_0^\infty \!\!\!dy
	e^{-y\left(\bar\mu + \frac{1-2\epsilon}{2L_\infty}\right)}
	y^{\beta -1}
	f^p\left(
	\tilde{x}_0+z+\frac{\rho}{\sigma}y,y \right)\right)^{1/p}\\
	&=&
	\frac{C_{p,\epsilon}e^{A_{p, \epsilon}y_0}}{t^{\frac{\beta}{p}+\frac{3}{2p}}}
	\left(\int dz e^{-\gamma|z-\tilde{x}_0-\frac{\rho}{\sigma}y|}\int_0^\infty \!\!\!dy
	e^{-y\left(\bar\mu + \frac{1-2\epsilon}{2L_\infty}\right)}
	y^{\beta -1}
	f^p\left(
	z,y \right)\right)^{1/p}\\
	&\leq&
	\frac{C_{p,\epsilon}e^{A_{p, \epsilon}y_0+\gamma|\tilde{x}_0|}}{t^{\frac{\beta}{p}+\frac{3}{2p}}}
	\left(\int dz e^{-\gamma|z|}\int_0^\infty \!\!\!dy
	e^{-y\left(\bar\mu -\gamma\frac{|\rho|}{\sigma}+ \frac{1-2\epsilon}{2L_\infty}\right)}
	y^{\beta -1}
	f^p\left(
	z,y \right)\right)^{1/p}.
\end{eqnarray*}
		If we choose $\epsilon=1/2$ and $\bar\mu=\mu+\gamma\frac{|\rho|}{\sigma}$,  the assertion follows provided $\lambda$ satisfies
	\[
	\lambda>\frac{\sigma^2}{2p(p-1)}\left(\mu+\gamma\frac{|\rho|}{\sigma}\right)^2-
	\kappa \frac{\mu+
		\gamma\frac{|\rho|}{\sigma}}{p}+| \bar\kappa|\frac{\gamma}{p}+\frac{1}{p^2}\gamma^2\bar\rho^2.
	\]
	\end{proof}

\subsection{Proof of Theorem  \ref{theorem2}}
We are finally ready to prove the identification Theorem \ref{theorem2}. We first prove the result under further regularity assumptions on the payoff function $\psi$, then we deduce the general statement by an approximation technique. 

\subsubsection{Case with a regular function $\psi$ }
The following regularity result paves the way for the identification theorem in the case of a  regular payoff function.
		 		\begin{proposition}\label{reg2}
		 		Assume that $\psi$ satisfies Assumption $\mathcal H^1$ and $0\leq \psi\leq \Phi$ with $\Phi$ satisfying Assumption $\mathcal{H}^2$. If moreover we assume  $\psi \in L^2([0,T];H^2(\O,\m))$ and $ \frac{\partial \psi}{\partial t}+\L \psi , \,(1+y)\Phi\in L^p([0,T];L^p(\mathcal{O,\m}))$ for some $p\geq 2$, then there exist $\lambda_0>0$ and $F\in L^p([0,T];L^p(\mathcal{O,\m}))$ such that for all $ \lambda \geq \lambda_0$ the solution $u$ of \eqref{VI} satisfies
		 			\begin{equation}
		 			-\left(\frac{\partial u}{\partial t}	 ,v\right)_H + a_\lambda(u,v)=(F,v)_H,\qquad \mbox{a.e. in } [0,T], \quad v\in V.
		 			\end{equation}
		 		\end{proposition}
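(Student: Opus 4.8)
The plan is to realize the solution $u$ of \eqref{VI} as a solution of the coercive problem and then to control the penalization term in $L^p$ uniformly in the penalization parameter. First I would observe that, since $a(u,v)=a_\lambda(u,v)-\lambda((1+y)u,v)_H$, the variational inequality \eqref{VI} is exactly the coercive variational inequality \eqref{CVI} with source $g=\lambda(1+y)u$. Under Assumption $\mathcal H^2$ we have $(1+y)^{3/2}\Phi\in L^2([0,T];H)$ and $0\le u\le\Phi$, so $\sqrt{1+y}\,g\in L^2([0,T];H)$, and by the extra hypothesis $(1+y)\Phi\in L^p([0,T];L^p(\O,\m))$ also $g\in L^p([0,T];L^p(\O,\m))$. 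By Proposition \ref{coercive variational inequality} and the uniqueness of the solution of \eqref{CVI}, the penalized coercive solutions $u_\varepsilon:=u_{\varepsilon,\lambda}$ of \eqref{PCP} with this $g$ converge to $u$ as $\varepsilon\to 0$. It then suffices to bound, uniformly in $\varepsilon$, the penalization term $\theta_\varepsilon:=\frac1\varepsilon(\psi-u_\varepsilon)_+=-\zeta_\varepsilon(\cdot,u_\varepsilon)\ge 0$ in $L^p([0,T];L^p(\O,\m))$: once this is done, $\theta_\varepsilon\rightharpoonup\theta$ weakly in $L^p$ along a subsequence, and passing to the limit in $-(\partial_t u_\varepsilon,v)_H+a_\lambda(u_\varepsilon,v)=(g+\theta_\varepsilon,v)_H$ yields the claim with $F=g+\theta=\lambda(1+y)u+\theta\in L^p([0,T];L^p(\O,\m))$.

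For the uniform $L^p$ bound I would set $w=u_\varepsilon-\psi$, so that $w_-=(\psi-u_\varepsilon)_+$ and $\theta_\varepsilon=\frac1\varepsilon w_-$. Subtracting $\psi$ from the penalized equation gives, weakly, $-\partial_t w-\mathcal L^\lambda w=G+\theta_\varepsilon$ with $G:=g+\partial_t\psi+\mathcal L^\lambda\psi=g+\partial_t\psi+\mathcal L\psi-\lambda(1+y)\psi$; by the hypotheses $\partial_t\psi+\mathcal L\psi\in L^p$, $0\le\psi\le\Phi$ and $(1+y)\Phi\in L^p$, we have $G\in L^p$ with norm independent of $\varepsilon$. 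I would then test this equation against $\theta_\varepsilon^{p-1}$ and integrate over $[t,T]\times\O$ against $\m$. The time term produces $-\frac{1}{p\varepsilon^{p-1}}\int_\O (w_-(t))^p\,d\m$ (using $w(T)=0$), and writing $\int(-\mathcal L^\lambda w)\theta_\varepsilon^{p-1}\,d\m=a_\lambda(w,\theta_\varepsilon^{p-1})$ and using $\nabla w=-\nabla w_-$ on $\{w<0\}$, the symmetric part $\bar a_\lambda$ contributes the two good coercive terms $\frac{p-1}{\varepsilon^{p-1}}\int\frac y2(w_-)^{p-2}Q(\nabla w_-,\nabla w_-)$ and $\frac{\lambda}{\varepsilon^{p-1}}\int(1+y)(w_-)^p$, where $Q$ denotes the positive quadratic form of the diffusion. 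Collecting these on the left leaves on the right $\int\theta_\varepsilon^p$, the first-order term $\tilde a(w,\theta_\varepsilon^{p-1})$ and $\int G\theta_\varepsilon^{p-1}$. Bounding $|\tilde a(w,\theta_\varepsilon^{p-1})|\le \frac{K_1}{\varepsilon^{p-1}}\into y|\nabla w_-|(w_-)^{p-1}\,d\m$ and splitting by \eqref{inequality}, the gradient factor is absorbed into the first good term (using $Q(\xi,\xi)\ge 2\delta_1|\xi|^2$), while the remaining $\frac{C}{\varepsilon^{p-1}}\int(1+y)(w_-)^p$ is absorbed into the second good term provided $\lambda\ge\lambda_0$, for a threshold $\lambda_0$ depending only on $\delta_1,K_1,p$; similarly $\int G\theta_\varepsilon^{p-1}\le C_\eta\|G\|_{L^p}^p+\eta\int\theta_\varepsilon^p$ lets one absorb $\int\theta_\varepsilon^p$. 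This yields $\int_t^T\!\int_\O\theta_\varepsilon^p\,d\m\,ds\le C\|G\|_{L^p([0,T];L^p)}^p$, uniformly in $\varepsilon$, and the limit passage described above then gives $F$.

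The main obstacle is making the computation of the second paragraph rigorous: it requires $\theta_\varepsilon^{p-1}$ to be an admissible test function in the weighted degenerate space $V$ and the identity $(-\mathcal L^\lambda w,\theta_\varepsilon^{p-1})_H=a_\lambda(w,\theta_\varepsilon^{p-1})$ to hold, which is delicate since for $p>2$ the power $(w_-)^{p-1}$ need not a priori belong to $V$ and $u_\varepsilon$ is only known to lie in $V$ with $L^2$ time derivative. I would handle this by first establishing the $H^2$-regularity of $u_\varepsilon$ (as in the elliptic estimates of \cite{DF}), then replacing $\theta_\varepsilon^{p-1}$ by the bounded truncations $\theta_\varepsilon(\theta_\varepsilon\wedge n)^{p-2}$, which lie in $V$ and preserve the sign structure, performing the integration by parts of Lemma \ref{IPP1} at this level, and finally letting $n\to\infty$ by monotone convergence to recover the estimate. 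The degeneracy in $y$ and the unboundedness of the coefficients cause no extra difficulty here, precisely because the weight $y$ appears identically in the bad first-order term and in the good diffusion term, while the $(1+y)$-weighted zero-order good term — available because $\lambda$ is taken large — dominates the leftover; this is exactly where the threshold $\lambda_0$ enters.
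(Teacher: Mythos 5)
Your proposal is correct and follows essentially the same route as the paper: it realizes $u$ as the solution of the coercive problem with source $g=\lambda(1+y)u$, tests the penalized equation against (a multiple of) $(\psi-u_{\varepsilon,\lambda})_+^{p-1}$, discards the nonnegative coercive contribution after absorbing the first-order term for $\lambda\geq\lambda_0$, and bounds the penalization term in $L^p$ uniformly in $\varepsilon$ by H\"older/Young against $G$ before passing to the weak limit. The only cosmetic differences are that the paper uses H\"older directly instead of Young (and note that, after rearranging signs, the term $\int\theta_\varepsilon^p$ sits on the \emph{left} as the quantity being estimated, as your final bound correctly reflects), and it legitimizes the test function by first assuming $\psi$ bounded and then approximating, rather than by truncating the power as you suggest.
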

		 		\begin{proof}
		 			Note that, for $\lambda$ large enough, $u$ can be seen as the solution $u_\lambda$ of an equivalent coercive variational inequality, that is 
		 			\begin{equation*}
		 			-\left(\frac{\partial u_{\lambda}}{\partial t}	 ,v -u_\lambda  \right)_H + a_\lambda(u_\lambda,v-u_\lambda)\geq (g,v-u_\lambda)_H,
		 			\end{equation*}
		 			where $g= \lambda(1+y)u $ satisfies the assumptions of Proposition \ref{coercive variational inequality}. Therefore,  there exists a sequence $(u_{\varepsilon, \lambda})_\varepsilon$ of non  negative functions such that $\lim_{\varepsilon\rightarrow 0 }u_{\varepsilon, \lambda}=u_\lambda$ and
		 			$$
		 			-\left( \frac{\partial u_{\varepsilon,\lambda}}{\partial t },v   \right)_H + a_\lambda(u_{\varepsilon,\lambda},v)-\left(\frac 1 \varepsilon (\psi-u_{\varepsilon,\lambda})_+,v\right)_H= (g,v)_H,\qquad v\in V.
		 			$$
Since  both $u_{\varepsilon,\lambda}$ and $\psi$ are positive and $\psi$ belongs to $L^p([0,T]; L^p(\mathcal{O},\m))$, we have $(\psi-u_{\varepsilon,\lambda})_+\in L^p([0,T]; L^p(\mathcal{O},\m))$. In order to simplify the notation, we set $w=(\psi-u_{\varepsilon,\lambda})_+$. Taking $v=w^{p-1}$ and assuming that $\psi$ is bounded we observe that $v\in L^2([0,T]; V  )$ and we can write
		 			$$
		 			-\left(	\frac{\partial u_{\varepsilon,\lambda}}{\partial t}, w^{p-1}\right)_H
		 			+a_\lambda(u_{\varepsilon,\lambda},w^{p-1})-\frac 1 \varepsilon \Vert w\Vert^p_{L^p(\mathcal{O},\m)} =\left( g, w^{p-1} \right)_H,
		 			$$
		 			so that
		 			$$
		 			\frac 1 p \frac{d }{dt }\Vert w\Vert_{ L^p(\mathcal{O},\m)}^p  -a_\lambda(\psi-u_{\varepsilon,\lambda},w^{p-1})-\frac 1 \varepsilon \Vert w\Vert^p_{L^p(\mathcal{O},\m)} 
		 					=\left( g, w^{p-1} \right)_H-\left( \frac{\partial \psi }{\partial t },w^{p-1} \right)_H+ a_\lambda(\psi,w^{p-1}).
		 			$$
		 			Integrating from $0$ to $T$ we get
		 			\begin{equation}\label{regpaux}
		 			\begin{split}
		 		&	-	\frac 1 p\Vert w(0)\Vert_{ L^p(\mathcal{O},\m)}^p -\int_0^T a_\lambda((\psi-u_{\varepsilon,\lambda})(t),w^{p-1}(t))dt  -\frac 1 \varepsilon\int_0^T \Vert w(t)\Vert_{ L^p(\mathcal{O},\m)}^p dt \\&\quad
		 			=\int_0^T\left( g(t), w^{p-1}(t) \right)_Hdt-\int_0^T \left( \frac{\partial \psi }{\partial t }(t),w_+^{p-1}(t) \right)_Hdt+ \int_0^Ta_\lambda(\psi(t),w^{p-1}(t))dt.
		 			\end{split}
		 			\end{equation}

		 			Now, with the usual integration by parts,
		 			\begin{align*}
		 			&a_{\lambda}(w,w^{p-1})
		 		= \into \frac y 2 (p-1 ) w^{p-2 }\left[ \left( \frac{\partial w}{\partial x }\right)^2 +2\rho\sigma      \frac{\partial w}{\partial x }\frac{\partial w}{\partial y }+ \sigma^2 \left( \frac{\partial w}{\partial y }\right)^2 \right]d\m
		 			\\&		 \qquad		+\into y\left( j_{\gamma,\mu }(x)  \frac{\partial w}{\partial x }+ k_{\gamma,\mu}(x)   \frac{\partial w}{\partial y }  \right)  w^{p-1} d\m + \lambda \into  ( 1+y ) w^p d\m 
		 			\\& 	\geq \delta_1(p-1) \into y w^{p-2 } \left[ \left( \frac{\partial w}{\partial x }\right)^2 + \left( \frac{\partial w}{\partial y }\right)^2 \right]d\m
		 			+\into y\left( j_{\gamma,\mu }(x)  \frac{\partial w}{\partial x }+ k_{\gamma,\mu}(x)   \frac{\partial w}{\partial y }  \right)  w^{p-1} d\m + \lambda \into  y w^p d\m 
		 			\\& 		= \into y  w^{p-2 }   \bigg[    \delta_1(p-1)   \left( \frac{\partial w}{\partial x }\right)^2  
		 		+  j_{\gamma,\mu }(x)  \frac{\partial w}{\partial x } w + \frac{\lambda}{2 }  w^2 \bigg] d\m\\&\qquad
+ \into y  w^{p-2 }   \bigg[    \delta_1(p-1)   \left( \frac{\partial w}{\partial y }\right)^2 
		 		+  k_{\gamma,\mu }(x)  \frac{\partial w}{\partial y } w + \frac{\lambda}{2 }  w^2 \bigg] d\m
		 		\geq 0,
		 			\end{align*}
		 			since, for $\lambda$ large enough, the quadratic forms $(a,b)\rightarrow  \delta_1(p-1)a^2 + j_{\gamma.\mu}  ab + \frac \lambda 2 b^2 $ and $(a,b)\rightarrow  \delta_1(p-1)a^2 + k_{\gamma.\mu}  ab + \frac \lambda 2 b^2 $ are both positive definite.
		 			
		 			Recall that $\psi\in L^2([0,T];H^2(\O,\m)), $  $\frac{\partial \psi }{\partial_t}+\mathcal{L }\psi\in L^p([0,T],L^p(\O,\m)), \, (1+y)\psi\leq(1+y)\Phi\in L^p([0,T],L^p(\O,\m))$  and $g=(1+y)u\leq(1+y)\Phi\in L^p([0,T];L^p(\mathcal{O,\m}))$.
		 			Therefore, going back to \eqref{regpaux} and using Holder's inequality,
		 			\begin{align*}
		 			\frac 1 \varepsilon\int_0^T\!\! \Vert w(t)\Vert_{ L^p(\mathcal{O},\m)}^pdt \leq \left[\left(\int_0^T \!\!\Vert g(t)\Vert_{ L^p(\mathcal{O},\m)}^pdt  \right)^{\frac 1 p}\!+ \left(\int_0^T\!\left \Vert   \frac{\partial \psi }{\partial t} (t)+ \mathcal{L}^\lambda\psi (t) \right \Vert_{ L^p(\mathcal{O},\m)}^p\!\!\!\!dt  \right)^{\frac 1 p} 
		 			\right]  \left( \int_0^T\!\! \Vert w\Vert_{ L^p(\mathcal{O},\m)}^pdt  \right)^{\frac {p-1} p}\!\!\!.
		 			\end{align*}
		 		Recalling that $w=(\psi-u_{\varepsilon,\lambda})_+$, we deduce that
		 			\begin{equation}
		 				\left \Vert \frac 1 \varepsilon (\psi-u_{\varepsilon,\lambda})_+ \right \Vert_{L^p([0,T]; L^p(\mathcal{O},\m))} \leq C,
		 			\end{equation}
		 			for a positive constant $C$ independent of $\varepsilon$. 	Note that the estimate does not involve the $L^\infty$-norm of $\psi$ (which we assumed to be bounded for the payoff) so that by a standard approximation argument, it remains valid for unbounded $\psi$. 
		 			The assertion then follows passing to the limit for $\varepsilon\rightarrow0$ in 
		 			$$
		 			-\left( \frac{\partial u_{\varepsilon,\lambda}}{\partial t },v   \right)_H + a_\lambda(u_{\varepsilon,\lambda},v)= \left(\frac 1 \varepsilon (\psi-u_{\varepsilon,\lambda})_+,v\right)_H +(g,v)_H,\qquad  v\in V.
		 			$$
		 		\end{proof}
		 		Now, note that we can easily prove the continuous dependence of the process $X$ with respect to the initial state. 
		 		\begin{lemma}\label{propflo}
		 			Fix $(x,y)\in \R\times [0,+\infty)$. Denote by $(X^{x,y}_t, Y^y_t)_{t\geq 0}$ the solution of the system
		 			\[
		 			\left\{
		 			\begin{array}{l}
		 			dX_t=\left( \frac{\rho\kappa\theta}{\sigma}-\frac{Y_t}{2}\right)dt+\sqrt{Y_t} dB_t,\\
		 			dY_t=\kappa(\theta-Y_t)dt+\sigma\sqrt{Y_t}dW_t,
		 			\end{array}
		 			\right.
		 			\]
		 			with $X_0=x$, $Y_0=y$ and   $\langle B,W\rangle_t=\rho t$.
		 			We have, for every  $ t\geq 0$ and for every $(x,y), \, (x',y')\in \R\times [0,+\infty)$,
		 			$\E\left|Y^{y'}_t-Y^y_t\right| \leq |y'-y|$ and
		 			\[
		 			\E\left|X^{x',y'}_t-X^{x,y}_t\right| \leq |x'-x|+\frac{t}{2}|y'-y|+\sqrt{t|y'-y|}.
		 			\]    
		 		\end{lemma}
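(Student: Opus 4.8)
The plan is to couple two copies of the system by driving them with the \emph{same} pair of Brownian motions $(B,W)$ but started from the two initial conditions, exploiting the fact that the $Y$-component is autonomous (a CIR diffusion). So I would first establish the bound on $\E|Y_t^{y'}-Y_t^y|$ and then feed it into the estimate for $X$, for which the constant drift $\rho\kappa\theta/\sigma$ conveniently cancels in the difference.

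For the $Y$-estimate, set $D_t=Y_t^{y'}-Y_t^y$. Subtracting the two CIR equations gives
\[
D_t=(y'-y)-\kappa\int_0^t D_s\,ds+\sigma\int_0^t\big(\sqrt{Y_s^{y'}}-\sqrt{Y_s^y}\big)\,dW_s,
\]
so the drift difference is the dissipative linear term $-\kappa D_s$ and the diffusion difference is $\sigma(\sqrt{Y_s^{y'}}-\sqrt{Y_s^y})$. Since $x\mapsto|x|$ is not $C^2$, I would use the Yamada--Watanabe approximation: pick $1=a_0>a_1>\cdots\downarrow 0$ with $\int_{a_n}^{a_{n-1}}\frac{dz}{z}=n$ and smooth even functions $\psi_n$ with $\psi_n(x)\uparrow|x|$, $|\psi_n'|\le 1$, $\psi_n'$ odd, and $0\le\psi_n''(x)\le \frac{2}{n|x|}$. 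Applying It\^o's formula to $\psi_n(D_t)$ and taking expectations (after localizing with stopping times to make the stochastic integral a true martingale, which is legitimate because $\psi_n'$ is bounded and $\sqrt{Y}$ has finite moments for the CIR process), the drift contributes $-\kappa\,\E\int_0^t\psi_n'(D_s)D_s\,ds\le 0$ because $\psi_n'(x)x=|\psi_n'(x)||x|\ge 0$. The It\^o correction is controlled using the elementary inequality $(\sqrt a-\sqrt b)^2\le|a-b|$ (valid for $a,b\ge 0$, and here $Y\ge 0$) together with $|x|\psi_n''(x)\le 2/n$, which bounds $\frac{\sigma^2}{2}\psi_n''(D_s)(\sqrt{Y_s^{y'}}-\sqrt{Y_s^y})^2$ by $\sigma^2/n$. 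Hence $\E\psi_n(D_t)\le\psi_n(y'-y)+\sigma^2t/n$, and letting $n\to\infty$ yields $\E|Y_t^{y'}-Y_t^y|\le|y'-y|$.

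For the $X$-estimate, subtract the two $X$-equations: the constant drift cancels and
\[
X_t^{x',y'}-X_t^{x,y}=(x'-x)-\frac12\int_0^t D_s\,ds+\int_0^t\big(\sqrt{Y_s^{y'}}-\sqrt{Y_s^y}\big)\,dB_s.
\]
Taking $\E|\cdot|$ and the triangle inequality, the first term gives $|x'-x|$; the Lebesgue term is bounded by $\frac12\int_0^t\E|D_s|\,ds\le\frac t2|y'-y|$ by the $Y$-estimate; and the stochastic term is bounded, via Jensen's inequality and the It\^o isometry, by $\big(\E\int_0^t(\sqrt{Y_s^{y'}}-\sqrt{Y_s^y})^2\,ds\big)^{1/2}$, which by $(\sqrt a-\sqrt b)^2\le|a-b|$ and the $Y$-estimate is at most $\big(\int_0^t\E|D_s|\,ds\big)^{1/2}\le\sqrt{t|y'-y|}$. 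Summing the three contributions gives the claimed bound.

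The main obstacle is the $Y$-estimate: everything hinges on the Yamada--Watanabe device, and the one genuinely delicate point is justifying that the local martingale term has zero expectation (handled by localization together with the finiteness of moments of $\sqrt{Y}$ for the CIR process) and checking the bookkeeping $|x|\psi_n''(x)\le 2/n$. Once $\E|Y_t^{y'}-Y_t^y|\le|y'-y|$ is in hand, the $X$-estimate is routine, the only structural inputs being the cancellation of the constant drift and the square-root inequality $(\sqrt a-\sqrt b)^2\le|a-b|$.
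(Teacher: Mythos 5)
Your proof is correct and follows essentially the route the paper intends: the paper omits the argument but points to Ikeda--Watanabe, Section IV.3, Theorem 3.2, which is precisely the Yamada--Watanabe approximation you use for the $Y$-estimate, and the $X$-estimate then follows by the same triangle-inequality/It\^o-isometry computation you give. All the steps check out (in fact, since $|\psi_n'|\le 1$ and $\E\int_0^t(\sqrt{Y_s^{y'}}-\sqrt{Y_s^y})^2\,ds\le\int_0^t\E(Y_s^{y'}+Y_s^y)\,ds<\infty$, the stochastic integral is already a true martingale and the localization is not even needed).
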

		 		The proof of Lemma \ref{propflo} is straightforward so we omit the details: the inequality $\E\left|Y^{y'}_t-Y^y_t\right| \leq  |y'-y|$ can be proved by using standard techniques introduced in \cite{IW} (see the proof of Theorem 3.2 and its Corollary in Section IV.3) and the other inequality easily follows.

		 		Then, we can prove the following result.
		 			\begin{proposition}\label{propconti1}
		 			 Let $\psi:\R\times[0,\infty)\rightarrow\R$ be  continuous 
		 				and such that there exist $C>0$ and $ a,\, b \geq0$ with  $|\psi(x,y)|\leq Ce^{a|x|+by}$ for every $(x,y)\in \R\times [0,+\infty)$. Then, if
		 				\[
		 				\lambda> ab|\rho|\sigma+\frac{b^2\sigma^2}{2}-\kappa b+\frac{a^2-a}{2},
		 				\]
		 				we have $P^\lambda_t|\psi|(x,y)<\infty$ for every $t\geq 0$, $(x,y)\in \R\times [0,+\infty)$ and the function
		 				$(t,x,y)\mapsto P^\lambda_t\psi(x,y)$ is continuous on $[0,\infty)\times\R\times[0,\infty)$.
		 		\end{proposition}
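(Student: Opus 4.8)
The plan is to estimate the expectation defining $P^\lambda_t$ directly, exploiting the conditional Gaussianity of the shifted log-price given the volatility path and then invoking the Laplace-transform bound of Proposition \ref{Laplace2}; continuity will then follow from the continuous dependence of the flow on its initial data (Lemma \ref{propflo}) together with a uniform integrability argument. For the finiteness of $P^\lambda_t|\psi|(x,y)$, I would first use $|\psi(x,y)|\le Ce^{a|x|+by}$ and $e^{a|x|}\le e^{ax}+e^{-ax}$ to reduce matters to bounding $\E_{x,y}\!\left(e^{-\lambda\int_0^t(1+Y_s)ds}e^{\pm aX_t+bY_t}\right)$. Writing $X_t=\tilde X_t+\frac{\rho}{\sigma}Y_t$ with $\tilde X$ as in \eqref{model2}, and recalling that $\tilde B$ is independent of $W$ (hence of the whole path of $Y$), I would condition on $\mathcal{F}^W$ and integrate out the Gaussian stochastic integral $\bar\rho\int_0^t\sqrt{Y_s}\,d\tilde B_s$, exactly as in the proof of Proposition \ref{prop-TF}. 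This turns the quantity into $e^{\pm a\tilde x_0-\lambda t}\,\E_{y}\!\left(e^{\lambda_1 Y_t+\lambda_2\int_0^tY_s\,ds}\right)$ for the explicit real numbers $\lambda_1=\pm a\frac{\rho}{\sigma}+b$ and $\lambda_2=\pm a(\frac{\rho\kappa}{\sigma}-\frac12)+\frac{a^2(1-\rho^2)}{2}-\lambda$. Applying Proposition \ref{Laplace2}, this is finite (and bounded by $e^{y\psi_{\lambda_1,\lambda_2}(t)+\theta\kappa\phi_{\lambda_1,\lambda_2}(t)}$) as soon as $\frac{\sigma^2}{2}\lambda_1^2-\kappa\lambda_1+\lambda_2\le 0$, and a direct computation of this expression reduces it to the lower bound on $\lambda$ appearing in the statement, the dominant requirement coming from the worse of the two signs.

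For the continuity, I would fix $(t,x,y)$ and a sequence $(t_n,x_n,y_n)\to(t,x,y)$. By Lemma \ref{propflo} the variables $X^{x_n,y_n}_{t}$ and $Y^{y_n}_{t}$ converge to $X^{x,y}_t,Y^y_t$ in $L^1$, hence in probability, and by path-continuity of $(X,Y)$ the time shift $t_n\to t$ also yields convergence in probability; combining these gives $\big(X^{x_n,y_n}_{t_n},Y^{y_n}_{t_n}\big)\to\big(X^{x,y}_t,Y^y_t\big)$ together with $\int_0^{t_n}(1+Y_s)ds\to\int_0^t(1+Y_s)ds$ in probability. Since $\psi$ is continuous, the integrands $e^{-\lambda\int_0^{t_n}(1+Y_s)ds}\psi\big(X^{x_n,y_n}_{t_n},Y^{y_n}_{t_n}\big)$ then converge in probability to the desired limit, so the only remaining task is to upgrade this to convergence of the expectations.

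The main obstacle, and the reason the inequality on $\lambda$ is strict, is precisely this last upgrade, which I would obtain through uniform integrability via a control of $(1+\delta)$-moments. For $\delta>0$ one has $\big|e^{-\lambda\int_0^{t}(1+Y_s)ds}\psi(X_t,Y_t)\big|^{1+\delta}\le C^{1+\delta}e^{-(1+\delta)\lambda\int_0^t(1+Y_s)ds}e^{(1+\delta)a|X_t|+(1+\delta)bY_t}$, whose expectation is $P^{(1+\delta)\lambda}_t$ applied to the exponential with exponents $(1+\delta)a,(1+\delta)b$. By the estimate of the first step this is finite provided $(1+\delta)\lambda$ exceeds the threshold associated with $(1+\delta)a,(1+\delta)b$; dividing by $(1+\delta)$ and letting $\delta\to0$ recovers exactly the strict condition of the statement, so such a $\delta$ exists. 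Moreover, because Proposition \ref{Laplace2} produces a bound of the form $e^{Ay_0}$ with $A$ independent of $y_0$ and with continuous prefactors, these $(1+\delta)$-moments remain bounded as $(x_n,y_n)$ ranges over a compact neighbourhood of $(x,y)$ and $t_n$ over a bounded time interval, which is the delicate point to verify carefully. Uniform integrability then permits passing to the limit, giving continuity of $(t,x,y)\mapsto P^\lambda_t\psi(x,y)$, while the same exponential-moment bound applied to $|\psi|$ itself furnishes $P^\lambda_t|\psi|(x,y)<\infty$.
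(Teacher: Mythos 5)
Your proposal is correct and follows essentially the same route as the paper: reduce to exponential moments via the growth bound, integrate out the Gaussian part of $\tilde X_t$ conditionally on $Y$ as in Proposition \ref{prop-TF}, invoke Proposition \ref{Laplace2} under the stated condition on $\lambda$, and obtain continuity from Lemma \ref{propflo} plus uniform integrability supplied by a $(1+\epsilon)$-moment bound made possible by the strictness of the inequality (the paper's estimate \eqref{6**}). No substantive differences to report.
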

		 		\begin{proof}
		 	 We can prove, as in the proof of  Proposition \ref{prop-TF},  that
		 	 \begin{eqnarray*}
		 	 	\E_{x,y}\left(e^{a X_t+bY_t-\lambda\int_0^t Y_s ds}\right)&=&
		 	 	e^{a\left(x-\frac{\rho}{\sigma}y\right)}
		 	 	\E_{y}\left(e^{\left(a\frac{\rho}{\sigma}+b\right)Y_t+
		 	 		\left(a(\frac{\rho\kappa}{\sigma}-\frac{1}{2})+\frac{a^2}{2}(1-\rho^2)-\lambda\right)\int_0^tY_sds}\right).
		 	 \end{eqnarray*}
		 	 Thanks to Proposition~\ref{Laplace2}, if
		 	 \begin{equation}\label{6*}
		 	 \frac{\sigma^2}{2}\left(a\frac{\rho}{\sigma}+b\right)^2-\kappa\left(a\frac{\rho}{\sigma}+b\right)+
		 	 \left(a(\frac{\rho\kappa}{\sigma}-\frac{1}{2})+\frac{a^2}{2}(1-\rho^2)-\lambda\right)<0,
		 	 \end{equation}
		 	 we have, for any $T>0$ and for any compact $K\subseteq \R\times [0,+\infty[$,
		 	 \[
		 	 \sup_{(t,x,y)\in[0,T]\times K} \E_{x,y}\left(e^{a X_t+bY_t-\lambda\int_0^t Y_s ds}\right)<\infty.
		 	 \]
		 	 Note that \eqref{6*} is equivalent to
		 	 \[
		 	 \lambda>ab\rho\sigma+\frac{b^2\sigma^2}{2}-\kappa b+\frac{a^2-a}{2}.
		 	 \]
		 	 Therefore, under the assumptions of the Proposition, we have,
		 	 for any $T>0$ and for any compact set $K\subseteq \R\times [0,+\infty[$,
		 	 \[
		 	 \sup_{(t,x,y)\in[0,T]\times K} \E_{x,y}\left(e^{a |X_t|+bY_t-\lambda\int_0^t Y_s ds}\right)<\infty.
		 	 \]
		 	Moreover, for $\epsilon$ small enough,
		 	 \begin{equation}\label{6**}
		 	 \sup_{(t,x,y)\in[0,T]\times K} \E_{x,y}\left(e^{a(1+\epsilon) |X_t|+b(1+\epsilon)Y_t-\lambda(1+\epsilon)\int_0^t Y_s ds}\right)<\infty.
		 	 \end{equation}
		 	Then, let  $\psi$ be a continuous function on $\R\times[0,+\infty[$ such that $|\psi(x,y)|\leq Ce^{a|x|+by}$. It is evident that $P^\lambda_t |\psi|(x,y)<\infty$
		 	 and we have
		 	 \[
		 	 P^\lambda_t\psi(x,y)=\E\left(e^{-\lambda\int_0^t(1+Y^y_s)ds}\psi(X^{x,y}_t,Y^{y}_t)\right).
		 	 \]
		 	 	If $((t_n,x_n,y_n))_n$ converges to $(t,x,y)$,  we deduce from Lemma~\ref{propflo} that $X^{x_n,y_n}_{t_n}\rightarrow X^{x,y}_t$, $Y^{y_n}_{t_n}\rightarrow Y^{y}_t$ and  $\int_0^{t_n} Y^{y_n}_sds \rightarrow\int_0^t Y^y_sds$ in probability. 
		 	 Therefore $e^{-\lambda\int_0^{t_n}(1+Y_s)ds}\psi(X^{x_n,y_n}_{t_n},Y^{y_n}_{t_n})$
		 	converges to $e^{-\lambda\int_0^{t}(1+Y_s)ds}\psi(X^{x,y}_{t},Y^{y}_{t})$ in probability. The estimate~\eqref{6**}
		 	 ensures the uniformly integrability of $e^{-\lambda\int_0^{t_n}(1+Y_s)ds}\psi(X^{x_n,y_n}_{t_n},Y^{y_n}_{t_n})$
		 	so that  $\lim_{n\to \infty}P^\lambda_{t_n}\psi(x_n,y_n)=P^\lambda_{t}\psi(x,y)$ which concludes the proof.		
%		 			If $((t_n,x_n,y_n))_n$ converges to $(t,x,y)$,  we deduce from Lemma~\ref{propflo} that $X^{x_n,y_n}_{t_n}\rightarrow X^{x,y}_t$, $Y^{y_n}_{t_n}\rightarrow Y^{y}_t$ and  $\int_0^{t_n} Y^{y_n}_sds \rightarrow\int_0^t Y^y_sds$ in probability. 
%		 			Therefore, the assertion is trivial if $\varphi$ is bounded continuous. If $\varphi\in\L^p(\O,\m)$, $\varphi$ is the limit in $L^p$ of a sequence of bounded continuous functions  $(\varphi_n)_n$. Moreover, thanks to Theorem \ref{theotransition}, for every compact $K\subseteq\R\times[0,+\infty)$, there is $C_{p,K, T}>0$ such that
%		 			$$
%		 			P^\lambda_t| \varphi_n-\varphi|(x,y) \leq \frac{C_{p,K, T}}{t^{\frac{\beta}{p}+\frac{3}{2p}}}||\varphi_n-\varphi||_{L^p(\O,\m)}, \qquad(x,y)\in K.
%		 			$$
%		 			Therefore $P^\lambda_t \varphi_n(x,y)$ converges locally uniformly to $P^\lambda_t \varphi(x,y)$ and the assertion follows.
		 		\end{proof} 

		 		\begin{proposition}\label{martingale}
		 			Fix  $p>\beta+\frac{5}{2}$ and $\lambda$ as in Theorem \ref{theotransition}. Let us consider
		 			$u\in C([0,T];H)\cap L^2([0,T];V)$, with $\frac{\partial u}{\partial t}\in L^2([0,T];H)$ such that
		 			\[
		 		\begin{cases}
		 		\left	(\frac{\partial u}{\partial t},v\right)_H+a_\lambda(u(t),v)=(f(t),v)_H,\qquad v\in V,\\
		 			u(0)=\psi,
		 		\end{cases}
		 			\]
		 			with $\psi$ continuous, $\psi\in V$, $\sqrt{1+y}f\in L^2([0,T];H)$  and $f\in L^p([0,T];L^p(\mathcal{O}, \m))$. Then, if $\psi$
		 			and $\lambda$ satisfy the assumptions of Proposition ~\ref{propconti1},  we have 
		 			\begin{enumerate}
		 				\item For every $t\in [0,T]$, $u(t)=P^\lambda_t\psi+\int_0^tP^\lambda_sf(t-s)ds$.
		 				\item  The function  $(t,x,y)\mapsto u(t,x,y)$ is continuous on $[0,T]\times \R\times [0,+\infty)$.
		 				\item If $\Lambda_t=\lambda\int_0^t(1+Y_s)ds$, 
		 				the process $(M_t)_{0\leq t\leq T}$, defined by
		 				\[
		 				M_t=e^{-\Lambda_t}u(T-t,X_t,Y_t)+\int_0^te^{-\Lambda_s}f(T-s,X_s,Y_s)ds,
		 				\]
		 			with $X_0=x,\, Y_0=y$	is a martingale
		 				for every $(x,y)\in \R\times [0,+\infty)$.
		 			\end{enumerate}
		 		\end{proposition}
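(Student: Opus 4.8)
The plan is to prove the three assertions in turn, each building on the previous one, and to reduce the martingale property to a deterministic semigroup identity rather than to an application of It\^{o}'s formula (which is unavailable since $u$ is only a weak solution).

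\emph{Assertion 1.} I would obtain the representation from uniqueness of the linear Cauchy problem combined with Duhamel's principle. By Proposition~\ref{propsg2} the solution of $(\partial_t u,v)_H+a_\lambda(u,v)=(f,v)_H$ with $u(0)=\psi$ is unique in the relevant class (here the hypothesis $\sqrt{1+y}f\in L^2([0,T];H)$ supplies the integrability required there). By definition of $\bar P^\lambda_t$ (the case $f=0$) and by Proposition~\ref{propsg3} (the case $\psi=0$), the function $t\mapsto \bar P^\lambda_t\psi+\int_0^t\bar P^\lambda_s f(t-s)\,ds$ solves exactly this problem, so by uniqueness it coincides with $u$ in $H$. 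It then suffices to pass from $\bar P^\lambda$ to $P^\lambda$: Proposition~\ref{identificationsemigroup} gives $\bar P^\lambda_t g=P^\lambda_t g$ ($dx\,dy$-a.e.) for every $g\in H$, yielding $u(t)=P^\lambda_t\psi+\int_0^t P^\lambda_s f(t-s)\,ds$ as an identity in $H$.

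\emph{Assertion 2.} I would prove continuity term by term. Since $\psi$ and $\lambda$ satisfy the hypotheses of Proposition~\ref{propconti1}, the map $(t,x,y)\mapsto P^\lambda_t\psi(x,y)$ is continuous. For the Duhamel term $(t,x,y)\mapsto\int_0^t P^\lambda_s f(t-s)(x,y)\,ds$ the crucial input is Theorem~\ref{theotransition}: $P^\lambda_s|g|(x,y)\le C_{p,K,T}\,s^{-(\beta/p+3/(2p))}\|g\|_{L^p(\O,\m)}$ for $(x,y)$ in a compact $K$. With $p'=p/(p-1)$, the kernel $s\mapsto s^{-(\beta/p+3/(2p))}$ lies in $L^{p'}(0,T)$ precisely because $\bigl(\tfrac{\beta}{p}+\tfrac{3}{2p}\bigr)p'=\tfrac{\beta+3/2}{p-1}<1$, and this inequality is equivalent to the standing hypothesis $p>\beta+\tfrac52$. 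H\"older's inequality then gives
\[
\int_0^t P^\lambda_s|f(t-s)|(x,y)\,ds\le C_{p,K,T}\Bigl(\int_0^T s^{-(\beta/p+3/(2p))p'}\,ds\Bigr)^{1/p'}\|f\|_{L^p([0,T];L^p(\O,\m))}<\infty,
\]
uniformly for $(x,y)\in K$. Continuity of the Duhamel term follows by approximating $f$ in $L^p([0,T];L^p(\O,\m))$ by continuous compactly supported functions — for which continuity in $(t,x,y)$ is clear from Proposition~\ref{propconti1} — and passing to the limit with the uniform bound above. This produces a continuous version of $u$, with which it is henceforth identified.

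\emph{Assertion 3.} Setting $\Lambda_t=\lambda\int_0^t(1+Y_s)\,ds$, the identity $P^\lambda_t g(x,y)=\E_{x,y}\bigl(e^{-\Lambda_t}g(X_t,Y_t)\bigr)$ together with the Markov property of $(X,Y)$ gives, for $t_0\le t$ and admissible $g$,
\[
\E\bigl[e^{-\Lambda_t}g(X_t,Y_t)\mid\mathcal F_{t_0}\bigr]=e^{-\Lambda_{t_0}}\bigl(P^\lambda_{t-t_0}g\bigr)(X_{t_0},Y_{t_0}).
\]
Applying this to $g=u(T-t,\cdot)$ in the first term of $M_t$ and, after Fubini, to $g=f(T-s,\cdot)$ inside the integral, the claim $\E[M_t\mid\mathcal F_{t_0}]=M_{t_0}$ reduces to the deterministic relation
\[
\bigl(P^\lambda_{t-t_0}u(T-t)\bigr)(x,y)+\int_{t_0}^t\bigl(P^\lambda_{s-t_0}f(T-s)\bigr)(x,y)\,ds=u(T-t_0,x,y).
\]
Writing $a=T-t$, $b=T-t_0$ and substituting $r=s-t_0$, this is $P^\lambda_{b-a}u(a)+\int_0^{b-a}P^\lambda_r f(b-r)\,dr=u(b)$, which I would verify by inserting the representation of Assertion~1 for $u(a)$ and $u(b)$, expanding by linearity and the semigroup law $P^\lambda_{b-a}P^\lambda_\rho=P^\lambda_{b-a+\rho}$, and changing variables in the resulting convolution integral.

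\emph{Main obstacle.} The conceptual core is Assertion~3, but the genuinely delicate point is the transition from the a.e.\ defined analytic solution to a pointwise probabilistic object: one must simultaneously establish the continuity of the Duhamel term (Assertion~2) and the finiteness of $\E\bigl(e^{-\Lambda_t}|u(T-t,X_t,Y_t)|\bigr)$ and of $\E\int_0^t e^{-\Lambda_s}|f(T-s,X_s,Y_s)|\,ds$, which are needed to condition and to apply Fubini. This is exactly where the sharp hypothesis $p>\beta+\tfrac52$ enters, through H\"older's inequality against the singular time kernel $s^{-(\beta+3/2)/p}$ from Theorem~\ref{theotransition}; the growth control of Proposition~\ref{propconti1} (together with the semigroup law, e.g.\ $P^\lambda_t P^\lambda_{T-t}|\psi|=P^\lambda_T|\psi|$) then secures the required integrability.
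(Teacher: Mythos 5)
Your proposal is correct and follows essentially the same route as the paper: Assertion 1 from Propositions \ref{propsg2}--\ref{propsg3} together with the semigroup identification, Assertion 2 from Proposition \ref{propconti1} plus Theorem \ref{theotransition}, H\"older's inequality and density of bounded continuous functions in $L^p$ (with $p>\beta+\frac{5}{2}$ entering exactly through the integrability of the singular time kernel, as you compute), and Assertion 3 from the Markov property and the semigroup law. The only cosmetic difference is in Assertion 3: the paper verifies $\E_{x,y}\left[M_T\mid\mathcal{F}_t\right]=M_t$ directly, so that only $\psi$ and $f$ appear under the conditional expectation, whereas you condition $M_t$ on $\mathcal{F}_{t_0}$ and reduce to the flow identity for $u$ --- the two computations coincide once the representation of Assertion 1 is inserted.
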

		 		\begin{proof}
		 			The first assertion follows from Proposition~\ref{propsg3}.

		 			The continuity of $(t,x,y)\mapsto P^\lambda_t\psi(x,y)$ is given by Proposition~\ref{propconti1}. The continuity of  $(t,x,y)\mapsto \int_0^t P^\lambda_sf(t-s,.)(x,y)ds$  is trivial if $(t,x,y)\mapsto f(t,x,y)$
		 		is bounded continuous.  If $f\in L^p([0,T];L^p(\mathcal{O}, \m))$, $f$ is the limit in  $L^p$ of a sequence of  bounded continuous functions and we have $\int_0^t P^\lambda_s f_n(t-s,\cdot)ds \rightarrow \int_0^t P^\lambda_s f(t-s,\cdot)ds$ uniformly in $[0,T]\times K$ for every compact
		 			$K$ of $\R\times [0,+\infty)$). In fact, thanks to Theorem~\ref{theotransition}, we can write for 
		 			$t\in[0,T]$ and $(x,y)\in K$ 
		 			\begin{equation}
		 			\begin{split}
		 				\int_0^tP^\lambda_s|f_n-f|(t-s,\cdot,\cdot)(x,y)ds&\leq \int_0^t\frac{C_{p,K,T}}{s^{\frac{2\beta+3}{2p}}}ds
		 				||(f_n-f)(t-s,\cdot,\cdot)||_{L^p(\O,\m)}\\
		 				&\leq C_{p,K,T}\left(\int_0^t||(f_n-f)(t-s,\cdot,\cdot)||^p_{L^p(\O,\m)}ds\right)^{1/p}\left(\int_0^t\frac{ds}{s^{\frac{2\beta+3}{2(p-1)}}}\right)^{1-\frac{1}{p}}\\
		 				&\leq C_{p,K,T}\left(\int_0^T||(f_n-f)(s,\cdot,\cdot)||^p_{L^p(\O,\m)}ds\right)^{1/p}\left(\int_0^T\frac{ds}{s^{\frac{2\beta+3}{2(p-1)}}}\right)^{1-\frac{1}{p}}.
		 				\end{split}
		 			\end{equation}
		 			The assumption $p>\beta+\frac{5}{2}$ ensures the convergence of the integral in the right hand side.

	 			For the last assertion, note that $M_T=e^{-\Lambda_T}\psi(X_T,Y_T)+\int_0^Te^{-\Lambda_s}f(T-s,X_s,Y_s)ds$.
	 			Then, we can prove that $M_t$ is integrable with the same arguments that we used to show the continuity of $(t,x,y)\mapsto u(t,x,y)$.
%		 			The integrability of the first term follows from Proposition~\ref{propconti1} while for the second term in the right hand side we use Theorem ~\ref{theotransition}
%		 			which gives, for $(x,y)$ in a compact $K$,
%		 			\begin{eqnarray*}
%		 				\E_{x,y}\left(\int_0^T e^{-\Lambda_s}|f(T-s,X_s,Y_s)|ds\right)&=&
%		 				\int_0^TP^\lambda_s |f(T-s,.,.)|(x,y)ds\\
%		 				&\leq & C_{p,K,T}\int_0^T\frac{1}{s^{\frac{2\beta+3}{2p}}}ds
%		 				||f(T-s,.,.)||_{L^p(\O,\m)}\\
%		 				&\leq&C_{p,K,T}\left(\int_0^T||f(s,.,.)||^p_{L^p(\O,\m)}ds\right)^{1/p}\left(\int_0^T\frac{ds}{s^{\frac{2\beta+3}{2(p-1)}}}\right)^{1-\frac{1}{p}}.
%		 			\end{eqnarray*}
%		 		The assumption $p>\beta+\frac{5}{2}$ ensures again that the right hand side is finite.
		 					Moreover, by using the Markov property,
		 			\begin{eqnarray*}
		 				\E_{x,y}\left( M_T \,|\, \mathcal{F}_t\right)&=&e^{-\Lambda_t}P^{\lambda}_{T-t}\psi(X_t,Y_t)
		 				+\int_0^te^{-\Lambda_s}f(T-s,X_s,Y_s)ds
		 				+e^{-\Lambda_t}\int_t^TP^\lambda_{s-t}f(T-s,.,.)(X_t,Y_t)ds\\
		 				&=&e^{-\Lambda_t}\left(P^{\lambda}_{T-t}\psi(X_t,Y_t)+\int_0^{T-t}P^\lambda_{s}f(T-t-s,.,.)(X_t,Y_t)ds\right)+\int_0^te^{-\Lambda_s}f(T-s,X_s,Y_s)ds\\
		 				&=&e^{-\Lambda_t}u(T-t, X_t, Y_t)+\int_0^te^{-\Lambda_s}f(T-s,X_s,Y_s)ds=M_t.
		 			\end{eqnarray*}
		 			
		 		\end{proof}
		 		
		 We are now ready to prove the following proposition.
		 		
		 		\begin{proposition}\label{identification}
		 		Assume that $\psi$ satisfies Assumption $\mathcal{H}^*$.
		 		Moreover, fix $p>\beta+\frac{5}{2}$ and assume that $\psi\in L^2([0,T];H^2(\O,\m))$ and $\frac{\partial \psi}{\partial t}+\L\psi\in L^p([0,T];L^p(\O,\m))$.
		 		  Then, the solution u of the variational inequality \eqref{VI} satisfies
	\begin{equation} \label{u=u*}
		 			u(t,x,y)=u^*(t,x,y), \qquad \mbox{on }[0,T] \times \bar{\mathcal{O}},
		 			\end{equation}
		 			where $u^*$ is defined by
		 			$$
		 			u^*(t,x,y)= \sup_{\tau \in \mathcal{T}_{t,T}}\E \left[   \psi(\tau,X_\tau^{t,x,y},
		 			Y_\tau^{t,y})     \right].
		 			$$
		 		\end{proposition}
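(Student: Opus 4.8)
The plan is to identify $u$ with $u^*$ by the classical Snell-envelope argument, but realized through the weak-solution martingale machinery built above (Propositions \ref{reg2}, \ref{martingale}, \ref{propconti1}) rather than through a direct application of It\^o's formula, which is unavailable since $u$ is only a variational solution. First I would record the consequences of Proposition \ref{reg2}: for $\lambda$ large enough it gives $-\left(\frac{\partial u}{\partial t},v\right)_H+a_\lambda(u,v)=(F,v)_H$ with $F\in L^p([0,T];L^p(\mathcal{O},\m))$, i.e. $\frac{\partial u}{\partial t}+\mathcal{L}^\lambda u=-F$ in the weak sense. Setting $G:=\frac{\partial u}{\partial t}+\mathcal{L}u=\lambda(1+y)u-F$, the variational inequality \eqref{VI} forces $G\leq 0$ everywhere, $G=0$ on the continuation set $\{u>\psi\}$, and $G\in L^p([0,T];L^p(\mathcal{O},\m))$ thanks to $(1+y)\Phi\in L^p$ and $0\leq u\leq\Phi$. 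Proposition \ref{martingale} (whose hypotheses are guaranteed here by $p>\beta+\frac52$, Assumption $\mathcal{H}^*$, and Proposition \ref{propconti1}) then yields both the continuity of $(t,x,y)\mapsto u(t,x,y)$ and, after reversing time so that $\hat u(s):=u(T-s)$ has initial datum $\hat u(0)=\psi(T)$ and shifting the starting instant to $t$ via the time-homogeneous Markov property, the fact that
\[
N_s=e^{-\Lambda^t_s}\,u(s,X_s,Y_s)+\int_t^s e^{-\Lambda^t_r}F(r,X_r,Y_r)\,dr,\qquad \Lambda^t_s=\lambda\int_t^s(1+Y_r)\,dr,
\]
is a martingale on $[t,T]$ for the diffusion started at $(t,x,y)$.

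The central step is to pass from the $\lambda$-killed martingale $N$ to the undiscounted process $\hat N_s:=u(s,X_s,Y_s)+\int_t^s(-G)(r,X_r,Y_r)\,dr$. A direct integration by parts, using $u(s)=e^{\Lambda^t_s}\big(N_s-\int_t^s e^{-\Lambda^t_r}F\,dr\big)$ and $\frac{\partial u}{\partial t}+\mathcal{L}u=G$, gives $d\hat N_s=e^{\Lambda^t_s}\,dN_s$, so $\hat N$ is a local martingale; since $u\geq 0$ and $-G\geq 0$ it is nonnegative, hence a supermartingale. This immediately yields $u\geq u^*$: for every $\tau\in\mathcal{T}_{t,T}$, optional sampling together with $-G\geq 0$ and $u\geq\psi$ gives $u(t,x,y)=\hat N_t\geq \E\big[\hat N_\tau\big]\geq \E\big[u(\tau,X_\tau,Y_\tau)\big]\geq \E\big[\psi(\tau,X_\tau,Y_\tau)\big]$, and taking the supremum over $\tau$ proves the inequality.

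For the reverse inequality I would introduce the hitting time of the coincidence set, $\tau^*=\inf\{s\geq t:\ u(s,X_s,Y_s)=\psi(s,X_s,Y_s)\}\wedge T$, which is a stopping time by continuity of $u$ and $\psi$. On $[t,\tau^*]$ one stays in $\{u>\psi\}$, where $G=0$, so the stopped process $\hat N^{\tau^*}$ coincides with $u(\,\cdot\wedge\tau^*,X_{\cdot\wedge\tau^*},Y_{\cdot\wedge\tau^*})$. Being a local martingale dominated by the integrable random variable $\sup_{0\leq r\leq T}\Phi(r,X_r,Y_r)$ (via $u\leq\Phi$), it is uniformly integrable, hence a true martingale; optional sampling at $\tau^*$ then gives $u(t,x,y)=\E\big[u(\tau^*,X_{\tau^*},Y_{\tau^*})\big]=\E\big[\psi(\tau^*,X_{\tau^*},Y_{\tau^*})\big]\leq u^*(t,x,y)$, where the middle equality uses $u(\tau^*)=\psi(\tau^*)$ (on $\{\tau^*<T\}$ by continuity, on $\{\tau^*=T\}$ by the terminal condition $u(T)=\psi(T)$). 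Combining the two inequalities gives \eqref{u=u*} on $[0,T]\times\bar{\mathcal{O}}$.

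The step I expect to be the main obstacle is precisely the passage through the $\lambda$-correction: Proposition \ref{martingale} only delivers the \emph{killed} martingale $N$, whereas the optimal stopping problem is undiscounted, so one must justify rigorously both that $\hat N$ is a genuine (super)martingale and that it loses no mass up to $\tau^*$. This integrability hinges entirely on the affine moment estimates of Propositions \ref{Laplace2} and \ref{propconti1} and on the growth restriction $L<\frac{2\kappa}{\sigma^2}$ in Assumption $\mathcal{H}^*$, which ensure $\E\big[\sup_{0\leq r\leq T}\Phi(r,X_r,Y_r)\big]<\infty$ and the finiteness of $u^*$, rather than on the variational theory itself.
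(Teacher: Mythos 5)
Your proposal follows essentially the same route as the paper: Proposition \ref{reg2} to produce $F$, Proposition \ref{martingale} to get the killed martingale, the integration by parts $d\hat N_s=e^{\Lambda_s}dN_s$ to remove the killing, the sign of $F-\lambda(1+y)u$ for the supermartingale inequality $u\geq u^*$, and the complementarity condition together with the hitting time of the coincidence set for the reverse inequality. Two points deserve care. First, your justification of uniform integrability of the stopped local martingale rests on the claim that $\sup_{0\leq r\leq T}\Phi(r,X_r,Y_r)$ is integrable; this is not established by the affine estimates and is in general delicate in the Heston model, where $\E[e^{pX_T}]$ can explode for $p>1$ and the running supremum of a positive martingale need not be integrable. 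The paper instead only shows that $(\Phi(t+s,X^{t,x,y}_s))_s$ is of class $\mathcal{D}$ (a martingale on a finite horizon is automatically of class $\mathcal{D}$, and the $e^{LY_t-\kappa\theta Lt}$ part is handled via the uniform integrability argument of Remark \ref{remarksemigroup}); domination by a class $\mathcal{D}$ process suffices for the stopped local martingale to be a true martingale, so your argument should be rephrased in those terms. Second, the assertion that $G=0$ on the continuation set is only an a.e.\ statement coming from $(F-\lambda(1+y)u,\psi-u)_H=0$; to conclude that $G(s,X_s,Y_s)=0$ for $s<\tau^*$ one must invoke, as the paper does, the fact that $(X,Y)$ has a density, so that Lebesgue-null sets are not charged by the law of the process. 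With these two repairs your proof coincides with the paper's.
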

		 		\begin{proof}
		 			 			We first check that $\psi$ satisfies the assumptions of Proposition \ref{reg2}. Note that, thanks to the growth condition \eqref{boundonpsi}, it is possible to write  $0\leq \psi(t,x,y)\leq \Phi(t,x,y)$ with $\Phi(t,x,y)=C_T(e^{x-\frac{\rho \kappa\theta}{\sigma}t}+e^{Ly-\kappa\theta L t})$, where $L\in\left[0,\frac{2\kappa}{\sigma^2}\right)$ and $C_T$ is a positive constant which depends on $T$. Moreover,  recall the growth condition on the derivatives  \eqref{boundonderivatives}. Then, it is easy to see that we can choose $\gamma $ and $\mu$ in the definition of the measure $\m$ (see \eqref{m}) such that $\psi$ satisfies Assumption $\mathcal{H}^1$, $ \Phi$ satisfies Assumption $\mathcal{H}^2$ (note that  $\frac{\partial \Phi}{\partial t}+\L\Phi\leq 0$) and
		 			 				 $(1+y)\Phi$, $\frac{\partial \psi}{\partial t}+\L\psi\in L^p([0,T];L^p(\O,\m))$.  Therefore we can apply  Proposition \ref{reg2} and we get  that, for $\lambda$ large enough,  there exists $F\in L^p([0,T];L^p(\mathcal{O,\m}))$ such that  $u$ satisfies
		 			 			\begin{equation*}
		 			 			-\left(\frac{\partial u}{\partial t}	 ,v\right)_H + a_\lambda(u,v)=(F,v)_H,\qquad v\in V, 
		 			 			\end{equation*}
		 			 			that is
		 			 						\begin{equation*}
		 			 						-\left(\frac{\partial u}{\partial t}	 ,v\right)_H + a(u,v)=(F-\lambda(1+y)u,v)_H,\qquad v\in V.
		 			 						\end{equation*}
		 			 		%	Then the continuity of $u$ follows from Proposition \ref{martingale}.
		 			 		On the other hand we know that
		 			 			\begin{equation*}
		 			 		\begin{cases}
		 			 		-\left( \frac{\partial u}{\partial t },v -u  \right)_H + a(u,v-u)\geq 0 , \qquad \mbox{ a.e. in } [0,T]\qquad v\in V, \ v\geq \psi, \\u(T)=\psi(T),\\u \geq \psi \mbox{ a.e. in } [0,T]\times \R \times (0,\infty).
		 			 		\end{cases}
		 			 		\end{equation*}
		 			 	 From the previous relations we easily   deduce that $F-\lambda(1+y)u\geq 0$ a.e. and, taking $v=\psi$, that $(F-\lambda(1+y)u,\psi-u)_H=0.$ 
		 			 			Moreover, note that  the assumptions of Proposition \ref{martingale} are satisfied, so
		 			 			the process $(M_t)_{0\leq t\leq T}$ defined by
		 			 		\begin{equation}\label{M}
		 			 			M_t=e^{-\Lambda_t}u(t,X_t,Y_t)+\int_0^te^{-\Lambda_s}F(s,X_s,Y_s)ds,
		 			 		\end{equation}
		 			 		with $X_0=x, \, Y_0=y$	is a martingale
		 			 			for every $(x,y)\in \R\times [0,+\infty)$. Then, we deduce that
		 			 			 the process
		 			 				\[
		 			 				\tilde M_t= u(t,X_t,Y_t)+\int_0^{t} \left(F(s,X_s,Y_s)   -\lambda(1+Y_s)u(s,X_s,Y_s)\right)ds
		 			 				\]
		 			 			is a local martingale. In fact, from \eqref{M} we can write
		 			 			\begin{align*}
		 			 			d	\tilde M_t&=d\left[e^{\Lambda_t}M_t-e^{\Lambda_t}\int_0^te^{-\Lambda_s}F(s,X_s,Y_s)ds
		 			 			 \right]+ F(t,X_t,Y_t)dt-\lambda(1+Y_t)u(t,X_t,Y_t)dt\\&
		 			 			 =e^{\Lambda_t}dM_t+\Big[ \lambda(1+Y_t)e^{\Lambda_t}M_t-\lambda(1+Y_t)e^{\Lambda_t}\int_0^te^{-\Lambda_s}F(s,X_s,Y_s)ds\\&\quad-e^{\Lambda_t}e^{-\Lambda_t} F(t,X_t,Y_t)   + F(t,X_t,Y_t)-\lambda(1+Y_t)u(t,X_t,Y_t)\Big]dt\\
		 			 			 &=e^{\Lambda_t}dM_t.
		 			 			\end{align*}

		 			 			So, for any stopping time $\tau$ there exists an increasing sequence of stopping times $(\tau_n)_n$ such that $\lim_n\tau_n=\infty$ and 
		 			 			\begin{equation}\label{relationmgloc}
		 			 		\E_{x,y}[u(\tau\wedge \tau_n,X_{\tau\wedge \tau_n}, Y_{\tau\wedge \tau_n})]=u(0,x,y)-\E_{x,y}\left[\int_0^{\tau\wedge\tau_n}(  F(s,X_s,Y_s)-\lambda(1+Y_s)u(s,X_s,Y_s))ds\right].
		 			 			\end{equation}
		 			 			Since $F-\lambda(1+y)u\geq0$ we can pass to the limit in the right hand side of \eqref{relationmgloc} thanks to the monotone convergence theorem.  Recall now that an adapted right continuous process $(Z_{t})_{t \geq 0}$ is said to be 
		 			 			of class $\mathcal{D}$ if the family $(Z_{\tau})_{\tau \in \mathcal{T}_{0,\infty} }$, where $\mathcal{T}_{0,\infty}$ is the set of all stopping times with values in $[0,\infty)$,  is uniformly integrable. Moreover, recall that $0\leq u(t,x,y)\leq \Phi(x,y)=C_T(e^{x-\frac{\rho \kappa\theta}{\sigma}t}+e^{Ly-\kappa\theta L t})$.  The discounted and dividend adjusted price process $(e^{-(r-\delta)t}S_t)_t=(e^{X_t-\frac{\rho\kappa\theta}{\sigma}t})_t$  is a martingale (we refer to \cite{Kr} for an analysis of the martingale property in general affine stochastic volatility models), so we deduce that it is of class $\mathcal{D}$. On the other hand, we can prove that the process $(e^{LY_t-\kappa\theta t})_t$ is of class $\mathcal D$ following the same arguments used in Remark \ref{remarksemigroup}. Therefore, the process $(\Phi(t+s,X^{t,x,y}_s))_{s\in[t,T]}$ is of class $ \mathcal{D}$ for every $(t,x,y)\in[0,T]\times\R\times [0,\infty)$.
		 			 		  So we can pass to the limit in the left hand side of \eqref{relationmgloc} and  we get that $\lim_{n\rightarrow \infty }\E_{x,y}[u(\tau\wedge \tau_n,X_{\tau\wedge \tau_n}, Y_{\tau\wedge \tau_n})]= \E_{x,y}[u(\tau,X_{\tau}, Y_{\tau})]$. Therefore, passing to the limit as $n\rightarrow \infty$, we get
		 			 			\begin{equation*}
		 			 			\E_{x,y}[u(\tau,X_{\tau}, Y_{\tau})]=u(0,x,y)-\E_{x,y}\left[\int_0^{\tau} (  F(s,X_s,Y_s)-\lambda(1+Y_s)u(s,X_s,Y_s))ds\right], 
		 			 			\end{equation*}
		 			 			for every $\tau\in\mathcal{T}_{[0,T]}$.
		 			Recall that $F -\lambda(1+y)u\geq0$, so the process $ u(t,X_t,Y_t)$ is actually a supermartingale. Since $u\geq \psi$, we deduce directly from the definition of Snell envelope that $u(t,X_t,Y_t) \geq u^*(t,X_t,Y_t)  $ a.e. for $t\in [0,T]$. 
		 			
		 			In order to show the opposite inequality, we consider the so called continuation region 
		 			$$
		 			\mathcal{C}=\{ (t,x,y)\in [0,T)\times \R\times [0,\infty): u(t,x,y)>\psi(t,x,y)   \},
		 			$$ 
		 			its $t$-sections
		 			$$
		 			\mathcal{ C}_t=\{(x,y)\in 	\R\times [0,\infty) : (t,x,y)\in \mathcal C  \}, \qquad t\in [0,T),
		 			$$
		 			and the stopping time 
		 			\begin{align*}
		 			\tau_t=\inf\{ s \geq t : (s,X_s,Y_s)\notin \mathcal{C} \}=\inf\{ s \geq t : u(s,X_s,Y_s)=\psi(s,X_s,Y_s) \}.
		 		\end{align*}
		 			Note that $u(x,X_s,Y_s)>\psi(s,X_s,Y_s)$ for $t\leq s < \tau_t$. Moreover, recall  that $(F-\lambda(1+y)u,\psi-u)=0$ a.e., so $Leb\{ (x,y)\in \mathcal{C}_t :  F-\lambda(1+y)u \neq 0 \}= 0 \, dt$ a.e.. Since the two dimensional diffusion $(X,Y)$ has a density, we deduce that  $\E\left[ F(s,X_s,Y_s)-\lambda(1+Y_s)u(s,X_s,Y_s)\mathbf{1}_{\{(X_s,Y_s)\in \mathcal C_s \}}  \right]=0$, and so $F(s,X_s,Y_s)-\lambda(1+Y_s)u(s,X_s,Y_s) = 0 $ $ds,\ d\P- a.e. $ on $\{s<\tau_t\}$. Therefore,
		 			$$
		 			\E\left[    u(\tau_t,X_{\tau_t},Y_{\tau_t}) \right]= 	\E\left[    u(t,X_t,Y_t) \right],
		 			$$
		 			and, since $u(\tau_t,X_{\tau_t},Y_{\tau_t}) = \psi(\tau_t,X_{\tau_t},Y_{\tau_t})$ thanks to the continuity of $u$ and $\psi$,
		 			$$
		 			E\left[  u(t,X_t,Y_t) \right]= 	\E\left[    \psi(\tau_t,X_{\tau_t},Y_{\tau_t}) \right]\leq 	 	\E\left[    u^*(t,X_{t},Y_{t}) \right],
		 			$$
		 			so that $u(t,X_t,Y_t) = u^*(t,X_t,Y_t)  $  a.e..  With the same arguments  we can prove that $u(t,x,y)=u^*(t,x,y)$ and this concludes the proof.
		 		\end{proof}
		
		 		\subsubsection {Weaker assumptions on $\psi$}
		 		The last step is to establish the equality $u=u^*$ under weaker assumptions on $\psi$, so proving Theorem \ref{theorem2}. 
		 		\begin{proof}[Proof of Theorem \ref{theorem2}]
		 		First assume that there exists a sequence $(\psi_n)_ {n	\in \N}$ of continuous functions on $[0,T]	\times \R\times[0,\infty)$ which converges uniformly to $\psi$ and such that, for each $n\in \N$,  $\psi_n$ satisfies the assumptions of Proposition \ref{identification}.
		 			% satisfies Assumption $\mathcal{H}^*$ with the same bounds \eqref{boundonpsi} and \eqref{boundonderivatives} of $\psi$, $\psi_n\in L^2([0,T];H^2(\O,\m))$ and $\frac{\partial \psi}{\partial t}+\L\psi\in L^p([0,T];L^p(\O,\m))$ for some  fixed $p>\beta+\frac{5}{2}$. 
		 			For every $n\in \N$,  we set  $u_n=u_n(t,x,y)$     the  unique solution of the variational inequality \eqref{variationalinequality} with final condition $u_n(T,x,y)=\psi_n(T,x,y)$ and $u_n^*(t,x,y)= \sup_{\tau\in\T_{t,T}}\E[\psi_n(\tau,X_\tau^{t,x,y},Y^{t,y}_\tau)]$. Then, thanks to Proposition \ref{identification}, for every $n\in\N$ we have
		 			$$
		 			u_n(t,x,y)=u_n^*(t,x,y) \qquad \mbox{ on } [0,T]\times \bar{\mathcal{O}}.
		 			$$
		 			Now, the left hand side  converges to $u(t,x,y)$ thanks to the Comparison Principle. As regards the right hand side, 
		 			$$
		 			\sup_{\tau \in \mathcal{T}_{t,T}}\E \left[   \psi_n(\tau,X_\tau^{t,x,y}, Y_\tau^{t,x,y})     \right]\rightarrow \sup_{\tau \in \mathcal{T}_{t,T}} \E \left[ e^{-r(\tau-t)}  \psi(\tau,X_\tau^{t,x,y}, Y_\tau^{t,x,y})     \right]
		 			$$
		 			thanks to the uniform convergence of $\psi_n$ to $\psi$.

		 			Therefore, it is enough to prove that, if $\psi$ satisfies Assumption  $\mathcal{H}^*$, then it is the uniform limit of  a sequence of functions $\psi_n$ which satisfy the assumptions of Proposition \ref{identification}. 
		 			This can be done following the very same arguments of \cite[Lemma 3.3]{JLL} so we omit  the technical details (see \cite{T}).
		\end{proof}


\begin{thebibliography}{abc}
	\bibitem{AS} \textsc{M. Abramowitz and I. A. Stegun} (1992): \textit{Handbook of mathematical functions with formulas, graphs and mathematical tables.} Dover publications, Inc., New York.
	\bibitem{Abook}\textsc{A. Alfonsi} (2015): \textit{Affine diffusions and related processes: simulation, theory and applications.} volume 6 of \textit{Bocconi \& Springer Series}. Springer, Cham; Bocconi University Press, Milan.
	\bibitem{An}\textsc{L. Andersen} (2008): Simple and efficient simulation of the Heston stochastic volatility model. \textit{J. Comput. Financ.,} \textbf{11}(3), 1-42.
	\bibitem{BL} \textsc{A. Bensoussan, J.L. Lions} (1982): \textit{Applications of aariational inequalities in Stochastic Control}, volume 12 of \textit{Studies in Mathematics and its Applications.} North-Holland Publishing Co., Amsterdam-New York.
	
	
	\bibitem{DF} \textsc{P. Daskalopoulos and P. Feehan} (2011):  Existence, uniqueness and global regularity for degenerate elliptic obstacle problems in mathematical finance. \textit{arxiv:1109.1075}.
		\bibitem{DF2} \textsc{P. Daskalopoulos and P. Feehan} (2016):  $C^{1,1}$ regularity for degenerate elliptic obstacle problems in mathematical finance. \textit{J. Differential Equations}, \textbf{26}(6), 5043-5074.
	\bibitem{ET} \textsc{E. Ekstrom, J. Tysk} (2010): The Black–Scholes equation in stochastic volatility models. \textit{J. Math. Anal. Appl.}, \textbf{368}(2), 498-507.
		\bibitem{FP} \textsc{P. Feehan, C. A. Pop} (2015): Stochastic  representation of solutions to degenerate elliptic and parabolic boundary value and obstacle problems with Dirichlet boundary conditions. \textit{Trans. Amer. Math. Soc.}, \textbf{367}(2), 981-1031.
		
		\bibitem{FP2} \textsc{P. Feehan, C. A. Pop} (2015): Higher-order regularity for solutions to degenerate elliptic variational equations in mathematical finance. \textit{Adv. Differential Equations} \textbf{20}, 361–432. 
		
			\bibitem{F} \textsc{A. Friedman} (2010): \textit{Variational principles and free-boundary problems.}. Courier Corporation, 2010.
			\bibitem{H} \textsc{S. L. Heston} (1993): A Closed-Form Solution for Options with Stochastic Volatility with Applications to
			Bond and Currency Options. \textit{Review of Financial Studies},
			\textbf{6}, 327-343.
	\bibitem{IW} \textsc{N. Ikeda, S. Watanabe} (1989): \textit{Stochastic differential equations and diffusion processes}. Volume 24 of \textit{North-Holland Mathematical Library}. North-Holland Publishing Co., Amsterdam;Kpdansha, Ltd., Tokio,second edition.
	\bibitem{JLL} \textsc{P. Jaillet, D. Lamberton, and B. Lapeyre} (1990): Variational inequalities and the pricing of American options, \textit{Acta Appl. Math.}, \textbf{21}(3), 263-289.
	\bibitem{Kr} \textsc{M. Keller Ressel} (2011): Moment explosions and long-term behaviour of affine stochastic volatility models. \textit{Math. Finance,} \textbf{21}(1), 73-98.
		\bibitem{LL} \textsc{D. Lamberton, B. Lapeyre} (2008): \textit{Introduction to stochastic calculus applied to finance}. Chapman\&Hall/CRC Financial Mathematics Series. Chapman\&Hall/CRC, Boca Raton, FL, second edition.
		\bibitem{L}\textsc{J. L. Lions}(1969): \textit{Quelques methodes de résolution des problémes aux limites non-linéaires.} Dunod, Gauthier-Villars, Paris.
		\bibitem{T}\textsc{G. Terenzi} (2018): American options in stochastic volatility models. Ph.D. thesis. In preparation.
\end{thebibliography}
\end{document}